\newcommand{\df}{\dfrac}
\newcommand{\tf}{\tfrac}
\renewcommand{\Re}{\operatorname{Re}}
\newcommand{\s}{{\sigma}}
\newcommand*\pFq[5]{{}_{#1}F_{#2}\left(\genfrac{}{}{0pt}{}{#3}{#4};{#5}\right)}
 \renewcommand{\a}{\alpha}
\renewcommand{\b}{\beta}
\renewcommand{\d}{{\delta}}
\newcommand{\g}{\gamma}
\newcommand{\G}{\Gamma}
\renewcommand{\l}{\lambda}
\newcommand{\z}{\zeta}
\renewcommand{\(}{\left(}
\renewcommand{\)}{\right)}
\let\dotlessi=\i
\newcommand{\iy}{\infty}
\renewcommand{\pmod}[1]{\,(\textup{mod}\,#1)}
\numberwithin{equation}{section}
 \theoremstyle{plain}
\newtheorem{theorem}{Theorem}[section]
\newtheorem{lemma}[theorem]{Lemma}
\newtheorem{corollary}[theorem]{Corollary}
\newtheorem{definition}[theorem]{Definition}
\newtheorem{entry}[theorem]{Entry}
\newcommand{\twopartdef}[4]
{
        \left\{
                \begin{array}{ll}
                        #1 & \mbox{if } #2 \\
                        #3 & \mbox{if } #4
                \end{array}
        \right.
}
\begin{document}

\title[New Pathways and Connections in Number Theory and Analysis]{New Pathways and Connections in Number Theory and Analysis 
Motivated by Two Incorrect Claims of Ramanujan}
\author{Bruce C.~Berndt}
\thanks{The first author's research was partially supported by NSA grant H98230-11-1-0200 and a Simons Foundation
Collaboration Grant.}
\address{Department of Mathematics, University of Illinois, 1409 West Green
Street, Urbana, IL 61801, USA} \email{berndt@illinois.edu}

\author{Atul Dixit}
\thanks{The second author is funded in part by the grant NSF-DMS 1112656 of Professor
Victor H.~Moll of Tulane University and sincerely thanks him for the support.}
\address{Department of Mathematics, Tulane University, New Orleans, LA 70118, USA}
\email{adixit@tulane.edu}
\curraddr{Department of Mathematics, Indian Institute of Technology Gandhinagar, Ahmedabad 382424, India}
\email{adixit@iitgn.ac.in}

\author{Arindam Roy}
\address{Department of Mathematics, University of Illinois, 1409 West Green
Street, Urbana, IL 61801, USA} \email{roy22@illinois.edu}
\curraddr{Department of Mathematics, Rice University, 6100 Main St, Houston, 77005, USA}
\email{arindam.roy@rice.edu}

 \author{Alexandru Zaharescu}
\address{Department of Mathematics, University of Illinois, 1409 West Green
Street, Urbana, IL 61801, USA, and Institute of Mathematics of the Romanian
Academy, P.O.~Box 1-764, Bucharest RO-70700, Romania}
\email{zaharesc@illinois.edu}

\subjclass[2010] {Primary 11M06, 33C10; Secondary 33E30, 11N37}

\dedicatory{
\begin{center}
In Memory of W.~Keith Moore, Professor of Mathematics at Albion College
\end{center}
Dedicated to Pratibha Kulkarni who first showed me how beautiful Mathematics is
\begin{center}
In Memory of Mohanlal S. Roy, Professor at Ramakrishna Mission
Vidyamandira
\end{center}
\begin{center}
In Memory of Professor Nicolae Popescu
\end{center}
}

















\begin{abstract} The focus of this paper commences with an
  examination of  three (not obviously related) pages in Ramanujan's lost notebook, pages 336, 335,
  and 332, in decreasing order of attention.
    On page 336,
   Ramanujan proposes two identities, but the formulas are wrong  --
  each is vitiated by divergent series. We concentrate on only one of the two
incorrect ``identities,'' which
   may have been devised to attack the extended divisor
  problem.   We prove here a corrected version of Ramanujan's
  claim, which contains the convergent series appearing in it.   The
  convergent series in Ramanujan's faulty claim is similar to one used by
  G.~F.~Vorono\"{\dotlessi}, G.~H.~Hardy, and others in their study of the
  classical Dirichlet divisor problem.   This now brings us to page 335, which comprises two
  formulas featuring doubly infinite series of Bessel functions, the first being conjoined with the
  classical circle problem initiated by Gauss, and the second being associated
  with the Dirichlet divisor problem.  The first and fourth authors, along
  with Sun Kim, have written several papers providing proofs of these two
  difficult formulas in different interpretations.  In this monograph, we
  return to these two formulas and examine them in more general settings.

    The aforementioned convergent series in Ramanujan's ``identity'' is also similar to one that
  appears in a curious identity found in Chapter 15 in Ramanujan's second
  notebook,  written in a more elegant, equivalent
  formulation on page 332 in the lost notebook.  This formula may be regarded
  as a formula for $\zeta(\tf12)$, and in 1925, S. Wigert obtained a
  generalization giving a formula for $\zeta(\tf1k)$ for any even integer
  $k\geq2$.  We extend the work of Ramanujan
  and Wigert in this paper.

The Vorono\"{\dotlessi} summation formula appears prominently in our study.  In particular, we generalize work of J.~R.~Wilton and derive an analogue involving the sum of divisors function $\sigma_s(n)$.

The modified Bessel
functions $K_{s}(x)$  arise in
several contexts, as do Lommel functions.   We establish here new series and integral identities
involving modified Bessel functions and modified Lommel functions.  Among other results, we establish a modular transformation for an infinite series involving $\sigma_{s}(n)$ and modified Lommel functions.  We also discuss certain
obscure related work of N.~S.~Koshliakov.  We
define and discuss two new related classes of integral transforms, which we
call Koshliakov transforms, because he first found elegant
special cases of each.

\end{abstract}
\maketitle
\tableofcontents


\section{Introduction}\label{sect1}
The Dirichlet divisor problem is one of the most notoriously difficult unsolved problems in analytic number theory.  Let $d(n)$ denote the number of divisors of $n$. Define the error term $\Delta(x)$, for $x>0$, by
\begin{equation}\label{ddp}
\sideset{}{'}\sum_{n\leq x}d(n)=x\log x+(2\gamma-1)x+\frac{1}{4}+\Delta(x),
\end{equation}
where $\gamma$ denotes Euler's constant.  Here, and in the sequel, a prime $'$ on the summation
sign in ${\sum_{n\leq x}}^{\prime}a(n)$ indicates that only $\frac{1}{2}a(x)$ is counted when $x$ is an
integer. The Dirichlet divisor problem asks for the correct order of
magnitude of $\Delta(x)$ as $x\to\infty$. At this writing, the
best estimate $\Delta(x) =O(x^{131/416+\epsilon})$, for each
$\epsilon>0$, as $x\to\iy$,  is due to M.~N.~Huxley \cite{huxley2}
($\tf{131}{416}=0.3149\dots$).  On the other hand, G.~H.~Hardy \cite{hardiv}
proved that $\Delta(x)\neq O(x^{1/4})$, as $x\to\infty$, with the best result
in this direction currently due to K.~Soundararajan \cite{sound}. It is conjectured that
$\Delta(x)=O\left(x^{1/4+\epsilon}\right)$, for each $\epsilon>0$, as $x\to\iy$.

The conditionally convergent series
\begin{equation}\label{impser}
\frac{x^{1/4}}{\pi\sqrt{2}}\sum_{n=1}^{\infty}\frac{d(n)}{n^{\frac{3}{4}}}\cos\left(4\pi\sqrt{nx}-\frac{\pi}{4}\right)
\end{equation}
arose in  G.~F.~Vorono\"{\dotlessi}'s \cite[p.~218]{voronoi} work on the
Dirichlet divisor problem, and its importance was further emphasized by Hardy
\cite[equation (6.32)]{hardiv}.  Moreover,
J.~L.~Hafner \cite{hafner} and   Soundararajan \cite[equation (1.8)]{sound}
in their improvements of Hardy's $\Omega$-theorem on the Dirichlet divisor
problem also crucially employed \eqref{impser}.


Let $\sigma_s(n)=\sum_{d|n}d^s$, and let $\zeta(s)$ denote the Riemann zeta function.  For $0<s<1$, define $\Delta_{-s}(x)$ \footnote{We use $\Delta_{-s}(x)$ instead of $\Delta_{s}(x)$, as is customarily used, so as to be consistent with the results in this paper, most of which require Re $s>0$.} by
\begin{equation}\label{gddp}
\sideset{}{'}\sum_{n\leq x}\sigma_{-s}(n)=\zeta(1+s)x+\frac{\zeta(1-s)}{1-s}x^{1-s}-\frac{1}{2}\zeta(s)+\Delta_{-s}(x).
\end{equation}
The problem of determining the correct order of magnitude of the error term $\Delta_{-s}(x)$, as $x\to\infty$,  is known as the extended divisor problem \cite{lau}.  As $x\to\iy$, it is conjectured that for each $\epsilon>0$, $\Delta_{-s}(x)= O(x^{1/4-s/2+\epsilon})$ for $0<s\leq\frac{1}{2}$
and $\Delta_{-s}(x)= O(x^{\epsilon})$ for $\frac{1}{2}\leq s<1$.

 In analogy with  \eqref{impser}, the series
\begin{equation}\label{cw}
\sum_{n=1}^{\infty}\frac{\s_{k}(n)}{n^{\frac{5}{4}+\frac{k}{2}}}\sin\left(4\pi\sqrt{nx}-\frac{\pi}{4}\right),
\end{equation}
for $|k|<\frac{3}{2}$, arises in work \cite[p.~282]{segal}, \cite{kanrao} related to a conjecture of S.~Chowla and H.~Walum \cite{chowwal}, \cite[pp.~1058--1063]{chowlacp}, which is another extension of the Dirichlet divisor problem. It is conjectured that if $a, r\in\mathbb{Z}, a\geq 0, r\geq 1$, and if $B_{r}(x)$ denotes the $r$-th Bernoulli polynomial, then for every $\epsilon>0$, as $x\to\infty$,
\begin{equation}\label{extddp}
\sum_{n\leq\sqrt{x}}n^{a}B_{r}\left(\left\{\frac{x}{n}\right\}\right)=O\left(x^{a/2+1/4+\epsilon}\right),
\end{equation}
where $\{x\}$ denotes the fractional part of $x$.
The conjectured correct order of magnitude in the Dirichlet divisor problem is equivalent to (\ref{extddp}) with $a=0, r=1$.

Our last example is as famous as the Dirichlet divisor problem with which we opened this paper. Let $r_2(n)$ denote the number of representations of $n$ as a sum of two
squares.  The equally celebrated circle problem asks for the precise order of magnitude
of the error term $P(x)$, as $x\to\infty$, where
\begin{equation*}
\sideset{}{'}\sum_{n\leq x}r_2(n)=\pi x +P(x).
\end{equation*}

During the five years that Ramanujan visited Hardy at Cambridge, there is
considerable evidence, from Hardy in his papers and from Ramanujan in his
lost notebook \cite{lnb}, that the two frequently discussed both the circle
and divisor problems.
For details of Ramanujan's contributions to these
problems, see either the first author's book with G.~E.~Andrews \cite[Chapter
2]{ab4} or the survey paper by the first author, S.~Kim, and the last author
\cite{besselsurvey}.

 It is possible that Ramanujan also thought of the extended divisor problem, for on page $336$ in his lost notebook \cite{lnb}, we find the following claim.

\textit{Let $\sigma_s(n)=\sum_{d|n}d^s$, and let $\zeta(s)$ denote the Riemann zeta function. Then
{\allowdisplaybreaks\begin{align}\label{qc}
&\Gamma\left(s+\frac{1}{2}\right)
\bigg\{\frac{\zeta(1-s)}{(s-\frac{1}{2})x^{s-\frac{1}{2}}}+\frac{\zeta(-s)\tan\frac{1}{2}\pi s}{2x^{s+\frac{1}{2}}}\nonumber\\
&\quad+\sum_{n=1}^{\infty}\frac{\sigma_s(n)}{2i}\left((x-in)^{-s-\frac{1}{2}}-(x+in)^{-s-\frac{1}{2}}\right)\bigg\}
\nonumber\\
&=(2\pi)^s\bigg\{\frac{\zeta(1-s)}{2\sqrt{\pi x}}-2\pi\sqrt{\pi x}\zeta(-s)\tan\tfrac{1}{2}\pi s\nonumber\\
&\quad+\sqrt{\pi}\sum_{n=1}^{\infty}\frac{\sigma_s(n)}{\sqrt{n}}e^{-2\pi\sqrt{2nx}}
\sin\left(\frac{\pi}{4}+2\pi\sqrt{2nx}\right)\bigg\}.
\end{align}}}%

In view of the identities for \eqref{impser} and \eqref{cw}, it is possible that Ramanujan developed the series on the right-hand side of (\ref{qc}) to study a generalized divisor problem.  Unfortunately, \eqref{qc} is incorrect, since the series on the left-hand side, which can be written as
$$\displaystyle\sum_{n=1}^{\infty}\frac{\sigma_s(n)\sin\left(\left(s+\tfrac{1}{2}\right)
\tan^{-1}\left(\frac{n}{x}\right)\right)}{(x^2+n^2)^{\frac{s}{2}+\frac{1}{4}}},$$
 diverges for all real values of $s$ since $\sigma_{s}(n)\geq n^s$. See
 \cite{bclz} for further discussion. In this paper, we obtain a corrected
 version of
 Ramanujan's claim, where we start with the series on the right-hand side,
 since we know that it converges.

  Before stating our version, we need to define a general hypergeometric function.
  Define the rising or shifted factorial  $(a)_n$ by
  \begin{equation}\label{rising}
  (a)_n=a(a+1)(a+2)\cdots(a+n-1), \qquad n\geq1, \qquad (a)_0=1.
  \end{equation}
  Let $p$ and $q$ be non-negative integers, with $q\leq p+1$.
  Then, the generalized hypergeometric function $_qF_p$ is defined by
   \begin{equation}\label{hyper}
   _qF_p(a_1,a_2,\dots,a_q;b_1,b_2,\dots,b_p;z)
   :=\sum_{n=0}^{\iy}\df{(a_1)_n(a_2)_n\cdots(a_q)_n}{(b_1)_n(b_2)_n\cdots(b_p)_n}\df{z^n}{n!},
   \end{equation}
   where $|z|<1$, if $q=p+1$, and $|z|<\iy$, if $q<p+1$.

We emphasize further notation.  Throughout the paper, $s=\sigma+it$, with $\sigma$ and $t$ both real.  We also set $R_a(f)=R_a$ to denote the residue of a meromorphic function $f(z)$ at a pole $z=a$.

\begin{theorem}\label{p3361}
Let $_3F_2$ be defined by \eqref{hyper}.  Fix $s$ such that $\sigma>0$. Let $x\in\mathbb{R}^{+}$. Let $a$ be the number defined by
\begin{equation}\label{a}
a=
\begin{cases}
0, \qquad \text{if}\hspace{1.5mm} s\hspace{1.5mm}\text{is an odd integer},\\
1, \qquad \text{otherwise}.
\end{cases}
\end{equation}
Then,
{\allowdisplaybreaks\begin{align}\label{tf}
&\sum_{n=1}^{\infty}\frac{\sigma_s(n)}{\sqrt{n}}e^{-2\pi\sqrt{2nx}}
\sin\left(\frac{\pi}{4}+2\pi\sqrt{2nx}\right)\notag\\
&=4\pi\left(\frac{\zeta(1-s)}{8\pi^2\sqrt{x}}+\frac{1}{4\sqrt{2}\pi}\zeta
\left(\frac{1}{2}\right)\zeta\left(\frac{1}{2}-s\right)-\frac{2^{-s-3}}{\pi^{s+\frac{3}{2}}}
\frac{\G(s+1/2)\cot\(\frac{\pi s}{2}\)\zeta(-s)}{x^{s+\frac{1}{2}}}\right)\nonumber\\
&\quad+\frac{\sqrt{x}}{\pi^{s}}\bigg\{\sum_{n<x}\frac{\s_{s}(n)}{n^{s+1}}
\left[-\frac{\sqrt{n}\G\(\frac{1}{4}+\frac{s}{2}\)}{\sqrt{2x}\G\(\frac{1}{4}-\frac{s}{2}\)}\right.\nonumber\\
&\quad\left.-\frac{a\G\(s+\frac{1}{2}\)\cot\(\frac{\pi s}{2}\)}{2^{s+1}\sqrt{\pi}}\left(\frac{n}{x}\right)^{s+1}
\left\{\(1+\frac{in}{x}\)^{-\(s+\frac{1}{2}\)}+\(1-\frac{in}{x}\)^{-\(s+\frac{1}{2}\)}\right\}\right.\nonumber\\
&\quad+\left.\frac{n2^{-s}}{x\sin\left(\frac{\pi
        s}{2}\right)\G(1-s)}\pFq{3}{2}{\frac{1}{4}, \frac{3}{4},
    1}{\frac{1-s}{2}, 1-\frac{s}{2}}{-\frac{n^2}{x^2}}\right]\nonumber\\
&\quad+\sum_{n\geq
  x}\frac{\s_{s}(n)}{n^{s+1}}\left[-\frac{n\G(s)\cos\(\frac{\pi s}{2}\)}{
    2^{s-1}\pi x}\left\{\pFq{3}{2}{\frac{s}{2},\frac{1+s}{2}, 1}{\frac{1}{4},
      \frac{3}{4}}{-\frac{x^2}{n^2}}-1\right\}\right.\nonumber\\
&\quad-\frac{i\sqrt{n}\G\(s+\frac{1}{2}\)}{2^{s+1}\sqrt{\pi
    x}}\left\{\sin\(\frac{\pi}{4}+\frac{\pi
    s}{2}\)\left(\left(1+\frac{ix}{n}\right)^{-(s+\frac{1}{2})}
-\left(1-\frac{ix}{n}\right)^{-(s+\frac{1}{2})}\right)\right.
\nonumber\\
&\quad+\left.\left.i\cos\(\frac{\pi}{4}+\frac{\pi
      s}{2}\)\left(\left(1+\frac{ix}{n}\right)^{-(s+\frac{1}{2})}
+\left(1-\frac{ix}{n}\right)^{-(s+\frac{1}{2})}-2\right)\right\}\right]\bigg\},
\end{align}}%
where, if $x$ is an integer, we additionally require that $\sigma<\frac{1}{2}$.
\end{theorem}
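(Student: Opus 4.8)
The plan is to start from the convergent series on the left of \eqref{tf}, represent it as a Mellin--Barnes integral against $\zeta\left(w+\tf12\right)\zeta\left(w+\tf12-s\right)$, shift the line of integration to the left so that the ``main terms'' on the right of \eqref{tf} appear as residues, and then evaluate the remaining integral by invoking the functional equation of $\zeta(s)$ and computing residues term by term. Set $h(t):=e^{-2\pi\sqrt{2t}}\sin\left(\tf{\pi}{4}+2\pi\sqrt{2t}\right)$, a smooth function on $(0,\infty)$ which is bounded at the origin and of exponential decay at infinity. The substitution $u=\sqrt{2t}$ turns $\int_0^{\infty}h(t)t^{w-1}\,dt$ into an elementary $\Gamma$-integral, and the duplication formula gives
\[
\int_0^{\infty}h(t)t^{w-1}\,dt=\frac{\Gamma(w)\,\Gamma\left(w+\tf12\right)}{\sqrt{\pi}\,(2\pi)^{2w}}\sin\left(\tf{\pi}{4}+\tf{\pi w}{2}\right)\qquad(\Re w>0).
\]
Since $\sum_{n\geq1}\sigma_s(n)n^{-z}=\zeta(z)\zeta(z-s)$ for $\Re z>1+\max(0,\sigma)$, Mellin inversion and absolute convergence yield, for every $c>\tf12+\sigma$,
\[
\sum_{n=1}^{\infty}\frac{\sigma_s(n)}{\sqrt n}\,h(nx)=\frac{1}{2\pi i}\int_{(c)}\frac{\Gamma(w)\,\Gamma\left(w+\tf12\right)}{\sqrt{\pi}\,(2\pi)^{2w}}\sin\left(\tf{\pi}{4}+\tf{\pi w}{2}\right)\zeta\left(w+\tf12\right)\zeta\left(w+\tf12-s\right)x^{-w}\,dw ,
\]
the left-hand side being exactly the series in \eqref{tf}.

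I would then move the contour down to a line $\Re w=\lambda$ with $-1<\lambda<-\tf12$. Between $\Re w=c$ and $\Re w=\lambda$ the integrand has exactly the simple poles $w=s+\tf12$ (from $\zeta\left(w+\tf12-s\right)$), $w=\tf12$ (from $\zeta\left(w+\tf12\right)$) and $w=0$ (from $\Gamma(w)$); the prospective pole of $\Gamma\left(w+\tf12\right)$ at $w=-\tf12$ is annihilated by the zero of $\sin\left(\tf{\pi}{4}+\tf{\pi w}{2}\right)$ there. Evaluating these three residues---using the functional equation to rewrite $\zeta(s+1)$ through $\zeta(-s)$, and the reflection and duplication formulas to simplify the $\Gamma$-factors---produces exactly the quantity in the $4\pi(\cdots)$ bracket on the right-hand side of \eqref{tf}. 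The shift is legitimate because, by Stirling's formula, the product of the two $\Gamma$-factors decays like $e^{-\pi|\Im w|}$ whereas the sine grows only like $e^{\pi|\Im w|/2}$, so the integrand is of polynomial growth in $|\Im w|$ on vertical lines.

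It remains to evaluate $I:=\frac{1}{2\pi i}\int_{(\lambda)}(\cdots)\,dw$. On $\Re w=\lambda$ I apply $\zeta(v)=\chi(v)\zeta(1-v)$, $\chi(v)=2^{v}\pi^{v-1}\sin\left(\tf{\pi v}{2}\right)\Gamma(1-v)$, to both zeta factors; then the reflection formula $\Gamma\left(w+\tf12\right)\Gamma\left(\tf12-w\right)=\pi/\cos(\pi w)$ together with $\cos(\pi w)=2\sin\left(\tf{\pi w}{2}+\tf{\pi}{4}\right)\cos\left(\tf{\pi w}{2}+\tf{\pi}{4}\right)$ collapses all the $\Gamma$- and sine-factors to
\[
\Psi(w):=\frac{2^{-s}}{\sqrt{\pi}\,\pi^{s}}\,\Gamma(w)\,\Gamma\left(\tf12+s-w\right)\tan\left(\tf{\pi w}{2}+\tf{\pi}{4}\right)\sin\left(\tf{\pi w}{2}+\tf{\pi}{4}-\tf{\pi s}{2}\right),
\]
while $\zeta\left(\tf12-w\right)\zeta\left(\tf12+s-w\right)=\sum_{n\geq1}\sigma_s(n)n^{w-s-1/2}$ converges absolutely since $\Re w<-\tf12$. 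Interchanging the sum and the integral, $I=\sum_{n\geq1}\sigma_s(n)n^{-s-1/2}J_n$ with $J_n=\frac{1}{2\pi i}\int_{(\lambda)}\Psi(w)(n/x)^{w}\,dw$. When $n<x$ the factor $\Gamma\left(\tf12+s-w\right)$ makes $\Psi$ decay rapidly as $\Re w\to+\infty$, so I close to the right and collect (with the sign of the clockwise orientation) the residue at $w=0$, which yields the $\Gamma\left(\tf14+\tf s2\right)/\Gamma\left(\tf14-\tf s2\right)$ term after duplication and reflection; the residues at the poles $w=\tf12+s+2j$ of $\Gamma\left(\tf12+s-w\right)$---those at $w=\tf12+s+2j+1$ being killed by zeros of $\sin\left(\tf{\pi w}{2}+\tf{\pi}{4}-\tf{\pi s}{2}\right)$---which sum to $\left(1+\tf{in}{x}\right)^{-(s+1/2)}+\left(1-\tf{in}{x}\right)^{-(s+1/2)}$; and the residues at the poles $w=\tf12+2k$ of the tangent, which sum to the term containing $\pFq{3}{2}{\tf14,\tf34,1}{\tf{1-s}{2},1-\tf s2}{-n^2/x^2}$. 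When $n\geq x$ one closes instead to the left; now the poles of $\Gamma(w)$ at the negative integers, separated according to parity, and the poles of the tangent at $w=-\tf32-2k$ produce, respectively, the two binomial series in $x/n$ and the term containing $\pFq{3}{2}{\tf s2,\tf{1+s}{2},1}{\tf14,\tf34}{-x^2/n^2}$, the constants $-1$ and $-2$ in \eqref{tf} recording the lowest summands that must be removed so that these series begin at their natural index. Combining the $n<x$ and $n\geq x$ contributions with the residue terms from the previous step reconstructs \eqref{tf}.

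I expect the main obstacle to be this last residue bookkeeping---in particular, deciding precisely which prospective poles are annihilated by zeros of the trigonometric factors, and controlling the closing arcs (this is where one needs $\sigma>0$, and, when $n=x$ occurs, $\sigma<\tf12$, the latter being equivalent to the convergence of $\pFq{3}{2}{\tf s2,\tf{1+s}{2},1}{\tf14,\tf34}{-1}$). The genuinely delicate point is the behaviour at integer $s$: when $s$ is an odd integer, $\tan\left(\tf{\pi w}{2}+\tf{\pi}{4}\right)$ vanishes at the poles $w=\tf12+s+2j$ of $\Gamma\left(\tf12+s-w\right)$, so the residues that produced the binomial term for generic $s$ disappear---which is exactly what the constant $a$ of \eqref{a} records---while at the remaining integer values of $s$ poles collide into double poles; these cases are dealt with by a limiting argument, the final identity then being valid for all $s$ with $\Re s>0$ by analytic continuation once the hypergeometric series are interpreted appropriately.
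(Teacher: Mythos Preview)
Your approach is essentially the same as the paper's: the paper writes the left-hand side as $\frac{2}{i}\int_{(c)}\zeta(z/2)\zeta(z/2-s)\frac{\Gamma(z-1)}{(4\pi)^{z}}\sin(\pi z/4)x^{(1-z)/2}\,dz$, shifts to $-1<\lambda<0$ collecting the residues at $z=1,2,2s+2$, applies the functional equation of $\zeta$ twice on the new line, expands $\zeta(1-z/2)\zeta(1-z/2+s)$ as $\sum_n\sigma_s(n)n^{-(s+1)+z/2}$, and then evaluates the resulting inner integral $I(s,x/n)$ termwise via a separate residue computation (Lemma~\ref{lemma1}), shifting the contour to $+\infty$ when $x/n>1$ and to $-\infty$ when $x/n\le1$; under the substitution $z=2w+1$ this is exactly your argument, pole locations and all. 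One small slip: after combining the two gamma factors with the sine, the integrand has \emph{exponential decay} $e^{-\pi|\Im w|/2}$ (times polynomial) on vertical lines, not polynomial growth --- this is in fact what you need both for the vertical integrals to converge and for the horizontal segments to vanish.
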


The following lemma, which is interesting in its own right, is the main
ingredient of our proof. We use the notation $\int_{(c)}$ to designate
$\int_{c-i\infty}^{c+i\infty}$.
\begin{lemma}\label{lemma1}
Fix $s$ such that $\sigma>0$. Fix $x\in\mathbb{R}^{+}$. Let $-1<\l<0$ and let
$a$ be defined by \textup{(\ref{a})}. Define $I(s,x)$ by
\begin{align}\label{Is}
I(s,x)&:=\frac{1}{2\pi i}\int_{(\l)}\G(z-1)\G\left(1-\frac{z}{2}\right)
\G\left(1-\frac{z}{2}+s\right)\notag\\&\qquad\times
\sin^2\left(\frac{\pi z}{4}\right)
\sin\left(\frac{\pi z}{4}-\frac{\pi s}{2}\right)
\left(4x\right)^{-\frac{1}{2}z}\,dz.
\end{align}
Then,\\
\textup{(i)} for $x>1$,
{\allowdisplaybreaks\begin{align}\label{ig1}
I(s,x)&=-\frac{\pi}{2^{2-s}}\left[\frac{\G\(\frac{1}{4}
+\frac{s}{2}\)}{\sqrt{2x}\G\(\frac{1}{4}-\frac{s}{2}\)}+\frac{ax^{-s-1}\cot\(\frac{\pi
      s}{2}\)}{2^{s+1}\sqrt{\pi}}\G\(s+\frac{1}{2}\)\left\{
\(1+\frac{i}{x}\)^{-\(s+\frac{1}{2}\)}\right.\right.\notag
\\&\quad\left.+\(1-\frac{i}{x}\)^{-\(s+\frac{1}{2}\)}\right\}
-\left.\frac{1}{x 2^{s}\sin\left(\frac{\pi
s}{2}\right)\G(1-s)}\pFq{3}{2}{\frac{1}{4}, \frac{3}{4}, 1}{\frac{1-s}{2};
1-\frac{s}{2}}{-\frac{1}{x^2}}\right];
\end{align}}

\textup{(ii)} for $x\leq 1$,
{\allowdisplaybreaks\begin{align}\label{ig2}
I(s,x)&=-\frac{\pi}{2^{2-s}}\biggl[\frac{\G(s)\cos\(\frac{\pi s}{2}\)}{ 2^{s-1}\pi x}\left\{\pFq{3}{2}{\frac{s}{2},\frac{1+s}{2}, 1}{\frac{1}{4}, \frac{3}{4}}{-x^2}-1\right\}\nonumber\\
&\quad+\frac{i\G\(s+\frac{1}{2}\)}{2^{s+1}\sqrt{\pi x}}\left\{\sin\(\frac{\pi}{4}+\frac{\pi s}{2}\)\left((1+ix)^{-(s+\frac{1}{2})}-(1-ix)^{-(s+\frac{1}{2})}\right)\right.\nonumber\\
&\quad+\left.i\cos\(\frac{\pi}{4}+\frac{\pi s}{2}\)\left((1+ix)^{-(s+\frac{1}{2})}+(1-ix)^{-(s+\frac{1}{2})}-2\right)\right\}\biggr],
\end{align}}%
where, if $x=1$, we additionally require that  $\sigma<\frac{1}{2}$.
\end{lemma}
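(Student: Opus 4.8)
The plan is to evaluate $I(s,x)$ by residue calculus: move the line $\Re z=\lambda$ and sum the residues crossed, the split $x>1$ versus $x\le1$ in the conclusion corresponding to the two directions in which the contour can be pushed. First I would locate the poles of the integrand $\Gamma(z-1)\Gamma(1-\tfrac z2)\Gamma(1-\tfrac z2+s)\sin^2(\tfrac{\pi z}4)\sin(\tfrac{\pi z}4-\tfrac{\pi s}2)(4x)^{-z/2}$. The three Gamma factors give candidate poles at $z=1,0,-1,-2,\dots$, at $z=2,4,6,\dots$, and at $z=2+2s+4k$ ($k\ge0$); but the double zeros of $\sin^2(\tfrac{\pi z}4)$ at the multiples of $4$ cancel the candidate pole at $z=0$ and those of $\Gamma(1-\tfrac z2)$ with $z\equiv0\pmod4$, and when $s$ is an odd integer the whole progression $z=2+2s+4k$ lies on multiples of $4$ and is likewise cancelled --- which is exactly why the parameter $a$ of \eqref{a} is set to $0$ in that case. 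Thus, for $s$ not an integer, the poles to the right of $\Re z=\lambda$ are simple and sit at $z=1$, at $z=2+4k$, and at $z=2+2s+4k$, while those to the left are simple and sit at the negative integers $z=-n$ with $n\not\equiv0\pmod4$; the integral values of $s$ are recovered by continuity, with the apparent singularities of the various coefficients interpreted in the limiting sense.

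To justify the shift I would appeal to Stirling's formula. On a fixed vertical line the integrand decays like $e^{-\pi|\Im z|/4}$, so $I(s,x)$ converges; and on the line $\Re z=\pm N$ the product of the three Gamma factors has size $2^{\mp N}|N|^{\sigma-1/2}$ up to a constant (the super-exponential decay of $\Gamma(1-\tfrac z2)\Gamma(1-\tfrac z2+s)$ in the real direction reducing the growth of $\Gamma(z-1)$ to a single geometric factor), so after multiplying by $(4x)^{-z/2}$ the integrand on $\Re z=\pm N$ has size $|N|^{\sigma-1/2}x^{\mp N/2}$. Hence for $x>1$ I would move the contour rightward to $\Re z=N\to\infty$ (through values missing the poles), collect the residues at $z=1$ and at the two progressions, show that the shifted integral and the horizontal sides of the rectangle vanish, and read off $I(s,x)=-\sum(\text{residues crossed})$ from the orientation; for $x<1$ the same estimate lets me push the contour leftward and collect the residues at $z=-n$, $n\not\equiv0\pmod4$. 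At the threshold $x=1$ the real-direction decay degrades to the bare power $|N|^{\sigma-1/2}$, and the supplementary hypothesis $\sigma<\tfrac12$ is precisely what makes the residue series converge and the tail vanish.

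It then remains to resum the residues into the stated closed forms. For $x>1$: the residue at $z=1$, after Legendre's duplication formula and the reflection formula, produces the term $\Gamma(\tfrac14+\tfrac s2)/(\sqrt{2x}\,\Gamma(\tfrac14-\tfrac s2))$ of \eqref{ig1}; the residues at $z=2+2s+4k$ carry a factor $(-1)^k/(2k)!$ together with $\Gamma$-values linear in $k$, and after reduction by the duplication formula their sum collapses, via the binomial theorem, to $(1+\tfrac ix)^{-(s+1/2)}+(1-\tfrac ix)^{-(s+1/2)}$ with the coefficient displayed in \eqref{ig1}; and the residues at $z=2+4k$ assemble, again after duplication, into $\pFq{3}{2}{\frac14,\frac34,1}{\frac{1-s}{2};1-\frac s2}{-\frac1{x^2}}$. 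For $x<1$ one sorts the residues at $z=-n$ by the class of $n$ modulo $4$: the class $n\equiv2$ yields the $\pFq{3}{2}{\frac s2,\frac{1+s}2,1}{\frac14,\frac34}{-x^2}-1$ term of \eqref{ig2} (the subtracted $1$ removing a contribution that would correspond to a pole lying on the other side), while the classes $n\equiv1$ and $n\equiv3$ recombine, by the binomial theorem, into the odd-power part $(1+ix)^{-(s+1/2)}-(1-ix)^{-(s+1/2)}$ and the even-power part $(1+ix)^{-(s+1/2)}+(1-ix)^{-(s+1/2)}-2$, giving the two bracketed expressions of \eqref{ig2}.

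The principal obstacle is this last step of bookkeeping: one must keep exact account of which poles survive the trigonometric cancellations and of their orders, evaluate every residue without error, and then recombine the three arithmetic-progression series of residues into elementary functions and a single ${}_3F_2$ by repeated use of the duplication, reflection, and binomial identities, matching each half-integral power of $x$ on the two sides. The degeneracies at integral $s$ and at $x=1$ are the delicate points --- precisely those signalled by the definition of $a$ and by the extra hypothesis $\sigma<\tfrac12$.
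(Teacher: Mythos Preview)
Your approach is essentially the same as the paper's: shift the contour of \eqref{Is} to the right for $x>1$ and to the left for $x\le1$, collect residues, and resum the resulting arithmetic-progression series into the binomial and ${}_3F_2$ expressions. Your pole analysis (survivors at $z=1$, $z=2+4k$, $z=2+2s+4k$ on the right; $z=-n$ with $n\not\equiv0\pmod4$ on the left; the disappearance of the $s$-progression when $s$ is an odd integer) matches exactly what the paper obtains.

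There are two tactical differences worth noting. First, the paper does not work directly with the original integrand: it first multiplies and divides by $\Gamma\bigl(\tfrac{3-z}{2}\bigr)$ and applies \eqref{ref}, \eqref{ref2}, \eqref{dup} to rewrite $I(s,x)$ as $-\frac{\pi}{2^{2-s}}\cdot\frac{1}{2\pi i}\int_{(\lambda)}F(z,s,x)\,dz$ with
\[
F(z,s,x)=\frac{\tan(\tfrac{\pi z}{4})}{2^{z/2}(1-z)}\,\frac{\Gamma(\tfrac12-\tfrac z4+\tfrac s2)\,\Gamma(\tfrac{1+z}{2})}{\Gamma(\tfrac z4-\tfrac s2)}\,x^{-z/2},
\]
which makes the residue computations cleaner (fewer Gamma factors to handle at each pole). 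Second, to kill the shifted integral the paper derives the exact recursion $|F(z+4,s,x)|=|F(z,s,x)|\,x^{-2}(1+O_s(|z|^{-1}))$ and iterates it through a sequence of lines $\Re z=X+4\ell$, rather than appealing to a global Stirling estimate along $\Re z=\pm N$ as you do. Your Stirling sketch is correct in spirit but would need more care to make uniform in $\Im z$; the recursion sidesteps that.
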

 We note in passing that each ${}_3F_{2}$ in Theorem
\ref{p3361}, as well as in Lemma \ref{lemma1}, can be written, using the
duplication formula for the Gamma function (see \eqref{dup} below), as a sum
of two ${}_2F_{1}$'s.

If we replace the `+' sign in the argument of the sine function in the series
on the left-hand side of (\ref{tf})  by a `$-$' sign, then we obtain the
following theorem.

\begin{theorem}\label{p3361a}
Fix $s$ such that $\sigma>0$. Let $x\in\mathbb{R}^{+}$. Then,
{\allowdisplaybreaks\begin{align}\label{tf-}
&\sum_{n=1}^{\infty}\frac{\sigma_s(n)}{\sqrt{n}}e^{-2\pi\sqrt{2nx}}
\sin\left(\frac{\pi}{4}-2\pi\sqrt{2nx}\right)\nonumber\\
&=4\pi\left(\frac{\sqrt{x}}{2}\zeta(-s)
+\frac{\zeta\left(\frac{1}{2}\right)}{4\pi\sqrt{2}}\zeta
\left(\frac{1}{2}-s\right)+\frac{\Gamma\left(s+\frac{1}{2}\right)
\zeta(-s)}{2^{s+3}\pi^{s+\frac{3}{2}}x^{s+\frac{1}{2}}}\right)\nonumber\\
&\quad+\frac{\Gamma\left(s+\frac{1}{2}\right)}{2^{s}\pi^{s+\frac{1}{2}}}
\bigg\{\sum_{n<x}\frac{\sigma_{s}(n)}{n^{s+\frac{1}{2}}}\left[-\sin\left(\frac{\pi}{4}-\frac{\pi
      s}{2}\right)+\frac{n^{s+\frac{1}{2}}}{2x^{s+\frac{1}{2}}}
\right.\notag\\
&\hspace{1.7in}\left.\times
\left(\left(1+\frac{in}{x}\right)^{-\left(s+\frac{1}{2}\right)}
+\left(1-\frac{in}{x}\right)^{-\left(s+\frac{1}{2}\right)}\right)\right]\nonumber\\
&\quad+\dfrac12\sum_{n\geq x}\frac{\sigma_{s}(n)}{n^{s+\frac{1}{2}}}\bigg[\cos\left(\frac{\pi}{4}+\frac{\pi s}{2}\right)\left(\left(1+\frac{ix}{n}\right)^{-\left(s+\frac{1}{2}\right)}
+\left(1-\frac{ix}{n}\right)^{-\left(s+\frac{1}{2}\right)}-2\right)\nonumber\\
&\quad+i\sin\left(\frac{\pi}{4}+\frac{\pi s}{2}\right)\left(\left(1+\frac{ix}{n}\right)^{-\left(s+\frac{1}{2}\right)}
-\left(1-\frac{ix}{n}\right)^{-\left(s+\frac{1}{2}\right)}\right)\bigg]\bigg\}.
\end{align}}
\end{theorem}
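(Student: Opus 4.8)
The plan is to carry out the argument that establishes Theorem~\ref{p3361}, changing only the input. That input is the Mellin transform
\[
\int_0^\infty e^{-t}\sin\left(\tfrac{\pi}{4}-t\right)t^{w-1}\,dt=\G(w)\,2^{-w/2}\sin\left(\tfrac{\pi(1-w)}{4}\right)\qquad(\Re w>0),
\]
which is obtained exactly as the corresponding formula for $e^{-t}\sin(\tfrac{\pi}{4}+t)$, the only change being that $\sin(\tfrac{\pi(1+w)}{4})$ is replaced by $\sin(\tfrac{\pi(1-w)}{4})$. Inverting, substituting $t=2\pi\sqrt{2nx}$, dividing by $\sqrt n$, summing over $n$ against $\s_s(n)$ and using $\sum_{n\ge1}\s_s(n)n^{-v}=\zeta(v)\zeta(v-s)$ (valid for $\Re v>1+\sigma$) one gets, for $c>1+2\sigma$,
\[
\sum_{n=1}^{\infty}\frac{\s_s(n)}{\sqrt n}e^{-2\pi\sqrt{2nx}}\sin\left(\frac{\pi}{4}-2\pi\sqrt{2nx}\right)=\frac{1}{2\pi i}\int_{(c)}\G(w)(4\pi)^{-w}\sin\left(\frac{\pi(1-w)}{4}\right)\zeta\left(\frac{w+1}{2}\right)\zeta\left(\frac{w+1}{2}-s\right)x^{-w/2}\,dw .
\]

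Next I would move the line of integration to $\Re w=w_0$ with $-2<w_0<-1$; Stirling's formula together with the standard convexity estimates for $\zeta$ on vertical lines makes this legitimate. One crosses simple poles at $w=1+2s$ (from the second zeta factor) and at $w=0$ and $w=-1$ (from $\G(w)$); at $w=1$ the simple zero of $\sin(\tfrac{\pi(1-w)}{4})$ cancels the simple pole of $\zeta(\tfrac{w+1}{2})$, so there is no contribution there. This is the precise point where the two theorems diverge: in \eqref{tf} the pole at $w=1$ survives while the one at $w=-1$ is killed, so \eqref{tf} carries a term in $\zeta(1-s)/\sqrt x$ whereas \eqref{tf-} carries one in $\sqrt x\,\zeta(-s)$. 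A direct residue computation — using the functional equation of $\zeta$ and the duplication formula for $\G$ — shows that the residues at $w=-1$, $0$ and $1+2s$ equal $2\pi\sqrt x\,\zeta(-s)$, $\tfrac{1}{\sqrt 2}\zeta(\tfrac12)\zeta(\tfrac12-s)$ and $\tfrac{\G(s+1/2)\zeta(-s)}{2^{s+1}\pi^{s+1/2}}x^{-s-1/2}$, which, after pulling out a factor $4\pi$, are exactly the three ``elementary'' terms on the right side of \eqref{tf-}.

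There remains the integral along $\Re w=w_0$. Applying the functional equation of $\zeta$ to both zeta factors and setting $w=z-1$ converts it into
\[
\frac{2^{2-s}\sqrt x}{\pi^{1+s}}\cdot\frac{1}{2\pi i}\int_{(\l)}\G(z-1)\G\left(1-\tfrac z2\right)\G\left(1-\tfrac z2+s\right)\cdot\tfrac12\sin\left(\tfrac{\pi z}{2}\right)\sin\left(\tfrac{\pi z}{4}-\tfrac{\pi s}{2}\right)\zeta\left(1-\tfrac z2\right)\zeta\left(1-\tfrac z2+s\right)(4x)^{-z/2}\,dz ,
\]
with $-1<\l<0$: the three Gamma factors are precisely those of $I(s,x)$ in Lemma~\ref{lemma1}, while the product of the remaining trigonometric factors, which in the proof of Theorem~\ref{p3361} is $\sin^2(\tfrac{\pi z}{4})\sin(\tfrac{\pi z}{4}-\tfrac{\pi s}{2})$, has collapsed — via $\sin(\tfrac{\pi(1-w)}{4})\sin(\tfrac{\pi(1+w)}{4})=\tfrac12\cos(\tfrac{\pi w}{2})$ — to $\tfrac12\sin(\tfrac{\pi z}{2})\sin(\tfrac{\pi z}{4}-\tfrac{\pi s}{2})$. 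Expanding $\zeta(1-\tfrac z2)\zeta(1-\tfrac z2+s)=\sum_{n\ge1}\s_{-s}(n)n^{z/2-1}$ (legitimate since $\Re z=\l<0$) replaces $(4x)^{-z/2}$ by $(4x/n)^{-z/2}$, so the integral equals $\sum_{n\ge1}\tfrac{\s_{-s}(n)}{n}\,\widetilde I(s,x/n)$, where $\widetilde I$ is the companion of $I$ with $\sin^2(\tfrac{\pi z}{4})$ replaced by $\tfrac12\sin(\tfrac{\pi z}{2})$. Evaluating $\widetilde I(s,y)$ by displacing the contour just as in the proof of Lemma~\ref{lemma1} — to the right when $y>1$ (i.e.\ $n<x$) and to the left when $y<1$ (i.e.\ $n>x$) — one sees that the extra factor $\sin(\tfrac{\pi z}{2})$, which vanishes at every even integer, destroys exactly the poles of $\G(1-\tfrac z2)$ (and of $\G(z-1)$ at the even integers) that generated the ${}_3F_2$'s in Lemma~\ref{lemma1}; the surviving residue series resum, through the quadratic evaluation ${}_2F_1\!\bigl(\tfrac a2,\tfrac{a+1}2;\tfrac12;w^2\bigr)=\tfrac12\bigl((1-w)^{-a}+(1+w)^{-a}\bigr)$, to the closed forms $\bigl(1\pm\tfrac{in}{x}\bigr)^{-(s+1/2)}$ and $\bigl(1\pm\tfrac{ix}{n}\bigr)^{-(s+1/2)}$ together with the $\sin(\tfrac{\pi}{4}\mp\tfrac{\pi s}{2})$ and $\cos(\tfrac{\pi}{4}+\tfrac{\pi s}{2})$ prefactors appearing in \eqref{tf-}. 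Reassembling with the factor $\tfrac{2^{2-s}\sqrt x}{\pi^{1+s}}$ and the identity $\s_{-s}(n)=n^{-s}\s_s(n)$ produces precisely the two sums over $n<x$ and $n\ge x$ in \eqref{tf-}, and no hypergeometric function survives.

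The one genuinely new computation, and the main obstacle, is the closed-form evaluation of $\widetilde I(s,y)$ — the analogue of Lemma~\ref{lemma1} for the kernel $\tfrac12\sin(\tfrac{\pi z}{2})\sin(\tfrac{\pi z}{4}-\tfrac{\pi s}{2})$: one must identify carefully which poles of the three Gamma factors survive after multiplication by the two sine factors and verify that the surviving residue series resum to the stated binomial expressions (each series converging because the $\G(z-1)$-, resp. $\G(1-\tfrac z2+s)$-, residues supply a factor that dominates the geometric growth of $(4y)^{\pm z/2}$). Since these closed forms are elementary — whereas the ${}_3F_2$'s of Lemma~\ref{lemma1} are, at $x=1$, evaluated at the argument $-1$ on the boundary of convergence and hence force $\sigma<\tfrac12$ there — the boundary value $\widetilde I(s,1)$, i.e.\ the term $n=x$ when $x$ is an integer, is unproblematic, which is why Theorem~\ref{p3361a} holds for all $\sigma>0$ and all $x>0$ with no extra hypothesis. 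Justifying the contour shift and computing the residues at $w=0,-1,1+2s$ is routine given the estimates already developed for Theorem~\ref{p3361}.
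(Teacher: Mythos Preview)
Your proposal is correct and follows precisely the approach the paper intends: the paper explicitly states ``We do not give a proof of Theorem~\ref{p3361a} since it is similar to that of Theorem~\ref{p3361},'' and your outline supplies exactly those modifications---replacing $\sin(\pi z/4)$ by $\cos(\pi z/4)$ in \eqref{omzsx}, which swaps the pole at $z=2$ for one at $z=0$ and, in the analogue of Lemma~\ref{lemma1}, turns $\sin^2(\pi z/4)$ into $\tfrac12\sin(\pi z/2)$, killing the $\tan(\pi z/4)$ poles at $z=\pm 2(2m+1)$ that produced the ${}_3F_2$'s and hence removing the restriction $\sigma<\tfrac12$ at integer $x$. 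Your residue computations at $w=-1,0,1+2s$ and your identification of which pole families survive in $\widetilde I$ are all correct.
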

On page 332 in his lost notebook \cite{lnb}, Ramanujan gives an elegant reformulation of a formula for $\zeta\left(\tfrac{1}{2}\right)$ that appears  in Chapter $15$ of his second notebook \cite{secnb}, \cite[p.~314, Entry 8]{II}.

\textit{Let $\alpha$ and $\beta$ be two positive numbers such that $\alpha\beta=4\pi^3$. If $\phi(n)$, $n\geq 1$, and $\psi(n), n\geq 1$, are defined by
\begin{equation*}
\sum_{j=1}^{\infty}\frac{x^{j^2}}{1-x^{j^2}}=\sum_{n=1}^{\infty}\phi(n)x^n
\end{equation*}
and
\begin{equation}\label{d2}
\sum_{j=1}^{\infty}\frac{jx^{j^2}}{1-x^{j^2}}=\sum_{n=1}^{\infty}\psi(n)x^n,
\end{equation}
respectively, then\footnote{Ramanujan inadvertently omitted the term $\tf14$ on the right-hand side of (\ref{p332}).}
\begin{align}\label{p332}
\sum_{n=1}^{\infty}\phi(n)e^{-n\alpha}&=\frac{\pi^2}{6\alpha}
+\frac{1}{4}+\frac{\sqrt{\beta}}{\pi\sqrt{2}}\left(\frac{1}{2\sqrt{2}}\zeta\left(\frac{1}{2}\right)
+\sum_{n=1}^{\infty}\frac{\psi(n)}{\sqrt{n}}e^{-\sqrt{n\beta}}
\sin\left(\frac{\pi}{4}-\sqrt{n\beta}\right)\right).
\end{align}}%
Recall that \cite[p.~340, Theorem 3.10]{Hardy}
$$\sum_{n=1}^{\infty}d(n)x^n=\sum_{n=1}^{\infty}\df{x^n}{1-x^n}.$$
A similar elementary argument shows that
$$ \sum_{n=1}^{\infty}\sigma(n)x^n=\sum_{n=1}^{\infty}\df{nx^n}{1-x^n}.$$
Hence, we see that $\phi(n)$ and $\psi(n)$ are analogues of $d(n)$ and $\sigma(n)$,
respectively. The identity \eqref{p332} was rediscovered by S.~Wigert
\cite[p.~9]{wig}, who actually gave a general formula for
$\zeta\left(\frac{1}{k}\right)$ for each positive even integer $k$. See
\cite[pp.~191--193]{ab4} for more details about \eqref{p332}.

We have included \eqref{p332} here to demonstrate the similarity in the
structure
of the series on its right-hand side with the series on the left-hand sides
of (\ref{tf}) and (\ref{tf-}). One might therefore ask if other arithmetic
functions, analogous to $\sigma_{s}(n)$ in (\ref{tf}) and $\psi(n)$ in
(\ref{p332}), produce interesting series identities like those in (\ref{tf}) and
(\ref{p332}).

The special case $s=\tf12$ of Theorem \ref{p3361} (see (\ref{corhalf})) is very interesting, since the two sums, one over $n<x$ and the other over $n\geq x$, coalesce into a single infinite sum.  If $K_{s}(x)$ denotes the modified Bessel function or the Macdonald function \cite[p.~78]{watsonbessel} of order $s$, and if we use the identities
 \cite[p.~80, equation (13)]{watsonbessel}
\begin{equation}\label{K1}
K_{1/2}(z)=\sqrt{\frac{\pi}{2 z}}e^{-z}
\end{equation}
and  \cite[p.~79, equation (8)]{watsonbessel}
\begin{equation}\label{K2}
K_{-s}(z)=K_{s}(z),
\end{equation} we see that this special case of
the series on the left-hand side of (\ref{tf}) can be realized as a special case of the series
\begin{equation}\label{minus}
2\sum_{n=1}^{\infty}\sigma_{-s}(n)n^{\frac{1}{2}s}\left(e^{\pi is/4}K_{s}\left(4\pi e^{\pi i/4}\sqrt{nx}\right)-e^{-\pi is/4}K_{s}\left(4\pi e^{-\pi i/4}\sqrt{nx}\right)\right)
\end{equation}
when $s=-\tf12$.
If we replace the minus sign by a plus sign between the Bessel functions in the summands of \eqref{minus}, then the resulting series
is a generalization of the series
\begin{equation}\label{minusspl}
\varphi(x):=2\sum_{n=1}^{\infty}d(n)\left(K_{0}\left(4\pi e^{i\pi/4}\sqrt{nx}\right)+K_{0}\left(4\pi e^{-i\pi/4}\sqrt{nx}\right)\right),
\end{equation}
extensively studied by N.~S.~Koshliakov (also spelled N.~S.~Koshlyakov) \cite{koshliakov, koshlond, koshli, kosh1937}.
 See also \cite{dixitmoll} for properties of this series and some integral transformations involving it.  The authors of this paper feel that Koshliakov's work has not earned the respect that it deserves in the mathematical community.  Some of his best work was achieved under extreme hardship, as these excerpts from a paper written for the centenary of his birth clearly demonstrate \cite{kosh}.

\begin{quote} The repressions of the thirties which affected scholars in Leningrad continued even after the outbreak of the Second World War. In the winter of 1942 at the height of the blockade of Leningrad, Koshlyakov along with a group \dots was arrested on fabricated \dots dossiers and condemned to 10 years correctional hard labour.  After the verdict he was exiled to one of the camps in the Urals. \dots On the grounds of complete exhaustion and complicated pellagra, Koshlyakov was classified in the camp as an invalid and was not sent to do any of the usual jobs. \dots very serious shortage of paper.  He was forced to carry out calculations on a piece of plywood, periodically scraping off what he had written with a piece of glass. Nevertheless, between 1943 and 1944 Koshlyakov wrote two long memoirs \dots \end{quote}

A natural question arises -- what may have motivated Ramanujan to consider the series
\begin{equation}\label{ramser}
\sum_{n=1}^{\infty}\frac{\sigma_s(n)}{\sqrt{n}}e^{-2\pi\sqrt{2nx}}
\sin\left(\frac{\pi}{4}+2\pi\sqrt{2nx}\right)?
\end{equation}
We provide a plausible answer to this question in Section \ref{sect5}, demonstrating that \eqref{ramser} is related to a generalization of the famous Vorono\"{\dotlessi} summation formula and also to the generalization of Koshliakov's series \eqref{minusspl} discussed above and its analogue.

This paper is organized as follows. The preliminary results are given in
Section \ref{sect2}. The proof of Theorem \ref{p3361} appears in Section
\ref{sect3}. We do not give a proof of Theorem \ref{p3361a} since it is
similar to that of Theorem \ref{p3361}. Lemma \ref{lemma1}, which is crucial
in the proof of Theorem \ref{p3361}, is derived in Section \ref{sect4}.  Special cases
of Theorems
\ref{p3361} and \ref{p3361a} are examined in Section \ref{bessie}, and
connections  with modified Bessel functions are made. In
Section \ref{sect5}, we relate \eqref{ramser} and Theorem \ref{p3361} to
Vorono\"{\dotlessi}'s formula for ${\sum_{n\leq x}}^{\prime}d(n)$ and work of
Hardy, Koshliakov, and A.~Oppenheim.  In the following section, we examine an analogue of the
Vorono\"{\dotlessi} summation formula with $d(n)$ replaced by $\sigma_s(n)$.  The work of Ramanujan \cite{lnb} and Wigert \cite{wig}, evinced in \eqref{p332}, is
extended in Section \ref{sect9}.  On page 335 in his lost notebook
\cite{lnb}, Ramanujan stated two beautiful identities connected,
respectively, with the circle and divisor problems.  We extend these
identities in Sections \ref{sect10}--\ref{sect13}.  The linear combination of Bessel functions appearing in our representation for $\sum_{n\leq x}\sigma_{-s}(n)$ was remarkably shown by Koshliakov \cite{kosh1938} to be the kernel of an integral transform for which the modified Bessel function $K_{\nu}(x)$ is self-reciprocal.  We study these transforms in Section \ref{ktmt}.

\section{Preliminary Results}\label{sect2}

We recall below the functional equation, the reflection formula (along with a
variant), and Legendre's duplication formula for the Gamma function
$\G(s)$. To that end,
{\allowdisplaybreaks\begin{align}
 \G(s+1)&=s\G(s),\label{feg}\\
 \G(s)\G(1-s)&=\frac{\pi}{\sin(\pi s)},\label{ref}\\
 \G\left(\frac{1}{2}+s\right)\G\left(\frac{1}{2}-s\right)&=\frac{\pi}{\cos(\pi s)},\label{ref2}\\
 \G(s)\G\left(s+\frac{1}{2}\right)&=\frac{\sqrt{\pi}}{2^{2s-1}}\G(2s).\label{dup}
\end{align}}%
 Throughout the paper, we shall need Stirling's formula for the Gamma function in a vertical strip \cite[p.~224]{cop}.  Thus, for $\sigma_1\leq\sigma\leq\sigma_2$, as $|t|\to\infty$,
 \begin{equation}\label{strivert}
 |\Gamma(s)|=\sqrt{2\pi}|t|^{\sigma-1/2}e^{-\pi|t|/2}\left(1+O\left(\df{1}{|t|}\right)\right).
 \end{equation}

The functional equation of the Riemann zeta function $\z(s)$ in its
asymmetric form is given by \cite[p.~24]{titch}
\begin{equation}\label{fe}
 \zeta(1-s)=2^{1-s}\pi^{-s}\cos\left(\tfrac{1}{2}\pi s\right)\G(s)\zeta(s),
\end{equation}
whereas its symmetric form takes the shape
\begin{equation}\label{fesym}
\pi^{-s/2} \Gamma \left( \tfrac{1}{2}s \right) \zeta(s) =
\pi^{-(1-s)/2} \Gamma \left( \tfrac{1}{2}(1-s) \right) \zeta(1-s).
\end{equation}
Since $\zeta(s)$ has a simple pole at $s=1$ with residue 1, i.e.,
\begin{equation}\label{zlim}
\lim_{s\to 1}(s-1)\zeta(s)=1,
\end{equation}
from \eqref{fe} and \eqref{zlim}, we find the value \cite[p.~19]{titch}
\begin{equation*}
\zeta(0)=-\tf12.
\end{equation*}
The Riemann $\xi$-function $\xi(s)$ is defined by
\begin{equation}\label{xis}
\xi(s) := \tfrac{1}{2}s(s-1)\pi^{-s/2} \Gamma \left( \tfrac{1}{2}s \right)\zeta(s),
\end{equation}
where $\Gamma(s)$ and $\zeta(s)$ are the Gamma and the Riemann zeta functions respectively. The Riemann $\Xi$-function is defined by
\begin{equation}\label{Xit}
\Xi(t) := \xi \left( \tfrac{1}{2} + i t \right).
\end{equation}
For $0<c=$ Re $w<\sigma$ \cite[p.~908, formula \textbf{8.380.3}; p.~909, formula \textbf{8.384.1}]{grn},
\begin{equation}\label{betamel}
\frac{1}{2\pi i}\int_{(c)}\frac{\Gamma(w)\Gamma(s-w)}{\Gamma(s)}x^{-w}\, dw=\frac{1}{(1+x)^{s}}.
\end{equation}
We note Parseval's identity \cite[pp.~82--83]{kp}
\begin{equation}\label{pf}
\int_{0}^{\infty}f(x)g(x)\, dx=\frac{1}{2\pi i}\int_{c-i\infty}^{c+i\infty}\mathfrak{F}(1-w)\mathfrak{G}(w)\, dw,
\end{equation}
where $\mathfrak{F}$ and $\mathfrak{G}$ are Mellin transforms of $f$ and $g$, and which is valid for Re $w=c$ lying in the common strip of analyticity of $\mathfrak{F}(1-w)$ and $\mathfrak{G}(w)$. A variant of the above identity \cite[p.~83, equation (3.1.13)]{kp} is
\begin{equation}\label{melconv}
\frac{1}{2\pi i}\int_{(k)}\mathfrak{F}(w)\mathfrak{G}(w)t^{-w}\, dw=\int_{0}^{\infty}f(x)g\left(\frac{t}{x}\right)\frac{dx}{x}.
\end{equation}

We close this section by recalling facts about Bessel functions. The ordinary Bessel function $J_{\nu}(z)$ of order $\nu$ is defined by \cite[p.~40]{watsonbessel}
\begin{align}\label{sumbesselj}
	J_{\nu}(z)=\sum_{m=0}^{\infty}\frac{(-1)^m(z/2)^{2m+\nu}}{m!\Gamma(m+1+\nu)}, \quad |z|<\infty.
	\end{align}
As customary, $Y_{\nu}(z)$ denotes the Bessel function of order $\nu$ of the second kind.  Its relation to $J_{\nu}(z)$ is given in the identity  \cite[p.~64]{watsonbessel}
\begin{align}
Y_{\nu}(z)=\frac{J_{\nu}(z)\cos(\pi \nu)-J_{-\nu}(z)}{\sin{\pi \nu}}\label{yj}.
\end{align}
 and, as above, $K_{\nu}(z)$ denotes the modified Bessel function of order $\nu$.
The asymptotic formulas of the Bessel functions $J_{\nu}(z), Y_{\nu}(z)$, and $K_{\nu}(z)$, as $|z|\to\infty$, are given  by 
\cite[p.~199 and p.~202]{watsonbessel}
\begin{align}
J_{\nu}(z)&\sim \left(\frac{2}{\pi z}\right)^{\tf12}\bigg(\cos w\sum_{n=0}^{\infty}\frac{(-1)^n(\nu, 2n)}{(2z)^{2n}} -\sin w\sum_{n=0}^{\infty}\frac{(-1)^n(\nu, 2n+1)}{(2z)^{2n+1}}\bigg),\label{asymbess}\\
Y_{\nu}(z)&\sim \left(\frac{2}{\pi z}\right)^{\tf12}\bigg(\sin w\sum_{n=0}^{\infty}\frac{(-1)^n(\nu, 2n)}{(2z)^{2n}}+\cos w\sum_{n=0}^{\infty}\frac{(-1)^n(\nu, 2n+1)}{(2z)^{2n+1}}\bigg),\label{asymbess1}\\
K_{\nu}(z)&\sim \left(\frac{\pi}{2 z}\right)^{\tf12}e^{-z}\sum_{n=0}^{\infty}\frac{(\nu, n)}{(2z)^{n}},\label{asymbess2}
\end{align}
for $|\arg z|<\pi$. Here $w=z-\tfrac{1}{2}\pi \nu-\tfrac{1}{4}\pi$ and
\begin{align*}
(\nu,n)=\frac{\Gamma(\nu+n+1/2)}{\Gamma(n+1)\Gamma(\nu-n+1/2)}.
\end{align*}
\section{Proof of Theorem \ref{p3361}}\label{sect3}
Let
\begin{equation}\label{ssx}
S(s,x):=\sum_{n=1}^{\iy} \frac{\s_s(n)}{\sqrt{n}}e^{-2\pi \sqrt{2nx}}\sin\left(\frac{\pi}{4}+2\pi \sqrt{2nx}\right).
\end{equation}
From \cite[p.~45, equations (5.19), (5.20)]{ob}, we have
\begin{align}
\frac{1}{2\pi i}\int_{(c)}\frac{\G (z)}{(a^2+b^2)^{z/2}}\sin \left(z\tan^{-1}\left(\frac{a}{b}\right)\right)x^{-z}\,dz & = e^{-bx}\sin (ax),\label{ob1}\\
\frac{1}{2\pi i}\int_{(c)}\frac{\G (z)}{(a^2+b^2)^{z/2}}\cos \left(z\tan^{-1}\left(\frac{a}{b}\right)\right)x^{-z}\,dz & = e^{-bx}\cos (ax), \label{ob2}
\end{align}
where $a,b>0$, and Re $z>0$ for (\ref{ob1}) and Re $z>-1$ for (\ref{ob2}). Let $a=b=2\pi\sqrt{2n}$, replace $x$ by $\sqrt{x}$, add (\ref{ob1}) and (\ref{ob2}), and then simplify, so that for $c=$ Re $z>0$,
\begin{equation}\label{uob}
\frac{1}{2\pi i}\int_{(c)}\frac{\G(z)}{(16\pi^2n)^{z/2}}\sin\left(\frac{\pi (z+1)}{4}\right)x^{-z/2}\, dz=e^{-2\pi\sqrt{2nx}}\sin\left(\frac{\pi}{4}+2\pi\sqrt{2nx}\right).
\end{equation}
Now replace $z$ by $z-1$ in (\ref{uob}), so that for $c=$ Re $z>1$,
\begin{equation}\label{uob1}
\frac{1}{2\pi i}\int_{(c)}\frac{\G(z-1)}{(4\pi)^{z-1}n^{z/2}}\sin\left(\frac{\pi z}{4}\right)x^{(1-z)/2}\, dz=\frac{e^{-2\pi\sqrt{2nx}}}{\sqrt{n}}\sin\left(\frac{\pi}{4}+2\pi\sqrt{2nx}\right).
\end{equation}
Now substitute (\ref{uob1}) in (\ref{ssx}) and interchange the order of summation and integration to obtain
\begin{align}\label{su1}
S(s,x)
&=\frac{2}{i}\int_{(c)}\left(\sum_{n=1}^{\iy}\frac{\s_{s}(n)}{n^{z/2}}\right)\frac{\G(z-1)}{(4\pi)^z}\sin\left(\frac{\pi z}{4}\right)x^{(1-z)/2}\, dz.
\end{align}
It is well-known \cite[p.~8, equation (1.3.1)]{titch} that for Re $\nu>1$ and Re $\nu>1+$ Re $\mu$,
\begin{align}\label{sz}
\zeta(\nu)\zeta(\nu-\mu)=\sum_{n=1}^{\infty}\frac{\sigma_{\mu}(n)}{n^{\nu}}.
\end{align}
Invoking (\ref{sz}) in (\ref{su1}), we see that
\begin{equation}\label{su2}
S(s,x)=\frac{2}{i}\int_{(c)}\Omega(z, s, x)\, dz,
\end{equation}
where $c>2\sigma+2$ (since $\sigma>0$) and
\begin{equation}\label{omzsx}
\Omega(z, s, x):=\zeta\left(\frac{z}{2}\right)\zeta\left(\frac{z}{2}-s\right)\frac{\G(z-1)}{(4\pi)^z}\sin\left(\frac{\pi z}{4}\right)x^{(1-z)/2}.
\end{equation}
We want to shift the line of integration from Re $z=c$ to Re $z=\l$, where
$-1<\l<0$. Note that the integrand in (\ref{su2}) has poles at $z=1$, $2$, and
$2s+2$. Consider the positively oriented rectangular contour formed by $[c-iT,
c+iT], [c+iT,\l+iT]$,  $[\l+iT,\l-iT]$, and $[\l-iT, c-iT]$, where $T$ is any
positive real number. By Cauchy's residue theorem,
\begin{align}\label{res}
\frac{1}{2\pi
  i}&\left\{\int_{c-iT}^{c+iT}+\int_{c+iT}^{\l+iT}
+\int_{\l+iT}^{\l-iT}+\int_{\l-iT}^{c-iT}\right\}\Omega(z,
s, x)\, dz\notag\\&=R_{1}(\Omega)+R_2(\Omega)+R_{2s+2}(\Omega),
\end{align}
where we recall that $R_a(f)$ denotes the residue of a function $f$ at the
pole $z=a$. The
residues are now calculated.
First,
\begin{align}
R_{2s+2}(\Omega)&=\lim_{z\to 2s+2}
(z-2s-2)\zeta\left(\frac{z}{2}-s\right)\zeta\left(\frac{z}{2}\right)\frac{\G(z-1)}{(4\pi)^z}\sin\left(\frac{\pi
    z}{4}\right)x^{(1-z)/2}\nonumber\\
&=2\zeta(s+1)\frac{\G(2s+1)}{(4\pi)^{2s+2}}\sin\left(\frac{\pi (2s+2)}{4}\right)x^{-s-\frac{1}{2}}\nonumber\\
&=-\frac{2^{-s-3}}{\pi^{s+\frac{3}{2}}}\frac{\G(s+\tf12)\cot\(\frac{1}{2}\pi s\)\zeta(-s)}{x^{s+\frac{1}{2}}},\label{res1}
\end{align}
where in the first step we used (\ref{zlim}), and in the last step we employed
(\ref{dup}) and (\ref{fe}) with $s$ replaced by $s+1$. Second and third,
{\allowdisplaybreaks\begin{align}
R_1(\Omega) &=\lim_{z\to 1} (z-1)\frac{\G(z-1)}{(4\pi)^z}\zeta\left(\frac{z}{2}\right)\zeta\left(\frac{z}{2}-s\right)\sin\left(\frac{\pi z}{4}\right)x^{(1-z)/2}\nonumber\\
&=\frac{1}{4\sqrt{2}\pi}\zeta\left(\frac{1}{2}\right)\zeta\left(\frac{1}{2}-s\right),\label{res2s2}\\
R_2(\Omega) &=\lim_{z\to 2} (z-2)\zeta\left(\frac{z}{2}\right)\zeta\left(\frac{z}{2}-s\right)\frac{\G(z-1)}{(4\pi)^z}\sin\left(\frac{\pi z}{4}\right)x^{(1-z)/2}\nonumber\\
&=\frac{\zeta(1-s)}{8\pi^2\sqrt{x}},\label{reshalf}
\end{align}}%
where, in (\ref{res2s2}) we utilized (\ref{feg}), and in (\ref{reshalf}) we used
(\ref{zlim}). Next, we show that as $T\to\infty$, the integrals along the
horizontal segments $[c+iT,\l+iT]$ and $[\l-iT,c-iT]$ tend to zero. To that
end, note that if $s=\sigma+it$, for $\sigma\geq -\delta$
\cite[p.~95, equation (5.1.1)]{titch},
\begin{equation}\label{zetab}
\zeta(s)=O(t^{\tf{3}{2}+\delta}).
\end{equation}
 Also, as  $|t|\to\infty$,
\begin{equation}\label{sinest}
\left|\sin\left(\frac{\pi s}{4}\right)\right|=\left|\frac{e^{\frac{1}{4}i\pi s}-e^{-\frac{1}{4}i\pi s}}{2i}\right|=O\left(e^{\frac{1}{4}\pi|t|}\right).
\end{equation}
Thus from (\ref{zetab}), (\ref{strivert}), and (\ref{sinest}), we see that the
integrals along the horizontal segments tend to zero as $T\to \iy$. Along
with (\ref{res}), this implies that
\begin{align}\label{cel}
&\int_{(c)}\Omega(z, s, x)\, dz=\int_{(\l)}\Omega(z, s, x)\, dz\\
&\quad+2\pi i\left(\frac{\zeta(1-s)}{8\pi^2\sqrt{x}}+\frac{1}{4\sqrt{2}\pi}\zeta\left(\frac{1}{2}\right)
\zeta\left(\frac{1}{2}-s\right)-\frac{2^{-s-3}}{\pi^{s+\frac{3}{2}}}\frac{\G(s+1/2)\cot\(\frac{\pi s}{2}\)\zeta(-s)}{x^{s+\frac{1}{2}}}\right).\notag
\end{align}

We now evaluate the integral along the vertical line $\Re z=\l$. Using (\ref{fe}) twice, we have
{\allowdisplaybreaks\begin{align}\label{ixn}
\int_{(\l)}\Omega(z, s, x)\, dz&=\int_{(\l)}2^{z-s}\pi^{z-s-2}\zeta\left(1-\frac{z}{2}\right)
\zeta\left(1-\frac{z}{2}+s\right)\G\left(1-\frac{z}{2}\right)\nonumber\\
&\quad\times\G\left(1-\frac{z}{2}+s\right)\frac{\G(z-1)}{(4\pi)^z}\sin^2\left(\frac{\pi z}{4}\right)\sin\left(\frac{\pi z}{4}-\frac{\pi s}{2}\right)x^{(1-z)/2}\, dz\nonumber\\
&=\frac{\sqrt{x}}{2^s\pi^{s+2}}\sum_{n=1}^{\iy}\frac{\s_{s}(n)}{n^{s+1}}
\int_{(\l)}\G(z-1)\G\left(1-\frac{z}{2}\right)\G\left(1-\frac{z}{2}+s\right)\nonumber\\
&\quad\quad\quad\quad\quad\quad\quad\quad\quad\quad\times\sin^2\left(\frac{\pi z}{4}\right)\sin\left(\frac{\pi z}{4}-\frac{\pi s}{2}\right)\left(\frac{4x}{n}\right)^{-z/2}\, dz\nonumber\\
&=\frac{i\sqrt{x}}{2^{s-1}\pi^{s+1}}\sum_{n=1}^{\iy}\frac{\s_{s}(n)}{n^{s+1}}I\left(s,\frac{x}{n}\right),
\end{align}}%
where in the penultimate step we used (\ref{sz}), since $\l<0$, and used the
notation for $I(s,x)$ in the lemma. From (\ref{su2}), (\ref{cel}), and
(\ref{ixn}), we deduce that
\begin{align*}
&S(s,x)=\frac{\sqrt{x}}{2^{s-2}\pi^{s+1}}\sum_{n=1}^{\iy}\frac{\s_{s}(n)}{n^{s+1}}I\left(s,\frac{x}{n}\right)\\
&+4\pi\left(\frac{\zeta(1-s)}{8\pi^2\sqrt{x}}+\frac{1}{4\sqrt{2}\pi}\zeta\left(\frac{1}{2}\right)
\zeta\left(\frac{1}{2}-s\right)-\frac{2^{-s-3}}{\pi^{s+\frac{3}{2}}}\frac{\G(s+1/2)\cot\(\frac12\pi s\)\zeta(-s)}{x^{s+\frac{1}{2}}}\right).
\notag\end{align*}
The final result follows by substituting the expressions for
$I\left(s,\frac{x}{n}\right)$ from the lemma, accordingly as $n<x$ or $n\geq x$. This completes the proof.

\section{Proof of Lemma \ref{lemma1}}\label{sect4}

Multiplying and dividing the integrand in (\ref{Is}) by $\Gamma\left(\frac12(3-z)\right)$ and then applying (\ref{dup}) and (\ref{ref}), we see that
\begin{equation}\label{Is1}
I(s,x)=-\frac{{\pi}^{\frac{3}{2}}}{4\pi i}\int_{(\l)}\frac{\sin^2\left(\frac14\pi z\right)\sin\left(\frac14\pi z-\frac12\pi s\right)}{\sin\pi z}\frac{\G\left(1-\frac12z+s\right)}{\G\left(1-\frac12z+\frac{1}{2}\right)}x^{-\frac12z}\,dz.
\end{equation}
We now apply (\ref{ref}), (\ref{ref2}), and (\ref{dup}) repeatedly to simplify the integrand in (\ref{Is1}). This gives
\begin{equation}\label{is}
 I(s,x)=\frac{1}{2\pi i}\frac{-\pi}{2^{2-s}}\int_{(\l)}F(z, s, x)\, dz,
\end{equation}
where
\begin{equation}\label{fz}
F(z, s, x):=\frac{\tan\(\frac14\pi z\)}{2^{z/2}(1-z)}\frac{\G\left(\frac{1}{2}-\frac14z
+\frac12s\right)\G\left(\frac12(1+z)\right)}{\G\left(\frac14z-\frac12s\right)}x^{-z/2}.
\end{equation}
The poles of $F(z,s,x)$ are at $z=1$,
 at $z=2(2k+1+s), k\in\mathbb{N}\cup\{0\}$,
at $z=2(2m+1), m\in\mathbb{Z}$, and
 at $z=-(2j+1)$, $j\in
\mathbb{N}\cup\{0\}$.\\

{\bf Case (i):} When $x>1$, we would like to move the  vertical line of
integration to $+\infty$. To that end, let $X>\l$ be such that
the line $(X-i\infty,X+i\infty)$ does not pass through any poles of
$F(z)$. Consider the positively oriented rectangular contour formed by $[\l-iT,
X-iT], [X-iT,X+iT]$, $[X+iT,\l+iT]$, and $[\l+iT, \l-iT]$, where $T$ is any
positive real number. Then by Cauchy's residue theorem,
\begin{align*}
&\frac{1}{2\pi i}\left\{\int_{\l-iT}^{X-iT}+\int_{X-iT}^{X+iT}+\int_{X+iT}^{\l+iT}+\int_{\l+iT}^{\l-iT}\right\}F(z, s, x)\, dz\nonumber\\
&=R_1(F)+\sum_{0\leq k< \frac{1}{2}\(\frac12X-1-\text{Re}\hspace{0.5mm} s\)}R_{2(2k+1+s)}(F)+\sum_{0\leq m<\frac{1}{2}\(\frac12X-1\)}R_{2(2m+1)}(F).
\end{align*}
We now calculate the residues.  First,
{\allowdisplaybreaks\begin{align}
R_1(F)&=\lim_{z\to 1}(z-1)\frac{\tan\(\frac14\pi z\)}{2^{\frac{z}{2}}(1-z)}\frac{\G\left(\frac{1}{2}-\frac14z+\frac12s\right)
\G\left(\frac12(1+z)\right)}{\G\left(\frac14z-\frac12s\right)}x^{-\frac12z}\nonumber\\
&=-\frac{1}{\sqrt{2x}}\frac{\G\(\frac{1}{4}+\frac12s\)}{\G\(\frac{1}{4}-\frac12s\)}.\label{lr1}
\end{align}
Second,
\begin{align}
&R_{2(2k+1+s)}(F)\notag\\
&=\lim_{z\to2(2k+1+s)}\{z-2(2k+1+s)\}\frac{\tan\(\frac14\pi z\)}{2^{\frac{z}{2}}(1-z)}\frac{\G\left(\frac{1}{2}-\frac14z+\frac12s\right)
\G\left(\frac12(1+z)\right)}{\G\left(\frac14z-\frac12s\right)}x^{-\frac{1}{2}z}\nonumber\\
&=\frac{4(-1)^{k+1}\cot\(\frac12\pi s\)}{k!2^{2k+2+s}}\frac{\G\(\frac{1}{2}+2k+s\)}{\G\(\frac12(2k+1)\)}x^{-(2k+1+s)}\nonumber\\
&=\frac{(-1)^{k+1}\cot\(\frac12\pi s\)}{(2k)!2^{s}\sqrt{\pi}}\G\(s+\frac{1}{2}\)\(s+\frac{1}{2}\)_{2k}x^{-(2k+1+s)},\label{lr2s}
\end{align}}%
where in the second calculation, we used the fact
$\lim_{z\to-n}(z+n)\G(z)=(-1)^{n}/n!,$ followed by (\ref{feg}) and
(\ref{dup}). Here $(y)_n$ denotes the rising factorial defined in
\eqref{rising}.
Note that we do not have a pole at $2(2k+1+s)$ when $s$ is an odd
integer. Also,
{\allowdisplaybreaks\begin{align}\label{xpp}
&R_{2(2m+1)}(F)\notag\\
&=\lim_{z\to 2(2m+1)}\{z-2(2m+1)\}\frac{\tan\(\frac14\pi z\)}{2^{z/2}(1-z)}\frac{\G\left(\frac{1}{2}-\frac14{z}+\frac12{s}\right)
\G\left(\frac12(1+z)\right)}{\G\left(\frac14{z}-\frac12{s}\right)}x^{-z/2}\nonumber\\
&=\frac{1}{\pi 2^{2m}}\frac{\G\(\frac12{s}-m\)\G\(2m+\frac{1}{2}\)}{\G\(m-\frac12{s}+\frac{1}{2}\)}x^{-(2m+1)}\nonumber\\
&=\frac{(-1)^{m}}{ 2^{s}\sin\left(\frac12{\pi s}\right)\G(1-s)}\frac{\(\frac{1}{2}\)_{2m}}{\(1-s\)_{2m}}x^{-(2m+1)},
\end{align}}%
where we used (\ref{ref}) and (\ref{dup}). As in the proof of Theorem \ref{p3361}, using Stirling's formula (\ref{strivert}), we see  that the
integrals along the horizontal segments tend to zero as $T\to\infty$. Thus,
\begin{align}\label{lfx}
\frac{1}{2\pi i}&\int_{(X)}F(z, s, x)\,dz=\frac{1}{2\pi i}\int_{(\l)}F(z, s,
x)\,dz\\
\quad&+R_1(F)+a\sum_{0\leq k\leq \frac{1}{2}\(\frac{1}{2}X-1-\text{Re}\hspace{0.5mm} s\)}R_{2(2k+1+s)}(F)+\sum_{0\leq m<\frac{1}{2}\(\frac{1}{2}X-1\)}R_{2(2m+1)}(F),\notag
\end{align}
where $a$ is defined in \eqref{a}. From (\ref{fz}), we see that
\begin{align}\label{fzp}
F(z+4, s, x)=-\frac{F(z, s, x)(z-1)\(\frac{1}{2}(z+1)\)\(\frac{1}{2}(z+3)\)}{4x^2(z+3)\(\frac{1}{4}z-\frac{1}{2}(s-1)\)\(\frac{1}{4}z-\frac{1}{2}s\)},
\end{align}
so that
\begin{align}\label{fzp1}
\left|F(z+4, s, x)\right|=\frac{\left|F(z, s, x)\right|}{x^2}\(1+O_s\(\frac{1}{|z|}\)\).
\end{align}
Applying (\ref{fzp}) and (\ref{fzp1}) repeatedly, we find that
\begin{equation*}
\left|F(z+4\ell, s, x)\right|=\frac{\left|F(z, s, x)\right|}{x^{2\ell}}\(1+O_s\(\frac{1}{|z|}\)\)^\ell,
\end{equation*}
for any positive integer $\ell$ and $\Re z>0$. Therefore,
{\allowdisplaybreaks\begin{align}\label{fzp2}
\left|\int_{(X+4\ell)}F(z, s, x)\,dz\right|\notag&=\left|\int_{(X)}\frac{F(z, s,
    x)}{x^{2\ell}}\(1+O_s\(\frac{1}{|z|}\)\)^\ell\,dz\right| \nonumber\\
&=\frac{1}{|x|^{2\ell}}\(1+O_s\(\frac{1}{|X|}\)\)^{\ell}\left|\int_{(X)}F(z,
  s, x)\,dz\right|.
\end{align}}%
Since $x>1$, we can choose $X$ large enough so that
\begin{equation*}
|x|>\sqrt{1+O_s\(\frac{1}{|X|}\)}.
\end{equation*}
With this choice of $X$ and the fact that $\left|\int_{(X)}F(z, s,
  x)\,dz\right|$ is finite, if we let $\ell\to \iy$, then, from (\ref{fzp2}),
we find that
\begin{equation}\label{infz}
\lim_{\ell\to\infty}\int_{X+4\ell-i\iy}^{X+4\ell+i\iy}F(z, s, x)\,dz=0.
\end{equation}
Hence, if we shift the vertical line $(X)$ through the sequence of vertical lines $\{(X+4\ell)\}_{\ell=1}^{\iy}$, then, from (\ref{lfx}) and (\ref{infz}), we arrive at
\begin{equation}\label{lfx1}
\frac{1}{2\pi i}\int_{(\l)}F(z, s, x)\,dz=-R_1(F)-a\sum_{k=0}^{\iy}R_{2(2k+1+s)}(F)-\sum_{m=0}^{\iy}R_{2(2m+1)}(F).
\end{equation}
Since $x>1$, from (\ref{lr2s}) and the binomial theorem, we deduce that
\begin{align}\label{resks}
a\sum_{k=0}^{\iy}&R_{2(2k+1+s)}(F)=-a\frac{x^{-s-1}\cot\(\tfrac{1}{2}\pi s\)}{2^{s}\sqrt{\pi}}\G\(s+\frac{1}{2}\)\sum_{k=0}^{\iy}\frac{\(s+\frac{1}{2}\)_{2k}}{(2k)!}
\left(\frac{i}{x}\right)^{2k}\\
&=-a\frac{x^{-s-1}\cot\(\tfrac{1}{2}\pi s\)}{2^{s+1}\sqrt{\pi}}\G\(s+\frac{1}{2}\)
\left\{\(1+\frac{i}{x}\)^{-\(s+\frac{1}{2}\)}+\(1-\frac{i}{x}\)^{-\(s+\frac{1}{2}\)}\right\}.\notag
\end{align}
From (\ref{xpp}),
\begin{align}\label{resk}
\sum_{m=0}^{\iy}R_{2(2m+1)}(F)&=\frac{1}{x 2^{s}\sin\left(\tfrac{1}{2}\pi s\right)\G(1-s)}\sum_{m=0}^{\iy}\frac{\(\frac{1}{2}\)_{2m}}{\(1-s\)_{2m}}\left(\frac{i}{x}\right)^{2m}\nonumber\\
&=\frac{1}{x 2^{s}\sin\left(\tfrac{1}{2}\pi s\right)\G(1-s)}\pFq{3}{2}{\frac{1}{4}, \frac{3}{4}, 1}{\frac{1}{2}(1-s), 1-\frac{1}{2}s}{-\frac{1}{x^2}}.
\end{align}
Therefore from (\ref{lr1}), (\ref{lfx1}), (\ref{resks}), and (\ref{resk}) we
deduce that
\begin{align*}
&\frac{1}{2\pi i}\int_{(\l)}F(z, s, x)\,dz\\
&=a\frac{x^{-s-1}\cot\(\tfrac{1}{2}\pi s\)}{2^{s+1}\sqrt{\pi}x}\G\(s+\frac{1}{2}\)
\left\{\(1+\frac{i}{x}\)^{-\(s+\frac{1}{2}\)}+\(1-\frac{i}{x}\)^{-\(s+\frac{1}{2}\)}\right\}\\
&\quad-\frac{1}{x 2^{s}\sin\left(\frac{1}{2}\pi s\right)\G(1-s)}\pFq{3}{2}{\frac{1}{4}, \frac{3}{4}, 1}{\frac{1}{2}(1-s), 1-\frac{1}{2}s}{-\frac{1}{x^2}}
+\frac{1}{\sqrt{2x}}\frac{\G\(\frac{1}{4}+\frac{1}{2}s\)}{\G\(\frac{1}{4}
-\frac{1}{2}s\)}.
\end{align*}
Using (\ref{is}), we complete the proof of (\ref{ig1}).\\


{\bf Case (ii):} Now consider $x\leq 1$. We would like to shift the line of
integration all the way to $-\infty$.  Let $X<\l$ be such that the line
$[X-i\infty,X+i\infty]$ again does not pass through any pole of
$F(z)$. Consider a positively oriented rectangular contour formed by $[\l-iT,
\l+iT], [\l+iT,X+iT]$, $[X+iT,X-iT]$, and $[X-iT, \l-iT]$, where $T$ is any
positive real number. Again, by Cauchy's residue theorem,
\begin{align*}
&\frac{1}{2\pi
  i}\left[\int_{\l-iT}^{\l+iT}+\int_{\l+iT}^{X+iT}+\int_{X+iT}^{X-iT}+\int_{X-iT}^{\l-iT}\right]F(z,s, x)\, dz\nonumber\\
&=\sum_{0\leq k< \frac{1}{2}\(-\frac{1}{2}X-1\)}R_{-2(2k+1)}(F)+\sum_{0\leq j<
  \frac{1}{2}\(-X-1\)}R_{-(2j+1)}(F).
\end{align*}
The residues in this case are calculated below. First,
{\allowdisplaybreaks\begin{align}
&R_{-2(2k+1)}(F)\notag\\
&=\lim_{z\to -2(2k+1)}\left\{(z+2(2k+1))\tan\left(\frac{\pi
      z}{4}\right)\right\}\frac{1}{2^{z/2}}(1-z)
\notag\\&\quad\times\frac{\G\left(\frac{1}{2}-\frac{1}{4}z+\frac{1}{2}s\right)
\G\left(\frac{1}{2}(1+z)\right)}{\G\left(\frac{1}{4}z-\frac{1}{2}s\right)}x^{-\frac{1}{2}z}\nonumber\\
&=\frac{(-1)^{k+1}}{\sqrt{\pi}2^{s}\sin\left(\frac{1}{2}\pi s\right)}\frac{\G\(\frac{1}{2}-2(k+1)\)}{\G\(1-2(k+1)-s\)}x^{(2k+1)}\nonumber\\
&=\frac{(-1)^{k+1}}{\sqrt{\pi}2^{s}\sin\left(\frac{1}{2}\pi s\right)}\frac{\G\(\frac{1}{2}+2(k+1)\)\G\(\frac{1}{2}-2(k+1)\)}{\G\(2(k+1)+s\)\G\(1-2(k+1)-s\)}
x^{(2k+1)}\frac{\G\(2(k+1)+s\)}{\G\(\frac{1}{2}+2(k+1)\)}\nonumber\\
&=\frac{(-1)^{k+1}\cos\(\frac{1}{2}\pi s\)\G(s)}{ 2^{s-1}\pi x}\frac{\(s\)_{2(k+1)}}{\(\frac{1}{2}\)_{2(k+1)}}x^{2(k+1)},\label{lro3}
\end{align}}
where in the last step we used (\ref{ref}) and (\ref{ref2}). Second,
\begin{align}
&R_{-(2j+1)}(F)\notag\\
&=\lim_{z\to -(2j+1)}(z+(2j+1))\frac{\tan\left(\frac{1}{4}\pi z\right)}{2^{z/2}(1-z)}\frac{\G\left(\frac{1}{2}-\frac{1}{4}z+\frac{1}{2}s\right)
\G\left(\frac{1}{2}(1+z)\right)}{\G\left(\frac{1}{4}z-\frac{1}{2}s\right)}x^{-z/2}\nonumber\\
&=-\frac{2^{j+\frac{1}{2}}}{(j+1)!}\frac{\G\left(\frac{5}{4}+\frac{1}{2}j
+\frac{1}{2}s\right)}{\G\left(-\frac{1}{4}-\frac{1}{2}j-\frac{1}{2}s\right)}x^{j+\frac{1}{2}}\nonumber\\
&=\frac{1}{\sqrt{\pi}2^s(j+1)!}\G\(s+\frac{3}{2}\)\(s+\frac{3}{2}\)_j
\sin\(\pi\(\frac{j}{2}+\frac{1}{4}+\frac{s}{2}\)\)x^{j+\frac{1}{2}},\label{lro4}
\end{align}
where we multiplied the numerator and denominator by
$\G\left(\frac{3}{4}+\frac{1}{2}j+\frac{1}{2}s\right)$ in the last step and then used
(\ref{ref}) and (\ref{dup}).  Thus, by \eqref{lro3} and \eqref{lro4},
\begin{multline}\label{lfx2}
\frac{1}{2\pi i}\int_{(\l)}F(z, s, x)\,dz=\frac{1}{2\pi i}\int_{(X)}F(z, s,
x)\,dz\\+\sum_{0\leq k\leq
  \frac{1}{2}\(-\frac{1}{2}X-1\)}R_{-2(2k+1)}(F)+\sum_{0\leq k\leq
  \frac{1}{2}\(-X-1\)}R_{-(2k+1)}(F).
\end{multline}
From (\ref{fzp}),
\begin{equation*}
\left|F(z-4, s, x)\right|=|x|^2\(1+O_s\(\frac{1}{|z|}\)\)\left|F(z, s,
  x)\right|,
\end{equation*}
and hence
\begin{equation}\label{fzp-1}
\left|F(z-4\ell, s,
  x)\right|=|x|^{2\ell}\(1+O_s\(\frac{1}{|z|}\)\)^{\ell}\left|F(z, s,
  x)\right|,
\end{equation}
for any positive integer $\ell$ and $\Re z<0$. Therefore, from (\ref{fzp-1}),
\begin{align*}
\left|\int_{(X-4k)}F(z, s, x)\,dz\right|&=\left|\int_{(X)}F(z, s, x)x^{2\ell}\(1+O_s\(\frac{1}{|z|}\)\)^{\ell}\,dz\right|\nonumber\\
&=|x|^{2\ell}\(1+O_s\(\frac{1}{|X|}\)\)^{\ell}\left|\int_{(X)}F(z, s,
  x)\,dz\right|.
\end{align*}
Since $x<1$, we can find an $X<\l$, with $|X|$ sufficiently large, so that
\begin{equation}\label{xz2}
x^2\(1+O_s\(\frac{1}{|X|}\)\)<1.
\end{equation}
With the given choice of $X$ and the fact that $\left|\int_{(X)}F(z, s, x)\,dz\right|$ is finite, upon letting $\ell\to\infty$ and using (\ref{xz2}), we find that
\begin{equation}\label{infz2}
\lim_{\ell\to\infty}\int_{X-4\ell-i\iy}^{X-4\ell+i\iy}F(z, s, x)\,dz=0.
\end{equation}
Thus if we shift the line of integration $(X)$ to $-\infty$ through the sequence of vertical lines $\{(X-4k)\}_{k=1}^{\iy}$, from (\ref{lfx2}) and (\ref{infz2}), we arrive at
 \begin{equation}\label{lfx3}
\frac{1}{2\pi i}\int_{(\l)}F(z, s, x)\,dz=\sum_{k=0}^{\iy}R_{-2(2k+1)}(F)+\sum_{j=0}^{\iy}R_{-(2j+1)}(F).
\end{equation}
Since $x\leq 1$, using (\ref{lro3}), we find that
\begin{align}\label{reskn}
\sum_{k=0}^{\iy}R_{-2(2k+1)}(F)&=\frac{\G(s)\cos\(\tfrac{1}{2}\pi s\)}{ 2^{s-1}\pi x}\sum_{k=0}^{\iy}\frac{\(s\)_{2(k+1)}}{\(1/2\)_{2(k+1)}}(ix)^{2(k+1)}\nonumber\\
&=\frac{\G(s)\cos\(\tfrac{1}{2}\pi s\)}{ 2^{s-1}\pi x}\left\{\pFq{3}{2}{\frac{s}{2},\frac{1+s}{2}, 1}{\frac{1}{4}, \frac{3}{4}}{-x^2}-1\right\},
\end{align}
where for $x=1$, we additionally require that $\sigma<\frac{1}{2}$ in order to
ensure the conditional convergence of
the ${}_3F_{2}$  \cite[p.~62]{aar}.

From (\ref{lro4}),
{\allowdisplaybreaks\begin{align}\label{resn}
\sum_{j=0}^{\iy}R_{-(2j+1)}(F)&=\frac{\G\(s+\frac{3}{2}\)}{2^s\sqrt{\pi}}\sum_{j=0}^{\iy}
\sin\(\pi\(\frac{j}{2}+\frac{1}{4}+\frac{s}{2}\)\)\frac{\(s+\frac{3}{2}\)_j}{(j+1)!}x^{(j+\frac{1}{2})}\\
&=\frac{\G\(s+\frac{3}{2}\)}{2^s\sqrt{\pi}}\left\{\sqrt{x}\sin\(\pi\(\frac{1}{4}+\frac{s}{2}\)\)
\sum_{j=0}^{\iy}\frac{\(s+\frac{3}{2}\)_{2j}}{(2j+1)!}(ix)^{2j}\right.\nonumber\\
&\hspace{2.2cm}+\left.x^{3/2}\cos\(\pi\(\frac{1}{4}+\frac{s}{2}\)\)\sum_{j=0}^{\iy}
\frac{\(s+\frac{3}{2}\)_{2j+1}}{(2j+2)!}(ix)^{2j}\right\}\nonumber\\
&=\frac{i\G\(s+\frac{1}{2}\)}{2^{s+1}\sqrt{\pi x}}\left[\sin\(\frac{\pi}{4}+\frac{\pi s}{2}\)\left\{(1+ix)^{-(s+\frac{1}{2})}-(1-ix)^{-(s+\frac{1}{2})}\right\}\right.\nonumber\\
&\hspace{1cm}+\left.i\cos\(\frac{\pi}{4}+\frac{\pi s}{2}\)\left\{(1+ix)^{-(s+\frac{1}{2})}+(1-ix)^{-(s+\frac{1}{2})}-2\right\}\right],\nonumber
\end{align}}%
where in the last step we used the identities
\begin{align*}
\sum_{j=0}^{\infty}\frac{(a)_{2j}x^{2j}}{(2j+1)!}&=\frac{(1+x)^{1-a}-(1-x)^{1-a}}{2x(1-a)},\\
\sum_{j=0}^{\infty}\frac{(a)_{2j+1}x^{2j+1}}{(2j+2)!}
&=\frac{-\left((1+x)^{1-a}+(1-x)^{1-a}-2\right)}{2x(1-a)},
\end{align*}
valid for $|x|<1$. Combining (\ref{lfx3}), (\ref{reskn}), and (\ref{resn}),
we deduce that
{\allowdisplaybreaks\begin{align*}
\frac{1}{2\pi i}&\int_{(\l)}F(z, s, x)\,dz=\frac{\cos\(\frac{\pi s}{2}\)\G(s)}{ 2^{s-1}\pi x}\left\{\pFq{3}{2}{\frac{s}{2},\frac{1+s}{2}, 1}{\frac{1}{4}, \frac{3}{4}}{-x^2}-1\right\}\nonumber\\
&\quad+\frac{i\G\(s+\frac{1}{2}\)}{2^{s+1}\sqrt{\pi x}}\left[\sin\(\frac{\pi}{4}+\frac{\pi s}{2}\)\left\{(1+ix)^{-(s+\frac{1}{2})}-(1-ix)^{-(s+\frac{1}{2})}\right\}\right.\nonumber\\
&\hspace{1cm}+\left.i\cos\(\frac{\pi}{4}+\frac{\pi s}{2}\)\left\{(1+ix)^{-(s+\frac{1}{2})}+(1-ix)^{-(s+\frac{1}{2})}-2\right\}\right].
\end{align*}}%
Using (\ref{is}), we see that this proves (\ref{ig2}). This completes the
proof of Lemma \ref{lemma1}.

 If $x$ is an integer in Theorem \ref{p3361}, then the term corresponding to
 it on the right-hand side of (\ref{tf}) can be included either in the first
 (finite) sum or in the second (infinite) sum. This follows from the fact
 that the integral $I(s, x)$ in the lemma above is continuous at
 $x=1$. Though elementary, we warn readers that it is fairly tedious to
 verify this by showing that the right-hand sides of (\ref{ig1}) and
 (\ref{ig2}) are equal when $x=1$, and requires the following transformation
 between ${}_3F_{2}$ hypergeometric functions, which is actually the special
 case $q=2$ of a general connection formula between ${}_pF_{q}$'s
 \cite[p.~410, formula \textbf{16.8.8}]{nist}.
\begin{theorem}
For $a_1-a_2, a_1-a_3, a_2-a_3\notin\mathbb{Z}$, and $z\notin (0, 1)$,
\begin{align}\label{3f2trans}
&{}_3F_{2}(a_1,a_2,a_3;b_1,b_2;z)
=\frac{\G(b_1)\G(b_2)}{\G(a_1)\G(a_2)\G(a_3)}
\bigg(\frac{\G(a_1)\G(a_2-a_1)\G(a_3-a_1)}{\G(b_1-a_1)\G(b_2-a_1)}(-z)^{-a_1}\notag\\
&\quad\times{}_3F_{2}\left(a_1, a_1-b_1+1, a_1-b_2+1; a_1-a_2+1, a_1-a_3+1; \frac{1}{z}\right)\nonumber\\
&\quad+\frac{\G(a_2)\G(a_1-a_2)\G(a_3-a_2)}{\G(b_1-a_2)\G(b_2-a_2)}(-z)^{-a_2}\notag\\
&\quad\times{}_3F_{2}\left(a_2, a_2-b_1+1, a_2-b_2+1; -a_1+a_2+1, a_2-a_3+1; \frac{1}{z}\right)\nonumber\\
&\quad+\frac{\G(a_3)\G(a_1-a_3)\G(a_2-a_3)}{\G(b_1-a_3)\G(b_2-a_3)}(-z)^{-a_3}\notag\\
&\quad\times{}_3F_{2}\left(a_3, a_3-b_1+1, a_3-b_2+1; -a_1+a_3+1, -a_2+a_3+1; \frac{1}{z}\right)\bigg).
\end{align}
\end{theorem}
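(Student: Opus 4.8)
The plan is to derive \eqref{3f2trans} from the Mellin--Barnes representation of the hypergeometric function by a contour-shifting and residue-summation argument of the type already used in Sections \ref{sect3} and \ref{sect4}. For $|\arg(-z)|<\pi$ one has
\begin{equation*}
\pFq{3}{2}{a_1,a_2,a_3}{b_1,b_2}{z}=\frac{\G(b_1)\G(b_2)}{\G(a_1)\G(a_2)\G(a_3)}\,\frac{1}{2\pi i}\int_{\CCal}\frac{\G(a_1+w)\G(a_2+w)\G(a_3+w)\G(-w)}{\G(b_1+w)\G(b_2+w)}\,(-z)^{w}\,dw,
\end{equation*}
where $(-z)^{w}$ takes its principal value and $\CCal$ is a vertical contour, suitably indented, separating the poles of $\G(-w)$ at $w=0,1,2,\dots$ from those of $\G(a_1+w)\G(a_2+w)\G(a_3+w)$ at $w=-a_j-n$, $j\in\{1,2,3\}$, $n\geq0$. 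Stirling's formula \eqref{strivert} shows the integrand is $O(e^{-\pi|\Im w|})$ on vertical lines, so the integral converges absolutely; closing $\CCal$ to the right and summing the residues of $\G(-w)$ recovers the defining series \eqref{hyper} and simultaneously pins down the normalising constant. This is the routine direction, included only as a check.

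The substance is the shift of $\CCal$ to the left, through vertical lines $\Re w=-N$ chosen to miss all poles, with $N\to\infty$. The hypotheses $a_1-a_2,\,a_1-a_3,\,a_2-a_3\notin\mathbb{Z}$ guarantee that the three families of poles $\{-a_j-n\}$ are pairwise disjoint and each simple; one may further assume, by continuity in the parameters, that no $a_j$ is a nonpositive integer, so all the residues are computed cleanly. At $w=-a_1-n$ one has $\Res\,\G(a_1+w)=(-1)^n/n!$; rewriting $\G(a_2-a_1-n)$, $\G(a_3-a_1-n)$, $\G(a_1+n)$, $\G(b_1-a_1-n)^{-1}$, $\G(b_2-a_1-n)^{-1}$ through repeated use of \eqref{feg} as Gamma functions of the base arguments times Pochhammer symbols, and noting that the accumulated sign and the factor $(-z)^{-n}$ combine into $z^{-n}$, the sum over $n$ of these residues collapses to
\begin{equation*}
\frac{\G(b_1)\G(b_2)}{\G(a_1)\G(a_2)\G(a_3)}\,\frac{\G(a_1)\G(a_2-a_1)\G(a_3-a_1)}{\G(b_1-a_1)\G(b_2-a_1)}\,(-z)^{-a_1}\,\pFq{3}{2}{a_1,\,a_1-b_1+1,\,a_1-b_2+1}{a_1-a_2+1,\,a_1-a_3+1}{1/z},
\end{equation*}
which is exactly the first term on the right of \eqref{3f2trans}. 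The families at $w=-a_2-n$ and $w=-a_3-n$ are handled by the same computation with the labels permuted, yielding the second and third terms.

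It remains to show that the integral over $\Re w=-N$ vanishes as $N\to\infty$. There the factor $(-z)^{w}$ contributes $|{-z}|^{-N}$, while \eqref{strivert} bounds the Gamma-quotient on that line by a constant times a power of $N$ times $e^{-\pi|\Im w|}$; hence, once $|z|>1$, the contribution of $\Re w=-N$ tends to $0$, which legitimises the left-shift. This proves \eqref{3f2trans} for $|z|>1$ with $-z$ off the negative real axis, after which the identity extends to the region stated in the theorem by analytic continuation, both sides being analytic there. The one genuine obstacle is precisely this last estimate: one must be sure that the polynomial-in-$N$ growth of the Gamma-quotient produced by Stirling's formula is dominated by $|{-z}|^{-N}$, and it is here that a restriction on $z$ of the kind assumed in the hypothesis has to be used. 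The residue bookkeeping of the previous paragraph, though long, is entirely mechanical once the reflection reductions are set up; a more elementary but messier alternative is to iterate the classical Gauss connection formula for ${}_2F_1$, but the Mellin--Barnes route is shorter and consonant with the rest of the paper.
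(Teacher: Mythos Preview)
The paper does not prove this theorem at all: it is quoted as the special case $q=2$ of a general connection formula for ${}_qF_p$, with a citation to \cite[p.~410, formula \textbf{16.8.8}]{nist}, and no argument is given beyond that reference. So there is no ``paper's own proof'' to compare against.

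Your Mellin--Barnes argument is the standard derivation of this connection formula and is correct in outline. The representation you start from, the left-shift of the contour, the simplicity of the three pole families under the hypothesis $a_i-a_j\notin\mathbb{Z}$, the residue summation producing the three ${}_3F_2(\,\cdot\,;1/z)$ terms, and the decay of the shifted integral for $|z|>1$ followed by analytic continuation to $z\notin(0,1)$ are all exactly how this identity is established in the standard references. The only remark I would add is that the phrase ``polynomial-in-$N$ growth of the Gamma-quotient'' undersells the bookkeeping: one must apply the reflection formula to each of $\G(a_j+w)$ and $\G(b_k+w)$ on $\Re w=-N$ before invoking Stirling, and then track both the $N$- and $t$-dependence simultaneously to see that the integrand is $O(N^{c}\,|z|^{-N}e^{-\delta|t|})$ for some $c$ and some $\delta>0$ when $|\arg(-z)|<\pi$. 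This is routine but worth stating, since your paragraph leaves the $t$-integration implicit. With that caveat, your proposal is sound and supplies what the paper merely cites.
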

\section{Coalescence}\label{bessie}

In the proofs of Theorems \ref{p3361} and \ref{p3361a} using contour
integration, the convergence of the series of residues of the corresponding
functions necessitates the consideration of two sums -- one over $n<x$ and
the other over $n\geq x$. However, for some special values of $s$, namely
$s=2m+\frac{1}{2}$, where $m$ is a non-negative integer, the two sums over
$n<x$ and $n\geq x$ coalesce into a single infinite sum. This section
contains corollaries of these theorems when $s$ takes these special values.

\begin{theorem}\label{coal1}
Let $x\notin\mathbb{Z}$. Then, for any non-negative integer $m$,
\begin{align}\label{coal1id}
&\sum_{n=1}^{\infty}\frac{\sigma_{2m+\frac{1}{2}}(n)}{\sqrt{n}}e^{-2\pi\sqrt{2nx}}
\sin\left(\frac{\pi}{4}+2\pi\sqrt{2nx}\right)\nonumber\\
&=\frac{\zeta\left(\frac{1}{2}-2m\right)}{2\pi\sqrt{x}}-\frac{(2m)!\zeta\left(-\frac{1}{2}-2m\right)}{\sqrt{2}(2\pi x)^{2m+1}}+\frac{1}{\sqrt{2}}\zeta\left(\frac{1}{2}\right)\zeta(-2m)\nonumber\\
&\quad+\frac{\sqrt{x}}{\pi^{2m+\frac{1}{2}}}\sum_{n=1}^{\infty}\frac{\sigma_{2m+\frac{1}{2}}(n)}{n^{2m
+\frac{3}{2}}}\bigg[-\frac{(2m)!}{\sqrt{\pi}}\left(\frac{n}{2x}\right)^{2m+\frac{3}{2}}\nonumber\\
&\quad\times\left\{\left(1+\frac{in}{x}\right)^{-(2m+1)}+\left(1-\frac{in}{x}\right)^{-(2m+1)}\right\}\nonumber\\
&\quad+\frac{(-1)^mn}{2^{2m}\pi x}\Gamma\left(2m+\frac{1}{2}\right)\pFq{3}{2}{\frac{1}{4}, \frac{3}{4}, 1}{\frac{1}{4}-m, \frac{3}{4}-m}{-\frac{n^2}{x^2}}\bigg].
\end{align}
\end{theorem}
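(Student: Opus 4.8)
Equation \eqref{coal1id} is the case $s=2m+\tfrac12$ of Theorem \ref{p3361}. This value satisfies $\sigma=2m+\tfrac12>0$, is not an odd integer (so $a=1$ in \eqref{a}), and, since $x\notin\mathbb{Z}$, the auxiliary clause ``$\sigma<\tfrac12$ if $x$ is an integer'' is vacuous and \eqref{tf} carries no boundary term; hence \eqref{tf} applies verbatim. I would first dispatch the routine simplifications, which rest on $\cot(\tfrac{\pi s}{2})=1$, $\cos(\tfrac{\pi s}{2})=\sin(\tfrac{\pi s}{2})=\tfrac{(-1)^m}{\sqrt2}$, $\sin(\tfrac{\pi}{4}+\tfrac{\pi s}{2})=(-1)^m$, $\cos(\tfrac{\pi}{4}+\tfrac{\pi s}{2})=0$, $\Gamma(s+\tfrac12)=(2m)!$, $\Gamma(s)=\Gamma(2m+\tfrac12)$, $\Gamma(1-s)=\Gamma(\tfrac12-2m)=\pi/\Gamma(2m+\tfrac12)$ (by \eqref{ref2}) and the functional equation \eqref{fe}. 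A short computation then shows that the three ``constant'' terms of \eqref{tf} collapse to the three constants displayed in \eqref{coal1id}; that in the $n<x$ summand of \eqref{tf} the first term disappears, since $\Gamma(\tfrac14-\tfrac s2)=\Gamma(-m)=\infty$, while the other two become exactly the bracketed expression of \eqref{coal1id}; and that in the $n\geq x$ summand the vanishing of $\cos(\tfrac{\pi}{4}+\tfrac{\pi s}{2})$ annihilates the second group of terms.

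The one substantial point is the \emph{coalescence}: one must show that, for $s=2m+\tfrac12$, the $n\geq x$ summand of \eqref{tf} is also equal to the bracketed expression of \eqref{coal1id} (whose ${}_3F_2$ is then evaluated at an argument of modulus $\geq 1$). By the computation in Section \ref{sect3}, each summand of \eqref{tf} equals $\tfrac{1}{2^{s-2}\pi}I\!\left(s,\tfrac xn\right)$, the $n<x$ summand obtained from Lemma \ref{lemma1}(i) and the $n\geq x$ summand from Lemma \ref{lemma1}(ii); and a direct substitution shows that $\tfrac{1}{2^{s-2}\pi}$ times the right side of \eqref{ig1}, evaluated at $\tfrac xn$ with $s=2m+\tfrac12$, is precisely the bracketed expression of \eqref{coal1id} for \emph{every} $n$. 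Thus the coalescence reduces to the single assertion that, for $s=2m+\tfrac12$, the right-hand sides of \eqref{ig1} and \eqref{ig2} represent one and the same function of $x$ on $(0,\infty)$.

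I would prove this by analytic continuation. For fixed $s$ with $\sigma>0$, the contour integral $I(s,x)$ in \eqref{Is} is holomorphic in $x$ throughout the open right half-plane: on the line $\Re z=\lambda$ the integrand, apart from the factor $(4x)^{-z/2}$, is $O\!\left(|\Im z|^{\sigma-1/2}e^{-\pi|\Im z|/4}\right)$ by Stirling's formula \eqref{strivert}, and this dominates $\bigl|(4x)^{-z/2}\bigr|$ as soon as $|\arg x|<\tfrac{\pi}{2}$, so the integral converges locally uniformly there. Next, when $s=2m+\tfrac12$ the hypergeometric functions ${}_3F_2(\tfrac14,\tfrac34,1;\tfrac14-m,\tfrac34-m;w)$ and ${}_3F_2(m+\tfrac14,m+\tfrac34,1;\tfrac14,\tfrac34;w)$ are in fact \emph{rational} in $w$: the factor $(1)_k/k!=1$ drops out, each of $(\tfrac14)_k/(\tfrac14-m)_k$, $(\tfrac34)_k/(\tfrac34-m)_k$ (respectively $(m+\tfrac14)_k/(\tfrac14)_k$, $(m+\tfrac34)_k/(\tfrac34)_k$) is a polynomial in $k$ of degree $m$, so the series is of the form $\sum_k P(k)w^k=P(\theta)\bigl[(1-w)^{-1}\bigr]$ with $\deg P=2m$ and $\theta=w\,\tfrac{d}{dw}$, hence equals $N(w)(1-w)^{-(2m+1)}$ for some polynomial $N$ with $\deg N\leq 2m$. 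It follows that the right sides of \eqref{ig1} and \eqref{ig2}, including the terms $(1\pm i/x)^{-(2m+1)}$ and $(1\pm ix)^{-(2m+1)}$, extend to functions of $x$ holomorphic on the open right half-plane (their only singularities are poles at $x=\pm i$ and, for \eqref{ig1}, a branch point of $x^{-1/2}$ at the origin, none of which lies in the open right half-plane). Since \eqref{ig1} agrees with $I(s,x)$ on $(1,\infty)$ and \eqref{ig2} agrees with it on $(0,1)$, the identity theorem forces all three to agree on the open right half-plane, and hence on $(0,\infty)$; this establishes the coalescence, and it simultaneously shows that the ${}_3F_2$ in \eqref{coal1id} is unambiguous for $n>x$, being the displayed rational function. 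The main obstacle is precisely this step --- recognizing that the two formulas of Lemma \ref{lemma1}(i),(ii) are, at $s=2m+\tfrac12$, a single analytic function --- which hinges on the degeneration of the relevant ${}_3F_2$'s to rational functions; a more laborious alternative is to equate \eqref{ig1} and \eqref{ig2} head-on via the connection formula \eqref{3f2trans} together with ${}_2F_1(a,a+\tfrac12;\tfrac12;w^2)=\tfrac12\bigl[(1+w)^{-2a}+(1-w)^{-2a}\bigr]$ and ${}_2F_1(a,a+\tfrac12;\tfrac32;w^2)=\frac{(1-w)^{1-2a}-(1+w)^{1-2a}}{2(1-2a)w}$. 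Everything else is routine bookkeeping with \eqref{feg}--\eqref{dup} and \eqref{fe}.
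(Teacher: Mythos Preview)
Your argument is correct, and the coalescence step takes a genuinely different route from the paper's. The paper substitutes $s=2m+\tfrac12$ into the $n<x$ and $n\geq x$ summands separately, reduces each to the form displayed in \eqref{nlxsum} and \eqref{ngxsum}, and then proves directly the hypergeometric identity
\[
\pFq{3}{2}{\tfrac14,\tfrac34,1}{\tfrac14-m,\tfrac34-m}{-\tfrac{n^2}{x^2}}
+\pFq{3}{2}{\tfrac14+m,\tfrac34+m,1}{\tfrac14,\tfrac34}{-\tfrac{x^2}{n^2}}=1
\]
by applying the connection formula \eqref{3f2trans} and manipulating the resulting series (equations \eqref{hysimp1}--\eqref{hysimp2}); this is precisely the ``laborious alternative'' you mention. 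Your approach instead observes that for $s=2m+\tfrac12$ the two ${}_3F_2$'s degenerate to rational functions of their argument (the ratio of Pochhammer symbols being a polynomial of degree $m$ in the summation index), so both right-hand sides of \eqref{ig1} and \eqref{ig2} extend holomorphically to $\Re x>0$, where they must agree with the holomorphic $I(s,x)$ and hence with each other by the identity theorem.

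Your route is cleaner and more conceptual: it explains \emph{why} the coalescence happens (the analytic obstruction that forces the split in Lemma \ref{lemma1} disappears when the series terminate in rational functions), and it dispenses with the explicit connection formula. The paper's route, by contrast, isolates \eqref{hyident} as a standalone hypergeometric identity valid for all $x,n>0$, which has some independent interest and is reused to interpret the ${}_3F_2$ in \eqref{coal1id} when $n>x$ (the remark following Theorem 5.5). Both arguments ultimately rely on the same structural fact---that the relevant ${}_3F_2$'s are rational at $s=2m+\tfrac12$---but yours reaches the conclusion with less computation.
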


\begin{proof}
Let $s=2m+\frac{1}{2}$, $m\geq0$, in Theorem \ref{p3361}. To examine the summands in the sum over $n<x$, observe first that $1/\Gamma\left(\frac{1}{4}-\frac{1}{2}s\right)=0$. Since $a=1$, the second expression in the summands is given by
{\allowdisplaybreaks\begin{align}\label{goofy}
&-\frac{\sigma_{2m+\frac{1}{2}}(n)}{n^{2m+\frac{3}{2}}}\frac{a\G\(s+\frac{1}{2}\)\cot\(\frac{\pi s}{2}\)}{2^{s+1}\sqrt{\pi}}\left(\frac{n}{x}\right)^{s+1}
\left\{\(1+\frac{in}{x}\)^{-\(s+\frac{1}{2}\)}+\(1-\frac{in}{x}\)^{-\(s+\frac{1}{2}\)}\right\}\nonumber\\
&=-\frac{\sigma_{2m+\frac{1}{2}}(n)}{n^{2m+\frac{3}{2}}}\frac{(2m)!}
{\sqrt{\pi}}\left(\frac{n}{2x}\right)^{2m+\frac{3}{2}}
\frac{\left(1-\frac{in}{x}\right)^{2m+1}+\left(1+\frac{in}{x}\right)^{2m+1}}
{\left(1+n^2/x^2\right)^{2m+1}}\nonumber\\
&=-\frac{\sigma_{2m+\frac{1}{2}}(n)}{n^{2m+\frac{3}{2}}}\frac{(2m)!}
{\sqrt{\pi}}\frac{n^{2m+\frac{3}{2}}x^{2m+\frac{1}{2}}}{2^{2m+\frac{1}{2}}(x^2+n^2)^{2m+1}}
\sum_{k=0}^{m}(-1)^k\binom{2m+1}{2k}\left(\frac{n}{x}\right)^{2k}.
\end{align}}%
The third expressions in the summands become
\begin{align}\label{3ex}
&\frac{\sigma_{2m+\frac{1}{2}}(n)}{n^{2m+\frac{3}{2}}}\frac{n2^{-s}}{x\sin\left(\frac{1}{2}\pi s\right)\G(1-s)}\pFq{3}{2}{\frac{1}{4}, \frac{3}{4}, 1}{\frac{1-s}{2}, 1-\frac{s}{2}}{-\frac{n^2}{x^2}}\nonumber\\
&=\frac{\sigma_{2m+\frac{1}{2}}(n)}{n^{2m+\frac{3}{2}}}\frac{(-1)^mn2^{-2m}}
{x\Gamma\left(\frac{1}{2}-2m\right)}\pFq{3}{2}{\frac{1}{4}, \frac{3}{4}, 1}{\frac{1}{4}-m, \frac{3}{4}-m}{-\frac{n^2}{x^2}}.
\end{align}
Hence, by \eqref{goofy} and \eqref{3ex}, the summands over $n<x$ are given by
\begin{align}\label{nlxsum}
\frac{\sigma_{2m+\frac{1}{2}}(n)}{n^{2m+\frac{3}{2}}}\bigg\{&-\frac{(2m)!}{\sqrt{\pi}}
\frac{n^{2m+\frac{3}{2}}x^{2m+\frac{1}{2}}}{2^{2m+\frac{1}{2}}(x^2+n^2)^{2m+1}}
\sum_{k=0}^{m}(-1)^k\binom{2m+1}{2k}
\left(\frac{n}{x}\right)^{2k}\nonumber\\
&+\frac{(-1)^mn2^{-2m}}{x\Gamma\left(\frac{1}{2}-2m\right)}\pFq{3}{2}{\frac{1}{4}, \frac{3}{4}, 1}{\frac{1}{4}-m, \frac{3}{4}-m}{-\frac{n^2}{x^2}}\bigg\}.
\end{align}
For the summands over $n>x$, observe that the third expression is equal to zero, since $\cos\left(\frac{1}{4}\pi+\frac{1}{2}\pi\left(2m+\frac{1}{2}\right)\right)=0$. The first expression becomes
\begin{align}\label{2nd1}
&-\frac{\sigma_{2m+\frac{1}{2}}(n)}{n^{2m+\frac{3}{2}}}\frac{n\G(s)\cos\(\frac{1}{2}\pi s\)}{ 2^{s-1}\pi x}\left\{\pFq{3}{2}{\frac{s}{2},\frac{1+s}{2}, 1}{\frac{1}{4}, \frac{3}{4}}{-\frac{x^2}{n^2}}-1\right\}\nonumber\\
&=\frac{\sigma_{2m+\frac{1}{2}}(n)}{n^{2m+\frac{3}{2}}}\frac{(-1)^{m+1}n2^{-2m}}
{x\G\left(\frac{1}{2}-2m\right)}\left\{\pFq{3}{2}{\frac{1}{4}+m,\frac{3}{4}+m, 1}{\frac{1}{4}, \frac{3}{4}}{-\frac{x^2}{n^2}}-1\right\},
\end{align}
where we used \eqref{ref2} with $s=2m$. The second expressions of the summands become
\begin{equation}\label{7.6}
\frac{\sigma_{2m+\frac{1}{2}}(n)}{n^{2m+\frac{3}{2}}}\frac{i(-1)^{m+1}\sqrt{n}(2m)!}{2^{2m+\frac{3}{2}}\sqrt{\pi x}}\frac{\left(1-ix/n\right)^{2m+1}-\left(1+ix/n\right)^{2m+1}}{\left(1+x^2/n^2\right)^{2m+1}}.
\end{equation}
Note that
\begin{equation*}
\left(1-\frac{ix}{n}\right)^{2m+1}-\left(1+\frac{ix}{n}\right)^{2m+1}=
-\sum_{k=0}^{2m+1}\binom{2m+1}{k}\left(\frac{ix}{n}\right)^{k}
\left(1+(-1)^{2m+1-k}\right).
\end{equation*}
These summands are non-zero only when $k$ is odd, and so if we let
$2j=2m+1-k$, we see that
\begin{equation*}
\left(1-\frac{ix}{n}\right)^{2m+1}-\left(1+\frac{ix}{n}\right)^{2m+1}=2i(-1)^{m+1}\left(\frac{x}{n}\right)^{2m+1}
\sum_{j=0}^{m}(-1)^{j}\binom{2m+1}{2j}\left(\frac{n}{x}\right)^{2j}.
\end{equation*}
Thus, after simplification, the second expressions \eqref{7.6} equal
\begin{equation}\label{2nd2}
-\frac{\sigma_{2m+\frac{1}{2}}(n)}{n^{2m+\frac{3}{2}}}\frac{(2m)!}{\sqrt{\pi}}
\frac{n^{2m+\frac{3}{2}}x^{2m+\frac{1}{2}}}{2^{2m+\frac{1}{2}}(x^2+n^2)^{2m+1}}
\sum_{j=0}^{m}(-1)^j\binom{2m+1}{2j}\left(\frac{n}{x}\right)^{2j}.
\end{equation}
Thus, by \eqref{2nd1} and \eqref{2nd2}, the summands over $n>x$ equal
\begin{align}\label{ngxsum}
&\frac{\sigma_{2m+\frac{1}{2}}(n)}{n^{2m+\frac{3}{2}}}\bigg[-\frac{(2m)!}{\sqrt{\pi}}
\frac{n^{2m+\frac{3}{2}}x^{2m+\frac{1}{2}}}{2^{2m+\frac{1}{2}}(x^2+n^2)^{2m+1}}
\sum_{j=0}^{m}(-1)^j\binom{2m+1}{2j}\left(\frac{n}{x}\right)^{2j}\nonumber\\
&\quad+\frac{(-1)^{m+1}n2^{-2m}}{x\G\left(\frac{1}{2}-2m\right)}\left\{\pFq{3}{2}{\frac{1}{4}+m,\frac{3}{4}+m, 1}{\frac{1}{4}, \frac{3}{4}}{-\frac{x^2}{n^2}}-1\right\}\bigg].
\end{align}
From \eqref{nlxsum} and \eqref{ngxsum}, it is clear that we want to prove that
\begin{equation}\label{hyident}
\pFq{3}{2}{\frac{1}{4}, \frac{3}{4}, 1}{\frac{1}{4}-m, \frac{3}{4}-m}{-\frac{n^2}{x^2}}+\pFq{3}{2}{\frac{1}{4}+m, \frac{3}{4}+m, 1}{\frac{1}{4}, \frac{3}{4}}{-\frac{x^2}{n^2}}=1,
\end{equation}
for $x>0$ and $n\in\mathbb{N}$.
To that end, use \eqref{3f2trans} with $a_1=\frac{1}{4}, a_2=\frac{3}{4}, a_3=1, b_1=\frac{1}{4}-m, b_2=\frac{3}{4}-m$, and $z=-n^2/x^2$. This gives, for all $x, n>0$,
\begin{equation}\label{hysimp1}
\pFq{3}{2}{\frac{1}{4}, \frac{3}{4}, 1}{\frac{1}{4}-m, \frac{3}{4}-m}{-\frac{n^2}{x^2}}=\frac{(4m+3)(4m+1)x^2}{3n^2}\pFq{3}{2}{\frac{7}{4}+m, \frac{5}{4}+m, 1}{\frac{7}{4}, \frac{5}{4}}{-\frac{x^2}{n^2}}.
\end{equation}
Now for $n>x$, we can use the series representation \eqref{hyper} for  ${}_3F_{2}$ on the right-hand side to obtain
\begin{align}\label{hysimp2}
&\pFq{3}{2}{\frac{7}{4}+m, \frac{5}{4}+m, 1}{\frac{7}{4}, \frac{5}{4}}{-\frac{x^2}{n^2}}=1+\sum_{k=1}^{\infty}
\frac{\left(\frac{7}{4}+m\right)_{k}\left(\frac{5}{4}+m\right)_{k}(1)_k}{\left(\frac{7}{4}\right)_{k}
\left(\frac{5}{4}\right)_{k}k!}\left(-\frac{x^2}{n^2}\right)^k\nonumber\\
&=1-\frac{3n^2}{(4m+3)(4m+1)x^2}\sum_{k=1}^{\infty}
\frac{\left(\frac{3}{4}+m\right)_{k+1}
\left(\frac{1}{4}+m\right)_{k+1}(1)_{k+1}}{\left(\frac{3}{4}\right)_{k+1}
\left(\frac{1}{4}\right)_{k+1}(k+1)!}\left(-\frac{x^2}{n^2}\right)^{k+1}
\nonumber\\
&=\frac{-3n^2}{(4m+3)(4m+1)x^2}\left\{\pFq{3}{2}{\frac{1}{4}+m,
    \frac{3}{4}+m, 1}{\frac{1}{4}, \frac{3}{4}}{-\frac{x^2}{n^2}}-1\right\}.
\end{align}
Combining \eqref{hysimp1} and \eqref{hysimp2}, we obtain \eqref{hyident} for
$n>x$.

Now set $a_1=\frac{1}{4}+m$, $a_2=\frac{3}{4}+m$, $a_3=1$, $b_1=\frac{1}{4}$,
$b_2=\frac{3}{4}$, and $z=-x^2/n^2$ in \eqref{3f2trans} and use, for $n<x$,
the series representation for the ${}_3F_{2}$ on the right-hand side of the
resulting identity to arrive at \eqref{hyident} for $n<x$. This shows that
\eqref{hyident} holds for all $x>0$ and $n\in\mathbb{N}$.

 Hence, the summands in the sums over $n<x$ and $n>x$ in Theorem \ref{p3361}
 are the same when $s=2m+\frac{1}{2}$. Now slightly rewrite \eqref{nlxsum}
 to finish the proof of Theorem \ref{coal1}.
\end{proof}

Similarly, when $s=2m+\frac{1}{2}$ in Theorem \ref{p3361a}, we obtain the
following.
\begin{theorem}\label{coal2}
For any non-negative integer $m$,
{\allowdisplaybreaks\begin{align}\label{coal2id}
&\sum_{n=1}^{\infty}\frac{\sigma_{2m+\frac{1}{2}}(n)}{\sqrt{n}}e^{-2\pi\sqrt{2nx}}
\sin\left(\frac{\pi}{4}-2\pi\sqrt{2nx}\right)\nonumber\\
&=\left(2\pi\sqrt{x}+\frac{(2m)!}{\sqrt{2}(2\pi x)^{2m+1}}\right)\zeta\left(-\frac{1}{2}-2m\right)+\frac{1}{\sqrt{2}}\zeta\left(\frac{1}{2}\right)\zeta(-2m)\nonumber\\
&\quad+\frac{\sqrt{\pi}(2m)!}{(2\pi)^{2m+\frac{3}{2}}}
\sum_{n=1}^{\infty}\sigma_{2m+\frac{1}{2}}(n)\left\{(x-in)^{-(2m+1)}+(x+in)^{-(2m+1)}\right\}.
\end{align}}
\end{theorem}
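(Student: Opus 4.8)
The proof will follow the pattern of Theorem~\ref{coal1}: specialize Theorem~\ref{p3361a} to $s=2m+\tfrac12$ and show that the $n<x$ and $n\geq x$ sums in \eqref{tf-} coalesce into a single infinite sum. First I would record the values of the elementary factors at $s=2m+\tfrac12$. Since $\tfrac{\pi}{4}-\tfrac{\pi s}{2}=-m\pi$, we have $\sin\bigl(\tfrac{\pi}{4}-\tfrac{\pi s}{2}\bigr)=0$, so the lone ``$-\sin$'' term in the $n<x$ summand of \eqref{tf-} disappears; since $\tfrac{\pi}{4}+\tfrac{\pi s}{2}=\tfrac{\pi}{2}+m\pi$, we have $\cos\bigl(\tfrac{\pi}{4}+\tfrac{\pi s}{2}\bigr)=0$ and $\sin\bigl(\tfrac{\pi}{4}+\tfrac{\pi s}{2}\bigr)=(-1)^m$, so only the term carrying $i\sin\bigl(\tfrac{\pi}{4}+\tfrac{\pi s}{2}\bigr)=i(-1)^m$ survives in the $n\geq x$ summand. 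Also $\Gamma\bigl(s+\tfrac12\bigr)=\Gamma(2m+1)=(2m)!$, and the exponent $-(s+\tfrac12)=-(2m+1)$ is now a negative integer.

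Next I would dispose of the ``constant'' block $4\pi(\cdots)$ of \eqref{tf-}. Using $\zeta(-s)=\zeta\bigl(-\tfrac12-2m\bigr)$, $\zeta\bigl(\tfrac12-s\bigr)=\zeta(-2m)$, $\Gamma\bigl(s+\tfrac12\bigr)=(2m)!$, and simplifying the powers $2^{s+3}=2^{2m+7/2}$, $\pi^{s+3/2}=\pi^{2m+2}$, $x^{s+1/2}=x^{2m+1}$, this block reduces term-by-term to
\[
\Bigl(2\pi\sqrt{x}+\frac{(2m)!}{\sqrt{2}\,(2\pi x)^{2m+1}}\Bigr)\zeta\bigl(-\tfrac12-2m\bigr)+\frac{1}{\sqrt{2}}\,\zeta\bigl(\tfrac12\bigr)\zeta(-2m),
\]
which is exactly the non-summation part of \eqref{coal2id}.

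The heart of the matter is the coalescence. For $n<x$, cancelling $n^{s+1/2}$ against the denominator and writing $\bigl(1\pm\tfrac{in}{x}\bigr)^{-(2m+1)}=x^{2m+1}(x\pm in)^{-(2m+1)}$ collapses the bracket to $\tfrac12\,\sigma_{s}(n)\bigl\{(x-in)^{-(2m+1)}+(x+in)^{-(2m+1)}\bigr\}$. For $n\geq x$, the one surviving term of the bracket is $\tfrac12\,i(-1)^m\,\tfrac{\sigma_s(n)}{n^{2m+1}}\bigl\{\bigl(1+\tfrac{ix}{n}\bigr)^{-(2m+1)}-\bigl(1-\tfrac{ix}{n}\bigr)^{-(2m+1)}\bigr\}$; rewriting $\bigl(1\pm\tfrac{ix}{n}\bigr)^{-(2m+1)}=n^{2m+1}(n\pm ix)^{-(2m+1)}$ and then invoking the identities $n+ix=i(x-in)$, $n-ix=-i(x+in)$ together with $i^{2m+1}=i(-1)^m$ and $(-i)^{2m+1}=-i(-1)^m$ yields $(n+ix)^{-(2m+1)}=-i(-1)^m(x-in)^{-(2m+1)}$ and $(n-ix)^{-(2m+1)}=i(-1)^m(x+in)^{-(2m+1)}$, so this bracket too equals $\tfrac12\,\sigma_{s}(n)\bigl\{(x-in)^{-(2m+1)}+(x+in)^{-(2m+1)}\bigr\}$. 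Hence the $n<x$ and $n\geq x$ sums merge into a single sum over all $n\geq1$; multiplying by the prefactor $\Gamma\bigl(s+\tfrac12\bigr)/(2^{s}\pi^{s+1/2})=(2m)!/(2^{2m}\sqrt{2}\,\pi^{2m+1})$ turns $\tfrac12\sum_{n\geq1}\sigma_s(n)\{\cdots\}$ into $\bigl(\sqrt{\pi}\,(2m)!/(2\pi)^{2m+3/2}\bigr)\sum_{n\geq1}\sigma_s(n)\{\cdots\}$, the final line of \eqref{coal2id}.

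The only genuine obstacle is keeping track of the powers of $i$ and $(-1)^m$ in the $n\geq x$ reduction; in contrast to the proof of Theorem~\ref{coal1}, no ${}_3F_2$ connection formula such as \eqref{3f2trans} is needed, because the series on the right-hand side of \eqref{tf-} are already elementary. I would also note that the restriction $x\notin\mathbb{Z}$ present in Theorem~\ref{coal1} is superfluous here: Theorem~\ref{p3361a} holds for every $x\in\mathbb{R}^{+}$, and since the $n<x$ and $n\geq x$ summands coincide, assigning $n=x$ to either sum is harmless. (Convergence of the resulting series is automatic from \eqref{tf-}, but can also be seen directly since, the exponent $2m+1$ being odd, $(x-in)^{-(2m+1)}+(x+in)^{-(2m+1)}=O(n^{-(2m+2)})$ as $n\to\infty$.)
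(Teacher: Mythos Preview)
Your proposal is correct and follows exactly the approach the paper indicates: the paper merely writes ``Similarly, when $s=2m+\tfrac12$ in Theorem~\ref{p3361a}, we obtain the following,'' and you have carried out precisely that specialization, including the trigonometric vanishings, the constant-block simplification, the $i^{\pm(2m+1)}$ bookkeeping that makes the $n<x$ and $n\ge x$ summands coincide, and the observation (also made in the paper) that the $x\notin\mathbb{Z}$ restriction is unnecessary because no ${}_3F_2$'s appear in Theorem~\ref{p3361a}.
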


Notice the resemblance of the series on the right-hand side of
\eqref{coal2id} with the divergent series in Ramanujan's incorrect ``identity''
\eqref{qc}. Since the series on the right side above has a $+$ sign between
the two binomial expressions in the summands, the order of $n$ in the summand
is at least $-\tf32+\epsilon$, for each $\epsilon>0$,  unlike $-\tf12+\epsilon$
in Ramanujan's series, because of which the latter is divergent.

When $m\geq 1$, we can omit the term
$\frac{1}{\sqrt{2}}\zeta\left(\frac{1}{2}\right)\zeta(-2m)$ from both
(\ref{coal1id}) and (\ref{coal2id}) since $\zeta(-2m)=0$.

 In Theorem \ref{coal1}, we assume $x\notin\mathbb{Z}$, whereas there is no
 such restriction in Theorem \ref{coal2}, because  Theorems 1.1 and
 \ref{coal1} involve ${}_3F_{2}$'s that are conditionally convergent, with
 the restriction  $\sigma<\tf12$ when $x$ is an integer. Thus, the condition
 $\sigma\geq \tf12$ implies that $x\notin\mathbb{Z}$, which is the case when
 $s=2m+\frac{1}{2}$ for $m\geq 0$. However,  ${}_3F_{2}$'s do not appear in
 Theorem 1.3, and so the restriction on $x$ (other than the requirement $x>0$) is not needed.

Adding (\ref{coal1id}) and (\ref{coal2id}) and simplifying gives the next theorem.
\begin{theorem}
For $x\notin\mathbb{Z}$,
\begin{align}\label{coal12plus}
&\sum_{n=1}^{\infty}\frac{\sigma_{2m+\frac{1}{2}}(n)}{\sqrt{n}}e^{-2\pi\sqrt{2nx}}\cos\left(2\pi\sqrt{2nx}\right)\nonumber\\
&=\frac{1}{2\pi\sqrt{2x}}\zeta\left(\frac{1}{2}-2m\right)+\pi\sqrt{2x}\zeta\left(\frac{-1}{2}-2m\right)
+\zeta\left(\frac{1}{2}\right)\zeta(-2m)\nonumber\\
&\quad+\frac{(-1)^m}{\pi\sqrt{x}(2\pi)^{2m+\frac{1}{2}}}\Gamma\left(2m+\frac{1}{2}\right)
\sum_{n=1}^{\infty}\frac{\sigma_{2m+\frac{1}{2}}(n)}{n^{2m+\frac{1}{2}}}\pFq{3}{2}{\frac{1}{4}, \frac{3}{4}, 1}{\frac{1}{4}-m, \frac{3}{4}-m}{-\frac{n^2}{x^2}}.
\end{align}
\end{theorem}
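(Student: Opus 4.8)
The plan is to obtain \eqref{coal12plus} by simply adding the two identities \eqref{coal1id} and \eqref{coal2id} of Theorems \ref{coal1} and \ref{coal2} and then dividing by $\sqrt{2}$. On the left-hand side, the two series combine through the elementary identity $\sin\left(\tf{\pi}{4}+\theta\right)+\sin\left(\tf{\pi}{4}-\theta\right)=2\sin\tf{\pi}{4}\cos\theta=\sqrt{2}\cos\theta$, applied with $\theta=2\pi\sqrt{2nx}$; after division by $\sqrt{2}$ this yields exactly $\sum_{n=1}^{\infty}\tf{\sigma_{2m+1/2}(n)}{\sqrt{n}}e^{-2\pi\sqrt{2nx}}\cos(2\pi\sqrt{2nx})$, the left side of \eqref{coal12plus}. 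Since both right-hand sides converge (the conditionally convergent ${}_3F_2$-series of \eqref{coal1id} being controlled exactly as in Theorem \ref{coal1}, which is where the hypothesis $x\notin\mathbb{Z}$ is used), the termwise addition is legitimate.

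On the right-hand side I would first dispose of the ``elementary'' terms, i.e., those not carrying an infinite sum. The term $-\tf{(2m)!\,\zeta(-\frac12-2m)}{\sqrt{2}\,(2\pi x)^{2m+1}}$ of \eqref{coal1id} cancels the term $+\tf{(2m)!}{\sqrt{2}\,(2\pi x)^{2m+1}}\zeta(-\frac12-2m)$ of \eqref{coal2id}; the two copies of $\tf1{\sqrt2}\zeta(\tf12)\zeta(-2m)$ add to $\sqrt2\,\zeta(\tf12)\zeta(-2m)$; and $\tf{\zeta(\frac12-2m)}{2\pi\sqrt{x}}$ together with $2\pi\sqrt{x}\,\zeta(-\frac12-2m)$ survive unchanged. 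Dividing by $\sqrt2$ turns these into $\tf1{2\pi\sqrt{2x}}\zeta(\tf12-2m)$, $\pi\sqrt{2x}\,\zeta(-\tf12-2m)$, and $\zeta(\tf12)\zeta(-2m)$, which are precisely the three elementary terms of \eqref{coal12plus}.

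The one step that requires a genuine (though short) computation is the cancellation of the binomial-type series. In \eqref{coal1id} the relevant summand is the one proportional to $\left(\tf{n}{2x}\right)^{2m+3/2}\bigl\{(1+\tf{in}{x})^{-(2m+1)}+(1-\tf{in}{x})^{-(2m+1)}\bigr\}$; rewriting $1\pm\tf{in}{x}=\tf{x\pm in}{x}$ turns it into a constant multiple of $\sigma_{2m+1/2}(n)\bigl\{(x+in)^{-(2m+1)}+(x-in)^{-(2m+1)}\bigr\}$, and upon collecting the powers of $2$, $\pi$, $x$, and $n$ one checks that this constant is exactly the negative of the coefficient $\tf{\sqrt{\pi}\,(2m)!}{(2\pi)^{2m+3/2}}$ standing in front of the corresponding series in \eqref{coal2id}; hence these two series cancel. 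What remains is the ${}_3F_2$-series of \eqref{coal1id}, whose coefficient $\tf{\sqrt{x}}{\pi^{2m+1/2}}\cdot\tf{(-1)^m n}{2^{2m}\pi x}\,\G(2m+\tf12)\cdot\tf1{n^{2m+3/2}}$ simplifies to $\tf{(-1)^m\G(2m+1/2)}{2^{2m}\pi^{2m+3/2}\sqrt{x}\,n^{2m+1/2}}$; dividing by $\sqrt2$ and using $\pi(2\pi)^{2m+1/2}=2^{2m}\sqrt2\,\pi^{2m+3/2}$ gives the coefficient $\tf{(-1)^m}{\pi\sqrt{x}\,(2\pi)^{2m+1/2}}\G(2m+\tf12)$ of the ${}_3F_2$-series in \eqref{coal12plus}. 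I do not foresee any real obstacle; the only care needed is the bookkeeping of these constant factors, and the restriction $x\notin\mathbb{Z}$ is inherited directly from Theorem \ref{coal1}.
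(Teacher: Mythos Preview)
Your approach is exactly the paper's: the authors state that the result follows by ``adding \eqref{coal1id} and \eqref{coal2id} and simplifying,'' and you carry out precisely that addition, including the trigonometric identity $\sin(\tfrac{\pi}{4}+\theta)+\sin(\tfrac{\pi}{4}-\theta)=\sqrt{2}\cos\theta$, the cancellation of the $(2m)!$ terms and the binomial-type series, and the correct bookkeeping of the ${}_3F_2$ coefficient. The computations you outline are all correct.
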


Subtracting (\ref{coal1id}) from (\ref{coal2id}) and simplifying leads to the next result.
\begin{theorem}
For $x\notin\mathbb{Z}$,
\begin{align}\label{coal12minus}
&\sum_{n=1}^{\infty}\frac{\sigma_{2m+\frac{1}{2}}(n)}{\sqrt{n}}e^{-2\pi\sqrt{2nx}}\sin\left(2\pi\sqrt{2nx}\right)\nonumber\\
&=\frac{\zeta\left(\frac{1}{2}-2m\right)}{2\pi\sqrt{2x}}-\sqrt{2}\left(\pi\sqrt{x}+\frac{(2m)!}{\sqrt{2}(2\pi x)^{2m+1}}\right)\zeta\left(\frac{-1}{2}-2m\right)\nonumber\\
&\quad+\frac{\sqrt{x}}{\sqrt{2}\pi^{2m+\frac{1}{2}}}
\sum_{n=1}^{\infty}\frac{\sigma_{2m+\frac{1}{2}}(n)}{n^{2m+\frac{3}{2}}}
\bigg[-2\frac{(2m)!}{\sqrt{\pi}}\left(\frac{n}{2x}\right)^{2m+\frac{3}{2}}\notag\\
&\quad\times
\left\{\left(1+\frac{in}{x}\right)^{-(2m+1)}+\left(1-\frac{in}{x}\right)^{-(2m+1)}\right\}\nonumber\\
&\quad+\frac{(-1)^mn}{2^{2m}\pi x}\Gamma\left(2m+\frac{1}{2}\right)\pFq{3}{2}{\frac{1}{4}, \frac{3}{4}, 1}{\frac{1}{4}-m, \frac{3}{4}-m}{-\frac{n^2}{x^2}}\bigg].
\end{align}
\end{theorem}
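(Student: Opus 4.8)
\section*{Proof proposal}

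The plan is to obtain \eqref{coal12minus} as an immediate consequence of Theorems \ref{coal1} and \ref{coal2}, with no further contour integration required. The starting point is the sum-to-product identity
\begin{equation*}
\sin\left(\frac{\pi}{4}-2\pi\sqrt{2nx}\right)-\sin\left(\frac{\pi}{4}+2\pi\sqrt{2nx}\right)=-\sqrt{2}\,\sin\left(2\pi\sqrt{2nx}\right),
\end{equation*}
which holds for every $n\geq1$. Since for $x\notin\mathbb{Z}$ the series on the left-hand sides of \eqref{coal1id} and \eqref{coal2id} both converge, we may subtract \eqref{coal1id} from \eqref{coal2id} term by term; by the displayed identity the resulting left-hand side is $-\sqrt{2}$ times the series on the left of \eqref{coal12minus}. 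Dividing by $-\sqrt{2}$, it then remains to check that $-\tfrac{1}{\sqrt{2}}$ times the difference of the right-hand sides of \eqref{coal2id} and \eqref{coal1id} equals the right-hand side of \eqref{coal12minus}; this is a routine simplification, which I outline next.

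First I would treat the terms not involving the sum over $n$. In the difference the two copies of $\tfrac{1}{\sqrt{2}}\zeta\left(\tfrac12\right)\zeta(-2m)$ cancel, leaving (up to the overall sign) $\frac{\zeta\left(\tfrac12-2m\right)}{2\pi\sqrt{x}}-2\pi\sqrt{x}\,\zeta\left(-\tfrac12-2m\right)-\frac{2(2m)!}{\sqrt{2}(2\pi x)^{2m+1}}\zeta\left(-\tfrac12-2m\right)$; multiplying by $\pm\tfrac{1}{\sqrt{2}}$ and regrouping the last two terms reproduces the first line of the right-hand side of \eqref{coal12minus}.

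For the series part, the point is to put the summand of \eqref{coal2id} into the shape of the summands of \eqref{coal1id}. Factoring $(x\pm in)^{-(2m+1)}=x^{-(2m+1)}\bigl(1\pm\tfrac{in}{x}\bigr)^{-(2m+1)}$ expresses the contribution of \eqref{coal2id} as $\sigma_{2m+\frac{1}{2}}(n)\,n^{-(2m+\frac{3}{2})}$ times a constant times the same combination $\bigl(1+\tfrac{in}{x}\bigr)^{-(2m+1)}+\bigl(1-\tfrac{in}{x}\bigr)^{-(2m+1)}$ that occurs in \eqref{coal1id}; the ${}_3F_{2}$ term is present only in \eqref{coal1id}. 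Collecting the coefficient of that combination and carefully tracking the powers of $2$, $\pi$, and $x$, one checks that the contributions of \eqref{coal1id} and \eqref{coal2id} reinforce one another under the subtraction rather than canceling, doubling the coefficient, so that after the factor $-\tfrac{1}{\sqrt{2}}$ it becomes the factor $-2\tfrac{(2m)!}{\sqrt{\pi}}\bigl(\tfrac{n}{2x}\bigr)^{2m+\frac{3}{2}}$ inside the bracket of \eqref{coal12minus}, while the ${}_3F_{2}$ term, coming only from \eqref{coal1id}, is simply carried along and multiplied by $\tfrac{1}{\sqrt{2}}$, producing the last term of \eqref{coal12minus}.

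There is no genuine obstacle: the whole argument is bookkeeping. The only step requiring care is the normalization of the binomial factors to a common form together with the attendant arithmetic of the constants $\sqrt{2}$, $2^{2m}$, $\pi^{2m}$, $x^{2m}$, where one must confirm that the two binomial contributions add rather than cancel after the subtraction. The hypothesis $x\notin\mathbb{Z}$ is inherited from Theorem \ref{coal1}; it ultimately comes from the requirement $\sigma<\tfrac12$ needed for the conditional convergence of the ${}_3F_{2}$ in Theorem \ref{p3361} at integral $x$, which is violated here since $s=2m+\tfrac12$.
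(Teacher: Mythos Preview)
Your proposal is correct and follows exactly the paper's approach: the paper states only that the result is obtained by subtracting \eqref{coal1id} from \eqref{coal2id} and simplifying, which is precisely what you do via the identity $\sin(\tfrac{\pi}{4}-\theta)-\sin(\tfrac{\pi}{4}+\theta)=-\sqrt{2}\sin\theta$. Your verification of the constants (including that the binomial contributions reinforce rather than cancel, and the provenance of the restriction $x\notin\mathbb{Z}$) is accurate.
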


 In Theorem \ref{coal1}, as well as in \eqref{coal12plus} and
 \eqref{coal12minus}, we should be careful while interpreting the
 ${}_3F_{2}$-function. For example, if $n<x$, then it can be expanded as a
 series. Otherwise, for $n>x$, the ${}_3F_{2}$-function represents the
 analytic continuation of the series. Of course, when $n>x$, one can replace
 the ${}_3F_{2}$-function by
\begin{equation*}
-\left\{\pFq{3}{2}{\frac{1}{4}+m, \frac{3}{4}+m, 1}{\frac{1}{4},
    \frac{3}{4}}{-\frac{x^2}{n^2}}-1\right\},
\end{equation*}
as can be seen from \eqref{hyident}, and then use the series expansion of
this other ${}_3F_{2}$-function.

\subsection{The Case $m=0$}

When $m=0$ in Theorem \ref{coal1}, we obtain the following corollary.

\begin{corollary}\label{fir}
Let $x\notin\mathbb{Z}$ and $x>0$. Then,
\begin{align}\label{corhalf}
&\sum_{n=1}^{\infty}\frac{\sigma_{1/2}(n)}{\sqrt{n}}e^{-2\pi\sqrt{2nx}}
\sin\left(\frac{\pi}{4}+2\pi\sqrt{2nx}\right)\\
&=\frac{1}{2}\left\{\left(\frac{1}{\pi\sqrt{x}}-\frac{1}{\sqrt{2}}\right)\zeta\left(\frac{1}{2}\right)-\frac{1}{\pi x\sqrt{2}}\zeta\left(-\frac{1}{2}\right)\right\}+\frac{x}{\pi\sqrt{2}}
\sum_{n=1}^{\infty}\frac{\sigma_{1/2}(n)}{\sqrt{n}}\frac{(\sqrt{2x}-\sqrt{n})}{x^2+n^2}. \nonumber
\end{align}
\end{corollary}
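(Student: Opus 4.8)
The plan is to obtain Corollary~\ref{fir} by specializing Theorem~\ref{coal1} to $m=0$ and simplifying; the genuinely delicate step---coalescing the sums over $n<x$ and $n\geq x$, which rests on the hypergeometric identity \eqref{hyident}---has already been carried out in establishing \eqref{coal1id}, so what remains is a short computation. First I would collect the three terms in \eqref{coal1id} lying outside the series. Putting $m=0$ gives $(2m)!=1$, and since $\zeta(0)=-\tfrac12$ the term $\tfrac{1}{\sqrt2}\zeta(\tfrac12)\zeta(-2m)$ becomes $-\tfrac{1}{2\sqrt2}\zeta(\tfrac12)$; adding the three contributions yields
\[
\frac{\zeta(\tfrac12)}{2\pi\sqrt x}-\frac{\zeta(-\tfrac12)}{2\sqrt2\,\pi x}-\frac{\zeta(\tfrac12)}{2\sqrt2}
=\frac12\left\{\left(\frac{1}{\pi\sqrt x}-\frac{1}{\sqrt2}\right)\zeta\!\left(\tfrac12\right)-\frac{1}{\pi x\sqrt2}\,\zeta\!\left(-\tfrac12\right)\right\},
\]
which is the first bracketed expression in \eqref{corhalf}.

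Next I would simplify the summand of the series in \eqref{coal1id}. With $m=0$ the ${}_3F_2$ there has upper parameters $\tfrac14,\tfrac34,1$ and lower parameters $\tfrac14-m=\tfrac14$ and $\tfrac34-m=\tfrac34$; the two pairs cancel, so the series reduces to ${}_1F_0\bigl(1;-n^2/x^2\bigr)=(1+n^2/x^2)^{-1}=x^2/(x^2+n^2)$. For $n>x$ the defining series diverges, but this elementary closed form is exactly the analytic continuation that the convention of Theorem~\ref{coal1} prescribes, so no ambiguity arises. Using $\Gamma(\tfrac12)=\sqrt\pi$, the hypergeometric summand collapses to $nx/\bigl(\sqrt\pi\,(x^2+n^2)\bigr)$; and since $\bigl(1+in/x\bigr)^{-1}+\bigl(1-in/x\bigr)^{-1}=2x^2/(x^2+n^2)$ and $\bigl(n/(2x)\bigr)^{3/2}=n^{3/2}/(2\sqrt2\,x^{3/2})$, the binomial summand collapses to $-n^{3/2}\sqrt x/\bigl(\sqrt{2\pi}\,(x^2+n^2)\bigr)$.

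I would then combine the two pieces over the common denominator $x^2+n^2$ and factor out $\sqrt x/\sqrt2$:
\[
\frac{nx}{\sqrt\pi\,(x^2+n^2)}-\frac{n^{3/2}\sqrt x}{\sqrt{2\pi}\,(x^2+n^2)}
=\frac{n\sqrt x}{\sqrt{2\pi}\,(x^2+n^2)}\bigl(\sqrt{2x}-\sqrt n\bigr),
\]
using $\sqrt x\,\sqrt{2x}=x\sqrt2$. Multiplying by the outer factor $\sqrt x/\pi^{2m+1/2}$, whose value at $m=0$ is $\sqrt x/\sqrt\pi$, and by $\sigma_{1/2}(n)/n^{3/2}$, and using $n/n^{3/2}=1/\sqrt n$, turns the series in \eqref{coal1id} into $\dfrac{x}{\pi\sqrt2}\sum_{n=1}^{\infty}\dfrac{\sigma_{1/2}(n)}{\sqrt n}\,\dfrac{\sqrt{2x}-\sqrt n}{x^2+n^2}$, which is the remaining term of \eqref{corhalf}. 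Since $\sigma_{1/2}(n)=O(n^{1/2+\epsilon})$, the general term of this series is $O(n^{-3/2+\epsilon})$, so it converges absolutely and the rearrangements above are legitimate.

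I do not expect a real obstacle. The one step that superficially looks worrisome---evaluating a ${}_3F_2$ at an argument of modulus greater than $1$---trivializes, because the collapsed ${}_1F_0$ has the meromorphic closed form $x^2/(x^2+n^2)$. The only point worth flagging is that the hypothesis $x\notin\mathbb{Z}$ is inherited from Theorem~\ref{coal1}; it ultimately stems from the conditional-convergence restriction $\sigma<\tfrac12$ imposed in Theorem~\ref{p3361} whenever $x$ is an integer, which is violated at $\sigma=\tfrac12$.
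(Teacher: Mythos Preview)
Your proof is correct and follows essentially the same route as the paper: specialize Theorem~\ref{coal1} to $m=0$, observe that the ${}_3F_2$ with coincident upper and lower parameters collapses to $x^2/(x^2+n^2)$ (the paper handles the $n>x$ case explicitly via \eqref{hyident}, while you appeal directly to the analytic-continuation convention), and then simplify the remaining terms. The paper's proof is terser, recording only the key ${}_3F_2$ evaluation, but your more detailed arithmetic fills in exactly what is needed.
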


\begin{proof}
The corollary follows readily from Theorem \ref{coal1}. We only need to
observe that when $n<x$,
\begin{equation*}
\pFq{3}{2}{\frac{1}{4}, \frac{3}{4}, 1}{\frac{1}{4}, \frac{3}{4}}{-\frac{n^2}{x^2}}=\frac{x^2}{x^2+n^2},
\end{equation*}
and when $n>x$,
\begin{align*}
\pFq{3}{2}{\frac{1}{4}, \frac{3}{4}, 1}{\frac{1}{4}, \frac{3}{4}}{-\frac{n^2}{x^2}}&=-\left\{\pFq{3}{2}{\frac{1}{4}+m, \frac{3}{4}+m, 1}{\frac{1}{4}, \frac{3}{4}}{-\frac{x^2}{n^2}}-1\right\}\\
&=-\left(\frac{1}{1+x^2/n^2}-1\right)
=\frac{x^2}{x^2+n^2}
\end{align*}
to complete our proof.
\end{proof}

Similarly, when $m=0$ in Theorem \ref{coal2}, we derive the following corollary.

\begin{corollary} For $x>0$,
\begin{align}\label{3aux}
&\sum_{n=1}^{\infty}\frac{\sigma_{1/2}(n)}{\sqrt{n}}e^{-2\pi\sqrt{2nx}}
\sin\left(\frac{\pi}{4}-2\pi\sqrt{2nx}\right)\nonumber\\
&=\left(2\pi\sqrt{x}+\frac{1}{2\sqrt{2}\pi x}\right)\zeta\left(-\frac{1}{2}\right)-\frac{1}{2\sqrt{2}}\zeta\left(\frac{1}{2}\right)
+\frac{x}{\pi\sqrt{2}}\sum_{n=1}^{\infty}\frac{\sigma_{1/2}(n)}{x^2+n^2}.
\end{align}
\end{corollary}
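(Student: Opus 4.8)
The plan is to obtain this corollary by specializing Theorem~\ref{coal2} to $m=0$ and simplifying the constants and the infinite series.

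First I would set $m=0$ in \eqref{coal2id}. The order index becomes $2m+\tfrac12=\tfrac12$, so $\sigma_{2m+1/2}(n)=\sigma_{1/2}(n)$, and the factorial trivializes, $(2m)!=0!=1$. The zeta-arguments collapse: $\zeta\left(-\tfrac12-2m\right)=\zeta\left(-\tfrac12\right)$ and $\zeta(-2m)=\zeta(0)=-\tfrac12$, the latter being recorded in Section~\ref{sect2}. Hence the coefficient of $\zeta\left(-\tfrac12\right)$ becomes $2\pi\sqrt{x}+\frac{1}{\sqrt2\,(2\pi x)}=2\pi\sqrt{x}+\frac{1}{2\sqrt2\,\pi x}$, while the middle term $\frac{1}{\sqrt2}\zeta\left(\tfrac12\right)\zeta(0)$ becomes $-\frac{1}{2\sqrt2}\zeta\left(\tfrac12\right)$. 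These reproduce the first two groups of terms on the right side of \eqref{3aux}.

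Next I would simplify the series term. At $m=0$ the prefactor is $\frac{\sqrt\pi\,(2m)!}{(2\pi)^{2m+3/2}}=\frac{\sqrt\pi}{(2\pi)^{3/2}}=\frac{1}{2\sqrt2\,\pi}$, and the summand $(x-in)^{-(2m+1)}+(x+in)^{-(2m+1)}$ reduces to the complex-conjugate pair $(x-in)^{-1}+(x+in)^{-1}=\frac{2x}{x^2+n^2}$. Multiplying these, the series contribution equals $\frac{x}{\sqrt2\,\pi}\sum_{n=1}^{\infty}\frac{\sigma_{1/2}(n)}{x^2+n^2}$, which is exactly the last term of \eqref{3aux}; this series converges absolutely since $\sigma_{1/2}(n)=O(n^{1/2+\epsilon})$. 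Collecting the three pieces then yields \eqref{3aux}.

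There is essentially no obstacle here: the entire argument is the bookkeeping of the specialization $m=0$ together with the elementary identity combining two conjugate reciprocals into a real fraction, so the only point requiring care is the exact matching of the various powers of $2$, $\pi$, and $\sqrt{2}$. In particular, since Theorem~\ref{coal2} imposes no restriction on $x$ beyond $x>0$ (no ${}_3F_2$'s, hence no conditional-convergence constraint, occur in it, as noted in the remark following Theorem~\ref{coal2}), the resulting corollary holds for all $x>0$ without further hypotheses. If one preferred to avoid invoking Theorem~\ref{coal2}, the same identity follows by putting $s=\tfrac12$ directly in Theorem~\ref{p3361a} and carrying out the coalescence of the sums over $n<x$ and $n\geq x$ exactly as in the proof of Theorem~\ref{coal2}.
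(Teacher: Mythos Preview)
Your proof is correct and follows exactly the approach in the paper: set $m=0$ in Theorem~\ref{coal2} and simplify using $\zeta(0)=-\tfrac12$ and $(x-in)^{-1}+(x+in)^{-1}=\frac{2x}{x^2+n^2}$. The paper in fact gives only the one-line indication ``when $m=0$ in Theorem~\ref{coal2}, we derive the following corollary,'' so your detailed bookkeeping is a faithful expansion of that.
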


We now show that the two previous corollaries can also be obtained by evaluating special cases of the infinite series
\begin{equation}
2\sum_{n=1}^{\infty}\sigma_{-s}(n)n^{\frac{1}{2}s}\left(e^{\pi is/4}K_{s}\left(4\pi e^{\pi i/4}\sqrt{nx}\right)\mp e^{-\pi is/4}K_{s}\left(4\pi e^{-\pi i/4}\sqrt{nx}\right)\right).
\end{equation}

\medskip

\begin{proof}[Second Proof of Corollary \ref{fir}.]
Use the remarks following \eqref{ab} and then replace $x$ by $xe^{\pi i/2}$ and by $xe^{-\pi i/2}$ in \eqref{cohenres}, and
then subtract the resulting two identities to obtain, in particular for $x>0$,
\begin{align}\label{two}
&2\sum_{n=1}^{\infty}\sigma_{-s}(n)n^{\frac{s}{2}}\left(e^{\pi is/4}K_{s}\left(4\pi e^{\pi i/4}\sqrt{nx}\right)-e^{-\pi is/4}K_{s}\left(4\pi e^{-\pi i/4}\sqrt{nx}\right)\right)\nonumber\\
&=-\frac{ix^{s/2-1}}{2\pi}\cot\left(\frac{\pi s}{2}\right)\zeta(s)-\frac{i(2\pi)^{-s-1}}{\pi x^{1+s/2}}\Gamma(s+1)\zeta(s+1)-\frac{ix^{s/2}}{2}\tan\left(\frac{\pi s}{2}\right)\zeta(s+1)\nonumber\\
&\quad+\frac{i\pi x}{6}\frac{\zeta(2-s)}{\sin\left(\frac{1}{2}\pi
    s\right)}-\frac{ix^{3-s/2}}{\pi\sin\left(\frac{1}{2}\pi s\right)}\sum_{n=1}^{\infty}\frac{\sigma_{-s}(n)}{x^2+n^2}\left(n^{s-2}+x^{s-2}\cos\left(\frac{\pi s}{2}\right)\right).
\end{align}
Now let $s=-\tf12$ in \eqref{two}. Using \eqref{K1} and \eqref{K2},
we see that the left-hand side simplifies to
\begin{align}
&\frac{1}{\sqrt{2}x^{1/4}}\sum_{n=1}^{\infty}\frac{\sigma_{1/2}(n)}{n^{1/4}}\left(e^{-\pi i/4-4\pi e^{\pi i/4}\sqrt{nx}}-e^{\pi i/4-4\pi e^{-\pi i/4}\sqrt{nx}}\right)\notag\\
&=-\frac{i\sqrt{2}}{x^{1/4}}\sum_{n=1}^{\infty}
\frac{\sigma_{1/2}(n)}{\sqrt{n}}e^{-2\pi\sqrt{2nx}}
\sin\left(\frac{\pi}{4}+2\pi\sqrt{2nx}\right).\label{ltwo3}
\end{align}
The right-hand side of (\ref{two}) becomes
{\allowdisplaybreaks\begin{align}\label{rtwo1}
&\frac{i}{2\pi x^{5/4}}\zeta\left(-\frac{1}{2}\right)-\frac{i}{\sqrt{2}\pi x^{3/4}}\zeta\left(\frac{1}{2}\right)+\frac{i}{2x^{1/4}}\zeta\left(\frac{1}{2}\right)\nonumber\\
&\quad-\frac{i\pi x^{5/4}}{3\sqrt{2}}\zeta\left(\frac{5}{2}\right)+\frac{ix^{3/4}}{\pi}\sum_{n=1}^{\infty}
\frac{\sigma_{1/2}(n)}{x^2+n^2}+\frac{i\sqrt{2}x^{13/4}}{\pi}\sum_{n=1}^{\infty}\frac{\sigma_{1/2}(n)}{n^{5/2}(x^2+n^2)}.
\end{align}}%
Thus, from (\ref{ltwo3}) and (\ref{rtwo1}), we deduce that
\begin{align}\label{corhalf2}
&\sum_{n=1}^{\infty}\frac{\sigma_{1/2}(n)}{\sqrt{n}}e^{-2\pi\sqrt{2nx}}
\sin\left(\frac{\pi}{4}+2\pi\sqrt{2nx}\right)\nonumber\\
&=\frac{1}{2}\left\{\left(\frac{1}{\pi\sqrt{x}}-\frac{1}{\sqrt{2}}\right)\zeta\left(\frac{1}{2}\right)-\frac{1}{\pi
    x\sqrt{2}}\zeta\left(\frac{-1}{2}\right)\right\}+\frac{\pi
  x^{3/2}}{6}\zeta\left(\frac{5}{2}\right)\notag\\
&\quad-\frac{x}{\pi\sqrt{2}}\sum_{n=1}^{\infty}\frac{\sigma_{1/2}(n)}{x^2+n^2}
-\frac{x^{7/2}}{\pi}\sum_{n=1}^{\infty}\frac{\sigma_{1/2}(n)}{n^{5/2}(x^2+n^2)}.
\end{align}
From (\ref{corhalf}) and (\ref{corhalf2}), it is clear that we want to prove
that
\begin{equation}\label{zeta}
\frac{\pi
  x^{3/2}}{6}\zeta\left(\frac{5}{2}\right)-\frac{x^{7/2}}{\pi}\sum_{n=1}^{\infty}
\frac{\sigma_{1/2}(n)}{n^{5/2}(x^2+n^2)}=\frac{x^{3/2}}{\pi}
\sum_{n=1}^{\infty}\frac{\sigma_{1/2}(n)}{\sqrt{n}(x^2+n^2)}.
\end{equation}
To that end, observe that
\begin{align*}
\frac{x^{7/2}}{\pi}\sum_{n=1}^{\infty}\frac{\sigma_{1/2}(n)}{n^{5/2}(x^2+n^2)}
+\frac{x^{3/2}}{\pi}
\sum_{n=1}^{\infty}\frac{\sigma_{1/2}(n)}{x^2+n^2}
=\frac{x^{3/2}}{\pi}\sum_{n=1}^{\infty}\frac{\sigma_{1/2}(n)}{n^{5/2}}.
\end{align*}
Finally, from (\ref{sz}) and the fact that $\zeta(2)=\pi^2/6$, we find that
\begin{equation}
\sum_{n=1}^{\infty}\frac{\sigma_{1/2}(n)}{n^{5/2}}=\frac{\pi^2}{6}\zeta\left(\frac{5}{2}\right).
\end{equation}
This proves (\ref{zeta}) and hence completes an alternative proof of (\ref{corhalf}).
\end{proof}

Similarly, if we let $s=-\tf12$ in (\ref{one}), then we obtain \eqref{3aux} upon simplification. Adding (\ref{corhalf}) and (\ref{3aux}), we obtain the following result.


\begin{theorem}\label{sec}
Let $x\notin\mathbb{Z}$. Then,
\begin{align*}
&\sum_{n=1}^{\infty}\frac{\sigma_{1/2}(n)}{\sqrt{n}}e^{-2\pi\sqrt{2nx}}
\cos\left(2\pi\sqrt{2nx}\right)\nonumber\\
&=\left(\frac{1}{2\pi\sqrt{2x}}-\frac{1}{2}\right)\zeta\left(\frac{1}{2}\right)
+\pi\sqrt{2x}\zeta\left(-\frac{1}{2}\right)+\frac{x^{3/2}}{\pi\sqrt{2}}
\sum_{n=1}^{\infty}\frac{\sigma_{1/2}(n)}{\sqrt{n}(x^2+n^2)}.
\end{align*}
\end{theorem}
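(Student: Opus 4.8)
The plan is to obtain Theorem~\ref{sec} simply by adding the two corollaries \eqref{corhalf} and \eqref{3aux}, whose left-hand sides differ only in the sign appearing in the argument of the sine. First I would invoke the elementary sum-to-product identity
\[
\sin\left(\frac{\pi}{4}+\theta\right)+\sin\left(\frac{\pi}{4}-\theta\right)=2\sin\frac{\pi}{4}\,\cos\theta=\sqrt{2}\,\cos\theta,
\]
applied with $\theta=2\pi\sqrt{2nx}$. Summing \eqref{corhalf} and \eqref{3aux} term by term — which is legitimate since $\sigma_{1/2}(n)=O(n^{1/2+\epsilon})$ makes every series in sight absolutely convergent — the left-hand side collapses to $\sqrt{2}\sum_{n=1}^{\infty}\sigma_{1/2}(n)n^{-1/2}e^{-2\pi\sqrt{2nx}}\cos(2\pi\sqrt{2nx})$.

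Next I would collect the right-hand side according to its transcendental constituents. The coefficient of $\zeta\left(\tfrac12\right)$ becomes $\tfrac12\left(\tfrac{1}{\pi\sqrt{x}}-\tfrac{1}{\sqrt{2}}\right)-\tfrac{1}{2\sqrt{2}}=\tfrac{1}{2\pi\sqrt{x}}-\tfrac{1}{\sqrt{2}}$; the coefficient of $\zeta\left(-\tfrac12\right)$ becomes $-\tfrac{1}{2\sqrt{2}\pi x}+2\pi\sqrt{x}+\tfrac{1}{2\sqrt{2}\pi x}=2\pi\sqrt{x}$, the two $x^{-1}$ contributions cancelling; and the two Dirichlet-type series merge via the telescoping $\sqrt{2x}-\sqrt{n}+\sqrt{n}=\sqrt{2x}$ as
\begin{align*}
\frac{x}{\pi\sqrt{2}}\sum_{n=1}^{\infty}\sigma_{1/2}(n)\left(\frac{\sqrt{2x}-\sqrt{n}}{\sqrt{n}\,(x^2+n^2)}+\frac{1}{x^2+n^2}\right)
&=\frac{x\sqrt{2x}}{\pi\sqrt{2}}\sum_{n=1}^{\infty}\frac{\sigma_{1/2}(n)}{\sqrt{n}\,(x^2+n^2)}\\
&=\frac{x^{3/2}}{\pi}\sum_{n=1}^{\infty}\frac{\sigma_{1/2}(n)}{\sqrt{n}\,(x^2+n^2)}.
\end{align*}
Dividing the resulting identity through by $\sqrt{2}$ produces exactly the claimed formula, with the hypothesis $x\notin\mathbb{Z}$ inherited unchanged from \eqref{corhalf}.

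There is no genuine obstacle here: the argument is a one-line trigonometric reduction followed by bookkeeping, and the only points demanding a moment's care are the cancellation of the $\tfrac{1}{2\sqrt{2}\pi x}\zeta\left(-\tfrac12\right)$ terms and the merging of the two series through the telescoping of their numerators. As an aside, one could instead derive the same identity by setting $s=-\tfrac12$ in the $K_{s}$-series identity underlying Corollary~\ref{fir} and in its companion and adding those, but adding the two already-established corollaries is the most economical route.
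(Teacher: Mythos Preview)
Your proposal is correct and follows precisely the paper's own approach: the paper states that Theorem~\ref{sec} is obtained by ``Adding (\ref{corhalf}) and (\ref{3aux}),'' and your write-up simply makes explicit the trigonometric identity and the term-by-term bookkeeping that this addition entails.
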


Subtracting (\ref{corhalf}) from (\ref{3aux}) gives the next result.

\begin{theorem}\label{thi}
Let $x\notin\mathbb{Z}$. Then,
\begin{align*}
&\sum_{n=1}^{\infty}\frac{\sigma_{1/2}(n)}{\sqrt{n}}e^{-2\pi\sqrt{2nx}}
\sin\left(2\pi\sqrt{2nx}\right)\\
&=\frac{1}{2\pi\sqrt{2x}}\zeta\left(\frac{1}{2}\right)-\left(\frac{1}{2\pi x}+\pi\sqrt{2x}\right)\zeta\left(-\frac{1}{2}\right)+\frac{x}{\pi\sqrt{2}}
\sum_{n=1}^{\infty}\frac{\sigma_{1/2}(n)}{\sqrt{n}}\frac{(\sqrt{x}-\sqrt{2n})}{x^2+n^2}. \nonumber
\end{align*}
\end{theorem}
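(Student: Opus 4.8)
The plan is to derive Theorem~\ref{thi} as a linear combination of the two Corollaries \eqref{corhalf} and \eqref{3aux} already established, so no new analytic input is required. The starting point is the elementary identity
\[
\sin\!\left(\tfrac{\pi}{4}-\theta\right)-\sin\!\left(\tfrac{\pi}{4}+\theta\right)=-2\cos\tfrac{\pi}{4}\,\sin\theta=-\sqrt{2}\,\sin\theta ,
\]
applied with $\theta=2\pi\sqrt{2nx}$. Multiplying by $\sigma_{1/2}(n)n^{-1/2}e^{-2\pi\sqrt{2nx}}$ and summing over $n\geq1$ --- legitimate because the exponential factor makes all three series absolutely convergent --- shows that the series in Theorem~\ref{thi} equals $\tfrac{1}{\sqrt{2}}$ times the difference of the right-hand side of \eqref{corhalf} and that of \eqref{3aux}. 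The hypothesis $x\notin\mathbb{Z}$ is inherited directly from Corollary~\ref{fir}.

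I would then substitute the explicit right-hand sides and collect like terms. For the $\zeta(\tfrac12)$ contributions, \eqref{corhalf} supplies $\tfrac{1}{2\pi\sqrt{x}}\zeta(\tfrac12)-\tfrac{1}{2\sqrt{2}}\zeta(\tfrac12)$ while \eqref{3aux} supplies $-\tfrac{1}{2\sqrt{2}}\zeta(\tfrac12)$; on subtraction the $-\tfrac{1}{2\sqrt{2}}\zeta(\tfrac12)$ terms cancel and only $\tfrac{1}{2\pi\sqrt{x}}\zeta(\tfrac12)$ remains. For the $\zeta(-\tfrac12)$ contributions, \eqref{corhalf} supplies $-\tfrac{1}{2\pi x\sqrt{2}}\zeta(-\tfrac12)$ while \eqref{3aux} supplies $\left(2\pi\sqrt{x}+\tfrac{1}{2\sqrt{2}\pi x}\right)\zeta(-\tfrac12)$, and the difference collapses to $-\left(\tfrac{1}{\pi x\sqrt{2}}+2\pi\sqrt{x}\right)\zeta(-\tfrac12)$. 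Multiplying by $\tfrac{1}{\sqrt{2}}$ turns these into exactly $\tfrac{1}{2\pi\sqrt{2x}}\zeta(\tfrac12)$ and $-\left(\tfrac{1}{2\pi x}+\pi\sqrt{2x}\right)\zeta(-\tfrac12)$ --- using $2\pi\sqrt{x}/\sqrt{2}=\pi\sqrt{2x}$ --- which are precisely the constants in the statement.

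Finally, for the series terms, \eqref{corhalf} carries $\tfrac{x}{\pi\sqrt{2}}\sum_{n=1}^{\infty}\tfrac{\sigma_{1/2}(n)}{\sqrt{n}}\cdot\tfrac{\sqrt{2x}-\sqrt{n}}{x^2+n^2}$ and \eqref{3aux} carries $\tfrac{x}{\pi\sqrt{2}}\sum_{n=1}^{\infty}\tfrac{\sigma_{1/2}(n)}{x^2+n^2}$. Rewriting the latter summand as $\tfrac{1}{\sqrt{n}}\cdot\tfrac{\sqrt{n}}{x^2+n^2}$ and subtracting pulls out the factor $\tfrac{\sqrt{2x}-\sqrt{n}}{\sqrt{n}}-1=\tfrac{\sqrt{2x}-2\sqrt{n}}{\sqrt{n}}=\sqrt{2}\,\tfrac{\sqrt{x}-\sqrt{2n}}{\sqrt{n}}$, so the difference of the two series equals $\tfrac{x}{\pi}\sum_{n=1}^{\infty}\tfrac{\sigma_{1/2}(n)}{\sqrt{n}}\cdot\tfrac{\sqrt{x}-\sqrt{2n}}{x^2+n^2}$; multiplying by $\tfrac{1}{\sqrt{2}}$ reproduces the series on the right of Theorem~\ref{thi}. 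The whole argument is bookkeeping with powers of $\sqrt{2}$ and signs; the only point needing care is keeping the two separate factors of $\sqrt{2}$ straight --- one coming from the trigonometric identity above and one from the simplification $\sqrt{2x}-2\sqrt{n}=\sqrt{2}(\sqrt{x}-\sqrt{2n})$ --- so that they combine correctly. There is no genuine analytic obstacle here.
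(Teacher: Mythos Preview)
Your proposal is correct and follows exactly the approach the paper uses: it derives Theorem~\ref{thi} by taking the appropriate $\tfrac{1}{\sqrt{2}}$-scaled difference of the two corollaries \eqref{corhalf} and \eqref{3aux}, which is precisely what the paper does in one line (``Subtracting (\ref{corhalf}) from (\ref{3aux}) gives the next result''). Your bookkeeping with the $\zeta(\tfrac12)$, $\zeta(-\tfrac12)$, and series terms is accurate.
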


\section{Connection with the Vorono\"{\dotlessi} Summation Formula}\label{sect5}

A celebrated formula of Vorono\"{\dotlessi}  \cite{voronoi} for $\sum_{n\leq x}d(n)$ is given by
\begin{align}\label{vsf}
\sideset{}{'}\sum_{n\leq x}d(n)&=x(\log x+(2\gamma-1))+\frac{1}{4}\nonumber\\
&\quad+\sqrt{x}\sum_{n=1}^{\infty}\frac{d(n)}{\sqrt{n}}\left(-Y_{1}(4\pi\sqrt{nx})-\frac{2}{\pi}K_{1}(4\pi\sqrt{nx})\right),
\end{align}
where $Y_{\nu}(x)$ denotes the Bessel function of order $\nu$ of the second kind, and $K_{\nu}(x)$ denotes the modified Bessel function of order $\nu$. Thus, the error term $\Delta(x)$ in the Dirichlet divisor problem \eqref{ddp} admits the infinite series representation
\begin{equation*}
\Delta(x)=\sqrt{x}\sum_{n=1}^{\infty}\frac{d(n)}{\sqrt{n}}\left(-Y_{1}(4\pi\sqrt{nx})-\frac{2}{\pi}K_{1}(4\pi\sqrt{nx})\right).
\end{equation*}

In \cite{voronoi}, Vorono\"{\dotlessi} also gave a more general form of \eqref{vsf}, namely,
\begin{align}\label{kvsf}
\sum_{\alpha<n<\beta}d(n)f(n)&=\int_{\alpha}^{\beta}(2\gamma+\log t)f(t)\, dt\nonumber\\
&\quad+2\pi\sum_{n=1}^{\infty}d(n)\int_{\alpha}^{\beta}f(t)\left(\dfrac{2}{\pi}K_{0}(4\pi\sqrt{nt})-Y_{0}(4\pi\sqrt{nt})\right)\, dt,
\end{align}
where $f(t)$ is a function of bounded variation in $(\alpha,\beta)$ and $0<\alpha<\beta$. A.~L.~Dixon and W.~L.~Ferrar \cite{dixfer1} gave a proof of \eqref{kvsf} under the more restrictive condition that $f$ has a bounded second differential coefficient in $(\a,\b)$. J.~R.~Wilton \cite{wilton} proved \eqref{kvsf} under less restrictive conditions. In his proof, he assumed $f(t)$ has compact support on $[\a,\b]$ and $V_{\a}^{\b-\epsilon}f(t)\to V_{\a}^{\b-0}f(t)$ as $\epsilon$ tends to $0$. Here $V_{\a}^{\b}f(t)$ denotes the total variation of $f(t)$ over $(\a,\b)$. In 1929,   Koshliakov \cite{koshliakov} gave a very short proof of \eqref{kvsf} for $0<\alpha<\beta$, $\alpha, \beta\notin\mathbb{Z}$, for $f$ analytic inside a closed contour strictly containing the interval $[\alpha, \beta]$.
%
Koshliakov's proof in \cite{koshliakov} is based on the series $\varphi(x)$, defined in \eqref{minusspl}, and its representation
\begin{equation*}
\varphi(x)=-\gamma-\frac{1}{2}\log x-\frac{1}{4\pi x}+\frac{x}{\pi}\sum_{n=1}^{\infty}\frac{d(n)}{x^2+n^2}.
\end{equation*}
 See also \cite{V, bcbconf} for Vorono\"{\dotlessi}-type summation formulas for a large class of arithmetical functions generated by Dirichlet series satisfying a functional equation involving the Gamma function. For Vorono\"{\dotlessi}-type summation formulas involving an exponential factor, see \cite{jutila}. The Vorono\"{\dotlessi} summation formula has been found to be useful in physics too; for example, S.~Egger and F.~Steiner \cite{es1, es2} showed that it plays the role of an exact trace formula for a Schr\"{o}dinger operator on a certain non-compact quantum graph. They also gave a short proof of the Vorono\"{\dotlessi} summation formula in \cite{es3}.

The extension of \eqref{kvsf} for $\a=0$ is somewhat more difficult, since one needs to impose a further condition on $f(t)$. When $f''(t)$ is bounded in $(\delta, \a)$ and $t^{3/4}f''(t)$ is integrable over $(0, \delta)$ for $0<\delta<\a$, Dixon and Ferrar \cite{dixfer1} proved that
 \begin{align}\label{kvsf1}
\sum_{0<n<\b}d(n)f(n)&=\frac{f(0+)}{4}+\int_{0}^{\b}(2\gamma+\log t)f(t)\, dt\nonumber\\
&\quad+2\pi\sum_{n=1}^{\infty}d(n)\int_{0}^{\b}f(t)\left(\dfrac{2}{\pi}K_{0}(4\pi\sqrt{nt})-Y_{0}(4\pi\sqrt{nt})\right)\, dt.
\end{align}
 Wilton \cite{wilton} obtained \eqref{kvsf1} under the assumption that $\log x V_{0+}^{x}f(t)$ tends to $0$ as $x\to 0+$. D.~A.~Hejhal \cite{hejhal} gave a proof of \eqref{kvsf1} for $\b\to \infty$ under the assumption that $f$ is twice continuously differentiable and possesses compact support. For other proofs of the Vorono\"{\dotlessi} summation formula, the reader is referred to papers by  T.~Meurman \cite{meurmanast} and A.~Ivi\'{c} \cite{ivic}.

Consider the following Vorono\"{\dotlessi} summation formula in an extended
form due to A.~Oppenheim \cite{oppenheim}, and in the version given by
A.~Laurin\u{c}ikas \cite{al}.  For $x>0,
x\notin\mathbb{Z}$, and $-\tf12<\s <\tf12$,
\begin{align}\label{lauform}
\sum_{n<x}&\sigma_{-s}(n)=\zeta(1+s)x+\frac{\zeta(1-s)}{1-s}x^{1-s}-\frac{1}{2}\zeta(s)
+\frac{x}{2\sin\left(\frac{1}{2}\pi s\right)}\sum_{n=1}^{\infty}\sigma_{s}(n)\\&\times\left(\sqrt{nx}\right)^{-1-s}
\left(J_{s-1}(4\pi\sqrt{nx})+J_{1-s}(4\pi\sqrt{nx})-\frac{2}{\pi}\sin(\pi s)K_{1-s}(4\pi\sqrt{nx})\right),\nonumber
\end{align}
so that, by \eqref{gddp}, $\Delta_{-s}(x)$ is represented by the expression
involving the series on the right-hand side of \eqref{lauform}. (Note that
Laurin\u{c}ikas proved \eqref{lauform} for $0<s<\tf12$. However, one can extend it to
$-\tf12<\sigma<\tf12$.) Wilton
\cite{wiltonextended} proved the same result in a more general setting by
considering the `integrated function', that is, the Riesz sum
\begin{align*}
\frac{1}{\Gamma(\lambda+1)}\sideset{}{'}\sum_{n\leq x}\sigma_{-s}(n)(x-n)^{\lambda}.
\end{align*}
Laurin\u{c}ikas \cite{al} gave a different proof of \eqref{lauform} many
years later.

We will now explain the connection of Ramanujan's series
\begin{equation*}
\sum_{n=1}^{\infty}\frac{\sigma_s(n)}{\sqrt{n}}e^{-2\pi\sqrt{2nx}}
\sin\left(\frac{\pi}{4}+2\pi\sqrt{2nx}\right)
\end{equation*}
and its companion with the extended form of the Vorono\"{\dotlessi} summation formula.

As mentioned by Hardy \cite{hardiv}, \cite[pp.~268--292]{hardycpII}, if we use the  asymptotic formulas \eqref{asymbess1} and \eqref{asymbess2} for $Y_{1}(4\pi\sqrt{nx})$ and $K_{1}(4\pi\sqrt{nx})$, respectively, in \eqref{vsf}, we find that
\begin{equation}\label{impser2}
\Delta(x)=\frac{x^{1/4}}{\pi\sqrt{2}}\sum_{n=1}^{\infty}\frac{d(n)}{n^{3/4}}\cos\left(4\pi\sqrt{nx}-\frac{\pi}{4}\right)+R(x),
\end{equation}
where $R(x)$ is a series absolutely and uniformly convergent for all positive
values of $x$. The first series on the left side of \eqref{impser2} is
convergent for all real values of $x$, and uniformly convergent throughout
any compact interval not containing an integer. At each integer $x$, it has
a finite discontinuity.

If we replace the Bessel functions in \eqref{lauform} by
their asymptotic expansions, namely \eqref{asymbess} and \eqref{asymbess2}, similar
to what Hardy did, then the most important part of the error term
$\Delta_{-s}(x)$ is given by
\begin{equation*}
\frac{x^{\frac{1}{4}-\frac{1}{2}s}\cot\left(\frac{1}{2}\pi s\right)}{\pi\sqrt{2}}\sum_{n=1}^{\infty}\frac{\sigma_s(n)}{n^{\frac{s}{2}+\frac{3}{4}}}
\cos\left(4\pi\sqrt{nx}-\frac{\pi}{4}\right).
\end{equation*}
This series, though similar to the one in \eqref{impser2} or in \eqref{impser}, is different from Ramanujan's series \eqref{ramser} in that the exponential factor, namely $e^{-2\pi\sqrt{2nx}}$, is not present.

A generalization of \eqref{minusspl}, namely,
\begin{equation}\label{plus}
\varphi(x, s):=2\sum_{n=1}^{\infty}\sigma_{-s}(n)n^{\frac{1}{2}s}\left(e^{\pi is/4}K_{s}\left(4\pi e^{\pi i/4}\sqrt{nx}\right)+e^{-\pi is/4}K_{s}\left(4\pi e^{-\pi i/4}\sqrt{nx}\right)\right),
\end{equation}
was studied in \cite{dixitmoll}. Note that $\varphi(x, 0)=\varphi(x)$, and that $\varphi(x)$ was used by Koshliakov \cite{koshliakov} in his short proof of \eqref{kvsf}.

Replacing the Bessel functions in \eqref{plus} by their asymptotic expansions from \eqref{asymbess2}, we find that the main terms are given by
\begin{align}\label{2ser}
&\frac{\sqrt{2}}{x^{1/4}}\cos\left(\frac{\pi}{4}\left(s+\frac{1}{2}\right)\right)
\sum_{n=1}^{\infty}\frac{\sigma_{s}(n)}{n^{s/2+1/4}}e^{-2\pi\sqrt{2nx}}
\sin\left(\frac{\pi}{4}-2\pi\sqrt{2nx}\right)\nonumber\\
&+\frac{\sqrt{2}}{x^{1/4}}\sin\left(\frac{\pi}{4}\left(s+\frac{1}{2}\right)\right)\sum_{n=1}^{\infty}
\frac{\sigma_{s}(n)}{n^{s/2+1/4}}e^{-2\pi\sqrt{2nx}}
\sin\left(\frac{\pi}{4}+2\pi\sqrt{2nx}\right).
\end{align}
 In our extensive study, the forms of the series in \eqref{2ser} are the closest that we could find that resemble the series in Ramanujan's original claim \eqref{qc}, or in our Theorem \ref{p3361}, or the companion series
 \begin{equation*}
 \sum_{n=1}^{\infty}\frac{\sigma_s(n)}{\sqrt{n}}e^{-2\pi\sqrt{2nx}}
\sin\left(\frac{\pi}{4}- 2\pi\sqrt{2nx}\right).
\end{equation*}
Note that the only place where they differ is in the power of $n$. Similar remarks can be made about \eqref{minus} and \eqref{2ser}.

Series similar to these arise in the mean square estimates of
$\int_{1}^{x}\Delta_{-s}(t)^2\, dt$ by Meurman \cite[equations (3.7),
(3.8)]{meurman}.  (An excellent survey on recent progress on divisor problems and mean square theorems has been written by K.--M.~Tsang \cite{tsang}.)  Similar series have also arisen in the work of H.~Cram\'{e}r
\cite{cramer}, and in the recent work of S.~Bettin and J.~B.~Conrey
\cite[p.~220--223]{betcon}. Thus it seems that the  two series in \eqref{2ser} are more
closely connected to the generalized Dirichlet divisor problem than are
Ramanujan's series and its companion. We have found identities, similar to
those in Theorems \ref{p3361} and \ref{p3361a}, for each of the series in
\eqref{2ser}. However, we refrain ourselves from stating them as they are
similar to the ones already proved.

\textbf{Remark.} It is interesting to note here that at the bottom of page $368$ in \cite{lnb},
one finds the following note in Hardy's handwriting: ``Idea. You can replace the Bessel functions of the Vorono\"{\dotlessi}
identity by circular functions, at the price of complicating the `sum'. Interesting idea, but probably of no value for the study of the divisor
problem.'' In view of the applications of such series mentioned in the above paragraph, it appears that Hardy's judgement was incorrect.

The series in \eqref{plus} can be used to derive an extended form of the
Vorono\"{\dotlessi} summation formula \eqref{kvsf} in the form contained in
the following theorem. This proof generalizes the technique enunciated by Koshliakov in \cite{koshliakov}.

\begin{theorem}\label{varlauform}
Let $0<\alpha<\beta$ and $\alpha, \beta\notin\mathbb{Z}$. Let $f$ denote a
function analytic inside a closed contour strictly containing  $[\alpha,
\beta]$. Assume that $-\tf12< \s <\tf12$. Then,
\begin{align}\label{varlauform1}
\sum_{\alpha<j<\beta}\sigma_{-s}(j)f(j)&=\int_{\alpha}^{\beta}(\zeta(1+s)+t^{-s}\zeta(1-s))f(t)\, dt\nonumber\\
&\quad+2\pi\sum_{n=1}^{\infty}\sigma_{-s}(n)n^{\frac{1}{2}s}\int_{\alpha}^{\beta}t^{-\frac{1}{2}s}f(t)
\bigg\{\left(\frac{2}{\pi}K_{s}(4\pi\sqrt{nt})-Y_{s}(4\pi\sqrt{nt})\right)\nonumber\\
&\quad\quad\quad\quad\quad\quad\times\cos\left(\frac{\pi s}{2}\right)-J_{s}(4\pi\sqrt{n t})\sin\left(\frac{\pi s}{2}\right)\bigg\}\, dt.
\end{align}
\end{theorem}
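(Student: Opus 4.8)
The plan is to follow, in this more general context, Koshliakov's 1929 proof of \eqref{kvsf}, with the series $\varphi(x)$ used there replaced by $\varphi(x,s)$ of \eqref{plus} and the elementary representation of $\varphi(x)$ replaced by the closed form \eqref{cohenres} of $\varphi(x,s)$.

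The first step is to reinterpret \eqref{cohenres} as a statement about a meromorphic function whose residues encode $\sigma_{-s}(n)$. Rotating the variable, that is, replacing $x$ by $we^{\pm i\pi/2}$ in \eqref{cohenres}, turns the Dirichlet-series part of its right-hand side into a function $\Lambda_s(w)$ which, near $[\alpha,\beta]$, is analytic except for simple poles at the integers, and a short computation gives $\Res_{w=n}\Lambda_s(w)=c_0(s)\,\sigma_{-s}(n)$ with $c_0(s)$ an explicit nonzero constant. At the same time, \eqref{cohenres} itself represents $\Lambda_s(w)$ near $[\alpha,\beta]$: in the lower half-plane by $\varphi(we^{i\pi/2},s)$ corrected by elementary terms, and in the upper half-plane by $\varphi(we^{-i\pi/2},s)$ corrected by elementary terms. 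Since $\sqrt{we^{\pm i\pi/2}}=e^{\pm i\pi/4}\sqrt w$, one of the two Macdonald functions in $\varphi(we^{\pm i\pi/2},s)$ collapses to the genuine $K_s(4\pi\sqrt{nw})$ and the other to $K_s$ of a purely imaginary multiple of $4\pi\sqrt{nw}$; applying the standard relations between $K_s$ of a rotated argument and the Hankel functions $H^{(1)}_s,H^{(2)}_s$ converts the boundary values of $\Lambda_s$ on $(\alpha,\beta)$ into the Koshliakov--Vorono\"{\dotlessi} kernel
\begin{equation*}
\Bigl(\tfrac{2}{\pi}K_s(4\pi\sqrt{nt})-Y_s(4\pi\sqrt{nt})\Bigr)\cos\tfrac{\pi s}{2}-J_s(4\pi\sqrt{nt})\sin\tfrac{\pi s}{2}
\end{equation*}
with weight $\sigma_{-s}(n)n^{s/2}t^{-s/2}$, while the elementary terms of \eqref{cohenres} reassemble, after the two rotations, into $\z(1+s)+t^{-s}\z(1-s)$.

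The second step is Koshliakov's contour argument. Let $C$ be a positively oriented simple closed contour lying in the region where $f$ is analytic, strictly enclosing $[\alpha,\beta]$ but containing neither $0$ nor any integer outside $(\alpha,\beta)$. By the residue theorem and the computation above,
\begin{equation*}
\frac{1}{2\pi i}\oint_C f(w)\,\Lambda_s(w)\,dw=c_0(s)\sum_{\alpha<j<\beta}\sigma_{-s}(j)f(j).
\end{equation*}
Next I would split $C$ into its upper and lower arcs, insert the two half-plane representations of $\Lambda_s$, and push each arc onto $[\alpha,\beta]$ from the appropriate side; the interior of $C$ can be taken inside the sector where $\varphi(we^{\pm i\pi/2},s)$ converges, where the series over $n$ converges absolutely (the rotated Macdonald functions decaying exponentially there) and may be integrated term by term. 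The collapse produces $\int_\alpha^\beta(\z(1+s)+t^{-s}\z(1-s))f(t)\,dt$ from the elementary terms, the $n$-th summand of $2\pi\sum_{n\ge1}\sigma_{-s}(n)n^{s/2}\int_\alpha^\beta t^{-s/2}f(t)\{\cdots\}\,dt$ from the $n$-th Bessel term, and, from the small indentations forced around the integers in $(\alpha,\beta)$ (admissible since $\alpha,\beta\notin\mathbb Z$), half-residues that sum over the two arcs to $c_0(s)\sum_{\alpha<j<\beta}\sigma_{-s}(j)f(j)$ once again. Equating the two evaluations and dividing by $c_0(s)$ gives \eqref{varlauform1}. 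As a check, letting $s\to0$ yields $\z(1+s)+t^{-s}\z(1-s)\to2\gamma+\log t$ and kernel $\to\tfrac{2}{\pi}K_0-Y_0$, so one recovers \eqref{kvsf}.

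The step I expect to be the genuine obstacle is analytic, not algebraic: the passage to the limit as the two arcs of $C$ are collapsed onto $[\alpha,\beta]$. On the real axis the boundary values $\varphi(te^{\pm i\pi/2},s)$ are only conditionally convergent Bessel series — the $J_s,Y_s$ of argument $4\pi\sqrt{nt}$ decay merely like $n^{-1/4-\s/2}$ — and the series in \eqref{varlauform1} is itself only conditionally convergent; in fact term-by-term integration of that series would, by $\Delta_{-s}'(t)=-(\z(1+s)+t^{-s}\z(1-s))$ (a consequence of \eqref{gddp}), force the right-hand side of \eqref{varlauform1} to vanish identically, so the summation over $n$ must be kept outside the integral and the interchange justified separately — by Abel summation against $f$, which is of bounded variation since it is analytic, together with the Vorono\"{\dotlessi}-type estimate for $\sum_{n\le N}\sigma_{-s}(n)e^{4\pi i\sqrt{nt}}$ — and it is at this point, exactly as in the classical treatments of \eqref{kvsf} and of \eqref{lauform}, that the hypothesis $-\tf12<\s<\tf12$ enters. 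The evaluation of $c_0(s)$ and the phase bookkeeping in the Bessel connection formulas, although tedious, are routine.
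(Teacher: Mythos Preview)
Your overall strategy --- Koshliakov's contour argument with $\varphi$ replaced by $\varphi(x,s)$ and Cohen's formula \eqref{cohenres} supplying the needed closed form --- is exactly the paper's approach. But your contour mechanics are confused, and the confusion would block the proof.

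The trouble is the sentence about ``half-residues that sum over the two arcs to $c_0(s)\sum$ once again'': if both evaluations of $\oint_C f\Lambda_s$ return $c_0(s)\sum$, equating them is a tautology, not \eqref{varlauform1}. What you are missing is the algebraic identity
\[
\Phi(iz,s)+\Phi(-iz,s)=-z^{-s}\zeta(1-s)-\zeta(1+s),
\]
which the paper derives from \eqref{cohenres} by rotating, adding, and analytically continuing (this is \eqref{mainreln1}). The paper integrates $f(z)\Phi(iz,s)$ --- a \emph{single} rotation --- over the closed contour; the residues at integers give the sum. On the upper arc $\gamma_1$ it then uses \eqref{mainreln1} to trade $\Phi(iz,s)$ for $-\Phi(-iz,s)$ plus the $\zeta$-terms. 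Now the integrand on each arc equals the Bessel series $\Lambda(\pm iz,s)$, which is \emph{analytic} across $[\alpha,\beta]$, so the arcs collapse onto $[\alpha,\beta]$ by Cauchy's theorem with no indentations at all. The $\zeta$-integral and the Bessel-kernel integrals fall out directly, and the computation \eqref{besshank2} converts $K_s(\pm 4\pi i\sqrt{nt})$ into the combination of $J_s$, $Y_s$, $K_s$ in the statement.

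Your closing worry is also misplaced. The interchange of sum and integral happens on the arcs $\gamma_1,\gamma_2$, where --- as you yourself note --- the rotated Macdonald functions decay exponentially and the series converges absolutely and uniformly. After that interchange, each $n$th term is moved to $[\alpha,\beta]$ individually by Cauchy's theorem; the resulting series $\sum_n\int_\alpha^\beta$ converges simply because it equals, term by term, the convergent series $\sum_n\int_{\gamma}$. No Abel summation, no Vorono\"{\dotlessi}-type partial-sum estimate, and no limiting argument on the real axis is needed, and conditional convergence never enters the proof.
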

We wish to extend \eqref{varlauform1} to allow $\a=0$ so as to obtain \eqref{lauform} as a special case of Theorem \ref{varlauform}. To do this, we need to impose some additional
restrictions on $f$. As an intermediate result, we state the following theorem which generalizes Theorem 3 in \cite{wilton}.

\begin{theorem}\label{voronoisumgen}
Let $0<\a<\tf12$, $-\tf12<\s <\tf12$, and $0<\theta<\min\left(1,\frac{1+2\s}{1-2\s}\right)$. Let $N\in\mathbb{N}$ such that $N^{\theta}\a>1$. If $f$ is twice differentiable as a function of $t$, and is of bounded variation in $(0,\a)$,  then as $N\to\infty$,
\begin{align*}
&f(0+)\frac{\z(-s)}{2}-\int_{0}^{\a}f(t)(\z(1-s)+t^s\z(1+s))\, dt\nonumber\\
&\quad+2\pi\sideset{}{'}\sum_{n=1}^{N}\frac{\s_s(n)}{n^{s/2}}\int_{0}^{\a}f(t)t^{\frac{1}{2}s}
\bigg\{J_{s}(4\pi\sqrt{n t})\sin\left(\frac{\pi s}{2}\right)\nonumber\\
&\hspace{5cm}+\left(Y_{s}(4\pi\sqrt{n t})-\frac{2}{\pi}K_{s}(4\pi\sqrt{n t})\right)\cos\left(\frac{\pi s}{2}\right)\bigg\}\, dt\nonumber\\
&\ll\twopartdef{ (2\g+\log N)(V_{0}^{N^{-\theta}}f(t)+N^{(\theta-1)/4}(|f(\a)|+V_{0}^{\a}f(t))),}{s=0,\\
\vspace{-5pt}}{V_{0}^{N^{-\theta}}f(t)+(N^{(1-\theta)(2\s-1)/4}+N^{(\theta(1-2\s)-(2\s+1))/4})\\
\hspace{5cm}\times(|f(\a)|+V_{0}^{\a}f(t)),}{s\neq 0.}
\end{align*}
Additionally, if we assume the limits
\begin{align}\label{limbdvar}
\lim_{x\to 0+}V_{0}^{x}f(t)=0,\text{ if }s\neq 0\quad\text{ and }\quad\lim_{x\to 0+}\log x\, V_{0}^{x}f(t)=0,\text{ if }s=0,
\end{align}
then
\begin{align}\label{varlauform3}
&f(0+)\frac{\z(-s)}{2}-\int_{0}^{\a}f(t)(\z(1-s)+t^s\z(1+s))\, dt\nonumber\\
&\quad+2\pi\sum_{n=1}^{\infty}\frac{\s_s(n)}{n^{s/2}}\int_{0}^{\a}f(t)t^{\frac{s}{2}}
\bigg\{J_{s}(4\pi\sqrt{n t})\sin\left(\frac{\pi s}{2}\right)\nonumber\\
&\hspace{3cm}+\left(Y_{s}(4\pi\sqrt{n t})-\frac{2}{\pi}K_{s}(4\pi\sqrt{n t})\right)\cos\left(\frac{\pi s}{2}\right)\bigg\}\, dt=0.
\end{align}
\end{theorem}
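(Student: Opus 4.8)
The plan is to adapt the argument Wilton used for the ordinary divisor function in Theorem~3 of \cite{wilton}. Note first that \eqref{varlauform3} is nothing but the Vorono\"\i{} summation formula for $\sigma_s(n)$ on the interval $(0,\alpha)$: since $\alpha<\tfrac12<1$ there are no integers in $(0,\alpha)$, so the left-hand lattice sum is empty, and the identity asserts that the Bessel series over $(0,\alpha)$ reproduces exactly the ``main terms'' $\zeta(1-s)\alpha+\tfrac{\zeta(1+s)}{1+s}\alpha^{1+s}-\tfrac12\zeta(-s)$ of $\sideset{}{'}\sum_{n\le\alpha}\sigma_s(n)$ (this is \eqref{gddp} with $s$ replaced by $-s$), integrated against $f$. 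Write $\mathcal{B}_s(z):=J_s(z)\sin\left(\tfrac{\pi s}{2}\right)+\left(Y_s(z)-\tfrac{2}{\pi}K_s(z)\right)\cos\left(\tfrac{\pi s}{2}\right)$ for the Bessel combination in the theorem, and let $E_N$ denote the quantity on the left of the asserted $\ll$-bound. Two analytic facts about $\mathcal{B}_s$ drive everything. From \eqref{asymbess}, \eqref{asymbess1}, \eqref{asymbess2}, as $z\to\infty$ the leading trigonometric parts of $J_s$ and $Y_s$ combine by the angle-addition formula to give $\mathcal{B}_s(z)=\left(\tfrac{2}{\pi z}\right)^{1/2}\sin\left(z-\tfrac{\pi}{4}\right)+O\left(z^{-3/2}\right)$, the $K_s$-part being exponentially small. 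From the power series \eqref{sumbesselj}, the relation \eqref{yj}, and the small-argument expansion of $K_s$, as $z\to0$ one has $\mathcal{B}_s(z)=O\left(z^{-|\sigma|}\right)$ for $s\neq0$ and $\mathcal{B}_0(z)=O\left(1+|\log z|\right)$. Finally one needs the primitive $G_n(t):=\int_0^{t}u^{s/2}\mathcal{B}_s\left(4\pi\sqrt{nu}\right)\,du$, which by the Bessel recurrences is well defined, has $G_n(0)=0$ uniformly in $n$, and satisfies $G_n(t)\ll n^{-3/4}t^{s/2+1/4}$ once $nt\gg1$.

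The core of the proof is a partition of $2\pi\sideset{}{'}\sum_{n\le N}\tfrac{\sigma_s(n)}{n^{s/2}}\int_0^{\alpha}f(t)t^{s/2}\mathcal{B}_s(4\pi\sqrt{nt})\,dt$. Following Wilton, split the $t$-integral at $t=N^{-\theta}$ and, in each range, split the $n$-sum according to whether $4\pi\sqrt{nt}$ is large or bounded. Where $4\pi\sqrt{nt}$ is large one substitutes $t=v^2$ and inserts the asymptotic expansion of $\mathcal{B}_s$; the leading term yields a constant multiple of
\[
\int_{N^{-\theta/2}}^{\sqrt{\alpha}} f(v^2)\,v^{s+1/2}\,\Biggl(\sideset{}{'}\sum_{n\le N}\frac{\sigma_s(n)}{n^{s/2+1/4}}\sin\left(4\pi\sqrt{n}\,v-\tfrac{\pi}{4}\right)\Biggr)dv ,
\]
while the remaining terms of the expansion, together with the whole $K_s$-contribution, are absolutely convergent and feed into the ``main part''. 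Where $4\pi\sqrt{nt}$ stays bounded, the bounds $\mathcal{B}_s(z)\ll z^{-|\sigma|}$, respectively $1+|\log z|$, together with the hypothesis $N^{\theta}\alpha>1$, show that this contribution is $O\bigl(V_0^{N^{-\theta}}f\bigr)$ for $s\neq0$ and $O\bigl((2\gamma+\log N)\,V_0^{N^{-\theta}}f\bigr)$ for $s=0$; it is in this regime, by a Stieltjes integration by parts of the partial sums of $\sigma_s(n)$ against $f$ and using \eqref{fe} to identify the relevant residues $\zeta(-s),\zeta(1-s),\zeta(1+s)$, that the constant $f(0+)\tfrac{\zeta(-s)}{2}$ and the integral $\int_0^{\alpha}f(t)\bigl(\zeta(1-s)+t^s\zeta(1+s)\bigr)\,dt$ appearing in $E_N$ get cancelled up to the stated errors.

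It remains to estimate the displayed oscillatory integral, and this is where the exponents in the theorem arise. Writing $\sigma_s(n)=n^s\sigma_{-s}(n)$ and summing by parts against $\sideset{}{'}\sum_{n\le u}\sigma_{s}(n)=\zeta(1-s)u+\tfrac{\zeta(1+s)}{1+s}u^{1+s}-\tfrac12\zeta(-s)+\Delta_{s}(u)$ turns the inner sum into two endpoint contributions plus the integral of $\Delta_{s}(u)$ against a kernel of bounded variation; inserting any unconditional power bound for $\Delta_{s}(u)$ and then integrating in $v$ after one integration by parts in $t$ (which transfers a derivative from $f$ onto the smooth kernel and so produces the factors $|f(\alpha)|$ and $V_0^{\alpha}f$) gives error terms of orders $N^{(\theta-1)/4}$ for $s=0$ and $N^{(1-\theta)(2\sigma-1)/4}+N^{(\theta(1-2\sigma)-(2\sigma+1))/4}$ for $s\neq0$. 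Because $\sigma<\tfrac12$, the restriction $0<\theta<\min\bigl(1,\tfrac{1+2\sigma}{1-2\sigma}\bigr)$ is precisely what makes both exponents negative; collecting the three contributions yields the claimed $\ll$-bound. Finally, for \eqref{varlauform3} fix any admissible $\theta$ and let $N\to\infty$: the power-of-$N$ error terms vanish by the previous sentence, and the hypotheses \eqref{limbdvar}, namely $V_0^{x}f\to0$ for $s\neq0$ and $\log x\cdot V_0^{x}f\to0$ for $s=0$, force the remaining $V_0^{N^{-\theta}}f$ and $(\log N)\,V_0^{N^{-\theta}}f$ terms to $0$ as well. I expect the main obstacle to be the uniform control of the inner oscillatory sum $\sideset{}{'}\sum_{n\le N}\sigma_s(n)n^{-s/2-1/4}\sin\left(4\pi\sqrt{n}\,v-\tfrac{\pi}{4}\right)$ for $v$ down near the cutoff $N^{-\theta/2}$, since the quality of that estimate is exactly what determines the admissible range of $\theta$, and hence whether the limit $N\to\infty$ can be carried out.
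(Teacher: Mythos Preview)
Your outline follows Wilton's original $d(n)$ argument, but the way you propose to generate the main terms is where the sketch breaks down. You claim that $f(0+)\tfrac{\zeta(-s)}{2}$ and $\int_0^{\alpha}f(t)\bigl(\zeta(1-s)+t^s\zeta(1+s)\bigr)\,dt$ emerge from the ``bounded $4\pi\sqrt{nt}$'' regime after a Stieltjes integration by parts in~$n$. But partial summation in $n$ produces the asymptotic of $\sum_{n\le u}\sigma_s(n)$ as a function of the truncation point $u$, not as a function of the integration variable $t$; there is no mechanism in your sketch that converts this into the $t$-integral of $f(t)\bigl(\zeta(1-s)+t^s\zeta(1+s)\bigr)$. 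The constant $\tfrac12\zeta(-s)$ is even more delicate: in the paper it does not come from any small-argument expansion of the Bessel kernel at all, but from evaluating the full improper integral $\int_0^{\infty}t^{s/2-1}F_s(4\pi\sqrt{Nt})\,dt$ via the functional equation and Wilton's formula \cite[equation~(4.65)]{wiltonextended}.

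The paper avoids this difficulty by reversing the logic. Rather than trying to extract the main terms from the partial Bessel sum, it starts from the Lebesgue--Stieltjes identity $0=\int_0^{\alpha}f(t)\,dD_s(t)$ (valid because $(0,\alpha)$ contains no integers), splits $D_s=\Phi_s+\Delta_s$, and then replaces $d\Delta_s(t)$ using a truncated representation of $\Delta_s$ (Lemma~\ref{lemtail}, a special case of \cite[Theorem~2]{wiltonextended}). The main terms are thus present from the outset via $d\Phi_s$, and what remains is a finite collection of correction integrals involving $F_s$, $G_s$, $\Delta_s(N)$, and the tail $I_s(t,N)$ of Lemma~\ref{sumprodint}. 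These are then estimated, after the split at $N^{-\theta}$, by the second mean value theorem for integrals against $f(t)-f(0+)$; this is where the factors $V_0^{N^{-\theta}}f$ and $|f(\alpha)|+V_0^{\alpha}f$ arise. Your own ``main obstacle''---the uniform control of the oscillatory inner sum near the cutoff---is exactly what Lemma~\ref{sumprodint} (from \cite[Lemma~6]{wiltonextended}) handles, and you would need an analogue of it to close your argument. So the strategy is salvageable, but you should begin from $\int_0^{\alpha}f\,dD_s=0$ rather than from the raw Bessel sum, and you need the two Wilton lemmas (or direct equivalents) as inputs.
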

Clearly, for $0<\a<\tf12$, we have
\begin{align}\label{rhszero}
\sideset{}{'}\sum_{0<j\leq \a}\sigma_{-s}(j)f(j)=0.
\end{align}
Also, if we substitute for $ Y_{s}(4\pi\sqrt{n t})$ via \eqref{yj} and  employ \eqref{K2}, we find  that the kernel in \eqref{varlauform3}, namely,
\begin{equation*}
J_{s}(4\pi\sqrt{n t})\sin\left(\frac{\pi s}{2}\right)+\left(Y_{s}(4\pi\sqrt{n t})-\frac{2}{\pi}K_{s}(4\pi\sqrt{n t})\right)\cos\left(\frac{\pi s}{2}\right)
\end{equation*}
is invariant under the replacement of $s$ by $-s$.  Therefore replacing $s$ by $-s$ in \eqref{varlauform3}, then
replacing zero on the right-hand side of \eqref{varlauform3} by
$-\sum_{0<j\leq \a}\sigma_{-s}(j)f(j)$ using \eqref{rhszero},
and then finally subtracting the resulting equation so obtained from
\eqref{varlauform1}, we arrive at the following result.

\begin{theorem}\label{vorsumgen}
Let $0<\alpha<\tf12, \a<\beta$ and $\beta\notin\mathbb{Z}$. Let $f$ denote a
function analytic inside a closed contour strictly containing  $[\alpha,
\beta]$, and of bounded variation in $0<t<\a$. Furthermore, if $f$ satisfies the limit conditions in
\eqref{limbdvar}, and  $-\tf12<\s <\tf12$, then
\begin{align*}
\sum_{0<j<\beta}\sigma_{-s}(j)f(j)&=-f(0+)\frac{\z(s)}{2}+\int_{0}^{\beta}(\zeta(1+s)+t^{-s}\zeta(1-s))f(t)\, dt\nonumber\\
&\quad+2\pi\sum_{n=1}^{\infty}\sigma_{-s}(n)n^{\frac{1}{2}s}\int_{0}^{\beta}t^{-\frac{1}{2}s}f(t)
\bigg\{\left(\frac{2}{\pi}K_{s}(4\pi\sqrt{nt})-Y_{s}(4\pi\sqrt{nt})\right)\nonumber\\
&\quad\quad\quad\quad\quad\quad\times\cos\left(\frac{\pi s}{2}\right)-J_{s}(4\pi\sqrt{n t})\sin\left(\frac{\pi s}{2}\right)\bigg\}\, dt.
\end{align*}
\end{theorem}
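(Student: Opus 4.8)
The plan is to obtain this theorem by combining three ingredients already established: Theorem~\ref{varlauform}, the limiting identity \eqref{varlauform3} of Theorem~\ref{voronoisumgen}, and the trivial relation \eqref{rhszero}. First I would verify that, under the hypotheses imposed here, both input results are available. The assumption $0<\alpha<\tf12$ forces $\alpha\notin\mathbb{Z}$, so together with $\beta\notin\mathbb{Z}$, the analyticity of $f$ near $[\alpha,\beta]$, and $-\tf12<\sigma<\tf12$, the hypotheses of Theorem~\ref{varlauform} hold, yielding \eqref{varlauform1}. On the other side, since $\sigma>-\tf12$ one has $\tfrac{1+2\sigma}{1-2\sigma}>0$, so one may choose $\theta$ with $0<\theta<\min\!\left(1,\tfrac{1+2\sigma}{1-2\sigma}\right)$ and then $N\in\mathbb{N}$ with $N^{\theta}\alpha>1$; combined with the bounded-variation hypothesis on $(0,\alpha)$ and the limit conditions \eqref{limbdvar}, this makes \eqref{varlauform3} applicable.

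The key algebraic fact is that the Bessel kernel
\[
\mathcal{K}_{n}(t):=\left(\tfrac{2}{\pi}K_{s}(4\pi\sqrt{nt})-Y_{s}(4\pi\sqrt{nt})\right)\cos\!\left(\tfrac{\pi s}{2}\right)-J_{s}(4\pi\sqrt{n t})\sin\!\left(\tfrac{\pi s}{2}\right)
\]
appearing in \eqref{varlauform1} is, up to sign, the kernel occurring in \eqref{varlauform3}, and that the latter is invariant under $s\mapsto -s$. This invariance I would check directly: using \eqref{yj} to express $Y_{s}$ and $Y_{-s}$ in terms of $J_{\pm s}$, using $K_{-s}=K_{s}$ from \eqref{K2}, and the elementary identity $\dfrac{1-\cos\pi s}{\sin\pi s}=\tan\dfrac{\pi s}{2}$, one finds $Y_{-s}-Y_{s}=(J_{s}+J_{-s})\tan\dfrac{\pi s}{2}$, after which the difference of the two kernels collapses to zero.

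Next I would replace $s$ by $-s$ throughout \eqref{varlauform3}. Under this substitution $\zeta(-s)\mapsto\zeta(s)$, $\zeta(1-s)+t^{s}\zeta(1+s)\mapsto\zeta(1+s)+t^{-s}\zeta(1-s)$, $\sigma_{s}(n)/n^{s/2}\mapsto\sigma_{-s}(n)n^{s/2}$, $t^{s/2}\mapsto t^{-s/2}$, and — by the kernel invariance, followed by a sign flip to match \eqref{varlauform1} — the Bessel factor becomes $-\mathcal{K}_{n}(t)$. After rearranging, \eqref{varlauform3} turns into
\[
-f(0+)\tfrac{\zeta(s)}{2}+\int_{0}^{\alpha}\bigl(\zeta(1+s)+t^{-s}\zeta(1-s)\bigr)f(t)\,dt+2\pi\sum_{n=1}^{\infty}\sigma_{-s}(n)n^{\frac12 s}\int_{0}^{\alpha}t^{-\frac12 s}f(t)\,\mathcal{K}_{n}(t)\,dt=0.
\]
Using \eqref{rhszero} I would rewrite the left-hand $0$ as $-\sum_{0<j\leq\alpha}\sigma_{-s}(j)f(j)$ and then subtract the resulting equation from \eqref{varlauform1}: on the left the two sums combine, via $\{0<j\leq\alpha\}\cup\{\alpha<j<\beta\}=\{0<j<\beta\}$ (there being no positive integers in $(0,\alpha]$ since $\alpha<\tf12$), into $\sum_{0<j<\beta}\sigma_{-s}(j)f(j)$; on the right the integrals over $(0,\alpha)$ and over $(\alpha,\beta)$ merge into integrals over $(0,\beta)$, and the extra term $-f(0+)\zeta(s)/2$ survives. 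This is precisely the asserted identity.

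I do not expect a serious obstacle here, since the hard analytic work is carried by Theorems~\ref{varlauform} and~\ref{voronoisumgen}; the only real points of care are, first, confirming that the hypotheses of those two theorems are simultaneously met by an $f$ satisfying the present assumptions, and second, the sign bookkeeping in the kernel, which is where an error is most likely to creep in. The termwise addition of the two series identities is legitimate for $-\tf12<\sigma<\tf12$ because the requisite convergence of the series of Bessel integrals is already part of the content of the input theorems, so no new estimate is needed.
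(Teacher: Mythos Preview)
Your proposal is correct and follows essentially the same route as the paper: establish the $s\mapsto -s$ invariance of the Bessel kernel via \eqref{yj} and \eqref{K2}, replace $s$ by $-s$ in \eqref{varlauform3}, use \eqref{rhszero} to rewrite the resulting zero, and subtract from \eqref{varlauform1}. You supply somewhat more detail than the paper on verifying the hypotheses of the two input theorems and on the kernel-invariance computation, but the argument is the same.
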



\subsection{Oppenheim's Formula \eqref{lauform} as a Special Case of Theorem \ref{vorsumgen}} Letting $\lambda=-s+1$, $\mu=s$, and $x=4\pi\sqrt{nt}$ in \cite[p.~37, equation (1.8.1.1)]{prud}, \cite[p.~42, equation (1.9.1.1)]{prud} \footnote{This formula, as is stated, contains many misprints. The correct version should read
\begin{align*}
\int_{x_1}^{x_2}y^{\lambda}Y_{\nu}(y)\, dy &={-1 \brace 1} \frac{\cos(\nu\pi)\G(-\nu) x^{\lambda+\nu+1}}{2^{\nu}\pi (\lambda+\nu+1)}{}_1F_{2}\left(\frac{\lambda+\nu+1}{2};1+\nu,\frac{\lambda+\nu+3}{2};-\frac{x^2}{4}\right)\nonumber\\
&\quad+{-1 \brace 1}\frac{2^{\nu}\G(\nu)x^{\lambda-\nu+1}}{\pi (\lambda-\nu+1)}{}_1F_{2}\left(\frac{\lambda-\nu+1}{2};1-\nu,\frac{\lambda-\nu+3}{2};-\frac{x^2}{4}\right)\nonumber\\
&\quad-{0 \brace 1}\frac{2^{\lambda}}{\pi}\cos\left(\frac{(\lambda-\nu+1)\pi}{2}\right)\G\left(\frac{\lambda+\nu+1}{2}\right)
\G\left(\frac{\lambda-\nu+1}{2}\right).\nonumber\\
&\quad\quad\quad\quad\quad\quad\quad\quad\quad\quad\quad\quad\quad\left[{{x_1=0, x_2=x;\hspace{2mm} \text{Re}(\lambda)>|\text{Re}(\nu)|-1} \brace {x_1=x, x_2=\infty;\hspace{2mm} \text{Re}(\lambda)<\frac{1}{2}}}\right].
\end{align*}}
and \cite[p.~47, equation (1.12.1.2)]{prud}, and then simplifying, we see that
{\allowdisplaybreaks\begin{align}\label{diffint}
&\int t^{-\frac{1}{2}s}\bigg\{\left(\frac{2}{\pi}K_{s}(4\pi\sqrt{nt})-Y_{s}(4\pi\sqrt{nt})\right)\cos\left(\frac{\pi s}{2}\right)-J_{s}(4\pi\sqrt{n t})\sin\left(\frac{\pi s}{2}\right)\bigg\}\, dt\nonumber\\
&=\frac{t^{(1-s)/2}}{4\pi\sqrt{n}\sin\left(\frac{1}{2}\pi s\right)}\left(J_{s-1}(4\pi\sqrt{nt})+J_{1-s}(4\pi\sqrt{nt})-\frac{2}{\pi}\sin(\pi s)K_{1-s}(4\pi\sqrt{nt})\right).
\end{align}}%
%
Let $f(t)\equiv 1$ and  $\beta=x\notin\mathbb{Z}$ in Theorem \ref{vorsumgen}. Then,
{allowdisplaybreaks\begin{align}\label{varlauformspl}
\sum_{j<x}\sigma_{-s}(j)&=-\frac{1}{2}\zeta(s)+\int_{0}^{x}(\zeta(1+s)+t^{-s}\zeta(1-s))\, dt\notag\\
&\quad+2\pi\sum_{n=1}^{\infty}
\sigma_{-s}(n)n^{\frac{1}{2}s}\int_{0}^{x}t^{-\frac{1}{2}s}
\bigg\{\left(\frac{2}{\pi}K_{s}(4\pi\sqrt{nt})
-Y_{s}(4\pi\sqrt{nt})\right)\nonumber\\
&\quad\times\cos\left(\frac{\pi s}{2}\right)-J_{s}(4\pi\sqrt{n t})\sin\left(\frac{\pi s}{2}\right)\bigg\}\, dt.
\end{align}}%
Note that
\begin{equation}\label{easyint}
\int(\zeta(1+s)+t^{-s}\zeta(1-s))\, dt=t\zeta(1+s)+\frac{t^{1-s}}{1-s}\zeta(1-s).
\end{equation}
Since $-\frac{1}{2}<\sigma<\frac{1}{2}$ and the right-hand sides of \eqref{diffint} and \eqref{easyint} vanish as $t$ tends to $0$, from \eqref{diffint}, \eqref{varlauformspl}, and \eqref{easyint}, we obtain \eqref{lauform}.\\

\textbf{Remark.} The analysis above also shows that for $\a>0, \a\notin\mathbb{Z}$,
{\allowdisplaybreaks\begin{align}\label{varlauformspl1}
	&\sum_{\alpha<j<x}\sigma_{-s}(j)=x\zeta(1+s)+\frac{x^{1-s}}{1-s}\zeta(1-s)-\a\zeta(1+s)-\frac{\a^{1-s}}{1-s}\zeta(1-s)\notag\\
	&+\frac{1}{2\sin\left(\frac{1}{2}\pi s\right)}\sum_{n=1}^{\infty}\frac{\sigma_{s}(n)}{n^{(s+1)/2}}
	\bigg\{x^{(1-s)/2}\biggl(J_{s-1}(4\pi\sqrt{nx})+J_{1-s}(4\pi\sqrt{nx})\notag\\
	&\hspace{1.6in}-\frac{2}{\pi}\sin(\pi s)K_{1-s}(4\pi\sqrt{nx})\biggr)\nonumber\\
	&-
	\alpha^{(1-s)/2}\left(J_{s-1}(4\pi\sqrt{n\a})+J_{1-s}(4\pi\sqrt{n\a})-\frac{2}{\pi}\sin(\pi s)K_{1-s}(4\pi\sqrt{n\a})\right)\bigg\}.
	\end{align}}%
From \eqref{lauform} and \eqref{varlauformspl1}, we conclude that, for $-\tf12< \s <\tf12$,
\begin{align*}
&\lim_{\a\to 0+}\frac{\a^{(1-s)/2}}{\sin\left(\frac{1}{2}\pi s\right)}\sum_{n=1}^{\infty}\frac{\sigma_{s}(n)}{n^{(s+1)/2}}
\biggl(J_{s-1}(4\pi\sqrt{n\a})+J_{1-s}(4\pi\sqrt{n\a})\notag\\
&\hspace{1.6in}-\frac{2}{\pi}\sin(\pi
s)K_{1-s}(4\pi\sqrt{n\a})\biggr)=\zeta(s),
\end{align*}
which is likely to be difficult to prove directly.




\section{Proof of Theorem \ref{varlauform}}\label{sect6}
We begin with a result due to H.~Cohen \cite[Theorem 3.4]{cohen}.

\begin{theorem}\label{cohen}
Let $x>0$ and $s\notin\mathbb{Z}$, where $\sigma\geq 0$ \footnote{As
  mentioned in \cite{cohen}, the condition $\sigma\geq 0$ is not restrictive
  since, because of \eqref{K2}, the left side of the identity in this theorem
  is invariant under the replacement of $s$ by $-s$.}. Then, for any integer $k$
such that $k\geq\lfloor\left(\sigma+1\right)/2\rfloor$,
{\allowdisplaybreaks\begin{align}\label{cohenres}
&8\pi x^{s/2}\sum_{n=1}^{\infty}\sigma_{-s}(n)n^{s/2}K_{s}(4\pi\sqrt{n x})=
A(s, x)\zeta(s)+B(s, x)\zeta(s+1)\\
&\quad+\frac{2}{\sin\left(\pi s/2\right)}\left(\sum_{1\leq j\leq k}\zeta(2j)\zeta(2j-s)x^{2j-1}+x^{2k+1}\sum_{n=1}^{\infty}\sigma_{-s}(n)\frac{n^{s-2k}-x^{s-2k}}{n^2-x^2}\right),\notag
\end{align}}%
where
\begin{align}\label{ab}
A(s, x)&=\frac{x^{s-1}}{\sin\left(\pi s/2\right)}-(2\pi)^{1-s}\Gamma(s),\nonumber\\
B(s, x)&=\frac{2}{x}(2\pi)^{-s-1}\Gamma(s+1)-\frac{\pi x^{s}}{\cos\left(\pi s/2\right)}.
\end{align}
\end{theorem}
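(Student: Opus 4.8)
This is a classical Mellin--Barnes argument, which I would carry out as follows. Using the Mellin transform $\int_0^{\infty}K_s(t)t^{w-1}\,dt=2^{w-2}\G\bigl(\tfrac{w-s}{2}\bigr)\G\bigl(\tfrac{w+s}{2}\bigr)$, valid for $\Re w>\sigma$, I would write $K_s(4\pi\sqrt{nx})$ as an inverse Mellin integral, insert it into the left-hand side of \eqref{cohenres}, interchange summation and integration (justified for $\Re w$ large by Stirling's formula \eqref{strivert}, the bound \eqref{zetab}, and absolute convergence of $\sum_n\sigma_{-s}(n)n^{-\nu}$), and collapse the resulting Dirichlet series by means of \eqref{sz}. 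After the substitution $w=2v+s$ this yields
\[
8\pi x^{s/2}\sum_{n=1}^{\infty}\sigma_{-s}(n)n^{s/2}K_s(4\pi\sqrt{nx})
=\frac{4\pi}{(2\pi)^s}\cdot\frac{1}{2\pi i}\int_{(c')}\G(v)\G(v+s)\zeta(v)\zeta(v+s)(4\pi^2x)^{-v}\,dv,
\]
with $c'$ large.

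I would then shift the line of integration leftward to a line $\Re v=\lambda'$ satisfying $-2k-1<\lambda'<-2k+1$ and, crucially, $-\sigma-1\le\lambda'<-\sigma$; such a $\lambda'$ exists exactly when $k=\lfloor(\sigma+1)/2\rfloor$, which is why one first treats the minimal admissible $k$. The horizontal segments contribute nothing in the limit, again by \eqref{strivert} and \eqref{zetab}, and since $s\notin\mathbb{Z}$ all poles crossed are simple. The residue at $v=1$ is $\dfrac{2}{x}(2\pi)^{-s-1}\G(s+1)\zeta(s+1)$ and that at $v=-s$ is $-\dfrac{\pi x^s}{\cos(\pi s/2)}\zeta(s+1)$, together yielding $B(s,x)\zeta(s+1)$; the residues at $v=1-s$ and $v=0$ are $\dfrac{x^{s-1}}{\sin(\pi s/2)}\zeta(s)$ and $-(2\pi)^{1-s}\G(s)\zeta(s)$, together yielding $A(s,x)\zeta(s)$ --- these identifications use \eqref{fe} (once directly, and once with argument $s+1$) together with the reflection formula \eqref{ref}. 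The poles at $v=-2j$, $j\ge1$, contribute nothing because $\zeta(-2j)=0$, while at $v=-(2j-1)$ for $1\le j\le k$ the functional equation converts the residue into $\dfrac{2}{\sin(\pi s/2)}\zeta(2j)\zeta(2j-s)x^{2j-1}$, producing the finite sum in \eqref{cohenres}.

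The heart of the argument is the integral left on $\Re v=\lambda'$. Applying \eqref{fe} twice turns $\G(v)\zeta(v)\,\G(v+s)\zeta(v+s)$ into $\dfrac{(2\pi)^{v}\zeta(1-v)}{2\cos(\pi v/2)}\cdot\dfrac{(2\pi)^{v+s}\zeta(1-v-s)}{2\cos(\pi(v+s)/2)}$, after which the powers of $2\pi$ cancel against $(4\pi^2x)^{-v}$ and the integrand becomes $\dfrac{\pi\,\zeta(1-v)\zeta(1-v-s)}{\cos(\pi v/2)\cos(\pi(v+s)/2)}\,x^{-v}$. Because $\Re v=\lambda'$ lies far to the left, $\zeta(1-v)\zeta(1-v-s)=\sum_{N\ge1}\sigma_{-s}(N)N^{v+s-1}$ converges absolutely, and after interchanging sum and integral the problem reduces to computing, for $y=N/x$,
\[
J(y)=\frac{1}{2\pi i}\int_{(\lambda')}\frac{y^{v}\,dv}{\cos(\pi v/2)\cos(\pi(v+s)/2)}.
\]
This is done by closing the contour to the left when $y>1$ and to the right when $y<1$: the poles lie at the odd integers $v=2\ell-1$, where the residue of the integrand is $\dfrac{2\,y^{2\ell-1}}{\pi\sin(\pi s/2)}$, and at $v=2\ell-1-s$, where it is $-\dfrac{2\,y^{2\ell-1-s}}{\pi\sin(\pi s/2)}$; summing the two resulting geometric series gives, in both cases, $J(y)=\dfrac{2}{\pi\sin(\pi s/2)}\cdot\dfrac{y^{1-2k}-y^{1-s}}{y^{2}-1}$. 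Reassembling, $\pi\sum_{N\ge1}\sigma_{-s}(N)N^{s-1}J(N/x)$ simplifies to $\dfrac{2}{\sin(\pi s/2)}\,x^{2k+1}\sum_{N\ge1}\sigma_{-s}(N)\dfrac{N^{s-2k}-x^{s-2k}}{N^2-x^2}$, which is exactly the remaining term in \eqref{cohenres}. To reach every $k\ge\lfloor(\sigma+1)/2\rfloor$ one then notes that the right-hand side of \eqref{cohenres} is independent of $k$, since the difference of the $k+1$ and $k$ versions collapses, via $\sum_{n\ge1}\sigma_{-s}(n)n^{s-2k-2}=\zeta(2k+2)\zeta(2k+2-s)$ (again a case of \eqref{sz}), to zero. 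The main obstacle is this evaluation of the shifted integral --- recognizing that, after the twofold use of the functional equation, the far-left integrand re-expands as a Dirichlet series whose termwise inverse Mellin transforms assemble into the kernel $\frac{N^{s-2k}-x^{s-2k}}{N^2-x^2}$; the attendant bookkeeping of pole positions, ensuring that the spurious poles $v=-s-m$ with $m\ge1$ stay to the left of $\lambda'$, is the technical subtlety that fixes the admissible range of $k$.
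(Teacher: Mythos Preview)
The paper does not actually prove this statement: Theorem~\ref{cohen} is quoted verbatim from Cohen's paper \cite[Theorem~3.4]{cohen} at the start of Section~\ref{sect6}, and the authors immediately proceed to use it (via the analytic continuation remark that follows) without supplying any argument of their own. So there is no ``paper's own proof'' to compare against.

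That said, your Mellin--Barnes argument is correct and is essentially the standard route to such identities. I checked the key steps: the integral representation after the substitution $w=2v+s$ is right; the four residues at $v=1,\,1-s,\,0,\,-s$ do combine (via \eqref{fe}, \eqref{ref}, and $\zeta(0)=-\tfrac12$) into $A(s,x)\zeta(s)+B(s,x)\zeta(s+1)$; the residues at $v=-(2j-1)$ yield $\tfrac{2}{\sin(\pi s/2)}\zeta(2j)\zeta(2j-s)x^{2j-1}$ after the twofold use of \eqref{fe} and the reflection formula; and your evaluation of $J(y)$ by closing the contour left or right according as $y\gtrless 1$ gives the stated rational kernel in both cases, with the geometric series summing to $\tfrac{2}{\pi\sin(\pi s/2)}\cdot\tfrac{y^{1-2k}-y^{1-s}}{y^{2}-1}$. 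The passage from the minimal $k$ to all larger $k$ via the telescoping identity (using \eqref{sz}) is also fine. One small point of exposition: when you say a suitable $\lambda'$ exists ``exactly when $k=\lfloor(\sigma+1)/2\rfloor$'', the precise condition is $\tfrac{\sigma-1}{2}<k<\tfrac{\sigma+2}{2}$, which for generic $\sigma$ may allow two integers; what matters is that the minimal admissible $k=\lfloor(\sigma+1)/2\rfloor$ always lies in this window, and you might phrase it that way.
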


By analytic continuation, the identity in Theorem \ref{cohen} is valid not only for $x>0$ but for $-\pi<\arg x<\pi$. Take $k=1$ in \eqref{cohenres}. The condition $\lfloor\left(\sigma+1\right)/2\rfloor\leq 1$ implies that $0\leq\sigma <3$. We consider $0\leq\s<\tf12$. Note that Koshliakov \cite{koshliakov}  proved the case $s=0$, and the theorem follows for the remaining values of $\s$, i.e., for $-\tf12<\s<0$, by the invariance noted in the previous footnote.

Replace $x$ by $iz$ in \eqref{cohenres} for $-\pi<\arg z<\frac{1}{2}\pi$, and then by $-iz$ for $-\frac{1}{2}\pi<\arg z<\pi$. Now add the resulting two identities and simplify, so that for $-\tfrac{1}{2}\pi<\arg z<\tfrac{1}{2}\pi$,
\begin{align}\label{one}
\Lambda(z,s)=\Phi(z,s),
\end{align}
where
\begin{equation}\label{Lambda}
\Lambda(z,s):=z^{-s/2}\varphi(z, s),
\end{equation}
with $\varphi(x, s)$ defined in \eqref{plus}, and
\begin{equation}\label{Phi}
\Phi(z,s):=-(2\pi z)^{-s}\Gamma(s)\zeta(s)+\frac{\zeta(s)}{2\pi z}-\frac{1}{2}\zeta(1+s)+\frac{z}{\pi}\sum_{n=1}^{\infty}\frac{\sigma_{-s}(n)}{z^2+n^2}.
\end{equation}
As a function of $z$, $\Phi(z, s)$ is analytic in the entire complex plane except on the negative real axis and at $z= in, n\in\mathbb{Z}$. Hence,
$\Phi(iz, s)$ is analytic in the entire complex plane except on the positive imaginary axis and at $z\in\mathbb{Z}$. Similarly, $\Phi(-iz, s)$ is analytic in the entire complex plane except on the negative imaginary axis and at $z= n\in\mathbb{Z}$. This implies that $\Phi(iz, s)+\Phi(-iz, s)$ is analytic in both the left and right half-planes, except possibly when $z$ is an integer. However, it is easy to see that
\begin{equation*}
\lim_{z\to\pm n}(z\mp n)\Phi(iz,s)=\frac{1}{2\pi i}\sigma_{-s}(n)\hspace{2mm}\text{and}\hspace{2mm} \lim_{z\to\pm n}(z\mp n)\Phi(-iz,s)=-\frac{1}{2\pi i}\sigma_{-s}(n),
\end{equation*}
so that
\begin{equation*}
\lim_{z\to\pm n}(z\mp n)\left(\Phi(iz, s)+\Phi(-iz, s)\right)=0.
\end{equation*}
In particular, this implies that $\Phi(iz, s)+\Phi(-iz, s)$ is analytic in
the entire right half-plane.

Now observe that for $z$ inside an interval $(u,v)$ on the positive real line
not containing any integer, we have, using the definition \eqref{Phi},
\begin{align}\label{mainreln}
\Phi(iz, s)+\Phi(-iz, s)=-2(2\pi
z)^{-s}\Gamma(s)\zeta(s)\cos\left(\tfrac{1}{2}
\pi s\right)-\zeta(1+s).
\end{align}
Since both $\Phi(iz, s)+\Phi(-iz, s)$ and $-2(2\pi
z)^{-s}\Gamma(s)\zeta(s)\cos\left(\tfrac{1}{2}\pi s\right)-\zeta(1+s)$ are
analytic in the right half-plane as functions of $z$, by analytic
continuation, the identity \eqref{mainreln} holds for any $z$ in the right
half-plane. Finally, using  the functional equation \eqref{fe} for $\zeta(s)$, we can simplify \eqref{mainreln} to deduce that, for
$-\tfrac{1}{2}\pi<\arg z<\tfrac{1}{2}\pi$,
\begin{align}\label{mainreln1}
\Phi(iz, s)+\Phi(-iz, s)=-z^{-s}\zeta(1-s)-\zeta(1+s).
\end{align}
Next, let $f$ be an analytic function of $z$ within a closed contour
intersecting the real axis in $\alpha$ and $\beta$, where $0<\alpha<\beta$,
$m-1<\alpha<m$, $n<\b<n+1$, and $m, n\in\mathbb{Z}$. Let $\gamma_1$ and
$\gamma_2$  denote the portions of the contour in the upper and
lower half-planes, respectively, so that the notations $\alpha\gamma_1\beta$
and $\alpha\gamma_2\beta$, for example, denote paths from $\alpha$ to $\beta$
in the upper and lower half-planes, respectively. By the residue theorem,
\begin{equation*}
\frac{1}{2\pi i}\int_{\alpha\gamma_2\beta\gamma_1\alpha}f(z)\Phi(iz,s)\,
dz=\sum_{\alpha<j<\beta}R_j(f(z)\Phi(iz,s)).
\end{equation*}
 Since $f(z)\Phi(iz,s)$ has a simple pole at each integer $j$,
 $\alpha<j<\beta$, with residue $\frac{1}{2\pi i}\sigma_{s}(j)f(j)$, we find
 that
\begin{align*}
&\sum_{\alpha<j<\beta}\sigma_{-s}(j)f(j)=\int_{\alpha\gamma_2\beta}f(z)\Phi(iz,s)\, dz-\int_{\alpha\gamma_1\beta}f(z)\Phi(iz,s)\, dz\nonumber\\
&=\int_{\alpha\gamma_2\beta}f(z)\Phi(iz,s)\, dz-\int_{\alpha\gamma_1\beta}f(z)\left(-\Phi(-iz,s)-z^{-s}\zeta(1-s)-\zeta(1+s)\right)\, dz\nonumber\\
&=\int_{\alpha\gamma_2\beta}f(z)\Phi(iz,s)\, dz+\int_{\alpha\gamma_1\beta}f(z)\Phi(-iz,s)\, dz
+\int_{\alpha\gamma_1\beta}f(z)\left(z^{-s}\zeta(1-s)+\zeta(1+s)\right)\, dz,
\end{align*}
where in the penultimate step, we used \eqref{mainreln1}. Using the residue
theorem again, we readily see that
\begin{equation*}
\int_{\alpha\gamma_1\beta}f(z)\left(z^{-s}\zeta(1-s)+\zeta(1+s)\right)\,
dz=\int_{\alpha}^{\beta}f(t)\left(\zeta(1+s)+t^{-s}\zeta(1-s)\right)\, dt.
\end{equation*}
Since $\Lambda(z, s)=\Phi(z, s)$ for $-\tfrac{1}{2}\pi<\arg
z<\tfrac{1}{2}\pi$, it is easy to see that $\Lambda(iz, s)=\Phi(iz, s)$, for
$-\pi<\arg z<0$, and $\Lambda(-iz, s)=\Phi(-iz, s)$, for $0<\arg
z<\pi$. Thus,
\begin{align}\label{genvor1}
\sum_{\alpha<j<\beta}\sigma_{-s}(j)f(j)&=\int_{\alpha\gamma_2\beta}f(z)\Lambda(iz,s)\,
dz+\int_{\alpha\gamma_1\beta}f(z)\Lambda(-iz,s)\, dz\nonumber\\
&\quad+\int_{\alpha}^{\beta}f(t)\left(\zeta(1+s)+t^{-s}\zeta(1-s)\right)\, dt.
\end{align}
Using the asymptotic expansion \eqref{asymbess2}, we see that the series
\begin{align*}
\Lambda(iz,s)=2(iz)^{-\frac{s}{2}}\sum_{n=1}^{\infty}\sigma_{-s}(n)n^{\frac{1}{2}s}\left(e^{i\pi
      s/4}K_{s}\left(4\pi
    e^{i\pi/4}\sqrt{inz}\right)\right.
\left.+e^{-i\pi s/4}K_{s}\left(4\pi
    e^{-i\pi/4}\sqrt{inz}\right)\right)
\end{align*}
is uniformly convergent in compact subintervals of $-\pi<\arg z<0$, and the
series
\begin{align*}
\Lambda(-iz,s)=2(-iz)^{-\frac{1}{2}s}\sum_{n=1}^{\infty}\sigma_{-s}(n)n^{\frac{1}{2}s}&\left(e^{i\pi
      s/4}K_{s}\left(4\pi e^{i\pi/4}\sqrt{-inz}\right)
\right.\notag\\&\left.+e^{-i\pi s/4}K_{s}\left(4\pi
    e^{-i\pi/4}\sqrt{-inz}\right)\right)
\end{align*}
is uniformly convergent in compact subsets of $0<\arg z<\pi$. Thus,
interchanging the order of summation and integration in \eqref{genvor1}, we
deduce that
{\allowdisplaybreaks\begin{align*}
\sum_{\alpha<j<\beta}\sigma_{-s}(j)f(j)
&=2\sum_{n=1}^{\infty}\sigma_{-s}(n)n^{\frac{1}{2}s}\int_{\alpha\gamma_2\beta}f(z)(iz)^{-\frac{1}{2}s}
\left(e^{i\pi s/4}K_{s}\left(4\pi e^{i\pi/4}\sqrt{inz}\right)
\right.\notag\\
&\left.\hspace{1.9in}+e^{-i\pi s/4}K_{s}\left(4\pi e^{-i\pi/4}\sqrt{inz}\right)\right)\, dz\nonumber\\
&\quad+2\sum_{n=1}^{\infty}\sigma_{-s}(n)n^{\frac{1}{2}s}
\int_{\alpha\gamma_1\beta}f(z)(-iz)^{-\frac{1}{2}s}\left(e^{i\pi
      s/4}K_{s}\left(4\pi
    e^{i\pi/4}\sqrt{-inz}\right)\right.\notag\\
&\left.\hspace{1.9in}+e^{-i\pi s/4}K_{s}\left(4\pi
    e^{-i\pi/4}\sqrt{-inz}\right)\right)\, dz\nonumber\\
&\quad+\int_{\alpha}^{\beta}f(t)\left(\zeta(1+s)+t^{-s}\zeta(1-s)\right)\, dt.
\end{align*}}%
Employing the residue theorem again, this time for each of the integrals
inside the two sums,  and simplifying, we find that
\begin{align}\label{genvor3}
&\sum_{\alpha<j<\beta}\sigma_{-s}(j)f(j)
=2\sum_{n=1}^{\infty}\sigma_{-s}(n)n^{\frac{1}{2}s}\notag\\&\quad\times\int_{\alpha}^{\beta}
t^{-\frac{1}{2}s}f(t)\bigg(K_{s}\left(4\pi i\sqrt{nt}\right)+K_{s}\left(-4\pi i\sqrt{nt}\right)+2\cos\left(\frac{\pi s}{2}\right)K_{s}\left(4\pi\sqrt{nt}\right)\bigg)\, dt\nonumber\\
&\quad+\int_{\alpha}^{\beta}f(t)\left(\zeta(1+s)+t^{-s}\zeta(1-s)\right)\, dt.
\end{align}
Note that for $-\pi<\arg z\leq \tfrac{1}{2}\pi$, the modified Bessel function
$K_{\nu}(z)$ is related to the Hankel function $H_{\nu}^{(1)}(z)$ by
\cite[p.~911, formula \textbf{8.407.1}]{grn}
\begin{equation}\label{besshank}
K_{\nu}(z)=\frac{\pi i}{2}e^{\nu\pi i/2}H_{\nu}^{(1)}(iz),
\end{equation}
where the Hankel function is defined by \cite[p.~911, formula \textbf{8.405.1}]{grn}
\begin{align}\label{hankel}
H_{\nu}^{(1)}(z):=J_{\nu}(z)+iY_{\nu}(z).
\end{align}
Employing the relations \eqref{besshank} and \eqref{hankel}, we have, for $x>0$,
\begin{align}\label{besshank1}
K_{s}(ix)+K_{s}(-ix)&=\frac{\pi i}{2}e^{i\pi s/2}\left(H_{s}^{(1)}(-x)+H_{s}^{(1)}(x)\right)\nonumber\\
&=\frac{\pi i}{2}e^{i\pi s/2}\left\{\left(J_{s}(x)+J_{s}(-x)\right)+i\left(Y_{s}(x)+Y_{s}(-x)\right)\right\}.
\end{align}
For $m\in\mathbb{Z}$ \cite[p.~927, formulas \textbf{8.476.1, 8.476.2}]{grn}
\begin{align}
J_{\nu}(e^{m\pi i}z)&=e^{m\nu\pi i}J_{\nu}(z),\label{be}\\
Y_{\nu}(e^{m\pi i}z)&=e^{-m\nu\pi i}Y_{\nu}(z)+2i\sin\left(m\nu\pi\right)\cot\left(\nu\pi\right)J_{\nu}(z).\label{bessreln}
\end{align}
Using the relations \eqref{be} and \eqref{bessreln} with $m=1$, we can simplify \eqref{besshank1} and put it in the form
{\allowdisplaybreaks\begin{align}\label{besshank2}
&K_{s}(ix)+K_{s}(-ix)\notag\\
&=\frac{\pi i}{2}e^{i\pi s/2}\left\{\left(J_{s}(x)+e^{i\pi s}J_{s}(x)\right)+i\left(Y_{s}(x)+e^{-i\pi s}Y_{s}(x)+2i\cos\left(\pi s\right)J_{s}(x)\right)\right\}\nonumber\\
&=\frac{\pi i}{2}e^{i\pi s/2}\left\{\left(1-e^{-i\pi s}\right)J_{s}(x)+i\left(1+e^{-i\pi s}\right)Y_{s}(x)\right\}\nonumber\\
&=-\pi\left(J_{s}(x)\sin\left(\frac{\pi s}{2}\right)+Y_{s}(x)\cos\left(\frac{\pi s}{2}\right)\right).
\end{align}}%
Now replace $x$ by $4\pi\sqrt{nt}$ in \eqref{besshank2} and substitute in
\eqref{genvor3} to obtain \eqref{varlauform1} after simplification. This
completes the proof.

\section{Proof of Theorem \ref{voronoisumgen}}
For any integer $\lambda$, define
\begin{align}\label{capg}
G_{\lambda+s}(z):=-J_{\lambda+s}(z)\sin\left(\frac{\pi s}{2}\right)-\left(Y_{\lambda+s}(z)-(-1)^{\l}\frac{2}{\pi}K_{\lambda+s}(z)\right)\cos\left(\frac{\pi s}{2}\right)
\end{align}
and
\begin{align}\label{capf}
	F_{\lambda+s}(z):=-J_{\lambda+s}(z)\sin\left(\frac{\pi s}{2}\right)-\left(Y_{\lambda+s}(z)+(-1)^{\l}\frac{2}{\pi}K_{\lambda+s}(z)\right)\cos\left(\frac{\pi s}{2}\right).
\end{align}
\textbf{Remark.} Throughout this section, we keep $s$ fixed such that $-\tf12<\s<\tf12$. So
while interpreting $F_{s+\l}(z)$ or $G_{s+\l}(z)$, care should be taken to not conceive them as functions obtained
after replacing $s$ by $s+\l$ in $F_s(z)$ or $G_s(z)$, but instead as those where $s$ remains fixed and only $\l$ varies.

From \cite[pp.~66, 79]{watsonbessel} we have
\begin{align}
\frac{d}{dz}\left\{z^{\nu}J_{\nu}(z)\right\}&=z^{\nu}J_{\nu-1}(z),\label{jd}\\
\frac{d}{dz}\left\{z^{\nu}K_{\nu}(z)\right\}&=-z^{\nu}K_{\nu-1}(z),\label{kd}\\
\frac{d}{dz}\left\{z^{\nu}Y_{\nu}(z)\right\}&=z^{\nu}Y_{\nu-1}(z).\label{yd}
\end{align}
Using \eqref{jd}, \eqref{kd}, and \eqref{yd} we deduce that
\begin{align}\label{gtransform}
\frac{d}{dt}\bigg\{\left(\frac{t}{u}\right)^{(s+\lambda)/2}G_{s+\lambda}(4\pi \sqrt{tu})\bigg\}=2\pi\left(\frac{t}{u}\right)^{(s+\lambda-1)/2}G_{s+\lambda-1}(4\pi \sqrt{tu}),
\end{align}
for $u>0$.
Similarly,
\begin{align}\label{ftransform}
\frac{d}{dt}\bigg\{\left(\frac{t}{u}\right)^{(s+\lambda)/2}F_{s+\lambda}(4\pi \sqrt{tu})\bigg\}=2\pi\left(\frac{t}{u}\right)^{(s+\lambda-1)/2}F_{s+\lambda-1}(4\pi \sqrt{tu}),
\end{align}
for $u>0$.

From \eqref{gddp} and \eqref{lauform}, recall the definition
\begin{align*}
\Delta_{-s}(x)&=\frac{x}{2\sin\left(\frac{1}{2}\pi s\right)}\sum_{n=1}^{\infty}\sigma_{s}(n)\left(\sqrt{nx}\right)^{-1-s}\nonumber\\
&\quad\times\left(J_{s-1}(4\pi\sqrt{nx})+J_{1-s}(4\pi\sqrt{nx})-\frac{2}{\pi}\sin(\pi s)K_{1-s}(4\pi\sqrt{nx})\right),
\end{align*}
for $-\tf12<\s <\tf12$ and $x>0$. If we replace $s$ by $-s$ in the equation above and use \eqref{yj}, we find by a straightforward computation that
\begin{align}\label{delsxat}
\Delta_s(x)=\sum_{n=1}^{\infty}\left(\frac{x}{n}\right)^{(s+1)/2}\s_s(n)G_{s+1}(4\pi \sqrt{nx}),
\end{align}
for $-\tf12< \s <\tf12$ and $x>0$. Fix $x>0$. By the asymptotic expansions of Bessel functions \eqref{asymbess}, \eqref{asymbess1}, and \eqref{asymbess2}, there exists a sufficiently large integer $N_0$ such that
\begin{align}\label{Gasym}
G_{\nu}(4\pi\sqrt{nx})\ll_{\nu}\frac{1}{(nx)^{1/4}}\quad\text{and}\quad F_{\nu}(4\pi\sqrt{nx})\ll_{\nu}\frac{1}{(nx)^{1/4}},
\end{align}
for all $n>N_0$. Hence, for $-\tf12<\s <\tf12$ and $x>0$,
\begin{align*}
\sum_{n>N_0}\left(\frac{x}{n}\right)^{\lambda+\frac{1}{2}s}\s_s(n)G_{s+2\lambda}(4\pi \sqrt{nx})\ll x^{\lambda+(2\s-1)/4}\sum_{n>N_0}\frac{\s_{\s}(n)}{n^{\lambda+(1+2\s)/4}}\ll x^{\lambda+(2\s-1)/4},
\end{align*}
provided that $2\lambda>|\s|+\tfrac{3}{2}$.
Therefore, for $\lambda\geq 1$, $-\tf12<\s <\tf12$, and $x>0$, the
series
\begin{align*}
\sum_{n=1}^{\infty}\left(\frac{x}{n}\right)^{\lambda+\frac{1}{2}s}\s_s(n)G_{s+2\lambda}(4\pi \sqrt{nx})
\end{align*}
is absolutely convergent. Similarly, for $\lambda\geq 1$, $-\tf12<\s <\tf12$, and $x>0$, the
series
\begin{align*}
\sum_{n=1}^{\infty}\left(\frac{x}{n}\right)^{\lambda+\frac{s}{2}}\s_s(n)F_{s+2\lambda}(4\pi \sqrt{nx})
\end{align*}
is absolutely convergent.
Denote
\begin{align}\label{defD}
D_s(x):=\sideset{}{'}\sum_{n\leq x}\s_s(n)
\end{align}		
and
\begin{align}\label{defph}
\Phi_s(x):=x\z(1-s)+\frac{x^{1+s}}{1+s}\z(1+s)-\frac{1}{2}\z(-s).
\end{align}
Therefore, from \eqref{lauform}, we write
\begin{align}\label{Dphidel}
D_s(x)=\Phi_s(x)+\Delta_s(x)
\end{align}
for $-\tf12<\s <\tf12$.

The following lemmas are key ingredients in the proof of Theorem \ref{voronoisumgen}. They are special cases of
two results in \cite{wiltonextended}. We note, however, that the definitions of $G$ and $F$ in \cite{wiltonextended} are different from those in \eqref{capg} and \eqref{capf} that we use.

\begin{lemma}\label{lemtail}
If $x>0$, $N>0$, and $-\tf12<\s <\tf12$, then
{\allowdisplaybreaks\begin{align}\label{talisdelx}
\Delta_s(x)&=\sideset{}{'}\sum_{n=1}^{N}\left(\frac{x}{n}\right)^{(s+1)/2}\s_s(n)G_{s+1}(4\pi \sqrt{nx})-\left(\frac{x}{N}\right)^{(s+1)/2}G_{s+1}(4\pi \sqrt{Nx})\Delta_s(N)\nonumber\\
&\quad+\frac{N^s\zeta(1+s)+\zeta(1-s)}{2\pi}\left(\frac{x}{N}\right)^{s/2}F_{s}(4\pi \sqrt{Nx})\nonumber\\
&\quad+\frac{s\zeta(1+s)}{2\pi}\int_{N}^{\infty}\left(\frac{x}{t}\right)^{s/2}F_{s}(4\pi \sqrt{xt})t^{s-1}\,dt\\
&\quad+2\pi\sum_{n=1}^{\infty}\s_s(n)\int_{N}^{\infty}\left(\frac{x}{t}\right)^{(s+2)/2}F_{s+2}(4\pi \sqrt{xt})\left(\frac{t}{n}\right)^{(s+1)/2}G_{s+1}(4\pi \sqrt{nt})\,dt.\notag
\end{align}}
\end{lemma}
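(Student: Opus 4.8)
The plan is to start from the representation \eqref{delsxat}, namely $\Delta_s(x)=\sum_{n=1}^{\infty}(x/n)^{(s+1)/2}\s_s(n)G_{s+1}(4\pi\sqrt{nx})$, and split the sum at $N$, writing $\Delta_s(x)=\sideset{}{'}\sum_{n=1}^{N}(x/n)^{(s+1)/2}\s_s(n)G_{s+1}(4\pi\sqrt{nx})+T_N(x)$, where $T_N(x)$ is the tail over $n>N$. The heart of the argument is to re-express the tail $T_N(x)$ as a Stieltjes integral against $D_s(t)=\sideset{}{'}\sum_{n\le t}\s_s(n)$ and then integrate by parts, using \eqref{Dphidel} to substitute $D_s(t)=\Phi_s(t)+\Delta_s(t)$. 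First I would write $T_N(x)=\int_{N}^{\infty}(x/t)^{(s+1)/2}G_{s+1}(4\pi\sqrt{xt})\,dD_s(t)$; the absolute convergence needed to justify this (and all subsequent interchanges) comes from the estimates \eqref{Gasym} together with $\s_\s(n)\ll n^{\e}$, exactly as in the convergence discussion preceding \eqref{defD}.

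Next I would integrate by parts. The boundary term at $t=N$ produces $-(x/N)^{(s+1)/2}G_{s+1}(4\pi\sqrt{Nx})D_s(N)$, and splitting $D_s(N)=\Phi_s(N)+\Delta_s(N)$ separates off the stated $-(x/N)^{(s+1)/2}G_{s+1}(4\pi\sqrt{Nx})\Delta_s(N)$ term, leaving a $\Phi_s(N)$-piece; the boundary term at $t=\infty$ vanishes by \eqref{Gasym} and the polynomial growth of $\Phi_s(t)$. The integral term becomes $-\int_N^\infty D_s(t)\,\frac{d}{dt}\{(x/t)^{(s+1)/2}G_{s+1}(4\pi\sqrt{xt})\}\,dt$. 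Here I invoke the key differentiation identity \eqref{gtransform}: with $\nu=s+1$ and $\lambda$ chosen so that $s+\lambda=s+1$, it gives $\frac{d}{dt}\{(t/u)^{(s+1)/2}G_{s+1}(4\pi\sqrt{tu})\}=2\pi(t/u)^{s/2}G_{s}(4\pi\sqrt{tu})$ — but note we have $(x/t)^{(s+1)/2}$ with $t$ in the denominator, so I must apply \eqref{ftransform} instead (or rewrite), which converts a falling power of $t$ and turns $G$ into $F$ with a sign; this is precisely why $F_{s+2}$ and $F_s$, not $G$'s, appear on the right side of \eqref{talisdelx}. Concretely, $\frac{d}{dt}\{(x/t)^{(s+2)/2}F_{s+2}(4\pi\sqrt{xt})\}=-2\pi(x/t)^{(s+1)/2}\cdot(\text{something})$, so one identifies $(x/t)^{(s+1)/2}G_{s+1}(4\pi\sqrt{xt})$ — wait, one needs the relation the other way; I will use the antiderivative form of \eqref{ftransform} to write $(x/t)^{(s+1)/2}G_{s+1}(4\pi\sqrt{xt})$ itself as $-\frac{1}{2\pi}\frac{d}{dt}\{(x/t)^{s/2}\,(\text{lower-index }F)\}$ — more carefully, since \eqref{gtransform}/\eqref{ftransform} relate consecutive indices, and $\int_N^\infty$ of a derivative is a boundary evaluation, the correct bookkeeping is: substitute $D_s(t)=\Phi_s(t)+\Delta_s(t)$ under the integral, handle the $\Phi_s(t)$-part by direct integration using \eqref{ftransform} (which, since $\Phi_s$ is an explicit elementary combination of powers, yields after one integration by parts the term $\frac{N^s\zeta(1+s)+\zeta(1-s)}{2\pi}(x/N)^{s/2}F_s(4\pi\sqrt{Nx})$ plus the remaining integral $\frac{s\zeta(1+s)}{2\pi}\int_N^\infty (x/t)^{s/2}F_s(4\pi\sqrt{xt})t^{s-1}\,dt$), and handle the $\Delta_s(t)$-part by inserting \eqref{delsxat} for $\Delta_s(t)$, interchanging sum and integral, and recognizing $\int_N^\infty (x/t)^{(s+2)/2}F_{s+2}(4\pi\sqrt{xt})(t/n)^{(s+1)/2}G_{s+1}(4\pi\sqrt{nt})\,dt$ as exactly the last sum in \eqref{talisdelx}.

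The main obstacle I anticipate is twofold: first, getting the index shifts and signs in \eqref{gtransform}/\eqref{ftransform} exactly right when the power of $t$ sits in the denominator (so that one genuinely lands on $F_{s+2}$ and $F_s$ with the displayed coefficients), and second, justifying rigorously the interchange of the infinite sum $\sum_n \s_s(n)(\cdots)$ with the improper integral $\int_N^\infty$ in the final step — this requires a uniform tail bound on $\int_N^\infty |F_{s+2}(4\pi\sqrt{xt})|\,|G_{s+1}(4\pi\sqrt{nt})|\,t^{\,\text{(power)}}\,dt$ that decays fast enough in $n$ to be summed against $\s_s(n)$, which one obtains from \eqref{Gasym} (the product of two $(nt)^{-1/4}$-type factors giving $t^{-1/2}$ decay, integrable since the surviving power of $t$ from $\Phi_s$'s contribution and the prefactors works out to an integrable exponent for $-\tf12<\s<\tf12$). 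Once these estimates are in place, the identity \eqref{talisdelx} follows by collecting the boundary terms and the three integral contributions; since the lemma is stated as a special case of a result in \cite{wiltonextended}, modulo the differing definitions of $F$ and $G$ noted in the text, I would also remark that one may alternatively deduce it by translating that reference's conventions into \eqref{capg}--\eqref{capf}.
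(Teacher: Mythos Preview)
The paper's proof of this lemma is a one-line citation: it simply takes $\lambda=0$, $\kappa=1$, $\theta=1$ in Theorem~2 of Wilton \cite[p.~404]{wiltonextended} and translates notation. Your closing remark that one could ``alternatively deduce it by translating that reference's conventions'' is therefore not an alternative at all --- it \emph{is} the paper's proof.

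Your direct argument via Stieltjes integration and integration by parts is the natural way to establish such a formula from scratch, and the outline is correct: the derivative identity
\[
\frac{d}{dt}\Bigl\{(x/t)^{s/2}F_s(4\pi\sqrt{xt})\Bigr\}=-2\pi(x/t)^{(s+1)/2}G_{s+1}(4\pi\sqrt{xt})
\]
(the ``descending'' companion to \eqref{gtransform}/\eqref{ftransform}, with the $G\leftrightarrow F$ flip coming from the $(-1)^\lambda$ in front of the $K$-term) is exactly what produces the $F_s$ and $F_{s+2}$ contributions from the $\Phi_s$ and $\Delta_s$ pieces respectively, and your boundary-term bookkeeping matches the displayed terms in \eqref{talisdelx}.

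There is, however, a genuine gap in your justification of the final step. You assert that the interchange of $\sum_n\sigma_s(n)$ with $\int_N^\infty$ follows from the crude bound \eqref{Gasym}. It does not: the $t$-power from the prefactors is $t^{-(s+2)/2}\cdot t^{(s+1)/2}=t^{-1/2}$, and each of $F_{s+2}(4\pi\sqrt{xt})$, $G_{s+1}(4\pi\sqrt{nt})$ contributes only $t^{-1/4}$, so the integrand is $O(t^{-1})$ in absolute value --- not integrable over $[N,\infty)$. The individual integrals $I_s(x,n;N)$ converge only conditionally, through oscillation of the Bessel asymptotics, and summing them against $\sigma_s(n)$ requires exactly the stationary-phase/cancellation analysis that the paper packages as Lemma~\ref{sumprodint} (itself cited from Wilton). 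Without that, your interchange is unjustified. This is not a minor technicality: it is the one genuinely delicate ingredient in the whole identity, and the paper flags it explicitly immediately after stating the lemma.
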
	
\begin{proof}
Take $\lambda=0$, $\kappa=1$, and $\theta=1$ in Theorem 2 of \cite[p.~404]{wiltonextended}, and make use of the notations (1.21) and (3.13) given in it.
\end{proof}	

We wish to invert the order of summation and integration in the last expression
on the right-hand side of \eqref{talisdelx}. In order to justify that, we need the following lemma.

\begin{lemma}\label{sumprodint}
If $N>A$, $Nx>A$, $-\tf12<\s <\tf12$, and
\begin{align*}
I_s(x,n;N):=2\pi \int_{N}^{\infty}\left(\frac{x}{t}\right)^{(s+2)/2}F_{s+2}(4\pi \sqrt{xt})\left(\frac{t}{n}\right)^{(s+1)/2}G_{s+1}(4\pi \sqrt{nt})\,dt,
\end{align*}
 then
\begin{align*}
\sum_{n=1}^{\infty}\s_s(n)I_s(x,n;N)=C_s(x,N)+O\left(\frac{x^{1+\epsilon}}{\sqrt{N}}\right),
\end{align*}
for every $\epsilon>0$, where
{\allowdisplaybreaks\begin{align*}
C_s(x,N)&=0, \quad\text{if}\quad x<\tf12\quad\text{or}\quad x\in\mathbb{N},\\
C_s(x,N)&=\frac{1}{\pi}\left(\frac{x}{y}\right)^{(2s+5)/4}\s_s({y})
\int_{4\pi\sqrt{N}|\sqrt{y}-\sqrt{x}|}^{\infty}\frac{\sin(t\operatorname{sgn}(y-x))}{t}\,dt,\notag\\
&\qquad\text{if}\quad x\neq y=\lfloor x+\tf12\rfloor \geq 1.
\end{align*}}
\end{lemma}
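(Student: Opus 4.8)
The plan is to extract the resonant term $C_s(x,N)$ from the large-argument behavior of the Bessel functions and to quote \cite{wiltonextended} for the one delicate estimate, exactly as in Lemma \ref{lemtail}. Just as Lemma \ref{lemtail} specializes Theorem~2 of \cite{wiltonextended}, the present statement should be a specialization of the companion convolution estimate in that paper: by \eqref{capf} and \eqref{capg} the functions $F_{s+2}$ and $G_{s+1}$ are fixed linear combinations of $J$, $Y$, $K$ of orders $s+2$ and $s+1$, and in $I_s(x,n;N)$ the algebraic weights collapse to $2\pi\,x^{(s+2)/2}n^{-(s+1)/2}t^{-1/2}$, so that $\sum_n\s_s(n)I_s(x,n;N)$ matches---after the notational translation flagged just before Lemma \ref{lemtail}, and with the signs and the factor $\operatorname{sgn}(y-x)$ carried through---the quantity estimated in \cite{wiltonextended}, with parameters chosen so that its two Bessel orders become $s+1$ and $s+2$.

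To see directly why $C_s(x,N)$ has the stated form, insert the asymptotic expansions \eqref{asymbess}, \eqref{asymbess1}, \eqref{asymbess2}; the hypotheses $N>A$ and $Nx>A$ keep the arguments $4\pi\sqrt{xt}$ and $4\pi\sqrt{nt}$ large throughout the range of integration for every $n\ge 1$. The $K$-parts of $F_{s+2}$ and $G_{s+1}$ are $O(e^{-4\pi\sqrt{nt}})$, hence contribute $O(e^{-c\sqrt{N}})$ after integrating over $t\ge N$ and summing against $\s_s(n)$. The leading oscillatory parts are constant multiples of $(xt)^{-1/4}\sin(4\pi\sqrt{xt}-\tf{\pi}{4})$ and $(nt)^{-1/4}\cos(4\pi\sqrt{nt}-\tf{\pi}{4})$; multiplying them and applying a product-to-sum identity turns the $t$-integrand, up to faster-decaying terms, into a constant times
\[
t^{-1}\Big[\sin\big(4\pi(\sqrt{x}-\sqrt{n})\sqrt{t}\big)-\cos\big(4\pi(\sqrt{x}+\sqrt{n})\sqrt{t}\big)\Big].
\]
The sum-frequency piece has no stationary point, so integration by parts renders the ensuing $n$-sum absolutely convergent and of admissible size; in the difference-frequency piece the substitution $u=4\pi|\sqrt{x}-\sqrt{n}|\sqrt{t}$ gives $2\operatorname{sgn}(\sqrt{x}-\sqrt{n})\int_{4\pi|\sqrt{x}-\sqrt{n}|\sqrt{N}}^{\iy}\tf{\sin u}{u}\,du$. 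The integer nearest to $x$, namely $y=\lfloor x+\tf12\rfloor$---a positive integer distinct from $x$ precisely when $x\ge\tf12$ and $x\notin\mathbb{N}$, and otherwise $C_s(x,N)=0$---has by far the smallest difference frequency and yields $C_s(x,N)$ once the explicit asymptotic constants are inserted, while every other $n$ satisfies $|n-x|\ge\tf12$ and contributes only to the error.

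I expect the main obstacle to be the uniform bound $O(x^{1+\epsilon}/\sqrt{N})$ on everything other than the $n=y$ term. One combines $\int_a^{\iy}\tf{\sin u}{u}\,du=O(a^{-1})$ with $\s_s(n)\ll n^{\epsilon}$ for $\s<0$ and $\s_s(n)\ll n^{\s+\epsilon}$ for $\s\ge 0$, and must treat separately the transition range $1\le|n-x|\ll x$, where $|\sqrt{x}-\sqrt{n}|$ is of size $|n-x|/\sqrt{x}$, so that the sum over that range costs a factor of order $\sqrt{x}\log x$ rather than $O(1)$; adding these pieces and using $-\tf12<\s<\tf12$ wherever a $\zeta$-type series must converge produces the claimed error. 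Since this analysis---together with the oscillatory cancellation in $n$ that one would need in order to dispense with the crude transition-range bound---is exactly what \cite{wiltonextended} packages, invoking it, as in the proof of Lemma \ref{lemtail}, is the efficient route; the residual work is checking the notational dictionary and the admissible parameter values.
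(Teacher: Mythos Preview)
Your approach matches the paper's: the paper proves this lemma in one line by citing it as the special case $\lambda=0$, $\kappa=1$ of Lemma~6 of \cite[p.~412]{wiltonextended}. Your heuristic analysis of the resonant term and the error is a correct sketch of what that lemma packages, but the paper simply invokes the reference directly without reproducing any of it; you could tighten your proof by naming the precise lemma and parameter specialization rather than leaving it as ``the companion convolution estimate.''
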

\begin{proof}
This is the special case $\lambda=0, \kappa=1$ of Lemma 6 of \cite[p.~412]{wiltonextended}.
\end{proof}
\begin{proof}[Proof of Theorem \ref{voronoisumgen}.]

By Lemma \ref{sumprodint}, we see that the last expression on the
right-hand side of \eqref{talisdelx} tends to $0$ as $N\to \infty$. Hence, by
interchanging the summation and integration in this expression, we deduce that
\begin{align}\label{taildelxs}
\Delta_s(x)&=\sideset{}{'}\sum_{n=1}^{N}\left(\frac{x}{n}\right)^{(s+1)/2}\s_s(n)G_{s+1}(4\pi
\sqrt{nx})\notag\\
&\quad+\frac{N^s\zeta(1+s)+\zeta(1-s)}{2\pi}\left(\frac{x}{N}\right)^{s/2}F_{s}(4\pi \sqrt{Nx})\nonumber\\
&\quad-\left(\frac{x}{N}\right)^{(s+1)/2}G_{s+1}(4\pi \sqrt{Nx})\Delta_s(N)
\notag\\&\quad+\frac{s\zeta(1+s)}{2\pi}\int_{N}^{\infty}\left(\frac{x}{t}\right)^{s/2}F_{s}(4\pi \sqrt{xt})t^{s-1}\,dt\nonumber\\
&\quad+2\pi\int_{N}^{\infty}\left(\frac{x}{t}\right)^{(s+2)/2}F_{s+2}(4\pi \sqrt{xt})\Delta_s(t)\,dt.
\end{align}
Let $a\geq 0$ and $b\geq 0$. From \eqref{defD},
\begin{align}\label{lsint}
\sum_{a\leq n\leq b}f(n)\s_s(n)=\int_{a}^{b}f(t)\,dD_s(t),
\end{align}
where we write the sum as a Lebesgue-Stieltjes integral.

For $a=0$ and $b=\a<\tf12$, the left-hand side of \eqref{lsint} equals
$0$. Therefore, from \eqref{gtransform}, \eqref{ftransform}, \eqref{defph}, \eqref{Dphidel}, \eqref{taildelxs}, \eqref{lsint}, and \eqref{delsxat},
\begin{align}\label{a0bh}
-\int_{0}^{\a}&f(t)(\z(1-s)+t^s\z(1+s))\, dt=\int_{0}^{\a}f(t)\, d\Delta_s(t)
\nonumber\\
&=2\pi\sideset{}{'}\sum_{n=1}^{N}\frac{\s_s(n)}{n^{s/2}}\int_{0}^{\a}t^{s/2}G_{s}(4\pi \sqrt{nt})f(t)\, dt\nonumber\\
&\quad+\frac{N^s\zeta(1+s)+\zeta(1-s)}{N^{(s-1)/2}}\int_{0}^{\a}t^{(s-1)/2}F_{s-1}(4\pi \sqrt{Nt})f(t)\, dt\nonumber\\
&\quad-\frac{2\pi}{N^{s/2}}\Delta_s(N)\int_{0}^{\a}t^{s/2}G_{s}(4\pi \sqrt{Nt})f(t)\, dt\nonumber\\
&\quad+\frac{s\zeta(1+s)}{2\pi}\int_{0}^{\a}f(t)\frac{d}{dt}\left(\int_{N}^{\infty}\left(\frac{t}{u}\right)^{s/2}F_{s}(4\pi \sqrt{tu})u^{s-1}\,du\right)\, dt\nonumber\\
&\quad+2\pi\int_{0}^{\a}f(t)\frac{d}{dt}\left(\int_{N}^{\infty}\left(\frac{t}{u}\right)^{(s+2)/2}F_{s+2}(4\pi \sqrt{tu})\Delta_s(u)\,du\right)\, dt.
\end{align}
Using \eqref{ftransform} twice, we see that
\begin{align}\label{tNG}
\frac{d}{dt}\left(\int_{N}^{\infty}\left(\frac{t}{u}\right)^{s/2}F_{s}(4\pi \sqrt{tu})u^{s-1}\,du\right)&=2\pi\int_{N}^{\infty}(tu)^{(s-1)/2}F_{s-1}(4\pi \sqrt{tu})\,du\nonumber\\
&=\left.t^{s/2-1}u^{s/2}F_{s}(4\pi \sqrt{tu})\right\rvert_{N}^{\infty}\nonumber\\
&=-t^{s/2-1}N^{s/2}F_{s}(4\pi \sqrt{tN}),
\end{align}
where in the last step we use \eqref{asymbess}--\eqref{asymbess2}, and the fact that $\sigma<\tf12$. The interchange of differentiation and integration  above is
justified from \eqref{Gasym}.
Denote
\begin{align}\label{ItN}
I_s(t,N):=2\pi\int_{N}^{\infty}\left(\frac{t}{u}\right)^{(s+2)/2}F_{s+2}(4\pi \sqrt{tu})\Delta_s(u)\,du.
\end{align}
 Performing an integration by parts on the last expression on the right-hand side of \eqref{a0bh}
 and using \eqref{tNG} and \eqref{ItN}, we find that
\begin{align}\label{fininequa}
-\int_{0}^{\a}&f(t)(\z(1-s)+t^s\z(1+s))\, dt-2\pi\sideset{}{'}\sum_{n=1}^{N}\frac{\s_s(n)}{n^{s/2}}\int_{0}^{\a}t^{s/2}G_{s}(4\pi \sqrt{nt})f(t)\, dt\nonumber\\
&=\frac{N^s\zeta(1+s)+\zeta(1-s)}{N^{(s-1)/2}}\int_{0}^{\a}t^{(s-1)/2}F_{s-1}(4\pi \sqrt{Nt})f(t)\, dt\nonumber\\
&\quad-\frac{2\pi}{N^{s/2}}\Delta_s(N)\int_{0}^{\a}t^{s/2}G_{s}(4\pi \sqrt{Nt})f(t)\, dt\nonumber\\
&\quad-\frac{s\zeta(1+s)N^{s/2}}{2\pi}\int_{0}^{\a}f(t)t^{s/2-1}F_{s}(4\pi \sqrt{Nt})\, dt\nonumber\\
&\quad+f(\a)I_s(\a,N)-\int_{0}^{\a}I_s(t,N)f'(t)\,dt,
\end{align}
where in the last step we made use of the fact that for $-\tf12<\s<\tf12$,
\begin{equation*}
\lim_{t\to 0}t^{(s+2)/2}F_{s+2}(4\pi \sqrt{tu})=0.
\end{equation*}
Here again the limit can be moved inside the integral because of \eqref{Gasym}.

Since $\a<\tf12$, by Lemma \ref{sumprodint},  $I_s(t,N)\ll N^{-1/2}$, for all $0<t\leq\a$. Also by hypothesis, $f$ is differentiable, so
\begin{align*}
V_0^{\a}f(t)=\int_0^{\a}|f'(t)|\, dt,
\end{align*}
where $V_0^{\a}f(t)$ is the total variation of $f$ on the interval $(0,\a)$.
Therefore the last two terms on the right-hand side of \eqref{fininequa} are of the form
\begin{align}\label{totvar}
O(N^{-1/2}(|f(\a)|+V_0^{\a}f(t))).
\end{align}
Recall the bound $\Delta_s(N)\ll  N^{\frac12(1+\s)}$ \cite[Lemma 7]{wiltonextended}. From \eqref{gtransform} and \eqref{Gasym},
\begin{align}\label{Delbd}
&\frac{2\pi}{N^{s/2}}\Delta_s(N)\int_{0}^{\a}t^{s/2}G_{s}(4\pi \sqrt{Nt})\, dt\nonumber\\
&=\Delta_s(N)\left(\frac{\a}{N}\right)^{(s+1)/2}G_{s+1}(4\pi \sqrt{N\a})\nonumber\\
&\ll \a^{\frac{\s}{2}}\left(\frac{\a}{N}\right)^{\frac{1}{4}}.
\end{align}
Here we  also made use of the fact that
\begin{equation*}
\lim_{t\to 0}t^{(s+1)/2}G_{s+1}(4\pi \sqrt{Nt})=0.
\end{equation*}
Again, from \eqref{gtransform} and \eqref{Gasym},
\begin{align}\label{zebd}
&\frac{N^s\zeta(1+s)+\zeta(1-s)}{N^{(s-1)/2}}\int_{0}^{\a}t^{(s-1)/2}F_{s-1}(4\pi \sqrt{Nt})\, dt\nonumber\\&=\frac{N^s\zeta(1+s)+\zeta(1-s)}{2\pi N^{s/2}}\a^{s/2}F_{s}(4\pi \sqrt{N\a})\nonumber\\
&\ll \twopartdef{(2\g+\log N)(\a N)^{-1/4},}{s=0,\\\vspace{-5pt}}{(\a N)^{(2\s-1)/4}+\a^{(2\s-1)/4}N^{(-2\s-1)/4},}{s\neq 0,}
\end{align}
since $\lim_{t\to 0}t^{s/2}F_{s}(4\pi \sqrt{Nt})=0$. Finally,
\begin{align*}
&\frac{s\zeta(1+s)N^{s/2}}{2\pi}\int_{0}^{\a}t^{s/2-1}F_{s}(4\pi \sqrt{Nt})\,
dt\notag\\
&=\frac{s\zeta(1+s)N^{s/2}}{2\pi}\left(\int_{0}^{\infty}-\int_{\a}^{\infty}\right)t^{s/2-1}F_{s}(4\pi \sqrt{Nt})\, dt\nonumber\\
&=:I_1-I_2.
\end{align*}
Using the functional equation of $\z(s)$, namely \eqref{fe}, and the formula \cite[p.~409, equation 4.65]{wiltonextended}, we find that
\begin{align*}
I_1&=\frac{s\zeta(1+s)N^{s/2}}{2\pi}\int_{0}^{\infty}t^{s/2-1}F_{s}(4\pi
\sqrt{Nt})\, dt\notag\\
&=-(2\pi)^{-s-1}\sin(\pi s/2) \G(s+1)\zeta(1+s)=\frac{\z(-s)}{2}.
\end{align*}
Using \eqref{Gasym}, we deduce that
\begin{align}\label{I2}
I_2=\frac{s\zeta(1+s)N^{s/2}}{2\pi}\int_{\a}^{\infty}t^{s/2-1}F_{s}(4\pi \sqrt{Nt})\, dt\ll (\a N)^{(2\s-1)/4},
\end{align}
since $-\tf12<\s<\tf12$. Using \eqref{totvar}--\eqref{I2} in \eqref{fininequa}, we find that
\begin{align}\label{fininequa01}
f&(0+)\frac{\z(-s)}{2}-\int_{0}^{\a}f(t)(\z(1-s)+t^s\z(1+s))\, dt\\
&\quad-2\pi\sideset{}{'}\sum_{n=1}^{N}\frac{\s_s(n)}{n^{s/2}}\int_{0}^{\a}t^{s/2}G_{s}(4\pi \sqrt{nt})f(t)\, dt\nonumber\\
&=\frac{N^s\zeta(1+s)+\zeta(1-s)}{N^{(s-1)/2}}\int_{0}^{\a}t^{(s-1)/2}F_{s-1}(4\pi \sqrt{Nt})(f(t)-f(0+))\, dt\nonumber\\
&\quad-\frac{2\pi}{N^{s/2}}\Delta_s(N)\int_{0}^{\a}t^{s/2}G_{s}(4\pi \sqrt{Nt})(f(t)-f(0+))\, dt\nonumber\\
&\quad-\frac{s\zeta(1+s)}{2\pi}\int_{0}^{\a}(f(t)-f(0+))t^{s/2-1}N^{s/2}F_{s}(4\pi \sqrt{Nt})\, dt\nonumber\\
&\quad+O((\a N)^{(2\s-1)/4}+\a^{(2\s-1)/4}N^{(-2\s-1)/4})+O((2\g+\log N)(\a N)^{-1/4}).\nonumber
\end{align}
By the second mean value theorem for integrals in the form given in \cite[p.~31]{wilton},
\begin{align}\label{secondmean}
\left\lvert\int_{a}^{b}f(t)\phi(t)\, dt-f(b)\int_{a}^{b}\phi(t)\, dt\right\rvert\leq V_{a}^{b}f(t)\max_{a\leq c< d\leq b}\left\lvert\int_{c}^{d}\phi(t)\, dt\right\rvert,
\end{align}
where $\phi$ is integrable on $[a,b]$.

Recall that $N^{\theta}\a>1$ for some $0<\theta<\min\left(1,\frac{1+2\s}{1-2\s}\right)$. Dividing the interval $(0,\a)$ into two sub-intervals $(0,N^{-\theta})$ and $(N^{-\theta},\a)$, applying \eqref{secondmean}, and using an argument like that in \eqref{zebd}, we see that
\begin{align}\label{assymp01}
&\frac{N^s\zeta(1+s)+\zeta(1-s)}{N^{(s-1)/2}}\int_{N^{-\theta}}^{\a}t^{(s-1)/2}F_{s-1}(4\pi \sqrt{Nt})(f(t)-f(0+))\, dt\nonumber\\
&\ll\twopartdef{ (2\g+\log N)N^{(\theta-1)/4}V_{N^{-\theta}}^{\a}f(t),}{s=0,\\\vspace{-5pt}}{(N^{(1-\theta)(2\s-1)/4}
+N^{(\theta(1-2\s)-(2\s+1))/4})V_{N^{-\theta}}^{\a}f(t),}{s\neq 0,}
\end{align}
and
\begin{align*}
&\frac{N^s\zeta(1+s)+\zeta(1-s)}{N^{(s-1)/2}}\int_{0}^{N^{-\theta}}t^{(s-1)/2}F_{s-1}(4\pi \sqrt{Nt})(f(t)-f(0+))\, dt\nonumber\\
&\ll\twopartdef{ (2\g+\log N)V_{0}^{N^{-\theta}}f(t),}{s=0,\\\vspace{-5pt}}{V_{0}^{N^{-\theta}}f(t),}{s\neq 0.}
\end{align*}
By \eqref{secondmean} and arguments similar to those in \eqref{Delbd} and \eqref{I2},
\begin{align*}
\frac{2\pi}{N^{s/2}}\Delta_s(N)\int_{N^{-\theta}}^{\a}t^{s/2}G_{s}(4\pi \sqrt{Nt})(f(t)-f(0+))\, dt\ll  \a^{\frac{\s}{2}}\left(\frac{\a}{N}\right)^{\frac{1}{4}}V_{N^{-\theta}}^{\a}f(t),
\end{align*}
\begin{align*}
\frac{2\pi}{N^{s/2}}\Delta_s(N)\int_{0}^{N^{-\theta}}t^{s/2}G_{s}(4\pi \sqrt{Nt})(f(t)-f(0+))\, dt\ll N^{\frac{-2\theta\s-1-\theta}{4}} V_{0}^{N^{-\theta}}f(t),
\end{align*}
\begin{align*}
\frac{s\zeta(1+s)}{2\pi}\int_{N^{-\theta}}^{\a}(f(t)-f(0+))t^{s/2-1}N^{s/2}F_{s}(4\pi \sqrt{Nt})\, dt\ll N^{\frac{(1-\theta)(2\s-1)}{4}}V_{N^{-\theta}}^{\a}f(t),
\end{align*}%
and
\begin{align}\label{assymp06}
\frac{s\zeta(1+s)}{2\pi}\int_{0}^{N^{-\theta}}(f(t)-f(0+))t^{s/2-1}N^{s/2}F_{s}(4\pi \sqrt{Nt})\, dt\ll V_{0}^{N^{-\theta}}f(t).
\end{align}
Combining \eqref{assymp01}--\eqref{assymp06} together with \eqref{fininequa01}, we obtain
\begin{align*}
&f(0+)\frac{\z(-s)}{2}-\int_{0}^{\a}f(t)(\z(1-s)+t^s\z(1+s))\, dt\\&\hspace{.7in}-2\pi\sideset{}{'}\sum_{n=1}^{N}\frac{\s_s(n)}{n^{s/2}}\int_{0}^{\a}t^{s/2}G_{s}(4\pi \sqrt{nt})f(t)\, dt\nonumber\\
&\ll\twopartdef{ (2\g+\log N)(V_{0}^{N^{-\theta}}f(t)+N^{(\theta-1)/4}V_{N^{-\theta}}^{\a}f(t)),}{s=0,\\
\vspace{-5pt}}{V_{0}^{N^{-\theta}}f(t)+(N^{(1-\theta)(2\s-1)/4}+N^{(\theta(1-2\s)-(2\s+1))/4)})V_{N^{-\theta}}^{\a}f(t),}{s\neq 0,}\nonumber\\
&\ll\twopartdef{ (2\g+\log N)(V_{0}^{N^{-\theta}}f(t)+N^{(\theta-1)/4}(|f(\a)|+V_{0}^{\a}f(t))),} {s=0,\\ \vspace{-5pt}}
{V_{0}^{N^{-\theta}}f(t)
+(N^{(1-\theta)(2\s-1)/4}+N^{(\theta(1-2\s)-(2\s+1))/4})\\\hspace{6cm}\times(|f(\a)|+V_{0}^{\a}f(t)),} {s\neq 0.}\nonumber
\end{align*}
Furthermore, if $\log x\,V_{0}^{x}f(t)\to 0$ as $x\to 0+$ when $s=0$, and if $V_{0}^{x}f(t)\to 0$ as $x\to 0+$ when $s\neq 0$, then the assumption $0<\theta<\min\left(1,\frac{1+2\s}{1-2\s}\right)$ implies that
\begin{align*}
f(0+)\frac{\z(-s)}{2}-&\int_{0}^{\a}f(t)(\z(1-s)+t^s\z(1+s))\, dt\notag\\&\quad-2\pi\sum_{n=1}^{\infty}\frac{\s_s(n)}{n^{s/2}}\int_{0}^{\a}t^{s/2}G_{s}(4\pi \sqrt{nt})f(t)\, dt=0.
\end{align*}
This completes the proof of Theorem \ref{voronoisumgen}.
\end{proof}

%
%
%
\section{An Interpretation of Ramanujan's Divergent Series}
\label{sect8}
As mentioned in the introduction, the series on the left-hand side of
\eqref{qc} is divergent for all real values of $s$, since $\sigma_s(n)\geq
n^s$. However, as we show below, there is a valid interpretation of this
series using the theory of analytic continuation.

Throughout this section, we assume $x>0$, $\sigma>0$, and Re $w>1$. Define a
function $F(s, x, w)$ by
\begin{equation}\label{fsxw}
F(s, x,
w):=\sum_{n=1}^{\infty}\frac{\sigma_s(n)}{n^{w-\frac{1}{2}}}\left((x-in)^{-s-\frac{1}{2}}-(x+in)^{-s-\frac{1}{2}}\right).
\end{equation}
Ramanujan's divergent series corresponds to letting $w=\frac{1}{2}$ in
\eqref{fsxw}.
Note that
\begin{equation*}
(x-in)^{-s-\frac{1}{2}}-(x+in)^{-s-\frac{1}{2}}
=\frac{2i\sin\left(\left(s+\tfrac{1}{2}\right)\tan^{-1}\left(n/x\right)\right)}{(x^2+n^2)^{\frac{s}{2}+\frac{1}{4}}}.
\end{equation*}
Since for $\sigma>-\frac{3}{2}$ and $n>0$ \cite[p.~524, formula \textbf{3.944}, no.~5]{grn}
\begin{equation}\label{3.944}
\int_{0}^{\infty}e^{-xt}t^{s-\frac{1}{2}}\sin (nt)\, dt=\Gamma\left(s+\frac{1}{2}\right)\frac{\sin\left(\left(s+\tfrac{1}{2}\right)
\tan^{-1}\left(n/x\right)\right)}{(x^2+n^2)^{\frac{s}{2}+\frac{1}{4}}},
\end{equation}
we deduce from \eqref{fsxw}--\eqref{3.944} that
\begin{align*}
F(s, x, w)=\frac{2i}{\Gamma\left(s+\frac{1}{2}\right)}
\sum_{n=1}^{\infty}\frac{\sigma_s(n)}{n^{w-\frac{1}{2}}}\int_{0}^{\infty}e^{-xt}t^{s-\frac{1}{2}}\sin nt\, dt.\nonumber\\
\end{align*}
From \cite[p.~42, formula (5.1)]{ob}, for $-1<c=$ Re $z<1$,
\begin{equation*}
\sin(nt)=\frac{1}{2\pi i}\int_{c-i\infty}^{c+i\infty}\G(z)\sin\left(\frac{\pi z}{2}\right)(nt)^{-z}\, dz.
\end{equation*}
Hence,
\begin{align}\label{fsxw2}
&F(s, x, w)=\frac{1}{\pi\Gamma\left(s+\frac{1}{2}\right)}\int_{0}^{\infty}e^{-xt}t^{s-\frac{1}{2}}\sum_{n=1}^{\infty}
\frac{\sigma_s(n)}{n^{w-\frac{1}{2}}}\int_{c-i\infty}^{c+i\infty}\G(z)\sin\left(\frac{\pi z}{2}\right)(nt)^{-z}\, dz\, dt\notag\\
&=\frac{1}{\pi\Gamma\left(s+\frac{1}{2}\right)}\int_{0}^{\infty}e^{-xt}t^{s-\frac{1}{2}}\int_{c-i\infty}^{c+i\infty}
t^{-z}\G(z)\sin\left(\frac{\pi z}{2}\right)\left(\sum_{n=1}^{\infty}\frac{\sigma_s(n)}{n^{w+z-\frac{1}{2}}}\right)\, dz\, dt,
\end{align}
where the interchange of the order of summation and integration in both
instances is justified by absolute convergence. Now if Re $z>\frac{3}{2}-$ Re
$w$ and Re $z>\frac{3}{2}-$ Re $w+\sigma$, from \eqref{sz}, we see that
\begin{equation*}
\sum_{n=1}^{\infty}\frac{\sigma_s(n)}{n^{w+z-\frac{1}{2}}}
=\zeta\left(w+z-\frac{1}{2}\right)\zeta\left(w+z-s-\frac{1}{2}\right).
\end{equation*}
Substituting this in \eqref{fsxw2}, we find that
\begin{align}\label{fsxw3}
F(s, x, w)&=\frac{1}{\pi\Gamma\left(s+\frac{1}{2}\right)}\int_{0}^{\infty}e^{-xt}t^{s-\frac{1}{2}}
\int_{c-i\infty}^{c+i\infty}t^{-z}\G(z)\sin\left(\frac{\pi z}{2}\right)\nonumber\\
&\quad\quad\quad\quad\quad\quad\quad\quad\quad\times\zeta
\left(w+z-\frac{1}{2}\right)\zeta\left(w+z-s-\frac{1}{2}\right)\, dz\, dt\nonumber\\
&=\frac{1}{\pi\Gamma\left(s+\frac{1}{2}\right)}\int_{c-i\infty}^{c+i\infty}\G(z)\sin\left(\frac{\pi z}{2}\right)\zeta\left(w+z-\frac{1}{2}\right)\zeta\left(w+z-s-\frac{1}{2}\right)\nonumber\\
&\quad\quad\quad\quad\quad\quad\quad\times\int_{0}^{\infty}e^{-xt}t^{s-z-\frac{1}{2}}\, dt\, dz,\nonumber\\
\end{align}
with the interchange of the order of integration again being easily justifiable.
For Re $z<\sigma+\frac{1}{2}$, we have
\begin{equation*}
\int_{0}^{\infty}e^{-xt}t^{s-z-\frac{1}{2}}\, dt=\frac{\G\left(s-z+\frac{1}{2}\right)}{x^{s-z+\frac{1}{2}}}.
\end{equation*}
Substituting this in \eqref{fsxw3}, we obtain the integral representation
\begin{align}\label{fsxw4}
F(s, x, w)&=\frac{x^{-s-\frac{1}{2}}}{\pi\Gamma\left(s+\frac{1}{2}\right)}\int_{c-i\infty}^{c+i\infty}\G(z)\sin\left(\frac{\pi z}{2}\right)\zeta\left(w+z-\frac{1}{2}\right)\nonumber\\
&\quad\quad\quad\quad\quad\quad\quad\quad\times\zeta\left(w+z-s-\frac{1}{2}\right)\G\left(s-z+\frac{1}{2}\right)x^{z}\, dz.
\end{align}
Note that if we shift the line of integration Re $z=c$ to Re $z=d$ such that
$d=\frac{3}{2}+ \sigma-\eta$ with $\eta>0$, we encounter a simple pole of the
integrand due to $\G\left(s-z+\frac{1}{2}\right)$. Employing the residue
theorem and noting that, from \eqref{strivert} and \eqref{sinest}, the
integrals over the horizontal segments tend to zero as the height of the
rectangular contour tends to $\infty$, we have
\begin{align}\label{fsxw5}
F(s, x, w)&=\frac{x^{-s-\frac{1}{2}}}{\pi\Gamma\left(s+\frac{1}{2}\right)}\int_{d-i\infty}^{d+i\infty}\G(z)\sin\left(\frac{\pi z}{2}\right)\zeta\left(w+z-\frac{1}{2}\right)\nonumber\\
&\quad\quad\quad\quad\quad\quad\quad\quad\times\zeta\left(w+z-s-\frac{1}{2}\right)\G\left(s-z+\frac{1}{2}\right)x^{z}\, dz\nonumber\\
&\quad-\frac{2ix^{-s-\frac{1}{2}}}{\Gamma\left(s+\frac{1}{2}\right)}\G\left(s+\frac{1}{2}\right)
\sin\left(\frac{\pi}{2}\left(s+\frac{1}{2}\right)\right)\zeta(w+s)\zeta(w)x^{s+\frac{1}{2}}.
\end{align}
Note that the residue in equation \eqref{fsxw5} is analytic in $w$ except for
simple poles at $1$ and $1-s$. Consider the integrand in \eqref{fsxw5}. The
zeta functions $\zeta\left(w+z-\frac{1}{2}\right)$ and
$\zeta\left(w+z-s-\frac{1}{2}\right)$ have simple poles at $w=\frac{3}{2}-z$
and $w=\frac{3}{2}+s-z$, respectively. However, since Re
$z=\frac{3}{2}+\sigma-\eta$ and $\sigma>0$, the integrand is analytic as a
function of $w$ as long as Re $w>\eta$. By a well-known theorem \cite[p.~30, Theorem
2.3]{temme}, the integral is also analytic in $w$ for Re
$w>\eta$. Thus, the right-hand side of \eqref{fsxw5} is analytic in $w$,
which allows us to analytically continue $F(s, x, w)$ as a function of $w$ to
the region Re $w>\eta$, and hence to Re $w>0$, since $\eta$ is any arbitrary
positive number.

As remarked in the beginning of this section, letting $w=\frac{1}{2}$ in
\eqref{fsxw} yields Ramanujan's divergent series. However, the analytic
continuation of $F(s, x, w)$ to Re $w>0$ allows us to substitute
$w=\frac{1}{2}$ in \eqref{fsxw5} and thereby give a valid interpretation of
Ramanujan's divergent series. The only exception to this is when
$s=\frac{1}{2}$, since then $w=\frac{1}{2}=1-s$ is a pole of the right-hand
side of \eqref{fsxw5}, as discussed above.

 If we further shift the line of integration in \eqref{fsxw4} from Re
 $z=\frac{3}{2}+ \sigma-\eta$ to Re $z=\frac{5}{2}+ \sigma-\eta$, and
 likewise to $+\infty$, we obtain a meromorphic continuation of $F(s, x, w)$,
 as a function of $w$, to the whole complex plane.


\section{Generalization of the Ramanujan--Wigert Identity}
\label{sect9}

The identity \eqref{p332} found by Ramanujan, and then extended by Wigert,
has the following one variable generalization in the same spirit as Theorem
\ref{p3361}.

\begin{theorem}\label{rwgthm}
Let $\psi_{s}(n):=\sum_{j^2|n}j^s$. Let $\a>0$ and  $\b>0$ be such that $\a\b=4\pi^3$. Recall that $_1F_1$ is defined in \eqref{hyper}.  Then, for $\sigma>0$,
\begin{align}\label{rwg}
&\sum_{n=1}^{\infty}\frac{\psi_{s}(n)}{\sqrt{n}}e^{-\sqrt{n\beta}}\sin\left(\frac{\pi}{4}-\sqrt{n\b}\right)\nonumber\\
&=\frac{\sqrt{\b}}{\sqrt{2}}\zeta(-s)+\G(s)\cos\left(\frac{\pi}{4}+\frac{\pi s}{4}\right)\zeta\left(\frac{1+s}{2}\right)(2\b)^{-\frac{1}{2}s}+\frac{1}{\sqrt{2}}\zeta\left(\frac{1}{2}\right)\zeta(1-s)
\nonumber\\
&\quad+2^{-s-\frac{1}{2}}\pi^{-s-2}\sqrt{\b}\sum_{n=1}^{\infty}\frac{\psi_{1-s}(n)}{n}
\left\{\frac{-\pi^3\sqrt{n}}{\sqrt{\b}\Gamma(1-s)\sin\left(\frac{\pi s}{2}\right)}\right.\notag\\&\quad\left.+2^{2s}\pi^{\frac{3}{2}s+2}\left(\frac{n}{\b}\right)^{(s+1)/2}
\Gamma\left(\frac{s}{2}\right){}_1F_{1}\left(\frac{s}{2};\frac{1}{2};-n\a\right)\right\}.
\end{align}
\end{theorem}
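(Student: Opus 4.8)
The plan is to follow the strategy behind Theorem \ref{p3361} and Lemma \ref{lemma1}, with $\sigma_s$ replaced by $\psi_s$. The starting point is the Dirichlet series identity, obtained by writing $n=j^2m$,
\[
\sum_{n=1}^{\infty}\frac{\psi_s(n)}{n^w}=\zeta(w)\zeta(2w-s),
\]
valid for $\Re w>\max\left(1,\frac{1+\sigma}{2}\right)$. Next, combining \eqref{ob1} and \eqref{ob2} with $a=b=1$ and $x$ replaced by $\sqrt{n\beta}$, and then replacing $z$ by $z-1$, one obtains for $\Re z>1$
\[
\frac{e^{-\sqrt{n\beta}}}{\sqrt n}\sin\left(\frac{\pi}{4}-\sqrt{n\beta}\right)=\frac{1}{2\pi i}\int_{(c)}\frac{\Gamma(z-1)}{(2\beta)^{(z-1)/2}}\cos\left(\frac{\pi z}{4}\right)n^{-z/2}\,dz .
\]
Substituting this into the left side of \eqref{rwg}, interchanging summation and integration, and invoking the Dirichlet series identity shows that the series on the left side of \eqref{rwg} equals $\frac{1}{2\pi i}\int_{(c)}\Omega(z)\,dz$ for $c>\max(2,\sigma+1)$, where $\Omega(z):=\zeta\left(\frac z2\right)\zeta(z-s)\Gamma(z-1)(2\beta)^{-(z-1)/2}\cos\left(\frac{\pi z}{4}\right)$.

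I would then shift the line of integration to $\Re z=\lambda$ with $-1<\lambda<0$, the horizontal segments being handled by \eqref{strivert}, \eqref{sinest}, and \eqref{zetab} exactly as in Section \ref{sect3}. The poles crossed are at $z=s+1$, $z=2$, $z=1$, and $z=0$. The residue at $z=2$ vanishes because $\cos(\pi/2)=0$; the residues at $z=s+1$, $z=1$, and $z=0$ are, respectively, $\Gamma(s)\cos\left(\frac{\pi}{4}+\frac{\pi s}{4}\right)\zeta\left(\frac{1+s}{2}\right)(2\beta)^{-s/2}$, $\frac{1}{\sqrt2}\zeta\left(\frac12\right)\zeta(1-s)$, and $\frac{\sqrt\beta}{\sqrt2}\zeta(-s)$, which are precisely the three explicit terms of \eqref{rwg}. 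On the line $\Re z=\lambda$ I would apply the functional equation \eqref{fe} to both $\zeta\left(\frac z2\right)$ and $\zeta(z-s)$, use $\sin\left(\frac{\pi z}{4}\right)\cos\left(\frac{\pi z}{4}\right)=\frac12\sin\left(\frac{\pi z}{2}\right)$, and then re-expand $\zeta\left(1-\frac z2\right)\zeta(1-z+s)=\sum_{n=1}^{\infty}\psi_{1-s}(n)\,n^{-(1-z/2)}$, which is legitimate since $\lambda<0$, where this Dirichlet series converges absolutely. Interchanging the resulting sum and integral leaves $\sum_n \frac{\psi_{1-s}(n)}{n}\,J(s,n)$, where $J(s,n)$ is an explicit Mellin--Barnes integral whose Gamma factors are $\Gamma(z-1)\Gamma\left(1-\frac z2\right)\Gamma(1-z+s)$ and whose trigonometric factor is $\frac12\sin\left(\frac{\pi z}{2}\right)\sin\left(\frac{\pi(z-s)}{2}\right)$.

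The core of the proof is the closed-form evaluation of $J(s,n)$, the analogue of Lemma \ref{lemma1}. Here I would move the contour of $J(s,n)$ to $+\infty$. The zeros of $\sin\left(\frac{\pi z}{2}\right)$ exactly cancel all the poles of $\Gamma\left(1-\frac z2\right)$, and the zeros of $\sin\left(\frac{\pi(z-s)}{2}\right)$ remove the even-indexed poles of $\Gamma(1-z+s)$, so that the only poles met while moving right are the simple pole of $\Gamma(z-1)$ at $z=1$ and the poles of $\Gamma(1-z+s)$ at $z=s+2k+1$, $k\geq 0$. The residue at $z=1$, after the reflection formula \eqref{ref} together with $\frac{\cos(\pi s/2)}{\sin(\pi s)}=\frac{1}{2\sin(\pi s/2)}$, gives the term proportional to $\sqrt n\big/\!\left(\Gamma(1-s)\sin\left(\frac{\pi s}{2}\right)\right)$; the residues at $z=s+2k+1$, via the duplication formula \eqref{dup} (for $(s)_{2k}$ and $(2k)!$) and the reflection formula \eqref{ref2} (for $\Gamma\left(\frac{s+1}{2}\right)\Gamma\left(\frac{1-s}{2}\right)$), assemble into $\Gamma\left(\frac s2\right){}_1F_1\left(\frac s2;\frac12;-n\alpha\right)$, with $\alpha\beta=4\pi^3$ converting the accumulated powers of $2$, $\pi$, and $\beta$ into the constants displayed in \eqref{rwg}. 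The tail of the moved contour vanishes because, along the real direction, the denominator Gammas outweigh the numerator Gamma, so Stirling's formula gives factorial decay of $J(s,n)$'s integrand as $\Re z\to+\infty$; this is why, unlike in Theorem \ref{p3361}, no splitting into $n<x$ and $n\geq x$ occurs and why the output is an entire ${}_1F_1$ rather than a ${}_3F_2$. I expect the main obstacle to be precisely this last evaluation: tracking the pole structure that survives the cancellations against the two sines, carrying the multiplicative constants faithfully through the functional-equation substitutions, and justifying the shift of the contour of $J(s,n)$ to $+\infty$ via the factorial decay estimate. Collecting the three residual terms together with $\sum_n\frac{\psi_{1-s}(n)}{n}\,J(s,n)$ then yields \eqref{rwg}.
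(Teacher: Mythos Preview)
Your proposal is correct and follows essentially the same route as the paper: represent the left side as a Mellin--Barnes integral via the Dirichlet series $\zeta(z/2)\zeta(z-s)$, shift to $-1<\lambda<0$ collecting the residues at $z=0,1,s+1$ (with the would-be pole at $z=2$ killed by $\cos(\pi z/4)$, exactly as you note), apply \eqref{fe} to both zeta factors, re-expand via $\psi_{1-s}$, and evaluate the remaining Mellin--Barnes integral by pushing the contour to $+\infty$ through the poles at $z=1$ and $z=s+2k+1$. Your analysis of the pole cancellations in that last integral and your explanation of why the result is a ${}_1F_1$ (factorial decay, no splitting) are precisely what the paper does, though the paper is terser about it.
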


Note that $\psi_s(n)$ is an extension of Ramanujan's definition of $\psi(n)$ that was defined earlier in \eqref{d2}.
The proof of Theorem \ref{rwgthm} is similar to that of Theorem \ref{p3361}, and so we will be brief.

\begin{proof}
From the definition of $\psi_{s}(n)$, we note that
\begin{equation*}
\sum_{n=1}^{\infty}\frac{\psi_{s}(n)}{n^{z/2}}=\zeta\left(\frac{z}{2}\right)\zeta(z-s)
\end{equation*}
for Re $z>2$ and Re $z>1+\sigma$. Let
\begin{equation*}
W(s,\beta):=\sum_{n=1}^{\infty}\frac{\psi_{s}(n)}{\sqrt{n}}e^{-\sqrt{n\beta}}\sin\left(\frac{\pi}{4}-\sqrt{n\b}\right).
\end{equation*}
Proceeding as we did in \eqref{ob1}--\eqref{omzsx}, we obtain for $c=$ Re $z>$ max$\{2, 1+\sigma\}$,
\begin{equation*}
W(s, \beta)=\frac{1}{2\pi i}\int_{(c)}\G(z-1)\cos\left(\frac{\pi z}{4}\right)\zeta\left(\frac{z}{2}\right)\zeta(z-s)(2\beta)^{(1-z)/2}\, dz.
\end{equation*}
Shifting the line of integration from Re $z=c$ to Re $z=\lambda,
-1<\lambda<0$, applying the residue theorem, and considering the
contributions of the poles at $z=0, 1$, and $1+s$, we find that
\begin{align}\label{ini1}
W(s,\beta)&=\frac{\sqrt{\beta}}{\sqrt{2}}\zeta(-s)+\G(s)\cos\left(\frac{\pi (1+s)}{4}\right)\zeta\left(\frac{1+s}{2}\right)(2\beta)^{-\frac{1}{2}s}+\frac{1}{\sqrt{2}}\zeta\left(\frac{1}{2}\right)\zeta(1-s)
\nonumber\\
&\quad+\frac{1}{2\pi i}\int_{(\lambda)}G(z, s, \beta)\, dz,
\end{align}
where
\begin{align}\label{ini2}
G(z, s, \beta)&=\G(z-1)\cos\left(\frac{\pi z}{4}\right)\zeta\left(\frac{z}{2}\right)\zeta(z-s)(2\beta)^{(1-z)/2}\nonumber\\
&=2^{z-s-\frac{1}{2}}\pi^{\frac{3}{2}z-s-2}\G(z-1)\G\left(1-\frac{z}{2}\right)\G(1-z+s)\sin\left(\frac{\pi z}{2}\right)\nonumber\\
&\quad\times\sin\left(\frac{\pi z}{2}-\frac{\pi s}{2}\right)\zeta\left(1-\frac{z}{2}\right)\zeta(1-z+s),
\end{align}
and where we used the functional equation of $\zeta(s)$ given in
\eqref{fe}. Since Re $z<0$ and $\sigma>0$,
\begin{equation}\label{ini3}
\zeta\left(1-\frac{z}{2}\right)\zeta(1-z+s)=\sum_{n=1}^{\infty}\frac{\psi_{1-s}(n)}{n^{1-z/2}}.
\end{equation}
Thus, \eqref{ini1}, \eqref{ini2}, and \eqref{ini3} imply that
\begin{align}\label{wsbbeff}
W(s,\beta)&=\frac{\sqrt{\beta}}{\sqrt{2}}\zeta(-s)+\G(s)\cos\left(\frac{\pi
    (1+s)}{4}\right)\zeta\left(\frac{1+s}{2}\right)(2\beta)^{-\frac{1}{2}s}
\notag\\&\quad+\frac{1}{\sqrt{2}}\zeta\left(\frac{1}{2}\right)\zeta(1-s)
+2^{-s-\frac{1}{2}}\pi^{-s-2}\sqrt{\beta}\sum_{n=1}^{\infty}\frac{\psi_{1-s}(n)}{n}H\left(s, \frac{\beta}{n}\right),
\end{align}
where
\begin{align*}
&H\left(s, \frac{\beta}{n}\right)=\frac{1}{2\pi
  i}\int_{(\lambda)}\G(z-1)\G\left(1-\frac{z}{2}\right)
\G(1-z+s)\\&\hspace{1in}\times\sin\left(\frac{\pi  z}{2}\right)
\sin\left(\frac{\pi z}{2}-\frac{\pi
    s}{2}\right)\left(\frac{\beta}{4\pi^3n}\right)^{-z/2}\, dz\nonumber\\
&=\frac{1}{2\pi i}\int_{(\lambda)}\frac{-2^{2z-3}\pi^{(3z+5)/2}
\left(\beta/n\right)^{-z/2}}{\G\left(\frac{3-z}{2}\right)\G(z-s)}\cos\left(\frac{\pi z}{2}\right)\cos\left(\frac{\pi (z-s)}{2}\right)\, dz,\notag
\end{align*}
by routine simplification using \eqref{feg}--\eqref{dup}.

To evaluate $H(s, \beta/n)$, we now move the line of integration
to $+\infty$ and apply the residue theorem. In this process, we encounter the
poles of the integrand at $z=1$ and at $z=2k+1+s,
k\in\mathbb{N}\cup\{0\}$. The residues at these poles are
\begin{equation*}
R_{1}(H)=\frac{\pi^{3}\sqrt{n}}{\beta\G(1-s)\sin\left(\frac{\pi s}{2}\right)}
\end{equation*}
and
\begin{align*}
R_{2k+1+s}(H)=\frac{(-1)^{k+1}2^{4k+2s}\pi^{3k+4+3s/2}
(\beta/n)^{-(k+(s+1)/2)}
\G\left(\frac{1}{2}s+k\right)}{\pi^2\G(2k+1)}.
\end{align*}
With the help of \eqref{strivert}, it is easy to show that the integrals
along the horizontal segments tend to zero as the height of the rectangular
contour tends to $\infty$. Also, arguing similarly as in
\eqref{fzp}--\eqref{infz}, we see that the integral along the shifted
vertical line in the limit also equals zero. Thus,
\begin{align}\label{hfin}
H\left(s, \frac{\beta}{n}\right)&=-R_{1}(H)-\sum_{k=0}^{\infty}R_{2k+1+s}\\
&=\frac{-\pi^{3}\sqrt{n}}{\beta\G(1-s)\sin\left(\frac{\pi s}{2}\right)}+2^{2s}\pi^{\frac{3}{2}s+2}\left(\frac{n}{\beta}\right)^{\frac{s+1}{2}}
\sum_{k=0}^{\infty}\frac{\G\left(\frac{s}{2}+k\right)}{(2k)!}\left(-\frac{16\pi^3n}{\beta}\right)^{k}\nonumber\\
&=\frac{-\pi^{3}\sqrt{n}}{\beta\G(1-s)\sin\left(\frac{\pi s}{2}\right)}+2^{2s}\pi^{\frac{3}{2}s+2}\left(\frac{n}{\beta}\right)^{\frac{s+1}{2}}
\G\left(\frac{s}{2}\right){}_1F_{1}\left(\frac{s}{2};\frac{1}{2};-\frac{4\pi^3n}{\beta}\right),\notag
\end{align}
where ${}_1F_{1}(a;c;z)$ is Kummer's confluent hypergeometric function.

 A result similar to the one in Theorem \ref{rwgthm} could be obtained when
 we replace the $-$ sign in the sine function by a $+$ sign.

The result now follows from \eqref{wsbbeff} and \eqref{hfin}, and from the
fact that $\a\b=4\pi^3$.
\end{proof}

\subsection{Special Cases of Theorem \ref{rwgthm}}

When $s=2m+1$, $m\in\mathbb{N}\cup\{0\}$, in Theorem \ref{rwgthm}, we obtain
the following result.
\begin{theorem}\label{sodd} For each non-negative integer $m$,
\begin{align}\label{m0}
&\sum_{n=1}^{\infty}\frac{\psi_{2m+1}(n)}{\sqrt{n}}e^{-\sqrt{n\beta}}\sin\left(\frac{\pi}{4}-\sqrt{n\b}\right)\nonumber\\
&=\frac{\sqrt{\b}}{\sqrt{2}}\zeta(-1-2m)-(2m)!\sin\left(\frac{\pi m}{2}\right)\zeta(1+m)(2\beta)^{-\left(m+\frac{1}{2}\right)}+\frac{1}{\sqrt{2}}\zeta\left(\frac{1}{2}\right)\zeta(-2m)\nonumber\\
&\quad+2^{m}\left(\frac{2\pi}{\beta}\right)^{m+\frac{1}{2}}\G\left(m+\frac{1}{2}\right)
\sum_{n=1}^{\infty}\psi_{-2m}(n)n^m{}_1F_{1}\left(m+\frac{1}{2};\frac{1}{2};-n\a\right).
\end{align}
\end{theorem}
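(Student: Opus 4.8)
The plan is to obtain \eqref{m0} by specializing the identity \eqref{rwg} of Theorem~\ref{rwgthm} to $s=2m+1$ and simplifying the four terms on its right-hand side; since $2m+1\ge 1>0$, the hypothesis $\sigma>0$ of Theorem~\ref{rwgthm} is met, so nothing is needed to justify the substitution. The first term $\tf{\sqrt{\b}}{\sqrt 2}\zeta(-s)$ becomes $\tf{\sqrt{\b}}{\sqrt 2}\zeta(-1-2m)$ and the third term $\tf{1}{\sqrt 2}\zeta(\tf{1}{2})\zeta(1-s)$ becomes $\tf{1}{\sqrt 2}\zeta(\tf{1}{2})\zeta(-2m)$, with nothing to prove. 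For the second term I would use $\G(2m+1)=(2m)!$, the trigonometric reduction $\cos\bigl(\tf{\pi}{4}+\tf{\pi(2m+1)}{4}\bigr)=\cos\bigl(\tf{\pi}{2}+\tf{\pi m}{2}\bigr)=-\sin\bigl(\tf{\pi m}{2}\bigr)$, together with $\zeta(\tf{1+s}{2})=\zeta(m+1)$ and $(2\b)^{-s/2}=(2\b)^{-(m+1/2)}$, turning it into $-(2m)!\sin\bigl(\tf{\pi m}{2}\bigr)\zeta(1+m)(2\b)^{-(m+1/2)}$.

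The decisive simplification occurs in the series term. With $s=2m+1$ the factor $\G(1-s)=\G(-2m)$ has a pole at the non-positive integer $-2m$, so $1/\G(1-s)=0$, while $\sin(\tf{\pi s}{2})=(-1)^m\ne 0$ provides no compensating singularity; hence the first summand $\tf{-\pi^3\sqrt n}{\sqrt\b\,\G(1-s)\sin(\pi s/2)}$ vanishes identically, and the series collapses to $2^{-s-1/2}\pi^{-s-2}\sqrt\b\sum_{n\ge1}\tf{\psi_{1-s}(n)}{n}\,2^{2s}\pi^{3s/2+2}\bigl(\tf{n}{\b}\bigr)^{(s+1)/2}\G(\tf{s}{2})\,{}_1F_1(\tf{s}{2};\tf{1}{2};-n\a)$. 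Here I would substitute $s=2m+1$ and collect exponents: $2^{-s-1/2}2^{2s}=2^{s-1/2}=2^{2m+1/2}$, $\pi^{-s-2}\pi^{3s/2+2}=\pi^{s/2}=\pi^{m+1/2}$, $\sqrt\b\cdot\b^{-(s+1)/2}=\b^{-s/2}=\b^{-(m+1/2)}$, $n^{(s+1)/2}/n=n^{(s-1)/2}=n^m$, together with $\psi_{1-s}(n)=\psi_{-2m}(n)$ and $\G(\tf{s}{2})=\G(m+\tf{1}{2})$; since $2^{2m+1/2}\pi^{m+1/2}\b^{-(m+1/2)}=2^m(2\pi/\b)^{m+1/2}$, the series term reduces to exactly $2^m(2\pi/\b)^{m+1/2}\G(m+\tf{1}{2})\sum_{n\ge1}\psi_{-2m}(n)\,n^m\,{}_1F_1(m+\tf{1}{2};\tf{1}{2};-n\a)$, as in \eqref{m0}. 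Assembling the four reduced terms yields \eqref{m0}.

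The only point requiring a moment of care is the case $m=0$: there $s=1$, and the second term carries the indeterminate product $\cos(\tf{\pi(1+s)}{4})\,\zeta(\tf{1+s}{2})$ — equivalently $\sin(\tf{\pi m}{2})\,\zeta(1+m)$ — which is $0\cdot\infty$. This is handled by passing to the limit: as $s\to1$ the simple zero of the cosine cancels the simple pole of the zeta value and $\cos(\tf{\pi(1+s)}{4})\,\zeta(\tf{1+s}{2})\to-\tf{\pi}{2}$, while correspondingly $\sin(\tf{\pi m}{2})\,\zeta(1+m)\to\tf{\pi}{2}$ as $m\to0$, so both forms of the term equal $-\tf{\pi}{2\sqrt{2\b}}$ and the identity stays valid and consistent at $m=0$. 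Beyond this, the proof is purely mechanical — the only labor is bookkeeping of Gamma-function values and powers of $2$, $\pi$, $\b$, and $n$ — so I expect no genuine obstacle, which is why the writeup can be kept short, exactly as the authors indicate.
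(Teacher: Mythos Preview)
Your proposal is correct and follows exactly the approach the paper takes: substitute $s=2m+1$ into Theorem~\ref{rwgthm} and simplify term by term, with the key observation that $1/\Gamma(1-s)=1/\Gamma(-2m)=0$ kills the first summand inside the series. You even supply the computational bookkeeping and the $m=0$ limit analysis that the paper leaves implicit.
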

When $m=0$ in \eqref{m0}, or equivalently when $s=1$ in Theorem \ref{rwgthm}, we obtain \eqref{p332}.
When $s=2m, m\in\mathbb{N}\cup\{0\}$, in Theorem \ref{rwgthm}, we obtain the
following companion result.

\begin{theorem}\label{seven} For each non-negative integer $m$,
\begin{align*}
&\sum_{n=1}^{\infty}\frac{\psi_{2m}(n)}{\sqrt{n}}e^{-\sqrt{n\beta}}\sin\left(\frac{\pi}{4}-\sqrt{n\b}\right)\nonumber\\
&=\frac{\sqrt{\b}}{\sqrt{2}}\zeta(-2m)-\frac{(2m-1)!}{\sqrt{2}}\sin\left(\frac{\pi m}{2}\right)\zeta\left(\frac{1+2m}{2}\right)(2\beta)^{-m}+\frac{1}{\sqrt{2}}\zeta\left(\frac{1}{2}\right)\zeta(1-2m)\nonumber\\
&\quad+2^{\frac{1}{2}-2m}\pi^{-2m}\sum_{n=1}^{\infty}\frac{\psi_{1-2m}(n)}{\sqrt{n}}
\left\{(-1)^{m+1}(2m)!\right.\notag\\&\quad\left.+2^{4m-1}\pi^{3m}\left(\frac{n}{\beta}\right)^m(m-1)!{}_1F_{1}
\left(m;\frac{1}{2};-n\a\right)\right\}.
\end{align*}
\end{theorem}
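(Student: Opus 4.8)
The plan is to specialize Theorem~\ref{rwgthm} to $s=2m$. Since the right-hand side of \eqref{rwg} contains the factors $\zeta(-s)$, $\Gamma(s)$, $\Gamma(s/2)$, $\zeta(1-s)$, and $1/(\Gamma(1-s)\sin(\tfrac{\pi s}{2}))$, several of which degenerate at even integers, the substitution must be read as a limit $s\to 2m$, valid for $m\ge 1$ (the case $m=0$, i.e.\ $s=0$, falls outside the hypothesis $\sigma>0$ and is degenerate). Equivalently, and more transparently, I would re-run the contour argument of the proof of Theorem~\ref{rwgthm} with $s$ replaced throughout by $2m$. I would start from
\[
W(2m,\beta)=\frac{1}{2\pi i}\int_{(c)}\Gamma(z-1)\cos\!\left(\frac{\pi z}{4}\right)\zeta\!\left(\frac{z}{2}\right)\zeta(z-2m)(2\beta)^{(1-z)/2}\,dz,\qquad c>\max\{2,2m+1\},
\]
shift the line of integration to $\operatorname{Re}z=\lambda\in(-1,0)$, and pick up residues at $z=0$, $z=1$, and $z=2m+1$ (the prospective pole of $\zeta(z/2)$ at $z=2$ is killed by the zero of $\cos(\tfrac{\pi z}{4})$ there). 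These three residues produce the first three terms on the right-hand side, with $\zeta(-2m)$ (which vanishes for $m\ge1$) kept in symbolic form.

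For the leftover integral over $(\lambda)$, I would apply the functional equation \eqref{fe} to $\zeta(z/2)$ and to $\zeta(z-2m)$. The feature special to even $s$ is the identity $\sin(\tfrac{\pi z}{2}-\pi m)=(-1)^m\sin(\tfrac{\pi z}{2})$, which collapses the product of sines arising from the two functional equations into $\sin^2(\tfrac{\pi z}{2})$. Inserting the Dirichlet series $\zeta(1-\tfrac{z}{2})\zeta(1-z+2m)=\sum_{n\ge1}\psi_{1-2m}(n)n^{z/2-1}$ (legitimate since $\operatorname{Re}z<0$) and interchanging sum and integral turns the $(\lambda)$-integral into $2^{-2m-1/2}\pi^{-2m-2}\sqrt{\beta}\sum_{n\ge1}\tfrac{\psi_{1-2m}(n)}{n}H(\beta/n)$, where $H$ is a Mellin--Barnes integral with Gamma/trigonometric part $\Gamma(z-1)\Gamma(1-\tfrac{z}{2})\Gamma(1-z+2m)\sin^2(\tfrac{\pi z}{2})$.

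Next I would evaluate $H(\beta/n)$ by moving the contour to $+\infty$ as in \eqref{fzp}--\eqref{infz}, so that the shifted integral tends to zero and $H(\beta/n)$ equals minus the sum of the residues crossed. Because $\sin^2(\tfrac{\pi z}{2})$ has double zeros at the even integers, the only surviving poles are the simple pole at $z=1$ (from $\Gamma(z-1)$) and the simple poles at $z=2m+2k+1$, $k\ge0$ (from $\Gamma(1-z+2m)$). The $z=1$ residue yields the constant term of the $n$-sum, while the residues at $z=2m+2k+1$ form a power series in $n$ which I would resum into a confluent hypergeometric function using $(2k)!=4^{k}k!\,(\tfrac12)_k$; this produces $\Gamma(s/2)\big|_{s=2m}=(m-1)!$ times ${}_1F_1\!\left(m;\tfrac12;-\tfrac{4\pi^3n}{\beta}\right)={}_1F_1(m;\tfrac12;-n\alpha)$ after using $\alpha\beta=4\pi^3$. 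Collecting all pieces and simplifying the elementary trigonometric factors gives the stated identity; this is the even-$s$ counterpart of the derivation of Theorem~\ref{sodd}.

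The main obstacle is the bookkeeping of the limit $s\to2m$: the factor $1/(\Gamma(1-s)\sin(\tfrac{\pi s}{2}))$ is a genuine indeterminacy whose limiting value (a rational multiple of a factorial) must be extracted carefully, and the $z=1$-residue of $H$ has to be correctly amalgamated with the three ``main'' residues and with the finite part of $\zeta(1-s)$ near $s=2m$. A secondary point is the convergence of the final series: its general term is $O\!\left(\psi_{1-2m}(n)n^{-1/2}\right)$, and $\sum_{n\ge1}\psi_{1-2m}(n)n^{-1/2}=\zeta(\tfrac12)\zeta(2m)$ converges for $m\ge1$, so the rearrangements above are legitimate. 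Careful tracking of the factorial and trigonometric constants is then all that remains.
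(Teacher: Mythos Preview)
Your approach matches the paper's: Theorem~\ref{seven} is simply stated there as the specialization $s=2m$ of Theorem~\ref{rwgthm}, with no further argument, and you have correctly identified all the relevant simplifications (the collapse of $\sin(\tfrac{\pi z}{2})\sin(\tfrac{\pi z}{2}-\tfrac{\pi s}{2})$ into $(-1)^m\sin^2(\tfrac{\pi z}{2})$, the surviving poles of $H$ at $z=1$ and $z=2m+2k+1$, the evaluation of the indeterminate $1/(\Gamma(1-s)\sin(\tfrac{\pi s}{2}))$ via the reflection formula, and the resummation into ${}_1F_1$ with $\Gamma(s/2)|_{s=2m}=(m-1)!$). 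Your exclusion of $m=0$ is also appropriate. One small remark: for $m\ge1$ the value $\zeta(1-2m)$ is a perfectly ordinary number, so no ``finite part'' extraction is needed there.

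There is, however, an error in your convergence argument. You assert that $\sum_{n\ge1}\psi_{1-2m}(n)\,n^{-1/2}=\zeta(\tfrac12)\zeta(2m)$ converges for $m\ge1$, but this is false: the Dirichlet identity $\sum_n\psi_{1-2m}(n)\,n^{-w}=\zeta(w)\zeta(2w+2m-1)$ holds only for $\operatorname{Re}w>1$, and since $\psi_{1-2m}(n)\ge1$ for every $n$, the series at $w=\tfrac12$ diverges by comparison with $\sum n^{-1/2}$. The infinite series in the theorem converges only because of cancellation between the two terms inside the braces. Using the asymptotic ${}_1F_1(m;\tfrac12;-x)\sim\Gamma(\tfrac12)\,\Gamma(\tfrac12-m)^{-1}x^{-m}$ as $x\to+\infty$, the ${}_1F_1$-term tends to a constant that exactly offsets the constant $(-1)^{m+1}$-term, so the brace is $O(n^{-1})$ and the $n$-th summand is $O\big(\psi_{1-2m}(n)\,n^{-3/2}\big)$, which does converge absolutely for $m\ge1$. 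This cancellation, not termwise absolute convergence, is what legitimizes the rearrangements and is also a useful consistency check on the factorial constants you track.
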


\subsection{A Common Generalization of $\sigma_s(n)$ and $\psi_s(n)$}
Note that if we define $\Omega_{s}(m,n):=\sum_{j^m|n}j^s$ for $m\in\mathbb{Z}, m\geq1$, then $\Omega_{s}(1, n)=\sigma_{s}(n)$ and $\Omega_{s}(2, n)=\psi_{s}(n)$. Thus, the series
\begin{equation}\label{seriesgen}
\sum_{n=1}^{\infty}\frac{\Omega_{s}(m, n)}{\sqrt{n}}e^{-\sqrt{n\beta}}\sin\left(\frac{\pi}{4}-\sqrt{n\b}\right)
\end{equation}
corresponds to the one in Theorem \ref{p3361a} when $m=1$, and to the one in Theorem \ref{rwgthm} when $m=2$. It might be interesting to find a representation of \eqref{seriesgen} analogous to those in these theorems for general $m$.

Attempting to evaluate \eqref{seriesgen} by first writing the sum as a line integral, then shifting the line of integration
to infinity and using the functional equation \eqref{fe} of $\zeta(s)$, we
encounter the sum of residues at the poles  $2(2k+1+s)/m$ to be a constant (independent of $k$ but depending on $m, s, n,$ and $\beta$) times
\begin{equation*}
\sum_{k=0}^{\infty}\frac{\Gamma\left(\frac{2k+1+s}{m}-\frac{1}{2}\right)}{(2k)!}\left(-\frac{2^{(2m+4)/m}
\pi^{(2m+2)/m}n^{2/m}}{\beta^{2/m}}\right)^k.
\end{equation*}
When $m=2$, this series essentially reduces to the ${}_1F_{1}$ present in (\ref{rwg}). If we let $\sqrt{\beta}=2\pi\sqrt{2x}$ and set $m=1$, we can write the series above as a ${}_3F_{2}$. However, we need to consider two cases, according as $n<x$ or as $n\geq x$, which should then give Theorem \ref{p3361a}. But for a general positive integer $m$, it is doubtful that the series above is summable in closed form.
\section{Ramanujan's Entries on Page 335 and Generalizations}
\label{sect10}

We begin this section by stating the two entries on page 335 in Ramanujan's
lost notebook \cite{lnb}.
 Define
\begin{equation}\label{ffn}
F(x)=\begin{cases}
	\lfloor x\rfloor, \quad &\text{if $x$ is not an integer},\\
x-\tf12, \quad&\text{if $x$ is an integer}.
\end{cases}
\end{equation}

\begin{entry}\label{1entry1} If $0<\theta<1$ and $F(x)$ is defined by \eqref{ffn}, then
\begin{multline}\label{entry1}
	\sum_{n=1}^{\infty}F\left(\frac{x}{n}\right)\sin(2\pi n\theta)=\pi x \left(\frac{1}{2}-\theta\right)-\frac{1}{4}\cot(\pi\theta)\\
	+\frac{1}{2}\sqrt{x}\sum_{m=1}^{\infty}\sum_{n=0}^{\infty}\left\{\frac{J_1(4\pi \sqrt{m(n+\theta)x})}{\sqrt{m(n+\theta)}}-\frac{J_1(4\pi \sqrt{m(n+1-\theta)x})}{\sqrt{m(n+1-\theta)}}\right\},
\end{multline}
where $J_{\nu}(x)$ denotes the ordinary Bessel function of order $\nu$.
\end{entry}

\begin{entry}\label{2entry2} If $0<\theta<1$ and $F(x)$ is defined by \eqref{ffn}, then
\begin{multline}\label{entry2}
	\sum_{n=1}^{\infty}F\left(\frac{x}{n}\right)\cos(2\pi n\theta)=\frac{1}{4}-x\log(2\sin(\pi\theta))\\
	+\frac{1}{2}\sqrt{x}\sum_{m=1}^{\infty}\sum_{n=0}^{\infty}\left\{\frac{I_1(4\pi \sqrt{m(n+\theta)x})}{\sqrt{m(n+\theta)}}+\frac{I_1(4\pi \sqrt{m(n+1-\theta)x})}{\sqrt{m(n+1-\theta)}}\right\},
\end{multline}
where
\begin{equation}\label{defofI}
	I_{\nu}(z):=-Y_{\nu}(z)+\frac{2}{\pi}\cos(\pi \nu)K_{\nu}(z),
\end{equation}
where $Y_{\nu}(x)$ denotes the Bessel function of the second kind of order $\nu$, and $K_{\nu}(x)$ denotes the modified Bessel function of order $\nu$.
\end{entry}

Entries \ref{1entry1} and \ref{2entry2} were established by Berndt, S.~Kim,
and Zaharescu under different conditions on the summation variables $m,n$ in
\cite{besselbbskaz, cdp1, bessel1}.  An expository account of their work
along with a survey of the circle and divisor problems can be found in
\cite{besselsurvey}.  See also the book \cite[Chapter 2]{ab4} by
Andrews and the first author.

It is easy to see from \eqref{ffn} that the left-hand sides of \eqref{entry1}
and \eqref{entry2} are finite. When $x\to 0{+}$, Entries \eqref{entry1} and \eqref{entry2} give the following interesting
limit evaluations:
\begin{align*}
\lim_{x\to 0{+}}\sqrt{x}\sum_{m=1}^{\infty}\sum_{n=0}^{\infty}\left\{\frac{J_1(4\pi \sqrt{m(n+\theta)x})}{\sqrt{m(n+\theta)}}-\frac{J_1(4\pi \sqrt{m(n+1-\theta)x})}{\sqrt{m(n+1-\theta)}}\right\}=\frac{1}{2}\cot(\pi\theta),
\end{align*}
and
\begin{align*}
\lim_{x\to 0{+}}\sqrt{x}\sum_{m=1}^{\infty}\sum_{n=0}^{\infty}\left\{\frac{I_1(4\pi \sqrt{m(n+\theta)x})}{\sqrt{m(n+\theta)}}+\frac{I_1(4\pi \sqrt{m(n+1-\theta)x})}{\sqrt{m(n+1-\theta)}}\right\}=-\frac{1}{2}.
\end{align*}
Direct proofs of these limit evaluations appear to be difficult.

As shown in \cite[equation (2.8)]{besselsurvey}, when $\theta=\tf14$, Entry
\ref{1entry1} is equivalent to the following famous identity due to Ramanujan
and Hardy
\cite{hardiv}, provided that the double sum in \eqref{entry1} is interpreted as
$\lim_{N\to\infty}\sum_{m,n\leq N}$, rather than as an iterated double sum
(see \cite[p.~26]{cdp1}):
\begin{equation*}
{\sum_{0<n\leq x}}^{\prime}r_2(n)=\pi x-1+\sum_{n=1}^{\infty}r_2(n)\left(\frac{x}{n}\right)^{1/2}J_{1}(2\pi\sqrt{nx}).
\end{equation*}
Note that the Bessel functions appearing in \eqref{entry2} are the
same as those appearing in \eqref{vsf}.  Indeed when $\theta=\tf12$,  Entry \ref{2entry2} is
connected with Vorono\"{\dotlessi}'s identity for $\sum_{n\leq x}d(n)$ as will be shown below. First, following the elementary
formula
\begin{equation*}
\sum_{n\leq x}d(n)=\sum_{n\leq x}\sum_{d|n}1=\sum_{dj\leq x}1
=\sum_{d\leq x}\left[\df{x}{d}\right],
\end{equation*}
we see that the left-hand side of \eqref{entry2}, for $\theta=\frac{1}{2}$, can be simplified as
\begin{equation*}
\sum_{n=1}^{\infty}F\left(\frac{x}{n}\right)\cos(\pi n)=\sideset{}{'}\sum_{n\leq x}\sum_{d|n}\cos(\pi d).
\end{equation*}
Second, let
\begin{equation*}
\ell=
\begin{cases}
0, \qquad \text{if}\hspace{1.5mm} n\hspace{1.5mm}\text{is odd},\\
1, \qquad \text{if}\hspace{1.5mm} n\hspace{1.5mm}\text{is even}.
\end{cases}
\end{equation*}
Note that
\begin{align*}
\sum_{d|n}\cos(\pi d)&=\#\hspace{1mm}\text{even divisors of}\hspace{1mm}n-\#\hspace{1mm}\text{odd divisors of}\hspace{1mm} n\nonumber\\
&=d\left(\frac{n}{2}\right)-\left\{d(n)-\ell d\left(\frac{n}{2}\right)\right\}\nonumber\\
&=(1+\ell)d\left(\frac{n}{2}\right)-d(n).
\end{align*}
Hence,
\begin{align*}
\sum_{n=1}^{\infty}F\left(\frac{x}{n}\right)\cos(\pi n)
&=-\sideset{}{'}\sum_{\substack{n\leq x\\n\text{ odd}}}d(n)+\sideset{}{'}\sum_{\substack{n\leq x\\n\text{ even}}}\left\{2d\left(\frac{n}{2}\right)-d(n)\right\}\nonumber\\
&=2\sideset{}{'}\sum_{n\leq\frac{1}{2}x}d(n)-\sideset{}{'}\sum_{n\leq x}d(n).
\end{align*}
Applying the Vorono\"{\dotlessi} summation formula \eqref{vsf} to each of
the sums above and simplifying, we find that
\begin{align*}
\sum_{n=1}^{\infty}F\left(\frac{x}{n}\right)&\cos(\pi n)
=-x\log 2+\frac{1}{4}-\sqrt{2x}\sum_{n=1}^{\infty}\frac{d(n)}{\sqrt{n}}\left(Y_{1}(2\pi\sqrt{2nx})+\frac{2}{\pi}K_{1}(2\pi\sqrt{2nx})\right)
\nonumber\\
&\quad+\sqrt{x}\sum_{n=1}^{\infty}\frac{d(n/2)}{\sqrt{n/2}}\left(Y_{1}(2\pi\sqrt{2nx})
+\frac{2}{\pi}K_{1}(2\pi\sqrt{2nx})\right)\nonumber\\
&=-x\log 2+\frac{1}{4}-\sqrt{2x}\sum_{k=1}^{\infty}\frac{1}{\sqrt{k}}\bigg(\sum_{\substack{d|k\\d\text{ odd}}}1\bigg)
\left(Y_{1}(2\pi\sqrt{2kx})+\frac{2}{\pi}K_{1}(2\pi\sqrt{2kx})\right).\notag
\end{align*}
Letting $k=m(2n+1)$ and interpreting the double sum as
$\lim_{N\to\infty}\sum_{m,n\leq N}$, we deduce that
\begin{align}
&\sum_{n=1}^{\infty}F\left(\frac{x}{n}\right)\cos(\pi n)=-x\log 2+\frac{1}{4}+\sqrt{x}\sum_{m=1}^{\infty}\sum_{n=0}^{\infty}\frac{I_{1}(4\pi\sqrt{m(n+\frac{1}{2})x})}{\sqrt{m(n+\frac{1}{2})}},
\label{goodsy}
\end{align}
where $I_1(z)$ is defined by \eqref{defofI}.  Then
\eqref{goodsy} is exactly Entry \ref{2entry2} with $\theta=\frac{1}{2}$.

It should be mentioned here that Dixon and Ferrar \cite{dixfer2}
established, for $a,b>0$, the identity
\begin{equation}\label{dixonferrar}
a^{\mu/2}\sum_{n=0}^{\infty}\frac{r_2(n)}{(n+b)^{\mu/2}}K_{\mu}(2\pi\sqrt{a(n+b)})
=b^{(1-\mu)/2}
\sum_{n=0}^{\infty}\frac{r_2(n)}{(n+a)^{(1-\mu)/2}}K_{1-\mu}(2\pi\sqrt{b(n+a)}).
\end{equation}
Generalizations have been given by Berndt \cite[p.~343, Theorem 9.1]{III} and
F.~Oberhettinger and K.~Soni \cite[p.~24]{os}.
Using Jacobi's identity
\begin{equation*}
r_2(n)=4\sum_{\substack{d|n\\ {d\hspace{0.5mm}\text{odd}}}}(-1)^{(d-1)/2},
\end{equation*}
we can recast \eqref{dixonferrar} as an identity between double
series
\begin{align*}
&a^{\mu/2}\sum_{n=0}^{\infty}\sum_{m=0}^{\infty}\left\{\frac{K_{\mu}\left(4\pi\sqrt{a\left(\left(n+\tfrac{1}{4}\right)m
+\tfrac{b}{4}\right)}\right)}{((4n+1)m+b)^{\mu/2}}
-\frac{K_{\mu}\left(4\pi\sqrt{a\left(\left(n+\tfrac{3}{4}\right)m+\tfrac{b}{4}\right)}\right)}
{((4n+3)m+b)^{\mu/2}}\right\}\nonumber\\
&=b^{(1-\mu)/2}\sum_{n=0}^{\infty}\sum_{m=0}^{\infty}\left\{\frac{K_{1-\mu}
\left(4\pi\sqrt{b\left(\left(n+\tfrac{1}{4}\right)m
+\tfrac{a}{4}\right)}\right)}{((4n+1)m+a)^{(1-\mu)/2}}\right.
\left.-\frac{K_{1-\mu}\left(4\pi\sqrt{b\left(\left(n+\tfrac{3}{4}\right)m
+\tfrac{a}{4}\right)}\right)}{((4n+3)m+a)^{(1-\mu)/2}}\right\}.
\end{align*}

In this section, we establish one-variable generalizations of Entries
\ref{1entry1} and \ref{2entry2}, where the double sums here are also
interpreted as
 $\lim_{N\to\infty}\sum_{m,n\leq N}$, instead of as iterated double sums.  It is
 an open problem to determine if the series can be replaced by
 iterated double series.
As in Entries \ref{1entry1} and \ref{2entry2}, the series on the left-hand sides of
Theorems \ref{bdrz01} and \ref{bdrz02} are finite.

\begin{theorem}\label{bdrz01} Let $\zeta(s,a)$ denote the Hurwitz zeta function.
Let $0<\theta<1$. Then, for $|\s|<\frac{1}{2}$,
\begin{align}\label{ebdrz01}
&\sum_{n=1}^{\infty}F\left(\frac{x}{n}\right)\frac{\sin\left(2\pi n \theta\right)}{n^{s}}
=-x\frac{\sin(\pi s/2)\Gamma(-s)}{(2\pi)^{-s}}(\zeta(-s,\theta)-\zeta(-s,1-\theta))\\\nonumber
&\quad-\frac{\cos(\pi s/2)\Gamma(1-s)}{2(2\pi)^{1-s}}(\zeta(1-s,\theta)-\zeta(1-s,1-\theta))
+\frac{x}{2}\sin\left(\frac{\pi s}{2}\right)\nonumber\\
&\quad\times\sum_{m=1}^{\infty}\sum_{n=0}^{\infty}\left\{\frac{M_{1-s}\left(4\pi \sqrt{mx\left(n+\theta\right)}\right)}{(mx)^{(1+s)/2}(n+\theta)^{(1-s)/2}}-\frac{M_{1-s}\left(4\pi \sqrt{mx\left(n+1-\theta\right)}\right)}{(mx)^{(1+s)/2}(n+1-\theta)^{(1-s)/2}}\right\},\notag
\end{align}
where
\begin{equation}\label{M}
	M_{\nu}(x)=\frac{2}{\pi}K_{\nu}(x)+\frac{1}{\sin(\pi
          \nu)}(J_{\nu}(x)-J_{-\nu}(x))=\frac{2}{\pi}K_{\nu}(x)+Y_{\nu}(x)+J_{\nu}(x)\tan\left(\frac{\pi \nu}{2}\right).
\end{equation}
\end{theorem}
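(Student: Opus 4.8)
The plan is to reduce \eqref{ebdrz01} to a contour argument of the same type as in the proof of Theorem~\ref{p3361}. Writing $F(x/n)=\sideset{}{'}\sum_{k\le x/n}1$, the left-hand side becomes the hyperbola sum $\sideset{}{'}\sum_{mk\le x}m^{-s}\sin(2\pi m\theta)$, whose Dirichlet series is $Z(w):=\zeta(w)\chi(s+w,\theta)$, where $\chi(z,\theta):=\sum_{n\ge1}n^{-z}\sin(2\pi n\theta)$ is the odd periodic zeta function, entire in $z$ since $0<\theta<1$. Because $|\sigma|<\tfrac12$, $Z(w)$ is absolutely convergent on $\Re w=2$, so by Perron's formula in the form valid for the primed sum,
\[
\sideset{}{'}\sum_{n=1}^{\infty}F\!\left(\frac xn\right)\frac{\sin(2\pi n\theta)}{n^{s}}=\frac{1}{2\pi i}\int_{(2)}\zeta(w)\,\chi(s+w,\theta)\,\frac{x^{w}}{w}\,dw .
\]

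First I would move the line of integration to $\Re w=\lambda$ with $\lambda<-\sigma$, controlling the horizontal segments with Stirling's formula \eqref{strivert} together with a convexity bound for $\chi$, and picking up the simple poles at $w=1$ (from $\zeta$) and $w=0$ (from $1/w$), whose residues are $x\,\chi(s+1,\theta)$ and $-\tfrac12\chi(s,\theta)$, respectively. Hurwitz's formula, in the shape
\[
\chi(z,\theta)=\frac{(2\pi)^{z}}{4\,\Gamma(z)\sin(\pi z/2)}\bigl(\zeta(1-z,\theta)-\zeta(1-z,1-\theta)\bigr),
\]
combined with \eqref{ref}, \eqref{ref2}, and \eqref{dup}, then turns these two residues into precisely the first and second terms on the right of \eqref{ebdrz01}.

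The heart of the matter is to identify the shifted integral $\frac1{2\pi i}\int_{(\lambda)}\zeta(w)\chi(s+w,\theta)\tfrac{x^{w}}{w}\,dw$ with the double series in \eqref{ebdrz01}. Applying the functional equation \eqref{fe} to $\zeta(w)$ and Hurwitz's formula to $\chi(s+w,\theta)$, and then expanding $\zeta(1-w)=\sum_{m\ge1}m^{w-1}$ and $\zeta(1-s-w,a)=\sum_{n\ge0}(n+a)^{s+w-1}$ (both absolutely convergent since $\Re w=\lambda<-\sigma<0$), the shifted integral formally becomes
\[
\frac{2^{s}\pi^{s-1}}{4}\sum_{m=1}^{\infty}\sum_{n=0}^{\infty}\frac1m\Bigl\{(n+\theta)^{s-1}\,\mathcal I\bigl(4\pi^{2}xm(n+\theta)\bigr)-(n+1-\theta)^{s-1}\,\mathcal I\bigl(4\pi^{2}xm(n+1-\theta)\bigr)\Bigr\},
\]
where $\mathcal I(u):=\frac1{2\pi i}\int_{(\lambda)}\frac{u^{w}}{w}\cdot\frac{\sin(\pi w/2)\,\Gamma(1-w)}{\Gamma(s+w)\sin(\pi(s+w)/2)}\,dw$. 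The kernel $M_{\nu}$ in \eqref{M} is designed so that $\mathcal I$ has a closed form: from the classical Mellin transforms of $J_{\pm\nu}$ and $K_{\nu}$ one obtains $\int_{0}^{\infty}M_{\nu}(t)t^{w-1}\,dt=\frac{2^{w-1}}{\pi}\Gamma\!\bigl(\tfrac{w+\nu}{2}\bigr)\Gamma\!\bigl(\tfrac{w-\nu}{2}\bigr)\bigl(1-\cos(\pi w/2)/\cos(\pi\nu/2)\bigr)$, and Mellin inversion, the change of variable $w\mapsto-2w$, and \eqref{ref}--\eqref{dup} give $\mathcal I(u)=\sin(\tfrac{\pi s}{2})\,u^{(1-s)/2}M_{1-s}(2\sqrt u)$. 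Substituting $u=4\pi^{2}xm(n+a)$, so that $2\sqrt u=4\pi\sqrt{xm(n+a)}$, and collecting the constants reproduces the double sum in \eqref{ebdrz01}, read as $\lim_{N\to\infty}\sum_{m,n\le N}$.

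The main obstacle is the last interchange of the double sum with the contour integral, and the precise sense in which the double series converges. When $|\sigma|<\tfrac12$ the estimate $M_{1-s}(z)=O(z^{-1/2})$ only puts the general term at the borderline exponent $\tfrac34\pm\tfrac\sigma2$, so neither the sum over $m$ nor that over $n$ is absolutely convergent, the shifted Perron integral is itself only conditionally convergent, and the line $\Re w=\lambda$ on which both the residue computation and the identification $\mathcal I(u)=\sin(\tfrac{\pi s}{2})u^{(1-s)/2}M_{1-s}(2\sqrt u)$ are simultaneously legitimate exists only because $|\sigma|<\tfrac12$. Making this rigorous will require a coupled truncation argument -- cutting the contour at height $T$, interchanging the resulting finite sums, and letting $T$ and $N$ tend to infinity along a suitable relation while estimating the tails through the oscillation of the Bessel kernel and of the exponentials $x^{it}$ -- exactly the delicate analysis developed for Entries~\ref{1entry1} and \ref{2entry2} in \cite{besselbbskaz, cdp1, bessel1} and by Wilton \cite{wiltonextended}. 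The real work will lie in that step, not in the formal manipulation above.
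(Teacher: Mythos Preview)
Your formal skeleton is correct: the Dirichlet series is indeed $\zeta(w)\chi(s+w,\theta)$, the two residues at $w=1$ and $w=0$ do convert via Hurwitz's formula into the two explicit Hurwitz-zeta terms in \eqref{ebdrz01}, and the Mellin identification of the remaining kernel with $M_{1-s}$ is right. You are also candid that the honest content of the proof is the interchange/truncation step you leave undone.

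The paper takes a genuinely different route, and it is worth seeing why. Rather than attacking \eqref{ebdrz01} for general $\theta$ by a direct Perron integral, the paper (i) specializes to $\theta=a/q$ with $q$ prime, (ii) decomposes $\sin(2\pi na/q)$ into odd primitive Dirichlet characters via Lemma~\ref{oddcharsin}, reducing the statement to a Vorono\"{\dotlessi}-type identity for $\sideset{}{'}\sum_{n\le x}\sigma_{-s}(\chi,n)$ (Corollary~\ref{bdrz06}), (iii) proves that identity by the Chandrasekharan--Narasimhan method: first for Riesz means $\tfrac{1}{\Gamma(k+1)}\sum_{n\le x}\sigma_{-s}(\chi,n)(x-n)^k$ with $k$ large enough that the Bessel series is absolutely convergent (Theorem~\ref{bdrz006}), then descends to $k=0$ using the Zygmund-type equi-convergence Lemmas~\ref{equiconser1}, \ref{equiconser2}, and \ref{ftnuequi}, and (iv) extends from prime-denominator rationals to all $\theta\in(0,1)$ by proving uniform convergence of the double series on compact subintervals. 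In particular, the paper's interpretation of the double sum is the symmetric one in which the product $mn\to\infty$; this is what the character rearrangement produces, and it matches the $\lim_{N\to\infty}\sum_{m,n\le N}$ you wrote only after the equi-convergence argument guarantees boundedness.

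What this buys is exactly the piece you postpone. The Riesz-mean parameter $k$ gives an extra factor of $t^{-k}$ in the contour integrand and an extra $n^{-k/2}$ in the Bessel series, turning every interchange into an absolute-convergence statement; the descent in $k$ is then handled by a localized Fourier argument rather than by any global bound on the tails of your shifted Perron integral. Your direct approach would need to manufacture the same cancellation by hand, and the references you cite (\cite{besselbbskaz,cdp1,bessel1,wiltonextended}) do precisely this, but via the same Riesz-sum or equi-convergence machinery. So your plan is not wrong, but following it to the end leads back to the paper's method; the character decomposition is the device that makes the passage from ``formal'' to ``rigorous'' clean.
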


We show that Entry \ref{1entry1} is identical with Theorem \ref{bdrz01} when $s=0$. First observe that \cite[p.~264, Theorem 12.13]{apostol}
\begin{equation}\label{hzetazero}
\zeta(0,\theta)=\frac{1}{2}-\theta
\end{equation}
 and
\begin{equation*}
\lim_{s\to 0}(\zeta(1-s,\theta)-\zeta(1-s,1-\theta))=\psi(1-\theta)-\psi(\theta)=\pi \cot(\pi\theta),
\end{equation*}
where $\psi(z)=\Gamma'(z)/\Gamma(z)$ denotes the digamma function. Since, by \eqref{sumbesselj}, $J_{-1}(x)=-J_1(x)$,
\begin{equation}\label{y1j1}
\lim_{s\to 0}\sin(\pi s/2)M_{1-s}(x)=J_1(x).
\end{equation}
Now taking the limit as $s\to 0$ on both sides of \eqref{ebdrz01} and using \eqref{hzetazero}--\eqref{y1j1}, we obtain Entry \ref{1entry1}.

\begin{theorem}\label{bdrz02}
Let $0<\theta<1$. Then, for $|\s|<\frac{1}{2}$,
\begin{align}\label{ebdrz02}
&\sum_{n=1}^{\infty}F\left(\frac{x}{n}\right)\frac{\cos\left(2\pi n \theta\right)}{n^{s}}
=x\frac{\cos(\pi s/2)\Gamma(-s)}{(2\pi)^{-s}}(\zeta(-s,\theta)+\zeta(-s,1-\theta))\\\nonumber
&\quad-\frac{\sin(\tf12\pi s)\Gamma(1-s)}{2(2\pi)^{1-s}}(\zeta(1-s,\theta)+\zeta(1-s,1-\theta))
-\frac{x}{2}\cos\left(\frac{\pi s}{2}\right)
\notag\\&\quad\times\sum_{m=1}^{\infty}\sum_{n=0}^{\infty}\left\{\frac{H_{1-s}\left(4\pi \sqrt{mx\left(n+\theta\right)}\right)}{(mx)^{\frac{1+s}{2}}(n+\theta)^{\frac{1-s}{2}}}+\frac{H_{1-s}\left(4\pi \sqrt{mx\left(n+1-\theta\right)}\right)}{(mx)^{\frac{1+s}{2}}(n+1-\theta)^{\frac{1-s}{2}}}\right\},\notag
\end{align}
where
\begin{equation}\label{H}
	H_{\nu}(x)=\frac{2}{\pi}K_{\nu}(x)-\frac{1}{\sin(\pi \nu)}(J_{\nu}(x)+J_{-\nu}(x))=\frac{2}{\pi}K_{\nu}(x)+Y_{\nu}(x)-J_{\nu}(x)\cot\left(\frac{\pi \nu}{2}\right).
\end{equation}
\end{theorem}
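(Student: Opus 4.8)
The plan is to mirror the contour–integration proof of Theorem \ref{p3361}, now with the divisor–type sum twisted by $\cos(2\pi n\theta)$ in place of an exponential, and to feed the output through Hurwitz's formula. Denote by $T(s,x)$ the left–hand side of \eqref{ebdrz02}; it is a finite sum since $F(x/n)=0$ for $n>x$. Because $F(y)$ is exactly the symmetric Perron sum $\sideset{}{'}\sum_{m\le y}1$, we have $F(y)=\frac{1}{2\pi i}\int_{(c)}\zeta(w)\,y^{w}\,dw/w$ (principal value) for $c>1$; putting $y=x/n$ and interchanging the summations with the integral gives, for $c$ large,
\begin{equation*}
T(s,x)=\frac{1}{2\pi i}\int_{(c)}\zeta(w)\,\frac{x^{w}}{w}\,L_{\theta}(s+w)\,dw,\qquad L_{\theta}(u):=\sum_{n=1}^{\infty}\frac{\cos(2\pi n\theta)}{n^{u}},
\end{equation*}
where, because $0<\theta<1$, the Dirichlet series $L_{\theta}(u)=\tfrac12\bigl(\mathrm{Li}_{u}(e^{2\pi i\theta})+\mathrm{Li}_{u}(e^{-2\pi i\theta})\bigr)$ extends to an entire function of $u$. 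Throughout, Hurwitz's formula, in the symmetric shape $\zeta(1-u,\theta)+\zeta(1-u,1-\theta)=\dfrac{4\,\Gamma(u)\cos(\pi u/2)}{(2\pi)^{u}}\,L_{\theta}(u)$, together with \eqref{feg}, \eqref{ref}, \eqref{ref2}, \eqref{dup}, will be used to pass between $L_\theta$ and the Hurwitz zeta values occurring in \eqref{ebdrz02}.

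Next I would move the line of integration from $\Re w=c$ to $\Re w=\lambda$ with $-1<\lambda<\min\{0,-\sigma\}$, a nonempty range since $|\sigma|<\tfrac12$. Inside the strip the integrand has simple poles only at $w=1$ (from $\zeta$) and $w=0$ (from $1/w$), with residues $x\,L_{\theta}(s+1)$ and $\zeta(0)L_{\theta}(s)=-\tfrac12 L_{\theta}(s)$. Invoking Hurwitz's formula turns these two residues into precisely the two ``main'' terms
\begin{equation*}
x\,\frac{\cos(\pi s/2)\Gamma(-s)}{(2\pi)^{-s}}\bigl(\zeta(-s,\theta)+\zeta(-s,1-\theta)\bigr)\quad\text{and}\quad -\frac{\sin(\pi s/2)\Gamma(1-s)}{2(2\pi)^{1-s}}\bigl(\zeta(1-s,\theta)+\zeta(1-s,1-\theta)\bigr)
\end{equation*}
on the right–hand side of \eqref{ebdrz02}; this is a short computation using only the reflection and duplication formulas, and the same check run backwards (letting $s\to0$, using \eqref{hzetazero} and $\zeta'(0,a)=\log\Gamma(a)-\tfrac12\log(2\pi)$) shows that these two terms degenerate to $-x\log(2\sin\pi\theta)+\tfrac14$ in Entry \ref{2entry2}.

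There remains the integral $J:=\frac{1}{2\pi i}\int_{(\lambda)}\zeta(w)\frac{x^{w}}{w}L_{\theta}(s+w)\,dw$, which must be shown to equal the double Bessel series. On the line $\Re w=\lambda$ I would apply the asymmetric functional equation \eqref{fe} to $\zeta(w)$ and Hurwitz's formula to $L_{\theta}(s+w)$; because $\lambda<0$ and $\lambda<-\sigma$, both $\zeta(1-w)=\sum_{m\ge1}m^{w-1}$ and $\zeta(1-s-w,\theta)+\zeta(1-s-w,1-\theta)=\sum_{n\ge0}\{(n+\theta)^{s+w-1}+(n+1-\theta)^{s+w-1}\}$ are given there by their convergent Dirichlet series. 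After substituting these and collapsing the accompanying gamma and trigonometric factors by \eqref{ref}, \eqref{ref2}, \eqref{dup} — in particular $\tfrac{1}{w}\sin(\pi w/2)\Gamma(1-w)=\tfrac{\pi}{2\cos(\pi w/2)\Gamma(1+w)}$, so that the kernel becomes $\dfrac{\pi}{2\cos(\pi w/2)\cos(\pi(s+w)/2)\Gamma(1+w)\Gamma(s+w)}$ — and interchanging the $m$– and $n$–sums with the $w$–integration, each pair $(m,n)$ contributes a Mellin--Barnes integral in $(4\pi^{2}mx(n+\theta))^{w}$ whose poles (those of $1/\cos(\pi w/2)$ at $w=1+2\ell$, $\ell\ge0$, giving the $J$– and $Y$–pieces; those of $1/\cos(\pi(s+w)/2)$ at $w=1-s+2\ell$, $\ell\ge0$, giving the $K$–piece; the reciprocal gamma factors being entire and contributing nothing) sum to $-H_{1-s}\bigl(4\pi\sqrt{mx(n+\theta)}\bigr)$ with $H_{\nu}$ as in \eqref{H}; the residual prefactors reproduce the factor $-\tfrac{x}{2}\cos(\pi s/2)$ and the weights $(mx)^{-(1+s)/2}(n+\theta)^{-(1-s)/2}$ of \eqref{ebdrz02}, and likewise for the $(n+1-\theta)$ family. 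Assembling this with the two residue terms gives \eqref{ebdrz02}, and $s\to 0$ (with $H_{1}=\tfrac2\pi K_{1}+Y_{1}=-I_{1}$) recovers Entry \ref{2entry2}.

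The principal obstacle is analytic, not algebraic. Unlike the integrand of Theorem \ref{p3361}, here the Mellin--Barnes integrals carry no free $\Gamma$–factor — the gamma functions produced by the functional equations cancel against the sines — so they are only conditionally (not absolutely) convergent on the relevant vertical lines; consequently the passage from $\Re w=c$ to $\Re w=\lambda$, the disposal of the horizontal segments, and especially the interchange of the double summation with the integration and the rearrangement of the resulting series into the symmetric order $\lim_{N\to\infty}\sum_{m,n\le N}$ of \eqref{ebdrz02} all require a truncation–and–second–mean–value analysis of the type Berndt, Kim, and Zaharescu carried out for the untwisted Entries \ref{1entry1} and \ref{2entry2}; the hypothesis $|\sigma|<\tfrac12$ is exactly what makes the attendant error terms decay. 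Alternatively, since $T(s,x)=\sideset{}{'}\sum_{k\le x}a_{k}$ with $a_k=\sum_{d\mid k}\cos(2\pi d\theta)d^{-s}$ and $\sum_k a_k k^{-w}=\zeta(w)L_{\theta}(s+w)$, one may instead deduce \eqref{ebdrz02} from a Vorono\"{\dotlessi}–type summation formula for this product Dirichlet series along the lines of \cite{V, bcbconf}, the two factors' functional equations being what produces the double Bessel series. Carrying out this convergence bookkeeping — rather than the formal identity, which is routine — is where the work lies.
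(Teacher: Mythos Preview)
Your formal computation is sound: the Perron representation $T(s,x)=\frac{1}{2\pi i}\int_{(c)}\zeta(w)\,\frac{x^{w}}{w}\,L_{\theta}(s+w)\,dw$ is correct, the residues at $w=0,1$ do yield the two main terms of \eqref{ebdrz02} via Hurwitz's formula exactly as you say, and the pole-by-pole evaluation of the shifted integral would produce $H_{1-s}$ termwise. But you yourself flag that the whole analytic content --- shifting the contour, interchanging the $(m,n)$-sums with a conditionally convergent Mellin--Barnes integral, and reorganising the result into the symmetric summation order --- is left undone. As written this is an outline, not a proof; what you call ``convergence bookkeeping'' is in fact the theorem.

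The paper's proof is engineered precisely to sidestep that bare conditionally-convergent interchange, and the route is genuinely different from yours. Instead of working with $\theta$ directly, they first observe that uniform convergence of the double Bessel series in $\theta$ on compact subintervals of $(0,1)$ reduces the claim to $\theta=a/q$ with $q$ prime (Theorem \ref{bdrz03}); the sum over residues $n\equiv\pm a\pmod q$ is then rewritten via even Dirichlet characters (equations \eqref{gsaqchar}--\eqref{prichar} together with Lemma \ref{evencahrcos}), reducing everything to the character identity of Corollary \ref{bdrz04}. That identity is the case $k=0$ of a Riesz-sum formula (Theorem \ref{bdrz004}) for $\frac{1}{\Gamma(k+1)}\sideset{}{'}\sum_{n\le x}\sigma_{-s}(\chi,n)(x-n)^{k}$, which the paper proves first for $k$ large --- where the extra factor $(x-n)^{k}$ makes the Bessel series absolutely convergent and every interchange is trivial --- and then pushes down to $k=0$ by differentiation, invoking the Chandrasekharan--Narasimhan/Zygmund equi-convergence machinery (Lemmas \ref{equiconser1}--\ref{fnuequi}). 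In short, the paper trades your single hard interchange for a detour through characters and Riesz means whose whole point is to replace conditional by absolute convergence at every step; your approach would have to reproduce that control without the Riesz parameter, which the history of Entries \ref{1entry1}--\ref{2entry2} suggests is substantially harder than the detour.
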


We demonstrate that Entry \ref{2entry2} can be obtained from Theorem \ref{bdrz02} as the particular case $s=0$.  First,
\begin{align}\label{s0}
\lim_{s\to 0}\Gamma(-s)(\zeta(-s,\theta)+\zeta(-s,1-\theta))&=\lim_{s\to 0}(-s)\Gamma(-s)\frac{(\zeta(-s,\theta)+\zeta(-s,1-\theta))}{-s}\nonumber\\\nonumber
&=\zeta'(0,\theta)+\zeta'(0,1-\theta)\\
&=-\log(2\sin(\pi\theta)),
\end{align}
where we used the fact that
$\zeta'(0,\theta)=\log(\Gamma(\theta))-\frac{1}{2}\log(2\pi)$
\cite{bcbhurwitz}. Second, since $s=1$ is a simple pole of $\zeta(s,\theta)$
with residue $1$, then
\begin{multline*}
\lim_{s\to 0}\sin(\pi s/2)(\zeta(1-s,\theta)+\zeta(1-s,1-\theta))\\=\lim_{s\to 0}\frac{\sin(\pi s/2)}{s}s(\zeta(1-s,\theta)+\zeta(1-s,1-\theta))
=-\pi.
\end{multline*}
Third, by \eqref{yj},
\begin{equation}\label{y1}
\lim_{s\to 0}\frac{1}{2\sin(\pi s/2)}(J_{1-s}(x)+J_{s-1}(x))
=-Y_1(x).
\end{equation}
Taking the limit as $s\to0$ in \eqref{ebdrz02} while using \eqref{s0}--\eqref{y1}, we obtain Entry \ref{2entry2}.

\section{Further Preliminary Results}\label{sect11}

 Let us define the generalized twisted divisor sum by
\begin{equation}\label{twistdivsum}
	\sigma_s(\chi,n):=\sum_{d\mid n}\chi(d)d^s,
\end{equation}
which, for Re $z>$ max$\{1,1+\sigma\}$, has the generating function
\begin{equation*}
	\zeta(z)L(z-s,\chi)=\sum_{n=1}^{\infty}\frac{\sigma_s(\chi,n)}{n^z}.
\end{equation*}

The following lemma from the papers of Vorono\"{\dotlessi} \cite{voronoi} and  Oppenheim \cite{oppenheim} is instrumental in proving our main theorems.

\begin{lemma}\label{ldivsum}
If $x>0$, $x\notin\mathbb{Z}$, and $-\tf12<\s<\tf12$, then
\begin{equation}\label{ohio}
\sideset{}{'}\sum_{n\leq x}\sigma_{-s}(n)=-\cos(\tf12 \pi s)\sum_{n=1}^{\infty}\sigma_{-s}(n)\left(\frac{x}{n}\right)^{(1-s)/2}H_{1-s}\left(4\pi\sqrt{nx}\right)
+xZ(s,x)-\frac{1}{2}\zeta(s),
\end{equation}
where $H_{\nu}(x)$ is defined in \eqref{H}, and where
\begin{equation}\label{Z}
Z(s,x)=\begin{cases}\zeta(1+s)+\dfrac{\zeta(1-s)}{1-s}x^{-s}, \quad &\text{if }s\neq 0,\\
\log x+2\gamma-1, \quad&\text{if }s=0,
\end{cases}
\end{equation}
is analytic for all $s$.
\end{lemma}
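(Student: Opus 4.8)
The plan is to derive Lemma~\ref{ldivsum} as a direct specialization of Oppenheim's formula \eqref{lauform}, which the excerpt already records. Recall that \eqref{lauform} states, for $x>0$, $x\notin\mathbb{Z}$, and $-\tf12<\s<\tf12$,
\begin{align*}
\sum_{n<x}\sigma_{-s}(n)&=\zeta(1+s)x+\frac{\zeta(1-s)}{1-s}x^{1-s}-\frac{1}{2}\zeta(s)
+\frac{x}{2\sin\left(\frac{1}{2}\pi s\right)}\sum_{n=1}^{\infty}\sigma_{s}(n)\left(\sqrt{nx}\right)^{-1-s}\\
&\quad\times\left(J_{s-1}(4\pi\sqrt{nx})+J_{1-s}(4\pi\sqrt{nx})-\frac{2}{\pi}\sin(\pi s)K_{1-s}(4\pi\sqrt{nx})\right).
\end{align*}
First I would pass from the coefficients $\sigma_s(n)$ to $\sigma_{-s}(n)$ by replacing $s$ by $-s$ throughout; this is legitimate since $-\tf12<\s<\tf12$ is symmetric in $s\mapsto-s$. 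This turns the Bessel combination into one built from $J_{-s-1}$, $J_{1+s}$, and $K_{1+s}$, with prefactor $x/(2\sin(-\tf12\pi s))$ and powers $(\sqrt{nx})^{s-1}$; equivalently, one may instead keep \eqref{lauform} as is but re-index, whichever bookkeeping is cleaner. The goal is to show the resulting kernel equals $-\cos(\tf12\pi s)\,(x/n)^{(1-s)/2}H_{1-s}(4\pi\sqrt{nx})$ with $H_\nu$ as in \eqref{H}.

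The second step is the trigonometric/Bessel simplification. Using the definition \eqref{H},
\begin{equation*}
H_{1-s}(x)=\frac{2}{\pi}K_{1-s}(x)+Y_{1-s}(x)-J_{1-s}(x)\cot\left(\frac{\pi(1-s)}{2}\right),
\end{equation*}
and the relation \eqref{yj}, namely $Y_{1-s}(x)=\bigl(J_{1-s}(x)\cos(\pi(1-s))-J_{s-1}(x)\bigr)/\sin(\pi(1-s))$, I would express $H_{1-s}$ purely in terms of $J_{1-s}$, $J_{s-1}$, and $K_{1-s}$. After collecting the $J$-terms one finds $Y_{1-s}(x)-J_{1-s}(x)\cot\bigl(\tfrac{\pi(1-s)}{2}\bigr)$ simplifies, via the half-angle identity $\cos(\pi(1-s))-\cos\bigl(\tfrac{\pi(1-s)}{2}\bigr)\big/\!\bigl(\ldots\bigr)$ — more precisely $\cos\theta - \cos(\theta/2)\big/\!\sin(\theta/2)\cdot\sin\theta$ wait, better: use $\cot(\theta/2)\sin\theta = 1+\cos\theta$ — to $-\bigl(J_{1-s}(x)+J_{s-1}(x)\bigr)/\sin(\pi(1-s))$. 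Hence
\begin{equation*}
H_{1-s}(x)=\frac{2}{\pi}K_{1-s}(x)-\frac{J_{1-s}(x)+J_{s-1}(x)}{\sin(\pi(1-s))}=\frac{2}{\pi}K_{1-s}(x)-\frac{J_{1-s}(x)+J_{s-1}(x)}{\sin(\pi s)},
\end{equation*}
since $\sin(\pi(1-s))=\sin(\pi s)$. Multiplying by $-\cos(\tf12\pi s)$ and using $\sin(\pi s)=2\sin(\tf12\pi s)\cos(\tf12\pi s)$ converts the $J$-part into exactly $\bigl(J_{s-1}(x)+J_{1-s}(x)\bigr)\big/\bigl(2\sin(\tf12\pi s)\bigr)$ and the $K$-part into $-\tfrac{2}{\pi}\cos(\tf12\pi s)K_{1-s}(x)$; comparing with \eqref{lauform} after the $s\mapsto-s$ substitution, and matching the power $(\sqrt{nx})^{-1-s}=(x/n)^{(1-s)/2}\cdot x^{-1}\cdot (\text{rescaling})$ — to be tracked carefully — gives precisely the kernel in \eqref{ohio}.

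The third step handles the polynomial-in-$x$ terms: I would note $\zeta(1+s)x+\tfrac{\zeta(1-s)}{1-s}x^{1-s}=x\bigl(\zeta(1+s)+\tfrac{\zeta(1-s)}{1-s}x^{-s}\bigr)=xZ(s,x)$ for $s\neq0$, matching \eqref{Z}, and for $s=0$ the expression degenerates: expanding $\zeta(1+s)\sim 1/s+\gamma$ and $\tfrac{\zeta(1-s)}{1-s}x^{-s}\sim(-1/s+\gamma)(1-s\log x)$ as $s\to0$ (using $\zeta(1-s)\sim -1/s+\gamma$), the $1/s$ poles cancel and one is left with $2\gamma-1+\log x$, which is the $s=0$ line of \eqref{Z}; a routine Laurent-expansion check confirms analyticity of $Z(s,x)$ across $s=0$. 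Since the statement cites both Vorono\"{\dotlessi} \cite{voronoi} and Oppenheim \cite{oppenheim}, one could alternatively remark that \eqref{ohio} is just their result transcribed in the $H_\nu$ notation, with the trigonometric reduction above supplying the translation. I do not expect a genuine obstacle here; the only place demanding care is the bookkeeping of constants and fractional powers of $n$ and $x$ in the $s\mapsto-s$ step — it is easy to misplace a factor — and the limiting computation at $s=0$, which is elementary but must be done honestly to justify the two-case definition of $Z(s,x)$ and the assertion that it is analytic for all $s$.
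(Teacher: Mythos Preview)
Your plan is sound and matches how the paper treats the lemma: the paper does not prove Lemma~\ref{ldivsum} but cites it to Vorono\"{\dotlessi} and Oppenheim, then verifies only the $s=0$ specialization and the limit \eqref{illini} for $Z(s,x)$. Showing that \eqref{ohio} is a rewriting of \eqref{lauform} is exactly the right justification, and your Bessel reduction is correct (indeed it can be shortened by reading the first form in \eqref{H} directly, $H_{1-s}=\tfrac{2}{\pi}K_{1-s}-(J_{1-s}+J_{s-1})/\sin(\pi(1-s))$, which already has the $J$-sum you want).

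Two corrections to the bookkeeping. First, do \emph{not} replace $s$ by $-s$ in \eqref{lauform}: that would change the left side to $\sum_{n<x}\sigma_s(n)$, which is not what you want. The right move (your ``alternatively'') is to keep \eqref{lauform} as is and use the elementary identity $\sigma_s(n)=n^s\sigma_{-s}(n)$, so that $\sigma_s(n)(\sqrt{nx})^{-1-s}=\sigma_{-s}(n)\,n^{(s-1)/2}x^{-(1+s)/2}$; multiplying by the prefactor $x$ gives precisely $\sigma_{-s}(n)(x/n)^{(1-s)/2}$, and your kernel identity then finishes the match. Second, in your $s\to 0$ computation you dropped the factor $1/(1-s)\sim 1+s$: the cross term $(-1/s)\cdot s=-1$ from that expansion is exactly what produces the $-1$ in $\log x+2\gamma-1$; as written, your expansion would give $2\gamma+\log x$. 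The paper carries out this limit carefully in and just after \eqref{illini}, using the Laurent expansion of $\zeta$ with Stieltjes constants, and you can follow that template.
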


We show that \eqref{ohio} reduces to Vorono\"{\dotlessi}'s formula \eqref{vsf} when $s=0.$ From the definition \eqref{H} of $H_{\nu}$  and \eqref{y1}, we find that
\begin{equation*}
	H_{1}(4\pi\sqrt{nx})=Y_{1}(4\pi\sqrt{nx})+\frac{2}{\pi}K_{1}(4\pi\sqrt{nx})=-I_{1}(4\pi\sqrt{nx}).
\end{equation*}
We now show that
\begin{equation}\label{illini}
\lim_{s\to 0}Z(s,x)=\log x+2\gamma-1=Z(0,x).
\end{equation}
Recall that the Laurent series expansion of $\zeta(s)$ near the pole $s=1$ is given by
\begin{equation*}
\zeta(s)=\frac{1}{s-1}+\gamma+\sum_{n=1}^{\infty}\frac{(-1)^n\gamma_n(s-1)^n}{n!},
\end{equation*}
where $\gamma_n$, $n\geq1$,  are the Stieltjes constants defined by \cite{bcbrocky}
\begin{equation}\label{stiel}
\gamma_n=\lim_{N\to\infty}\left(\sum_{k=1}^N\df{\log^nk}{k}-\df{\log^{n+1}N}{n+1}\right).
\end{equation}
Thus, by \eqref{Z}, for $s>0$,
\begin{equation*}
Z(s,x)=\frac{s-1+x^{-s}}{s(s-1)}+\gamma\left(1-\frac{x^{-s}}{s-1}\right)+\sum_{n=1}^{\infty}\frac{(-1)^n\gamma_ns^n}{n!}
+\frac{x^{-s}}{1-s}\sum_{n=1}^{\infty}\frac{\gamma_ns^n}{n!}.
\end{equation*}
Hence,
\begin{equation*}
\lim_{s\to 0}Z(s,x)=\lim_{s\to 0}\frac{s-1+x^{-s}}{s(s-1)}+2\gamma=-\lim_{s\to 0}(1-\log x x^{-s})+2\gamma=\log x+2\gamma-1,
\end{equation*}
which proves \eqref{illini}.

\begin{lemma}\label{oddcharsin}
Let $F(x)$ be defined by \eqref{ffn}. For each character $\chi$ modulo $q$, where $q$ is  prime, define the Gauss sum
\begin{equation}\label{tau}
\tau(\chi)=\sum_{n\pmod q}\chi(n)e^{2\pi in/q}.
\end{equation}
If $0<a<q$ and $(a,q)=1$, then, for any complex number $s$,
\begin{equation*}
\sum_{n=1}^{\infty}F\left(\frac{x}{n}\right)\sin\left(\frac{2\pi n a}{q}\right)n^{s}=-iq^s\sum_{\substack{d\mid q \\ d>1}}\frac{1}{d^s\phi(d)}\sum_{\substack{\chi \operatorname{mod} d\\ \chi \operatorname {odd}}}\chi(a)\tau(\bar{\chi})\sideset{}{'}\sum_{1\leq n\leq dx/q}\sigma_s(\chi,n),
\end{equation*}
where $\phi(n)$ denotes Euler's $\phi$-function.
\end{lemma}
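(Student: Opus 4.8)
The plan is to deduce the identity from a finite Fourier expansion of $\sin\!\left(2\pi na/q\right)$ over the Dirichlet characters modulo $q$, followed by a reindexing of a finite double sum; no analytic machinery is required. First I would record two simplifications coming from the primality of $q$: the divisor condition $d\mid q$ with $d>1$ forces $d=q$ (so the right-hand side is a single inner sum, the factor $q^{s}$ cancels the $d^{-s}=q^{-s}$, and $dx/q=x$), and every non-principal character $\chi\ \operatorname{mod} q$ is primitive. For a primitive character one has the separability relation $\sum_{m\ \operatorname{mod} q}\bar{\chi}(m)e^{2\pi imn/q}=\chi(n)\tau(\bar{\chi})$ for every integer $n$, with $\tau$ as in \eqref{tau}, whereas for the principal character $\chi_{0}$ the sum $\sum_{(m,q)=1}e^{2\pi imn/q}$ equals $-1$ whenever $(n,q)=1$. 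Thus the target reduces to showing that the left-hand side equals $\dfrac{-i}{\phi(q)}\sum_{\chi\ \operatorname{mod} q,\ \chi\ \operatorname{odd}}\chi(a)\,\tau(\bar{\chi})\,\sideset{}{'}\sum_{1\leq n\leq x}\sigma_{s}(\chi,n)$.

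The first main step is to prove that, for every positive integer $n$ and every $a$ with $(a,q)=1$,
\[
\sin\!\left(\frac{2\pi na}{q}\right)=\frac{-i}{\phi(q)}\sum_{\substack{\chi\ \operatorname{mod} q\\ \chi\ \operatorname{odd}}}\chi(a)\,\tau(\bar{\chi})\,\chi(n).
\]
To do this I would write $\sin\!\left(2\pi na/q\right)=\tfrac{1}{2i}\bigl(e^{2\pi ina/q}-e^{-2\pi ina/q}\bigr)$. When $q\mid n$ both sides vanish, since $\chi(n)=0$ for all $\chi\ \operatorname{mod} q$ and $na/q\in\mathbb{Z}$. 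When $(n,q)=1$, I expand the function $r\mapsto e^{2\pi irb/q}$ on $(\mathbb{Z}/q\mathbb{Z})^{\times}$ in characters; orthogonality together with the two facts recalled above gives, for $(b,q)=1$,
\[
e^{2\pi inb/q}=\frac{1}{\phi(q)}\Bigl(-1+\sum_{\chi\neq\chi_{0}}\chi(b)\,\tau(\bar{\chi})\,\chi(n)\Bigr).
\]
Applying this with $b=a$ and with $b=-a$ and subtracting, the constant $-1$ cancels; using $\chi(-a)=\chi(-1)\chi(a)$, the even characters cancel and the odd ones contribute a factor $2$, and dividing by $2i$ yields the displayed expansion.

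The second step is substitution and reindexing. Since $F(x/n)=0$ for $n>x$ by \eqref{ffn}, the series on the left of the lemma is a finite sum, so inserting the Fourier expansion and interchanging the two finite sums is legitimate. It then remains to evaluate $\sum_{n\geq1}F(x/n)\chi(n)n^{s}$. By \eqref{ffn} we have $F(x/n)=\sideset{}{'}\sum_{m\leq x/n}1$, the prime halving the term $m=x/n$ when it is an integer; therefore
\[
\sum_{n=1}^{\infty}F\!\left(\frac{x}{n}\right)\chi(n)n^{s}=\sideset{}{'}\sum_{mn\leq x}\chi(n)n^{s}=\sideset{}{'}\sum_{N\leq x}\ \sum_{d\mid N}\chi(d)d^{s}=\sideset{}{'}\sum_{1\leq N\leq x}\sigma_{s}(\chi,N),
\]
where \eqref{twistdivsum} is used in the last step and the prime on $\sum_{N\leq x}$ records the halving of $\sigma_{s}(\chi,x)$ when $x\in\mathbb{Z}$. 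Combining this with the Fourier expansion and the simplifications from primality produces exactly the right-hand side of the lemma.

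The only genuinely delicate points are the Gauss-sum bookkeeping in the first step — in particular verifying that the principal-character coefficient is exactly $-1$ and that the odd/even parity split produces the correct constant $\tfrac{-i}{\phi(q)}$ — and keeping the prime ($'$) convention straight through the change of variables $N=mn$, so that, when $x$ is an integer, the factor $\tfrac12$ attaches to $\sigma_{s}(\chi,x)$ rather than being lost or doubled. (The same argument, with characters mod $q$ grouped according to the primitive character mod $d\mid q$ that induces them, would recover the full sum over $d\mid q$ in the stated right-hand side for composite $q$, but this is not needed here.)
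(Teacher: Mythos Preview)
Your proof is correct and follows essentially the same approach as the paper: both hinge on the Fourier expansion of $\sin(2\pi na/q)$ in odd characters (the paper's \eqref{echarcter}, your first step) and on the reindexing identity $\sum_{n}F(x/n)\chi(n)n^{s}=\sideset{}{'}\sum_{N\leq x}\sigma_{s}(\chi,N)$ (the paper's \eqref{genteistdivsum}, your second step). The only cosmetic difference is that the paper first decomposes the sum according to $(n,q)=q/d$ before applying the character expansion, whereas you observe directly that the $q\mid n$ terms vanish and that $d=q$ is the only surviving divisor; for prime $q$ the two routes collapse to the same computation.
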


\begin{proof}
    First, we see that
\begin{equation}\label{gendivsum}
	\sideset{}{'}\sum_{n\leq x}\sigma_{s}(n)=\sideset{}{'}\sum_{n\leq x}\sum_{d\mid n}d^{s}=\sum_{d\leq x}d^{s}\sideset{}{'}\sum_{m=1}^{\left\lfloor x/d\right\rfloor}1=\sum_{n=1}^{\infty}F\left(\frac{x}{n}\right)n^{s}.
\end{equation}
Similarly, for any Dirichlet character $\chi$ modulo $q$,
\begin{equation}\label{genteistdivsum}
	\sideset{}{'}\sum_{n\leq x}\sigma_{s}(\chi,n)=\sum_{n=1}^{\infty}F\left(\frac{x}{n}\right)\chi(n)n^{s},
\end{equation}
where $\sigma_{s}(\chi,n)$ is defined in \eqref{twistdivsum}. We have
\begin{align*}
\quad\sum_{n=1}^{\infty}F\left(\frac{x}{n}\right)\sin\left(\frac{2\pi n a}{q}\right)n^{s}&=\sum_{n=1}^{\infty}\sum_{d\mid q}\sum_{(n,q)=q/d}F\left(\frac{x}{n}\right)\sin\left(\frac{2\pi n a}{q}\right)n^{s}\\\nonumber
&=\sum_{d\mid q}\sum_{\substack{m=1\\(m,d)=1}}^{\infty}F\left(\frac{dx}{qm}\right)\sin\left(\frac{2\pi m a}{d}\right)\left(\frac{qm}{d}\right)^{s}\\\nonumber
&=\sum_{\substack{d\mid q\\d>1}}\sum_{\substack{m=1\\(m,d)=1}}^{\infty}F\left(\frac{dx}{qm}\right)\sin\left(\frac{2\pi m a}{d}\right)\left(\frac{qm}{d}\right)^{s}.
\end{align*}
Now using the fact \cite[p.~72, Lemma 2.5]{besselcrelle}
\begin{equation}\label{echarcter}
\sin\left(\frac{2\pi ma}{d}\right)=\frac{1}{i\phi(d)}\sum_{\substack{\chi\operatorname{mod} d\\\chi\text{ odd}} }\chi(a)\tau(\bar{\chi})\chi(m),
\end{equation}
we find that
\begin{align*}
&\sum_{n=1}^{\infty}F\left(\frac{x}{n}\right)\sin\left(\frac{2\pi n a}{q}\right)n^{s}\\&=\sum_{\substack{d\mid q\\d>1}}\frac{1}{i\phi(d)}\sum_{\substack{m=1\\(m,d)=1}}^{\infty}F\left(\frac{dx}{qm}\right)\left(\frac{qm}{d}\right)^{s}
\sum_{\substack{\chi\operatorname{mod} d \\\chi\operatorname{odd}}}\tau(\bar{\chi})\chi(m)\chi(a)\\\nonumber
&=-iq^s\sum_{\substack{d\mid q\\d>1}}\frac{1}{d^s\phi(d)}\sum_{\substack{\chi\operatorname{mod} d \\\chi\operatorname{odd}}}\tau(\bar{\chi})\chi(a)\sideset{}{'}\sum_{n\leq dx/q}\sigma_s(\chi,n),
\end{align*}
as can be seen from \eqref{genteistdivsum}. This completes the proof of Lemma \ref{oddcharsin}.
\end{proof}

\begin{lemma}\label{evencahrcos}
If $0<a<q$ and $(a,q)=1$, then, for any complex number $s$,
\begin{multline*}
\sum_{n=1}^{\infty}F\left(\frac{x}{n}\right)\cos\left(\frac{2\pi n a}{q}\right)n^{s}\\=q^s\sideset{}{'}\sum_{1\leq n\leq x/q}\sigma_s(n)+q^s\sum_{\substack{d\mid q \\ d>1}}\frac{1}{d^s\phi(d)}\sum_{\substack{\chi \operatorname{mod} d\\ \chi \operatorname {even}}}\chi(a)\tau(\bar{\chi})\sideset{}{'}\sum_{1\leq n\leq dx/q}\sigma_s(\chi,n).
\end{multline*}
\end{lemma}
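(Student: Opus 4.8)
The proof will follow the same lines as that of Lemma \ref{oddcharsin}, with the expansion of $\cos\left(\tfrac{2\pi ma}{d}\right)$ over the even Dirichlet characters playing the role that the expansion \eqref{echarcter} of $\sin\left(\tfrac{2\pi ma}{d}\right)$ over the odd characters played there. First I would sort the terms on the left-hand side according to the value of $\gcd(n,q)$: writing $\gcd(n,q)=q/d$ with $d\mid q$ and then $n=qm/d$ with $(m,d)=1$ gives
\begin{equation*}
\sum_{n=1}^{\infty}F\left(\frac{x}{n}\right)\cos\left(\frac{2\pi n a}{q}\right)n^{s}
=\sum_{d\mid q}\left(\frac{q}{d}\right)^{s}\sum_{\substack{m=1\\(m,d)=1}}^{\infty}F\left(\frac{dx}{qm}\right)\cos\left(\frac{2\pi m a}{d}\right)m^{s}.
\end{equation*}
Since $F(y)=0$ for $0<y<1$, each inner sum is in fact finite, so no convergence questions arise and all the rearrangements below are legitimate.

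The term with $d=1$ is treated separately: there the restriction $(m,1)=1$ is vacuous and $\cos(2\pi ma)=1$, so by \eqref{gendivsum} applied with $x$ replaced by $x/q$ this term equals
\begin{equation*}
q^{s}\sum_{m=1}^{\infty}F\left(\frac{x}{qm}\right)m^{s}=q^{s}\sideset{}{'}\sum_{1\leq n\leq x/q}\sigma_{s}(n),
\end{equation*}
which is the first term on the right-hand side of the asserted identity.

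For the remaining terms, with $d\mid q$ and $d>1$, I would insert the even-character companion of \eqref{echarcter},
\begin{equation*}
\cos\left(\frac{2\pi ma}{d}\right)=\frac{1}{\phi(d)}\sum_{\substack{\chi\operatorname{mod} d\\\chi\text{ even}}}\chi(a)\tau(\bar{\chi})\chi(m),
\end{equation*}
which is valid since $(a,d)=1$ (because $(a,q)=1$ and $d\mid q$); this is the even analogue of \cite[p.~72, Lemma 2.5]{besselcrelle}, and can alternatively be derived from the orthogonality relations for Dirichlet characters. Interchanging the finite sums over $m$ and over $\chi$, the coprimality condition $(m,d)=1$ becomes automatic since $\chi(m)=0$ otherwise, and the resulting inner sum $\sum_{m=1}^{\infty}F\left(dx/(qm)\right)\chi(m)m^{s}$ equals, by \eqref{genteistdivsum} with $x$ replaced by $dx/q$ and $\chi$ regarded modulo $d$, the quantity $\sideset{}{'}\sum_{1\leq n\leq dx/q}\sigma_{s}(\chi,n)$. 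Gathering the factor $(q/d)^{s}/\phi(d)$ produces precisely the second term of the claimed identity, and adding back the $d=1$ contribution finishes the proof.

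The only step that requires genuine care is the even-character expansion of $\cos\left(\tfrac{2\pi ma}{d}\right)$: one must make sure that the principal character modulo $d$, which is even, is included in the sum for each $d>1$, so that no term is lost or double-counted when passing between the exponential sum and the character sum. Apart from this bookkeeping, every step is a verbatim analogue of the corresponding step in the proof of Lemma \ref{oddcharsin}.
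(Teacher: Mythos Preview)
Your proof is correct and follows essentially the same approach as the paper's own proof: both decompose the sum according to $\gcd(n,q)=q/d$, separate off the $d=1$ contribution via \eqref{gendivsum}, and for $d>1$ insert the even-character expansion of $\cos(2\pi ma/d)$ before applying \eqref{genteistdivsum}. Your added remark about the inclusion of the principal character for each $d>1$ is a useful point of care that the paper leaves implicit.
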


\begin{proof}
We have
\begin{align*}
\quad\sum_{n=1}^{\infty}F\left(\frac{x}{n}\right)&\cos\left(\frac{2\pi n a}{q}\right)n^{s}=\sum_{n=1}^{\infty}\sum_{d\mid q}\sum_{(n,q)=q/d}F\left(\frac{x}{n}\right)\cos\left(\frac{2\pi n a}{q}\right)n^{s}\\\nonumber
&=\sum_{d\mid q}\sum_{\substack{m=1\\(m,d)=1}}^{\infty}F\left(\frac{dx}{qm}\right)\cos\left(\frac{2\pi m a}{d}\right)\left(\frac{qm}{d}\right)^{s}\\\nonumber
&=\sum_{m=1}^{\infty}F\left(\frac{x}{qm}\right)(qm)^{s}+\sum_{\substack{d\mid q\\d>1}}\sum_{\substack{m=1\\(m,d)=1}}^{\infty}F\left(\frac{dx}{qm}\right)\cos\left(\frac{2\pi n a}{d}\right)\left(\frac{qm}{d}\right)^{s}.
\end{align*}
Invoking \eqref{gendivsum} and \eqref{echarcter} above, we find that
\begin{align*}
\quad\sum_{n=1}^{\infty}F\left(\frac{x}{n}\right)&\cos\left(\frac{2\pi n a}{q}\right)n^{s}
=q^s\sideset{}{'}\sum_{n\leq x/q}\sigma_s(n)\\\nonumber
&\quad+q^s\sum_{\substack{d\mid q\\d>1}}\frac{1}{d^s\phi(d)}\sum_{\substack{m=1\\(m,d)=1}}^{\infty}F\left(\frac{dx}{qm}\right)m^{s}
\sum_{\substack{\chi\operatorname{mod} d \\\chi\operatorname{even}}}\tau(\bar{\chi})\chi(a)\chi(m)\\
&=q^s\sideset{}{'}\sum_{n\leq x/q}\sigma_s(n)+q^s\sum_{\substack{d\mid q\\d>1}}\frac{1}{d^s\phi(d)}\sum_{\substack{\chi\operatorname{mod} d \\\chi\operatorname{even}}}\tau(\bar{\chi})\chi(a)\sideset{}{'}\sum_{n\leq dx/q}\sigma_s(\chi,n).\notag
\end{align*}
Thus, we have finished the proof of Lemma \ref{evencahrcos}.
\end{proof}
  We need a lemma from \cite[p.~5, Lemma 1]{chandrasekharannarasimhan}.

\begin{lemma}\label{titchma}
Let $\sigma_a$ denote the abscissa of absolute convergence for  $$\phi(s):=\sum_{n=1}^{\infty}a_n\lambda_n^{-s}.$$
Then for $k\geq 0$, $\s>0$, and $\s>\s_a$,
\begin{align*}
\frac{1}{\Gamma(k+1)}\sideset{}{'}\sum_{\lambda_n\leq x}a_n(x-\lambda_n)^k=\frac{1}{2\pi i}\int_{(\s)}\frac{\Gamma(s)\phi(s)x^{s+k}}{\Gamma(s+k+1)}\, ds,
\end{align*}
where the prime $\prime$ on the summation sign indicates that if $k=0$ and
$x=\lambda_m$ for some positive integer $m$, then we count only $\tf12a_m$.
\end{lemma}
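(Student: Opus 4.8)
The plan is to reduce Lemma~\ref{titchma} to a single ``discontinuous integral'' of Mellin--Barnes type and then integrate it against the Dirichlet series $\phi$. The key auxiliary fact is that, for $\sigma>0$, $k\geq 0$, and $y>0$,
\[
\frac{1}{2\pi i}\int_{(\sigma)}\frac{\Gamma(s)}{\Gamma(s+k+1)}\,y^{-s}\,ds=
\begin{cases}
\dfrac{(1-y)^{k}}{\Gamma(k+1)}, & 0<y<1,\\[2mm]
\dfrac{1}{2\,\Gamma(k+1)}, & y=1\ \text{and}\ k=0,\\[2mm]
0, & y>1\ \text{or}\ (y=1\ \text{and}\ k>0).
\end{cases}
\]
Granting this, I would write $\phi(s)=\sum_{n}a_{n}\lambda_{n}^{-s}$, valid absolutely on $\Re s=\sigma>\sigma_{a}$, substitute into the right-hand side of the asserted identity, and interchange summation and integration. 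Each term then equals $a_{n}x^{k}$ times the above integral with $y=\lambda_{n}/x$; since $x^{k}(1-\lambda_{n}/x)^{k}=(x-\lambda_{n})^{k}$ and the value $\frac{1}{2\Gamma(k+1)}$ at $y=1$ (which can occur only when $k=0$) matches the primed-sum convention, the series collapses to $\frac{1}{\Gamma(k+1)}\sideset{}{'}\sum_{\lambda_{n}\leq x}a_{n}(x-\lambda_{n})^{k}$, which is the claim.

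To prove the discontinuous integral I would first record that $\Gamma(s)/\Gamma(s+k+1)$ is holomorphic for $\Re s>0$ (the poles of $\Gamma(s)$ sit at $s=0,-1,-2,\dots$ while $1/\Gamma(s+k+1)$ is entire) and that, by Stirling's formula~\eqref{strivert}, it satisfies $\Gamma(s)/\Gamma(s+k+1)\ll|s|^{-k-1}$ on vertical strips bounded away from these poles. For $y>1$ I would shift the line of integration to $\Re s\to+\infty$, where $y^{-s}$ decays; the horizontal segments contribute nothing in the limit by the Stirling bound, and since no poles are crossed the integral is $0$ (the value $0$ at $y=1$, $k>0$ then follows by continuity in $y$, the integrand being absolutely integrable there). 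For $0<y<1$ I would shift the contour leftward through the lines $\Re s=-M-\tfrac12$, $M\to\infty$, using~\eqref{strivert} together with the reflection formula~\eqref{ref} (and $y^{M+1/2}\to0$) to kill the horizontal and far-left segments, so the integral equals the sum of residues at $s=-m$, $m\geq0$. Because $\Res_{s=-m}\Gamma(s)=(-1)^{m}/m!$, the residue of the integrand at $s=-m$ is $\dfrac{(-1)^{m}y^{m}}{m!\,\Gamma(k+1-m)}=\dfrac{1}{\Gamma(k+1)}\binom{k}{m}(-y)^{m}$, and summing the resulting binomial series gives $(1-y)^{k}/\Gamma(k+1)$, as required. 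The boundary case $y=1$, $k=0$ reduces to the elementary evaluation $\frac{1}{2\pi i}\int_{(\sigma)}s^{-1}\,ds=\tfrac12$.

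The routine step is the interchange of summation and integration when $k>0$: the bound $\Gamma(s)/\Gamma(s+k+1)\ll|s|^{-k-1}$ renders the double integral $\sum_{n}|a_{n}|\lambda_{n}^{-\sigma}\int_{(\sigma)}|\Gamma(s)x^{s+k}/\Gamma(s+k+1)|\,|ds|$ absolutely convergent, so Fubini's theorem applies. I expect the main obstacle to be the case $k=0$, where the integrand decays only like $|t|^{-1}$, the contour integral converges merely conditionally, and term-by-term integration is not automatic. There I would argue in the classical Perron manner: truncate the contour at height $T$, bound the truncation error by a sum of the shape $\sum_{n}|a_{n}|\lambda_{n}^{-\sigma}/(T\,|\log(x/\lambda_{n})|)$ plus a separate contribution from any index with $\lambda_{n}=x$ (which is what produces the primed sum), interchange for the truncated finite-height integral, and let $T\to\infty$. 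This is exactly the situation handled in~\cite{chandrasekharannarasimhan}, whose argument transcribes directly; alternatively, one may simply invoke their result.
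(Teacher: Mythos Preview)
Your proof is correct and supplies the standard Mellin--Barnes/Perron argument for this Riesz-sum identity. Note, however, that the paper does not actually prove Lemma~\ref{titchma}: it simply cites it as Lemma~1 from Chandrasekharan--Narasimhan~\cite[p.~5]{chandrasekharannarasimhan}, so your write-up is strictly more than what the paper provides, and your closing remark that one ``may simply invoke their result'' is precisely what the authors do.
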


We recall the following version of the Phragm\'en-Lindel\"of theorem \cite[p. 109]{littlewood}.

\begin{lemma}\label{phragmenlindelof}
Let $f$ be holomorphic in a strip $S$ given by $a<\s<b$, $|t|>\eta>0$, and continuous on the boundary. If for some constant $\theta<1$,
\begin{align*}
f(s)\ll \exp(e^{\theta \pi |s|/(b-a)}),
\end{align*}
uniformly in $ S $, $ f(a+it)=o(1) $, and $ f(b+it)=o(1) $ as $|t|\to
\infty$, then $ f(\s+it)=o(1) $ uniformly in $ S $ as $ |t|\to \infty $.
\end{lemma}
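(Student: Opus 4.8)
The plan is to reduce to a strip of width one and then to prove, for each fixed $\epsilon>0$, that $\limsup_{|t|\to\infty}\sup_{a<\sigma<b}|f(\sigma+it)|\le\epsilon$; letting $\epsilon\to0$ then yields the assertion. After an affine change of the real variable we may assume $a=0$, $b=1$; the horizontal gap persists (with a rescaled $\eta>0$) and the growth hypothesis becomes $f(s)\ll\exp(C_0e^{\theta\pi|s|})$ with the same $\theta\in(0,1)$. Thus $S$ is the union of the two half-strips $\Pi_\pm=\{0<\sigma<1,\ \pm t>\eta\}$, and by symmetry of the argument it suffices to treat $\Pi_+$. Observe that $f$ is bounded on $\partial S$: it is $o(1)$, hence bounded, on the four vertical rays, and continuous, hence bounded, on the two horizontal segments, so $M_0:=\sup_{\partial S}|f|<\infty$; we may assume $M_0>0$.

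\textbf{Step 1 (removing the growth).} Fix $\rho$ with $\theta<\rho<1$ and set $c_0:=\cos(\rho\pi/2)>0$. For $\delta>0$ put
\[
E_\delta(s):=\exp\!\left(-\delta\bigl(e^{\rho\pi i(s-1/2)}+e^{-\rho\pi i(s-1/2)}\bigr)\right).
\]
For $s=\sigma+it$ with $0\le\sigma\le1$ the real part of the exponent equals $-2\delta\cos\!\bigl(\rho\pi(\sigma-\tfrac12)\bigr)\cosh(\rho\pi t)\le-\delta c_0e^{\rho\pi|t|}$, so $|E_\delta|\le1$ on the closed strip and $|E_\delta(s)|\le e^{-\delta c_0e^{\rho\pi|t|}}$. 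Since $|f(s)|\ll\exp(C_1e^{\theta\pi|t|})$ on $\Pi_+$ (using $|s|\le1+|t|$) and $\rho>\theta$, we get $f(s)E_\delta(s)\to0$ as $t\to\infty$ in $\Pi_+$. Applying the maximum modulus principle on $[0,1]\times[\eta,R]$ and letting $R\to\infty$ gives $|fE_\delta|\le M_0$ on $\Pi_+$. Fixing $s$ and letting $\delta\to0^+$, so that $E_\delta(s)\to1$, yields $|f|\le M_0$ on $S$.

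\textbf{Step 2 (uniform decay).} Fix $\epsilon\in(0,M_0)$ (if $M_0\le\epsilon$ there is nothing to prove) and choose $T\ge\eta$ with $|f(it)|<\epsilon$ and $|f(1+it)|<\epsilon$ for $|t|\ge T$. On $\Pi:=\{0<\sigma<1,\ t>T\}$ define the harmonic function
\[
v(\sigma,t):=\frac{4}{\pi}\log\frac{M_0}{\epsilon}\sum_{n\ \mathrm{odd}}\frac{1}{n}\,e^{-n\pi(t-T)}\sin(n\pi\sigma),
\]
so that $v$ vanishes on $\sigma=0$ and $\sigma=1$, equals $\log(M_0/\epsilon)$ on $t=T$ (the Fourier series of a constant), satisfies $0\le v\le\log(M_0/\epsilon)$, and tends to $0$ uniformly in $\sigma$ as $t\to\infty$. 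Then $\log|f(s)|-\log\epsilon-v(\sigma,t)$ is subharmonic on $\Pi$, bounded above by Step 1, and $\le0$ on $\partial\Pi$: on the two vertical sides since $|f|<\epsilon$ there, and on $\{t=T\}$ since $|f|\le M_0$ while $v=\log(M_0/\epsilon)$. By the Phragm\'en-Lindel\"of maximum principle on the half-strip---equivalently, transplant $\Pi$ conformally onto the upper half of the unit disk by $s\mapsto e^{i\pi(s-iT)}$ and use that the single boundary point at infinity (the image of $t=\infty$) is removable for bounded-above subharmonic functions---this difference is $\le0$ on $\Pi$. Hence $|f(\sigma+it)|\le\epsilon\,e^{v(\sigma,t)}$, so $\limsup_{t\to\infty}\sup_{0<\sigma<1}|f(\sigma+it)|\le\epsilon$; the same argument on $\{0<\sigma<1,\ t<-T\}$ treats $t\to-\infty$, and letting $\epsilon\to0$ completes the proof.

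Neither step is deep---this is a standard Phragm\'en-Lindel\"of statement---but two points deserve attention. The strict inequality $\theta<1$ is precisely what makes $|E_\delta|$ decay (like $e^{-\delta c_0e^{\rho\pi|t|}}$ with $\rho>\theta$) faster than $f$ is allowed to grow; without it Step 1 fails. And in Step 2 one must check that the explicit harmonic function $v$ dominates $\log|f|-\log\epsilon$ on the interior segment $\{t=T\}$, where only the crude bound $|f|\le M_0$ is available---this is why $v$ is normalized to equal $\log(M_0/\epsilon)$ there. The disconnectedness of $S$ and the boundary point at infinity are handled routinely by working on each half-strip and using boundedness above in the maximum principle.
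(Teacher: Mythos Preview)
The paper does not prove this lemma; it simply quotes it from Littlewood's \emph{Lectures on the Theory of Functions} with a page reference, treating it as a known result. Your argument supplies a complete and correct proof along the standard lines: first kill the double-exponential growth with a damping factor $E_\delta$ built from $\cos(\rho\pi(s-\tfrac12))$-type exponentials with $\theta<\rho<1$, thereby reducing to the bounded case; then, for each $\epsilon>0$, compare $\log|f|$ with the harmonic solution of the Dirichlet problem on the half-strip that equals $\log(M_0/\epsilon)$ on the segment $\{t=T\}$ and vanishes on the vertical rays. Both steps are carried out correctly, including the reason the inequality $\theta<1$ is essential (it guarantees $\rho>\theta$ so that $|E_\delta|$ decays faster than $|f|$ can grow) and the handling of the point at infinity via boundedness above. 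There is nothing to compare against in the paper itself; your proof is exactly the sort of argument Littlewood gives.
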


We also need two lemmas, proven by K.~Chandrasekharan and R.~Narasimhan
\cite[Corollaries 1 and 2, p.~11]{chandrasekharannarasimhan} (see also
\cite[Lemmas 12 and 13]{bII}), which are based on results of A.~Zygmund
\cite{zygmund} for equi-convergent series. We recall that two series
\begin{align*}
\sum_{j=-\infty}^{\infty}a_j(x)\quad\text{and}\quad\sum_{j=-\infty}^{\infty}b_j(x)
\end{align*}
are  uniformly equi-convergent on an interval if
\begin{align*}
\sum_{j=-n}^{n}[a_j(x)-b_j(x)]
\end{align*}
converges uniformly  on that interval as $n\to\infty$  \cite[Definition
5]{bII}.

\begin{lemma}\label{equiconser1}
Let $a_n$ be a positive strictly increasing sequence of numbers tending to
$\infty$, and suppose that $a_n=a_{-n}$. Suppose that $J$ is a closed
interval contained in an interval $I$ of length $2\pi$. Assume  that
\begin{align*}
\sum_{n=-\infty}^{\infty}|c_n|<\infty.
\end{align*}
Then, if $g$ is a function with period $2\pi$ which equals
\begin{align*}
\sum_{n=-\infty}^{\infty}c_ne^{ia_n x}
\end{align*}
on $I$, the Fourier series of $g$ converges uniformly on $J$.
\end{lemma}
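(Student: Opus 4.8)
The plan is to reduce the statement to the elementary fact that a $2\pi$-periodic function whose ordinary Fourier coefficients are absolutely summable has an absolutely (hence uniformly) convergent Fourier series. The starting observation is that, since $\sum_{n=-\infty}^{\infty}|c_n|<\infty$, the series $h(x):=\sum_{n=-\infty}^{\infty}c_ne^{ia_nx}$ converges absolutely and uniformly on $\mathbb{R}$ to a bounded continuous function, and $g$ coincides with $h$ on $I$. The delicate point is that $g$ is only the $2\pi$-periodization of $h|_I$, so it may have a jump discontinuity at the endpoint of $I$; hence the argument must \emph{localize} to the interior of $I$, away from that endpoint. This localization is exactly what the equiconvergence results of \cite{zygmund} supply in \cite[Corollaries 1, 2, p.~11]{chandrasekharannarasimhan} (see also \cite[Lemmas 12, 13]{bII}); below is a direct route.

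First I would fix a $2\pi$-periodic function $\chi\in C^{\infty}(\mathbb{R})$ with $0\le\chi\le1$, with $\chi\equiv1$ on $J$, and whose support within one period lies strictly inside the interior $I^{\circ}$ of $I$; this is possible because $J$ is a closed subinterval of $I^{\circ}$. Then $\chi g=\chi h$ identically on $\mathbb{R}$: off $I$ both sides vanish, and on $I$ we have $g=h$. Next I would compute the ordinary Fourier coefficients of $\chi g$. Since $\chi g=\chi h=\sum_{n}c_n\,\chi(\cdot)e^{ia_n\cdot}$ with the series converging uniformly (each summand has sup-norm $\le1$), the Fourier coefficients may be evaluated termwise, giving
\[
\widehat{\chi g}(m)=\frac{1}{2\pi}\sum_{n=-\infty}^{\infty}c_n\,\Phi(m-a_n),\qquad m\in\mathbb{Z},
\]
where $\Phi(\xi):=\int_{\mathbb{R}}\chi(t)e^{-i\xi t}\,dt$ is the Fourier transform of $\chi$ regarded as a compactly supported function on one period. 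Because $\chi\in C_c^{\infty}$, $\Phi$ is a Schwartz function; in particular $|\Phi(\xi)|\le C_2(1+\xi^2)^{-1}$.

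The key step, and the one I expect to be the only real obstacle, is the uniform bound
\[
C:=\sup_{\alpha\in\mathbb{R}}\ \sum_{m\in\mathbb{Z}}\bigl|\Phi(m-\alpha)\bigr|<\infty,
\]
which follows from the decay estimate just recorded by grouping the integers $m$ according to the value of $\lfloor|m-\alpha|\rfloor$, there being at most two values of $m$ in each group. Granting this, I would conclude
\[
\sum_{m\in\mathbb{Z}}\bigl|\widehat{\chi g}(m)\bigr|\ \le\ \frac{1}{2\pi}\sum_{n=-\infty}^{\infty}|c_n|\sum_{m\in\mathbb{Z}}\bigl|\Phi(m-a_n)\bigr|\ \le\ \frac{C}{2\pi}\sum_{n=-\infty}^{\infty}|c_n|\ <\ \infty .
\]
Hence the Fourier series of $\chi g$ converges absolutely and uniformly on $\mathbb{R}$, necessarily to $\chi g$; restricting to $J$, where $\chi g=g$, gives the uniform convergence of the Fourier series of $g$ on $J$. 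I note that the hypotheses that $\{a_n\}$ be strictly increasing, tend to $\infty$, and satisfy $a_n=a_{-n}$ play no role in this argument and are recorded only because they hold in the applications.
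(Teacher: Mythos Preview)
The paper does not prove this lemma; it is quoted from Chandrasekharan--Narasimhan \cite[Corollaries 1, 2]{chandrasekharannarasimhan}, so there is no ``paper's proof'' to compare against. Your argument is essentially a clean direct proof, but the last sentence contains a genuine gap.

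You show that the Fourier coefficients of $\chi g$ are absolutely summable, hence the Fourier series of $\chi g$ converges uniformly on $\mathbb{R}$ to $\chi g$. From this you write: ``restricting to $J$, where $\chi g=g$, gives the uniform convergence of the Fourier series of $g$ on $J$.'' This does not follow. The functions $g$ and $\chi g$ agree on $J$, but their Fourier partial sums $S_N[g]$ and $S_N[\chi g]$ are global objects and need not agree there. What you need is precisely the Riemann--Lebesgue localization principle (stated in the paper as Lemma~\ref{localization}): since $g$ and $\chi g$ coincide on an open neighborhood of $J$ (namely $\{\chi=1\}$), their Fourier series are uniformly equiconvergent on $J$, and since $S_N[\chi g]$ converges uniformly on $J$, so does $S_N[g]$. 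Once you insert this step, the proof is complete.

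A minor point: the sentence ``$\chi g=\chi h$ identically on $\mathbb{R}$'' is literally false, because $\chi$ and $g$ are $2\pi$-periodic while $h$ is not. What you actually need (and use) is that $\chi g=\chi_0 h$ on a single period, where $\chi_0$ is the compactly supported bump equal to $\chi$ on $I$ and zero elsewhere; your Fourier-coefficient computation is then correct with $\Phi$ the Fourier transform of $\chi_0$. This is only a matter of phrasing, not of substance.
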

\begin{lemma}\label{equiconser2}
With the same notation as Lemma \ref{equiconser1}, assume that
\begin{align*}
\operatorname{sup}_{0\leq h\leq 1}\left|\sum_{k<a_n<k+h}c_n\right|=o(1),
\end{align*}
as $k\to\infty$, and
\begin{align*}
\sum_{n=-\infty}^{\infty}\frac{|c_n|}{a_n}<\infty.
\end{align*}
Let $A(x)$ be a $\mathcal{C}^{\infty}$ function with compact support on $I$,
which is equal to $1$ on $J$. Furthermore, let $B(x)$ be a
$\mathcal{C}^{\infty}$ function. Then, the series
\begin{align*}
B(x)\sum_{n=-\infty}^{\infty}c_ne^{ia_n x}
\end{align*}
is uniformly equi-convergent on $J$ with the differentiated series of the Fourier series of a function with period $ 2\pi $, which equals
\begin{align*}
A(x)\sum_{n=-\infty}^{\infty}c(n)W_n(x)
\end{align*}
on $I$, where $W_n(x)$ is an antiderivative of $B(x)e^{ia_n x}$.
\end{lemma}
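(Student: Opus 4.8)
The plan to establish Lemma~\ref{equiconser2}, which is the differentiated counterpart of Lemma~\ref{equiconser1}, is to reduce it to the equiconvergence machinery of Zygmund~\cite{zygmund}, in the form packaged by Chandrasekharan and Narasimhan~\cite{chandrasekharannarasimhan}, keeping track of where the two hypotheses are used. First one verifies that
\[
g(x):=A(x)\sum_{n=-\infty}^{\infty}c_{n}W_{n}(x)
\]
is a genuine continuous periodic function. Integrating by parts in $W_{n}(x)=\int_{x_{0}}^{x}B(t)e^{ia_{n}t}\,dt$, with $x_{0}$ a fixed base point and using that $B\in C^{\infty}$, gives
\[
W_{n}(x)=\frac{B(x)}{ia_{n}}\,e^{ia_{n}x}+O\!\left(\frac{1}{a_{n}^{2}}\right)
\]
uniformly on the support of $A$, so the hypothesis $\sum_{n}|c_{n}|/a_{n}<\infty$ forces the series defining $g$ to converge absolutely and uniformly; since $A$ has compact support inside $I$, the periodic extension of $g$ is continuous (in fact flat at the junction of consecutive periods), and hence $g$ has a bona fide Fourier series. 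Next one records the formal termwise derivative of the series for $g$: because $W_{n}'(x)=B(x)e^{ia_{n}x}$,
\[
\frac{d}{dx}\sum_{n}c_{n}(AW_{n})(x)=\sum_{n}c_{n}A'(x)W_{n}(x)+A(x)B(x)\sum_{n}c_{n}e^{ia_{n}x},
\]
and on $J$, where $A\equiv1$ and $A'\equiv0$, the right-hand side collapses to $B(x)\sum_{n}c_{n}e^{ia_{n}x}$. Thus the content of the lemma is that the termwise differentiated Fourier series of $g$ and this last series are uniformly equiconvergent on $J$, i.e.\ that the difference of their partial sums over all frequencies of modulus at most $R$ tends to $0$ uniformly on $J$ as $R\to\infty$.

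The technical heart is this equiconvergence statement. One expands $\widehat{g}(k)$, for $k\in\mathbb{Z}$, as a sum over $n$ of $c_{n}/(ia_{n})$ times the $k$-th Fourier coefficient of $A(x)B(x)e^{ia_{n}x}$, plus the contribution of the $O(1/a_{n}^{2})$ tails of $W_{n}$, whose coefficients are $O(|c_{n}|/a_{n})$ even after one differentiation and so are absorbed by applying Lemma~\ref{equiconser1} to the ambient smooth multipliers. Since $AB$ is smooth with compact support, its Fourier transform decays faster than any power, so the $k$-th Fourier coefficient of $A(x)B(x)e^{ia_{n}x}$ decays rapidly in $|k-a_{n}|$. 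The difference of partial sums is then estimated by splitting, for each frequency $k$, the $n$-sum into the diagonal window $\lfloor a_{n}\rfloor\le k<\lfloor a_{n}\rfloor+1$ and the off-diagonal part $|k-a_{n}|\ge1$: the hypothesis $\sup_{0\le h\le1}\big|\sum_{k<a_{n}<k+h}c_{n}\big|=o(1)$ as $k\to\infty$ is exactly the ``first Riesz mean'' condition that makes the window sums negligible uniformly in $x$, while $\sum_{n}|c_{n}|/a_{n}<\infty$---which the integration-by-parts identity for $W_{n}$ exhibits as precisely the right order of smoothing---together with the off-diagonal decay and an Abel summation controls the off-diagonal part. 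This is the content of Corollaries~1 and~2 of~\cite{chandrasekharannarasimhan}, which I would invoke after carrying out the reduction above.

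The main obstacle is therefore the uniform equiconvergence estimate of the previous paragraph: essentially all of the difficulty of the lemma is concentrated in showing that the diagonal-window and off-diagonal contributions to the difference of partial sums tend to $0$ uniformly on $J$, a delicate adaptation of Zygmund's equiconvergence theorems to the non-integer frequencies $a_{n}$. The remaining steps---the integration by parts for $W_{n}$, the identification of the formal derivative, and the bookkeeping that isolates the leading term of $W_{n}$---are routine.
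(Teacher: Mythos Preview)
The paper does not supply its own proof of this lemma: it is simply quoted from Chandrasekharan--Narasimhan \cite[Corollaries~1 and~2, p.~11]{chandrasekharannarasimhan} (with a parallel reference to \cite[Lemmas~12 and~13]{bII}), and the underlying analysis is attributed to Zygmund~\cite{zygmund}. So there is nothing to compare against except the citation itself.

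Your sketch does recover the correct architecture of the Zygmund--Chandrasekharan--Narasimhan argument: the integration by parts $W_n(x)=B(x)e^{ia_nx}/(ia_n)+O(a_n^{-2})$ explains why $\sum|c_n|/a_n<\infty$ is the right smoothing hypothesis and makes $g=A\sum c_nW_n$ a genuine continuous periodic function; the identification of the termwise derivative on $J$ is correct; and the diagonal/off-diagonal splitting with the $o(1)$ window condition is indeed where the work lies. However, there is a circularity at the end. You write that the equiconvergence estimate ``is the content of Corollaries~1 and~2 of~\cite{chandrasekharannarasimhan}, which I would invoke after carrying out the reduction above.'' But Lemma~\ref{equiconser2} \emph{is} Corollary~2 of~\cite{chandrasekharannarasimhan}; it is not a tool available for proving the lemma, it is the lemma. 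If your goal is to give an independent proof, you must actually carry out the Zygmund-type estimate you describe (controlling the difference of partial sums via the rapid decay of $\widehat{AB}$ away from the diagonal and Abel summation near it), rather than defer to the very result being established. If instead your goal is merely to document why the lemma holds, then the honest statement is the same as the paper's: cite \cite{chandrasekharannarasimhan} and~\cite{zygmund} and move on.
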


Let the Fourier series of any function $ f $ defined, say, in the interval $
(-\pi, \pi) $, be
\begin{align*}
S[f]:=\sum_{n=-\infty}^{\infty}c_ne^{inx}.
\end{align*}
The following result of Zygmund \cite[Theorem 6.6,
p.~53]{zygmundtrig} expresses the Riemann-Lebesgue localization
principle.

\begin{lemma}\label{localization}
If two functions $f_1$ and $f_2$ are equal in an interval $I$, then $S[f_1]$ and $S[f_2]$ are uniformly equi-convergent in any interval $I'$ interior to $I$.
\end{lemma}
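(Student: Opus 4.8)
The plan is to reduce the assertion to the classical Riemann localization principle for a single function. First I would set $g:=f_1-f_2$, which is integrable on $(-\pi,\pi)$ and vanishes identically on $I$; extending $g$ to have period $2\pi$, and replacing $I$, $I'$ by translates (using periodicity) together with passing to real and imaginary parts, I may assume that $g$ is real‑valued with $\overline{I'}\subset I\subset(-\pi,\pi)$. If $S_n[h]$ denotes the $n$‑th symmetric partial sum of the Fourier series of $h$, then $S_n[f_1](x)-S_n[f_2](x)=S_n[g](x)$, so the uniform equi‑convergence on $I'$ asserted in the lemma is exactly the statement that $S_n[g](x)\to 0$ uniformly for $x\in I'$ (the common limit of the two Fourier series being $0$, in agreement with $f_1=f_2$ on $I$).

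Next I would write $S_n[g]$ against the Dirichlet kernel $D_n(u)=\sin\bigl((n+\tfrac12)u\bigr)/\sin(u/2)$, so that, since $g$ vanishes on $I$,
\[
S_n[g](x)=\frac{1}{2\pi}\int_{-\pi}^{\pi}g(t)D_n(x-t)\,dt=\frac{1}{2\pi}\int_{E}g(t)\,\frac{\sin\bigl((n+\tfrac12)(x-t)\bigr)}{\sin\bigl(\tfrac12(x-t)\bigr)}\,dt ,
\]
where $E$ is the complement of $I$ in a period. Put $\delta:=\operatorname{dist}(I',\partial I)>0$; for $x\in I'$ and $t\in E$ the arc‑distance between $x$ and $t$ is at least $\delta$, so $\bigl|\sin\tfrac12(x-t)\bigr|\ge \sin(\delta/2)=:c(\delta)>0$ and hence $|D_n(x-t)|\le 1/c(\delta)$ uniformly. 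Given $\varepsilon>0$ I would choose $g_\varepsilon\in C^1$ (periodic) with $\|g-g_\varepsilon\|_{L^1}<\varepsilon$: the remainder contributes at most $\varepsilon/(2\pi c(\delta))$ uniformly in $x\in I'$ and in $n$, because of the crude bound $|D_n|\le 1/c(\delta)$; for the smooth piece, writing $\Psi_x(t):=g_\varepsilon(t)/\sin\tfrac12(x-t)$ (which is $C^1$ on $\overline E$ with $\|\Psi_x\|_\infty,\|\Psi_x'\|_\infty$ bounded independently of $x\in I'$) and integrating by parts in $t$ against $\frac{d}{dt}\cos\bigl((n+\tfrac12)(x-t)\bigr)=(n+\tfrac12)\sin\bigl((n+\tfrac12)(x-t)\bigr)$ produces boundary terms and a remaining integral each of size $O_\varepsilon(1/n)$, uniformly in $x\in I'$. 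Combining the two estimates gives $\limsup_{n\to\infty}\sup_{x\in I'}|S_n[g](x)|\le \varepsilon/(2\pi c(\delta))$, and letting $\varepsilon\to 0$ finishes the argument.

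The only genuine obstacle is the \emph{uniformity} in $x$: the mere pointwise statement $S_n[g](x)\to 0$ for $x\in I$ is the classical Riemann localization theorem and follows from the ordinary Riemann–Lebesgue lemma, so the extra work is entirely in arranging the $L^1$‑approximation so that the error is controlled uniformly on $I'$ — which is exactly where the lower bound $|\sin\tfrac12(x-t)|\ge c(\delta)$ on the compact set $I'\times E$, and the uniform $C^1$ bounds on $\Psi_x$, are used. (One could instead phrase the uniformity via compactness in $L^1$ of the family $\{g/\sin\tfrac12(x-\cdot):x\in I'\}$ together with a uniform Riemann–Lebesgue lemma, or simply quote \cite[Theorem~6.6, p.~53]{zygmundtrig} directly.)
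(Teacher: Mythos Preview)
Your argument is correct and is essentially the standard proof of the uniform Riemann--Lebesgue localization principle: reduce to $g=f_1-f_2$ vanishing on $I$, restrict the Dirichlet‑kernel integral to the complement $E$ where $|\sin\tfrac12(x-t)|\ge c(\delta)>0$ uniformly for $x\in I'$, then split $g$ into an $L^1$‑small piece (controlled by the crude bound $|D_n|\le 1/c(\delta)$) and a $C^1$ piece (handled by one integration by parts, with bounds uniform in $x\in I'$). The paper itself gives no proof of this lemma; it simply quotes it as \cite[Theorem~6.6, p.~53]{zygmundtrig}, and your final parenthetical remark already acknowledges this. Zygmund's own proof follows the same Dirichlet‑kernel route, so there is nothing to compare.
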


For each integer $\lambda$ define
\begin{align}\label{capgt}
\tilde{G}_{\lambda+s}(z):=J_{\lambda+s}(z)\cos\left(\frac{\pi s}{2}\right)-\left(Y_{\lambda+s}(z)-(-1)^{\l}\frac{2}{\pi}K_{\lambda+s}(z)\right)\sin\left(\frac{\pi s}{2}\right).
\end{align}
By \eqref{jd}, \eqref{yd}, and \eqref{kd},
\begin{align}\label{dergt}
\frac{d}{dx}\left(\frac{x}{u}\right)^{(1+k-s)/2}\s_s(n)\tilde{G}_{1+k-s}(4\pi
\sqrt{xu})=2\pi
\left(\frac{x}{u}\right)^{(k-s)/2}\s_s(n)\tilde{G}_{k-s}(4\pi
\sqrt{xu}).
\end{align}
Let us consider the Dirichlet series $\sum_{n=1}^{\infty}a_n\mu_n^{-s}$ with abscissa of absolute convergence $\s_a$ and
$ 0<\mu_1<\mu_2<\cdots<\mu_n\to\infty.$
For $y>0$ and $\nu=\lambda+s$, define
\begin{align*}
\tilde{F}_{\nu}(y):=\sum_{n=1}^{\infty}a_n\left(\frac{qy^2}{\mu_n}\right)^{\nu/2}
\tilde{G}_{\nu}\left(4\pi
  y\sqrt{\frac{\mu_n}{q}}\right)
\end{align*}
and
\begin{align*}
F_{\nu}(y):=\sum_{n=1}^{\infty}a_n\left(\frac{qy^2}{\mu_n}\right)^{\nu/2}G_{\nu}
\left(4\pi y\sqrt{\frac{\mu_n}{q}}\right),
\end{align*}
where $G_{\lambda+s}(z)$ is defined in \eqref{capg}. Suppose that
\begin{align}\label{absconv}
\sum_{n=1}^{\infty}\frac{|a_n|}{\mu_n^{\frac{1}{2}\nu+\frac{3}{4}}}<\infty
\end{align}
and
\begin{align}\label{conconv}
\operatorname{sup}_{0\leq h\leq 1}\left\lvert\sum_{m^2<\mu_n\leq(m+h)^2}
\frac{a_n}{\mu_n^{\frac{1}{2}\nu+\frac{1}{4}}}\right\rvert=o(1),
\end{align}
as $m\to\infty$.

The following lemma is similar to Theorem II in
\cite{chandrasekharannarasimhan} and Lemma 14 in \cite{bII}.

\begin{lemma}\label{ftnuequi} The function
$2y\tilde{F}_{\nu}(y)$ is uniformly equi-convergent on any interval $J$ of
length less than $1$ with the differentiated series of the Fourier series of
a function with period $1$, which on $I$ equals $A(y)\tilde{F}_{\nu+1}(y)$,
where $I$ is of length $1$ and contains $J$. Moreover, $\tilde{F}_{\nu}(y)$
is a continuous function.
\end{lemma}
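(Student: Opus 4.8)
\textbf{Proof plan for Lemma \ref{ftnuequi}.}

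The plan is to reduce the statement to the abstract equiconvergence machinery already set up in Lemmas \ref{equiconser1} and \ref{equiconser2}, exactly as was done for the functions $F_\nu$ in \cite{chandrasekharannarasimhan} and \cite{bII}. First I would unwind the definition of $\tilde{F}_\nu(y)$ and insert the large-argument asymptotics of $J_{\lambda+s}$, $Y_{\lambda+s}$, and $K_{\lambda+s}$ from \eqref{asymbess}, \eqref{asymbess1}, \eqref{asymbess2} into \eqref{capgt}. Because the $K$-term decays exponentially, the function $\tilde{G}_\nu(4\pi y\sqrt{\mu_n/q})$ is, up to an absolutely and uniformly convergent remainder, a combination of terms of the shape
\begin{equation*}
\left(\frac{q}{\mu_n}\right)^{1/4}\frac{1}{(4\pi y)^{1/2}}\,e^{\pm 4\pi i y\sqrt{\mu_n/q}}\cdot(\text{constant depending on }\nu),
\end{equation*}
plus lower-order terms in $1/\sqrt{\mu_n}$ that are likewise controlled by \eqref{absconv}. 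Thus $2y\tilde{F}_\nu(y)$ differs from a series of the form $\sum_n c_n e^{i a_n y}$, with $a_n = \pm 4\pi\sqrt{\mu_n/q}$ and $c_n$ a constant multiple of $a_n\left(qy^2/\mu_n\right)^{\nu/2}(q/\mu_n)^{1/4}\mu_n^{-1/2}$-type coefficients, by a uniformly convergent series; the precise bookkeeping of the $y$-powers is routine and I would not belabor it.

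Next I would verify the two hypotheses of Lemma \ref{equiconser2} for these coefficients $c_n$. The absolute-summability-after-dividing-by-$a_n$ condition $\sum |c_n|/a_n<\infty$ follows from \eqref{absconv} once one checks the exponent: $|c_n|/a_n$ is of order $|a_n|\cdot\mu_n^{-\nu/2-1/4}\cdot|a_n|^{-1}\asymp \mu_n^{-\nu/2-1/4}$ against $|a_n|$ spacing, which matches $\sum |a_n|\mu_n^{-\nu/2-3/4}<\infty$ after accounting for $a_n\asymp\sqrt{\mu_n}$. The "block-sum" condition $\sup_{0\le h\le 1}\big|\sum_{k<a_n<k+h}c_n\big|=o(1)$ is exactly the translation of \eqref{conconv} under the substitution $\mu_n = (\text{const})\cdot a_n^2$, since summing over $k<4\pi\sqrt{\mu_n/q}<k+h$ is the same as summing over $m^2<\mu_n\le(m+h')^2$ for appropriate $m,h'$. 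With both hypotheses in hand, Lemma \ref{equiconser2} (applied with $B(y)$ absorbing the polynomial-in-$y$ factors and $A(y)$ the chosen cutoff) gives that $2y\tilde{F}_\nu(y)$ is uniformly equi-convergent on $J$ with the differentiated Fourier series of the periodic function agreeing with $A(y)\tilde{F}_{\nu+1}(y)$ on $I$; here one uses the derivative identity \eqref{dergt}, which shows that an antiderivative of $B(y)e^{ia_n y}$ is, up to the uniformly convergent correction, the corresponding $\tilde{G}_{\nu+1}$-term, i.e.\ that term-by-term integration of $2y\tilde{F}_\nu$ reproduces $\tilde{F}_{\nu+1}$.

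Finally, continuity of $\tilde{F}_\nu(y)$ follows because $\tilde{F}_{\nu+1}(y)$ is represented by an absolutely and uniformly convergent series (its coefficients satisfy $\sum|a_n|\mu_n^{-(\nu+1)/2-?}<\infty$, which is implied by \eqref{absconv} since raising $\nu$ by $1$ only improves convergence), hence continuous; and a continuous function whose Fourier series differentiates to something uniformly equi-convergent with $2y\tilde{F}_\nu(y)$ forces $\tilde{F}_\nu$ itself to be continuous on the interior interval, by the localization principle (Lemma \ref{localization}) together with the fact that uniform equi-convergence transfers continuity of the sum. The main obstacle I anticipate is purely clerical rather than conceptual: keeping the various half-integer powers of $y$, $q$, and $\mu_n$ consistent when passing between $\tilde{G}_\nu$, its asymptotic expansion, and the coefficients $c_n$, so that the exponent in \eqref{absconv} and \eqref{conconv} lines up exactly with what Lemma \ref{equiconser2} requires. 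Once the exponent bookkeeping is pinned down, the rest is a direct citation of the cited lemmas, in complete parallel with \cite[Lemma 14]{bII}.
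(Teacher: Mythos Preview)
Your proposal is correct and follows essentially the same route as the paper: split $\tilde G_\nu$ via the Bessel asymptotics \eqref{asymbess}--\eqref{asymbess2} into a leading oscillatory piece plus an absolutely convergent remainder controlled by \eqref{absconv}, then feed the leading piece into Lemma~\ref{equiconser2} (with \eqref{conconv} supplying the block-sum hypothesis) and invoke \eqref{dergt} to identify the antiderivative with the $\tilde F_{\nu+1}$-term. The only cosmetic difference is that the paper keeps two orders of the asymptotic expansion explicitly and handles the subleading $(\mu_n)^{-3/4}$-piece via Lemma~\ref{equiconser1} rather than absorbing it into the remainder, but this is exactly the ``clerical bookkeeping'' you flagged and changes nothing of substance.
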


\begin{proof}
We examine the function
\begin{align}\label{funf}
&f(y):=2q^{\nu/2}y^{1+\nu}\sum_{n=1}^{\infty}\left(\frac{a_n}{\mu_n}\right)^{\nu/2}\bigg\{\tilde{G}_{\nu}\left(4\pi y\sqrt{\frac{\mu_n}{q}}\right)\\
&-\frac{q^{1/4}}{\pi\mu_n^{1/4}(2y)^{1/2}}\left(\cos\left(4\pi y\sqrt{\frac{\mu_n}{q}}-\dfrac{\pi \nu}{2}-\dfrac{\pi}{4}\right)d_0+\sin\left(4\pi y\sqrt{\frac{\mu_n}{q}}-\dfrac{\pi \nu}{2}-\dfrac{\pi}{4}\right)d_0'\right)\nonumber\\
&-\frac{q^{3/4}}{2\pi^2\mu_n^{3/4}y^{3/2}}\left(\sin\left(4\pi y\sqrt{\frac{\mu_n}{q}}-\dfrac{\pi \nu}{2}-\dfrac{\pi}{4}\right)d_1+\cos\left(4\pi y\sqrt{\frac{\mu_n}{q}}-\dfrac{\pi \nu}{2}-\dfrac{\pi}{4}\right)d_1'\right)\bigg\},
\notag
\end{align}
where $d_0, d_0', d_1$, and $d_1'$ are constants.
Since $y>0$, then by the definition \eqref{capgt}, \eqref{asymbess},
\eqref{asymbess1}, \eqref{asymbess2}, and \eqref{absconv}, the function
$f(y)$ in \eqref{funf} is a continuously differentiable function. Let $g$ be
a function with period $1$ which equals $f$ on $I$. Since $f$ is continuously
differentiable,
the Fourier series of $g$ is uniformly convergent on $J$. By the hypothesis
\eqref{absconv}, \eqref{conconv}, and Lemma \ref{equiconser2}, the series
\begin{align*}
&2q^{\nu/2}y^{1+\nu}\sum_{n=1}^{\infty}\left(\frac{a_n}{\mu_n}\right)^{\nu/2}\frac{q^{1/4}}{\pi\mu_n^{1/4}(2y)^{1/2}}\nonumber\\
&\hspace{1cm}\times\left(\cos\left(4\pi y\sqrt{\frac{\mu_n}{q}}-\dfrac{\pi \nu}{2}-\dfrac{\pi}{4}\right)d_0+\sin\left(4\pi y\sqrt{\frac{\mu_n}{q}}-\dfrac{\pi \nu}{2}-\dfrac{\pi}{4}\right)d_0'\right)
\end{align*}
is uniformly equi-convergent on $J$ with the derived series of the Fourier
series of a function that is of period $1$ and equals on $I$,
\begin{align}\label{derivese}
&A(y)\sum_{n=1}^{\infty}\left(\frac{a_n}{\mu_n}\right)^{\nu/2}\int_{\a}^{y}2q^{\nu/2}t^{1+\nu}\frac{q^{1/4}}{\pi\mu_n^{1/4}(2t)^{1/2}}
\nonumber\\
&\quad\times\left(\cos\left(4\pi t\sqrt{\frac{\mu_n}{q}}-\dfrac{\pi
      \nu}{2}-\dfrac{\pi}{4}\right)d_0+\sin\left(4\pi
    t\sqrt{\frac{\mu_n}{q}}-\dfrac{\pi
      \nu}{2}-\dfrac{\pi}{4}\right)d_0'\right)\, dt,
\end{align}
for some $\a>0$.
Using Lemma \ref{equiconser1}, we can prove a result similar to that of
\eqref{derivese} for the series
\begin{align*}
&2q^{\nu/2}y^{1+\nu}\sum_{n=1}^{\infty}\left(\frac{a_n}{\mu_n}\right)^{\nu/2}\frac{q^{3/4}}{2\pi^2\mu_n^{3/4}(y)^{3/2}}\nonumber\\
&\hspace{1cm}\times\left(\cos\left(4\pi y\sqrt{\frac{\mu_n}{q}}-\dfrac{\pi \nu}{2}-\dfrac{\pi}{4}\right)d_0+\sin\left(4\pi y\sqrt{\frac{\mu_n}{q}}-\dfrac{\pi \nu}{2}-\dfrac{\pi}{4}\right)d_0'\right).
\end{align*}
Hence, the series
\begin{align*}
2y\sum_{n=1}^{\infty}a_n\left(\frac{qy^2}{\mu_n}\right)^{\nu/2}\tilde{G}_{\nu}\left(4\pi y\sqrt{\frac{\mu_n}{q}}\right)
\end{align*}
is uniformly equi-convergent on $J$ with the derived series of the Fourier series of a function that is of period $1$ and equals on $I$,
\begin{align*}
A(y)&\sum_{n=1}^{\infty}a_n\int_{0}^{y}2t\left(\frac{qt^2}{\mu_n}\right)^{\nu/2}\tilde{G}_{\nu}\left(4\pi t\sqrt{\frac{\mu_n}{q}}\right)\,dt\\
&=\frac{A(y)}{2\pi}\sum_{n=1}^{\infty}a_n\left(\frac{qy^2}{\mu_n}\right)^{(\nu+1)/2}\tilde{G}_{\nu+1}\left(4\pi y\sqrt{\frac{\mu_n}{q}}\right).
\end{align*}
In the last step we use \eqref{dergt}. This completes the proof of the lemma.

\end{proof}
The following lemma is proved by the same kind of argument.

\begin{lemma}\label{fnuequi}
The function $2yF_{\nu}(y)$ is uniformly equi-convergent on any interval $J$ of length less than $1$ with the differentiated series of the Fourier series of a function with period $1$, which on $I$ equals $A(y)F_{\nu+1}(y)$, where $I$ is of length $1$ and contains $J$. Moreover, $F_{\nu}(y)$ is a continuous function.
\end{lemma}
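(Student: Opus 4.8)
The plan is to establish Lemma \ref{fnuequi} by an almost verbatim adaptation of the proof of Lemma \ref{ftnuequi}, substituting the kernel $G_{\lambda+s}$ for $\tilde G_{\lambda+s}$ throughout. The only structural feature of $\tilde G_\nu$ that was actually used in the proof of Lemma \ref{ftnuequi} is the differentiation identity \eqref{dergt}, together with the three asymptotic expansions \eqref{asymbess}, \eqref{asymbess1}, \eqref{asymbess2} for its constituent Bessel functions. For $G_{\lambda+s}$ we have the companion relation \eqref{gtransform}, which plays exactly the role of \eqref{dergt}, and since $G_\nu$ is also a fixed linear combination of $J_\nu$, $Y_\nu$ and $K_\nu$, it too admits an asymptotic expansion of the shape
\begin{align*}
G_\nu(z)\sim\left(\frac{2}{\pi z}\right)^{1/2}\left(c_0\cos\Bigl(z-\tfrac{\pi\nu}{2}-\tfrac{\pi}{4}\Bigr)+c_0'\sin\Bigl(z-\tfrac{\pi\nu}{2}-\tfrac{\pi}{4}\Bigr)\right)+O\!\left(z^{-3/2}\right),
\end{align*}
as $z\to\infty$, for suitable constants $c_0,c_0'$ depending on $s$; the $K_\nu$-term contributes only exponentially small error. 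This is precisely the information needed to mimic the definition \eqref{funf}.

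First I would define, for constants $e_0,e_0',e_1,e_1'$ dictated by the expansion above,
\begin{align*}
\tilde f(y):=2q^{\nu/2}y^{1+\nu}\sum_{n=1}^{\infty}\left(\frac{a_n}{\mu_n}\right)^{\nu/2}\Bigl\{G_{\nu}\Bigl(4\pi y\sqrt{\tfrac{\mu_n}{q}}\Bigr)-(\text{two oscillatory main terms})\Bigr\},
\end{align*}
in complete analogy with \eqref{funf}, where the subtracted terms carry the factors $q^{1/4}\mu_n^{-1/4}(2y)^{-1/2}$ and $q^{3/4}\mu_n^{-3/4}y^{-3/2}$ respectively. By \eqref{absconv} and the asymptotics, the tail after subtracting the $O(z^{-1/2})$ and $O(z^{-3/2})$ pieces converges absolutely to a continuously differentiable function; hence the periodization $\tilde g$ of $\tilde f$ has a uniformly convergent Fourier series on $J$. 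Next I would dispatch the two families of subtracted oscillatory series exactly as in the proof of Lemma \ref{ftnuequi}: the $\mu_n^{-1/4}$-weighted family is handled by Lemma \ref{equiconser2} using hypotheses \eqref{absconv} and \eqref{conconv}, producing the derived Fourier series of the antiderivative (the analogue of \eqref{derivese}), while the $\mu_n^{-3/4}$-weighted family, being absolutely summable, is handled by Lemma \ref{equiconser1}. Combining the three pieces and invoking \eqref{gtransform} in place of \eqref{dergt} to recognize the term-by-term antiderivative as $\frac{1}{2\pi}\sum a_n (qy^2/\mu_n)^{(\nu+1)/2}G_{\nu+1}(4\pi y\sqrt{\mu_n/q})$, we obtain the uniform equi-convergence of $2yF_\nu(y)$ with the differentiated Fourier series of a period-$1$ function equal to $A(y)F_{\nu+1}(y)$ on $I$, and the continuity of $F_\nu$ falls out of the same decomposition.

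The only point requiring genuine care — and thus the main obstacle — is verifying that the constants $c_0,c_0'$ (equivalently $e_0,e_0',e_1,e_1'$) arising from the $J$-, $Y$-, and $K$-parts of $G_\nu$ via \eqref{asymbess}--\eqref{asymbess2} actually combine so that, after subtracting the two explicit oscillatory blocks, the residual sum is $O(\mu_n^{-\nu/2-3/4})$ uniformly, so that \eqref{absconv} suffices for absolute convergence of $\tilde f$; this is the same bookkeeping that underlies \eqref{funf} but must be re-checked because $G_\nu$ has a different trigonometric signature than $\tilde G_\nu$ (it mixes $\sin(\pi s/2)$ and $\cos(\pi s/2)$ oppositely). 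Once that constant-matching is confirmed, every remaining step is a direct transcription of the proof of Lemma \ref{ftnuequi}, with \eqref{dergt} replaced by \eqref{gtransform} and \eqref{capgt} replaced by \eqref{capg}. Accordingly I would simply remark that the proof proceeds exactly as that of Lemma \ref{ftnuequi}, with $G$ in place of $\tilde G$ and \eqref{gtransform} in place of \eqref{dergt}, and omit the repeated details.
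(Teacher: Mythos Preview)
Your proposal is correct and matches the paper's approach exactly: the paper simply states that Lemma \ref{fnuequi} ``is proved by the same kind of argument'' as Lemma \ref{ftnuequi} and gives no further details. Your elaboration of which ingredients transfer (replacing $\tilde G$ by $G$, \eqref{dergt} by \eqref{gtransform}, and re-reading the asymptotic constants from \eqref{asymbess}--\eqref{asymbess2}) is precisely the bookkeeping the paper leaves implicit.
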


\section{Proof of Theorem \ref{bdrz01}}\label{sect13}
We  prove the theorem under the assumption that the double series on the right-hand sides of \eqref{ebdrz01} and \eqref{ebdrz02} are summed symmetrically, i.e., the product  $mn$ of the indices of summation tends to $\infty$. Under this assumption, we prove that the double series in \eqref{ebdrz01} and \eqref{ebdrz02} are uniformly convergent with respect to $\theta$ on any compact subinterval of $(0,1)$. By continuity, it is sufficient to prove the theorem for all primes $q$ and all fractions $\theta=a/q$, where $0<a<q$.
 Therefore for these values of $\theta$, Theorem \ref{bdrz01} is equivalent to the following theorem.
\begin{theorem}\label{bdrz05}
Recall that $M_{\nu}$ is defined in \eqref{M}. Let $q$ be a prime and $0<a<q$. Let
\begin{align}\label{L}
&L_s(a,q,x)=-\frac{x}{2}\sin\left(\frac{\pi s}{2}\right)\\&\quad\times\sum_{m=1}^{\infty}\sum_{n=0}^{\infty}\left\{\frac{M_{1-s}\left(4\pi \sqrt{mx\left(n+a/q\right)}\right)}{(mx)^{(1+s)/2}(n+a/q)^{(1-s)/2}}-\frac{M_{1-s}\left(4\pi \sqrt{mx\left(n+1-a/q\right)}\right)}{(mx)^{(1+s)/2}(n+1-a/q)^{(1-s)/2}}\right\},\notag
\end{align}
where $M_{s}(z)$ is defined in \eqref{M}.
Then, for $ |\s|< \frac{1}{2}$,
\begin{align*}
L_s(a,q,x)+\sum_{n=1}^{\infty}F\left(\frac{x}{n}\right)&\frac{\sin\left(2\pi n a/q\right)}{n^s}
=-x\frac{\sin(\pi s/2)\Gamma(-s)}{(2\pi)^{-s}}\left(\zeta(-s,\frac{a}{q})-\zeta\left(-s,1-\frac{a}{q}\right)\right)\notag\\
&\quad-\frac{\cos(\pi s/2)\Gamma(1-s)}{2(2\pi)^{1-s}}\left(\zeta\left(1-s,\frac{a}{q}\right)-\zeta\left(1-s,1-\frac{a}{q}\right)\right),
\end{align*}
where $\zeta(s,a)$ denotes the Hurwitz zeta function.
\end{theorem}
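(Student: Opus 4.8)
The plan is to express the twisted divisor sum $\sideset{}{'}\sum_{n\leq x/q}\sigma_s(\chi,n)$ (or more precisely the combination arising from odd characters) in closed form, by applying a Perron-type integral together with the functional equation of the Dirichlet $L$-function, and then to reassemble the pieces using Lemma \ref{oddcharsin}. Concretely, I would start from Lemma \ref{oddcharsin}, which already rewrites $\sum_{n=1}^{\infty}F(x/n)\sin(2\pi na/q)n^{-s}$ (with $s$ replaced by $-s$) as a $\mathbb{Q}$-linear combination, over divisors $d\mid q$ with $d>1$ and odd characters $\chi\bmod d$, of the partial sums $\sideset{}{'}\sum_{1\leq n\leq dx/q}\sigma_{-s}(\chi,n)$, weighted by $\chi(a)\tau(\bar\chi)$. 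Since $q$ is prime, the only relevant $d$ is $d=q$ itself, so this reduces to a sum over the odd primitive characters $\chi\bmod q$. The task then becomes: find a Vorono\"{\i}-type summation formula for $\sideset{}{'}\sum_{n\leq y}\sigma_{-s}(\chi,n)$, analogous to Lemma \ref{ldivsum} for the trivial character.

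The key step is thus to derive, for a primitive odd character $\chi\bmod q$ and $-\tfrac12<\sigma<\tfrac12$, an identity of the shape
\begin{equation*}
\sideset{}{'}\sum_{n\leq y}\sigma_{-s}(\chi,n)=(\text{main term from }\zeta(z)L(z-s,\chi)\text{ pole at }z=1)+(\text{Bessel series in }\sqrt{ny}).
\end{equation*}
I would obtain this via Lemma \ref{titchma} (Perron's formula in the form with $k=0$) applied to $\phi(z)=\zeta(z)L(z-s,\chi)=\sum_n\sigma_{-s}(\chi,n)n^{-z}$: write the partial sum as a vertical integral $\frac{1}{2\pi i}\int_{(c)}\Gamma(z)\phi(z)y^z/\Gamma(z+1)\,dz$, shift the line of integration to the left past the pole at $z=1$ (coming from $\zeta(z)$; note $L(z-s,\chi)$ is entire since $\chi$ is nontrivial), pick up the residue $L(1-s,\chi)y$, and then use the functional equations of $\zeta$ and of $L(\cdot,\chi)$ — the latter in asymmetric form with the Gauss sum $\tau(\chi)$ and the factor $(\pi/q)^{\ldots}$, the sign governed by $\chi$ being odd — to convert the shifted integral into a Dirichlet series whose coefficients are again $\sigma_s(\chi,n)$, times a Mellin-Barnes integral. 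That Mellin-Barnes integral is, after the standard Gamma-function manipulations (\ref{feg})--(\ref{dup}) and the relation (\ref{yj}) between $J_\nu, Y_\nu$ and the link (\ref{K2}) to $K_\nu$, exactly a linear combination of $J_{1-s}, Y_{1-s}, K_{1-s}$ — i.e., the kernel $M_{1-s}$ of (\ref{M}). The horizontal segments are controlled by Stirling (\ref{strivert}) together with convexity bounds for $\zeta$ and $L(\cdot,\chi)$ on vertical lines, supplemented by the Phragm\'en--Lindel\"of Lemma \ref{phragmenlindelof} where needed.

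Having the Vorono\"{\i} formula for each odd $\chi\bmod q$, I would substitute it into the output of Lemma \ref{oddcharsin}. The main term $\sum_\chi \chi(a)\tau(\bar\chi)L(1-s,\chi)\cdot(\text{const})$ is evaluated by unfolding $L(1-s,\chi)$ as a sum over residue classes and using the orthogonality/Gauss-sum identity $\sum_{\chi\text{ odd}}\chi(a)\tau(\bar\chi)\chi(m)=i\phi(q)\sin(2\pi ma/q)$ from (\ref{echarcter}); summing the resulting geometric-type series in $m$ produces precisely the Hurwitz zeta differences $\zeta(-s,a/q)-\zeta(-s,1-a/q)$ and $\zeta(1-s,a/q)-\zeta(1-s,1-a/q)$ with the stated Gamma and trigonometric prefactors (here I would use $\zeta(1-s,\alpha)=\sum_m(m+\alpha)^{s-1}$ in the region of convergence and analytically continue). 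The Bessel part similarly reassembles: the character sum collapses the twisted divisor function back to a sum over $m,n$ with $n+a/q$ and $n+1-a/q$ replacing $n$, and the normalizations $(mx)^{-(1+s)/2}(n+\theta)^{-(1-s)/2}$ emerge from tracking the powers of $q$ and $d$ through Lemma \ref{oddcharsin}. Finally, the symmetric-summation convergence claim (that $\sum_{mn\le X}$ converges) and the continuity in $\theta$ are handled by the equi-convergence machinery, Lemmas \ref{equiconser1}--\ref{localization} and Lemma \ref{ftnuequi}, exactly as in \cite{bII}; this legitimizes passing from rational $\theta=a/q$ to all $\theta\in(0,1)$.

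The main obstacle I anticipate is the bookkeeping in the Gamma-function reduction of the Mellin--Barnes integral: one must show that the combination of $\Gamma$-factors produced by applying the functional equations of $\zeta$ and of the odd $L$-function (which carries an extra $\Gamma((z+1)/2)$-type factor and a different trigonometric sign compared to the even/trivial case) collapses to exactly the kernel $M_{1-s}$ of (\ref{M}) rather than to some other combination of Bessel functions — getting the $\tan(\pi\nu/2)$ term and the signs right is delicate and is where the parity of $\chi$ genuinely enters. A secondary difficulty is justifying the interchange of the (conditionally convergent, symmetrically summed) double Bessel series with the character sum and the limit in $\theta$, which is precisely what the battery of Zygmund-type equi-convergence lemmas is designed to supply, but invoking them correctly for this particular kernel requires checking the hypotheses (\ref{absconv}) and (\ref{conconv}) for the relevant Dirichlet series.
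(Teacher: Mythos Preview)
Your plan matches the paper's: derive a Vorono\"{\dotlessi} identity for $\sideset{}{'}\sum_{n\le y}\sigma_{-s}(\chi,n)$ with $\chi$ odd primitive (Corollary~\ref{bdrz06}) via Lemma~\ref{titchma} and the functional equations, feed it into Lemma~\ref{oddcharsin}, and convert the $L$-function main terms to Hurwitz zeta values using \eqref{evenfe1}, \eqref{hurwitz}, and \eqref{dupref}. Two points to tighten: the contour shift also crosses a pole at $z=0$ (from $\Gamma(z)$), which supplies the $-\tfrac12 L(s,\chi)$ term you omitted; and the Perron/contour argument does not close directly at $k=0$ --- the paper first proves the Riesz-weighted identity (Theorem~\ref{bdrz006}) for $k$ large, where the horizontal integrals vanish and the Bessel series converges absolutely, and only then invokes Lemma~\ref{ftnuequi} to descend to $k=0$, so the equi-convergence machinery is the mechanism that \emph{produces} the $k=0$ identity rather than a post-hoc check on the assembled double sum.
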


First we need the following theorem.

\begin{theorem}\label{bdrz006}
If $\chi$ is a non-principal odd primitive character modulo $q$, $x>0$,
$|\s|<1/2$, and $k$ is a non-negative integer, then
\begin{align}
&\frac{1}{\Gamma(k+1)}\sideset{}{'}\sum_{n\leq x}\sigma_{-s}(\chi,n)(x-n)^k\nonumber\\
&=\frac{x^{k+1}L(1+s,\chi)}{\Gamma(k+2)}-\frac{x^kL(s,\chi)}{2\Gamma(k+1)}
+2\sum_{n=1}^{\left\lfloor\frac{k+1}{2}\right\rfloor}\frac{(-1)^{n-1}x^{k-2n+1}}{\Gamma(k-2n+2)}\frac{\z(2n)}{(2\pi)^{2n}}
L(1-2n+s,\chi)\nonumber\\
&\quad+\frac{i}{\tau(\bar{\chi})(2\pi)^k}\sum_{n=1}^{\infty}\sigma_{-s}(\bar{\chi},n)\left(\frac{qx}{n}\right)^{(1-s+k)/2}
\tilde{G}_{1-s+k}
\left(4\pi\sqrt{\frac{nx}{q}}\right),\notag
\end{align}
where $\tilde{G}_{\lambda-s}(z)$ is defined in \eqref{capgt}. The series on the right-hand side
converges uniformly on any interval for $x > 0$, where the left-hand side is continuous.
The convergence is bounded on any interval $0 < x_1\leq x\leq x_2<\infty$ when $k=0$.
\end{theorem}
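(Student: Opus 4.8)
The plan is to derive Theorem \ref{bdrz006} by applying Lemma \ref{titchma} to the Dirichlet series $\zeta(z)L(z-s,\chi) = \sum_{n=1}^{\infty} \sigma_{-s}(\chi,n) n^{-z}$ (whose abscissa of absolute convergence is $\max\{1, 1+\sigma\}$), and then shifting the line of integration to the left and picking up residues. First I would write, for $\sigma'$ to the right of the abscissa of absolute convergence,
\begin{align*}
\frac{1}{\Gamma(k+1)}\sideset{}{'}\sum_{n\leq x}\sigma_{-s}(\chi,n)(x-n)^k=\frac{1}{2\pi i}\int_{(\sigma')}\frac{\Gamma(z)\zeta(z)L(z-s,\chi)x^{z+k}}{\Gamma(z+k+1)}\,dz.
\end{align*}
Since $\chi$ is odd and primitive, $L(z-s,\chi)$ is entire, so the only poles of the integrand in the relevant region are the simple pole of $\zeta(z)$ at $z=1$ (contributing $\dfrac{x^{k+1}L(1+s,\chi)}{\Gamma(k+2)}$), the pole of $\Gamma(z)$ at $z=0$ (contributing $-\dfrac{x^k L(s,\chi)}{2\Gamma(k+1)}$, using $\zeta(0)=-\tfrac12$), and the poles of $\Gamma(z)$ at $z=-2n$ for $n\geq 1$, where $\zeta(-2n)=0$ kills all of them — except that those are cancelled, so the nonzero contributions come instead from combining $\Gamma(z)$'s poles at the negative even integers with the functional equation. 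The cleanest route: after passing the pole at $z=0$, use the functional equation \eqref{fe} for $\zeta(z)$ and the functional equation for $L(z-s,\chi)$ (which, for $\chi$ odd primitive, reads in the form involving $\tau(\bar\chi)$, the Gauss sum, and a factor $\Gamma$ and trigonometric factor) to convert the shifted integral into one whose integrand is a product of Gamma functions times $\zeta$ and $L$ of arguments $1-z/\cdots$, expand these as Dirichlet series $\sum_n \sigma_{-s}(\bar\chi,n)n^{\cdots}$, and recognize the resulting Mellin–Barnes integral term by term as the Bessel-type kernel $\tilde G_{1-s+k}$ via \eqref{capgt} and the Mellin transforms of $J_\nu$, $Y_\nu$, $K_\nu$.

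For the finitely many polynomial terms $2\sum_{n=1}^{\lfloor(k+1)/2\rfloor}\frac{(-1)^{n-1}x^{k-2n+1}}{\Gamma(k-2n+2)}\frac{\zeta(2n)}{(2\pi)^{2n}}L(1-2n+s,\chi)$, I would account for these as the residues encountered while moving the contour far enough to the left; each arises from a pole of a Gamma factor hitting a point where $\zeta$ or the trig factor in the functional-equation form of the integrand does not vanish, and the values $\zeta(2n)/(2\pi)^{2n}$ together with the alternating signs are exactly what the reflection formula and $\Gamma$-residues produce. I would do the bookkeeping by first transforming with the functional equations and only then shifting, so that the residue structure becomes transparent: the integrand after transformation has poles at $z = -2n+1$ (for $n\geq 1$), giving precisely the stated finite sum (with the upper limit $\lfloor(k+1)/2\rfloor$ coming from requiring $k-2n+1\geq 0$ for a nonzero $1/\Gamma(k-2n+2)$), plus the tail integral which becomes the Bessel series. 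The convergence of the horizontal segments to zero is handled as elsewhere in the paper by Stirling's formula \eqref{strivert} combined with polynomial bounds on $\zeta$ \eqref{zetab} and on $L(z,\chi)$ in vertical strips, together with the exponential decay of $\Gamma(z)/\Gamma(z+k+1)$.

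The uniform convergence and boundedness claims for the Bessel series on the right-hand side — and the assertion that it converges uniformly on intervals where the left side is continuous — I would obtain from the equi-convergence machinery already assembled in Section \ref{sect11}: Lemma \ref{ftnuequi} (and its companion Lemma \ref{fnuequi}) applied to the Dirichlet series with coefficients $a_n = \sigma_{-s}(\bar\chi,n)$ and $\mu_n = n$, after checking hypotheses \eqref{absconv} and \eqref{conconv}. Condition \eqref{absconv} is the absolute convergence $\sum_n |\sigma_{-s}(\bar\chi,n)| n^{-(\nu/2 + 3/4)} < \infty$, which holds since $|\sigma_{-s}(\bar\chi,n)| \ll n^{\epsilon}$ and $\nu/2 + 3/4 = (1-s+k)/2\cdot\tfrac12 + 3/4$ exceeds $1$ for the relevant range; condition \eqref{conconv} is a short-interval average bound that follows from partial summation on the twisted divisor sum. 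The Fourier-series comparison (Lemmas \ref{equiconser1}, \ref{equiconser2}, \ref{localization}) then transfers continuity of the Riesz-summed partial sums — which is clear when $k\geq 1$, and when $k=0$ follows from the bounded convergence obtained at the level of $\sideset{}{'}\sum_{n\leq x}$ — to the Bessel series.

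The main obstacle I anticipate is getting the constants and the precise trigonometric/Gamma factors exactly right in the step where the functional equations for $\zeta$ and for the odd primitive character $L$-function are combined and the Mellin–Barnes integral is matched term-by-term to $\tilde G_{1-s+k}$; in particular, the Gauss sum $\tau(\bar\chi)$ and the factor $i$ in the final term must emerge correctly from the odd-character functional equation (whose gamma-factor is $\Gamma((z+1)/2)$, not $\Gamma(z/2)$), and the sign pattern of the finite polynomial sum must be reconciled with the residues of $\Gamma$ at negative arguments — a calculation that is routine in structure but unforgiving in detail. Everything else (contour shifts, horizontal-segment estimates, the equi-convergence bookkeeping) parallels arguments already carried out in the paper and in \cite{wiltonextended}, \cite{chandrasekharannarasimhan}, \cite{bII}.
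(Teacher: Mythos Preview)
Your proposal is essentially the paper's proof. A few small corrections and comparisons:

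\textbf{Sign slip in the generating series.} The Dirichlet series for $\sigma_{-s}(\chi,n)$ is $\zeta(z)L(z+s,\chi)$, not $\zeta(z)L(z-s,\chi)$ (compare \eqref{twistdivsum} and \eqref{perron1}). This is cosmetic for the method but will propagate through every constant you compute.

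\textbf{Direction of the second shift.} The paper first shifts the Perron integral from $\text{Re}\,w=c$ to $\text{Re}\,w=1-c$ (picking up the poles at $w=1,0$), applies the combined functional equation \eqref{fezetal1}, expands the Dirichlet series $\sum_n \sigma_s(\bar\chi,n)n^{w-1}$, and then shifts the resulting auxiliary integral $I(y)$ to the \emph{right} (i.e.\ $1-c\to+\infty$), collecting residues at $w=2m+1$ and $w=2m+2-s$. Your description of ``shifting far to the left'' with poles at $z=-2n+1$ is the mirror image of this in a different normalization; either way the residue sums assemble into $J_\nu$, $Y_\nu$, $K_\nu$ via \eqref{sumbesselj}, \eqref{sumbesseli}, \eqref{sumbesselk} and combine into $\tilde G_{1-s+k}$ exactly as in \eqref{gmrel}. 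The polynomial terms in the statement arise, in the paper's bookkeeping, from the finite truncation sums in \eqref{sumr2mF1}--\eqref{sumr2mF1o}, which after a second application of the odd-character functional equation \eqref{evenfe1} become \eqref{extrak}.

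\textbf{Horizontal segments.} The paper does not get away with Stirling plus pointwise $\zeta,L$ bounds alone; it invokes Phragm\'en--Lindel\"of (Lemma \ref{phragmenlindelof}) to pass from $o(1)$ on the two vertical edges \eqref{rhsest1}, \eqref{lhsest1} to a uniform $o(1)$ across the strip, which is what actually makes the horizontal integrals \eqref{horintt111} vanish. Your version would need to supply a uniform convexity bound explicitly.

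\textbf{Descent in $k$.} The paper establishes \eqref{residue111} first only for $k$ large (specifically $k>2c-\sigma-1$), then differentiates down to $k>|\sigma|+\tfrac12$ using absolute convergence and \eqref{dergt}, and finally reaches $k\geq 0$ via Lemma \ref{ftnuequi}. You mention the equi-convergence step but skip the intermediate differentiation; it is needed because the Mellin--Barnes computation does not directly justify the termwise expansion for small $k$.
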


\begin{proof}
From \eqref{twistdivsum} and Lemma \ref{titchma}, for a fixed $x>0$, we see that
\begin{align}\label{perron1}
\frac{1}{\Gamma(k+1)}\sideset{}{'}\sum_{n\leq x}\sigma_{-s}(\chi,n)(x-n)^k=\frac{1}{2\pi i}\int_{(c)}\zeta(w)L(w+s,\chi)\frac{\Gamma(w)x^{w+k}}{\Gamma(w+k+1)}dw,
\end{align}
where $\max\{1,1-\s, \s\}<c$ and $k\geq 0$. Consider the positively oriented rectangular contour $R$ with vertices $[c\pm iT, 1-c\pm iT]$.
 Observe that the integrand on the right-hand side of \eqref{perron1} has poles at $w=1$ and $w=0$ inside the contour $R$. By the residue  theorem,
\begin{align}\label{cresapp1}
&\frac{1}{2\pi i}\int_{R}\zeta(w)L(w+s,\chi)\frac{\Gamma(w)x^{w+k}}{\Gamma(w+k+1)}\,dw\\\nonumber
&\quad=R_1\left(\zeta(w)L(w+s,\chi)\frac{\Gamma(w)x^{w+k}}{\Gamma(w+k+1)}\right)
+R_0\left(\zeta(w)L(w+s,\chi)\frac{\Gamma(w)x^{w+k}}{\Gamma(w+k+1)}\right),
\end{align}
where we recall that $R_a(f(w))$ denotes the residue of the function $f(w)$ at the pole $w=a$. Straightforward computations show that
\begin{align}\label{r0zl1}
R_{0}\left(\zeta(w)L(w+s,\chi)\frac{\Gamma(w)x^{w+k}}{\Gamma(w+k+1)}\right)&=\frac{\z(0)L(s,\chi)x^{1+k}}{\Gamma(k+1)}
\end{align}
and
\begin{align}\label{r1zl1}
R_1\left(\zeta(w)L(w+s,\chi)\frac{\Gamma(w)x^{w+k}}{\Gamma(w+k+1)}\right)&=\frac{x^{k+1}L(1+s,\chi)}{\Gamma(k+2)}.
\end{align}
We show that the contribution from the integrals along the horizontal sides $(\s\pm iT, 1-c\leq\s\leq c)$ on the left-hand side of \eqref{cresapp1} tends to zero as $|t|\to\infty$. We prove this fact by showing that
\begin{align*}
\zeta(w)L(w+s,\chi)\frac{\Gamma(w)x^{w+k}}{\Gamma(w+k+1)}=o(1),
\end{align*}
as $|\textnormal{Im }w|\to \infty$, uniformly for $1-c\leq\textnormal{ Re }w\leq c$. The functional equation for $L(s,\chi)$ for an odd primitive Dirichlet character $\chi$  is given by \cite[p.~69]{davenport}
\begin{equation}\label{evenfe1}
\left(\df{\pi}{q}\right)^{-(1+s)/2}\Gamma\left(\frac{1+s}{2}\right)L(s,\chi)
=\df{i\tau(\chi)}{\sqrt{q}}\left(\df{\pi}{q}\right)^{-(2-s)/2}\Gamma\left(\frac{2-s}{2}\right)L(1-s,\overline{\chi}),
\end{equation}
where $\tau(\chi)$ is the Gauss sum defined in \eqref{tau}. Combining the functional equation \eqref{fe} of $\zeta(w)$  and the functional equation \eqref{evenfe1} of $L(w+s,\chi)$ for odd primitive $\chi$, we deduce the functional equation
\begin{equation}\label{fezetal1}
	\zeta(w)L(w+s,\chi)=\frac{i\pi^{2w+s-1}}{\tau(\bar{\chi})q^{w+s-1}}\eta(w,s)\zeta(1-w)L(1-w-s,\bar{\chi}),
\end{equation}
where
\begin{equation*}
	\eta(w,s)=\frac{\Gamma\left(\frac12(1-w)\right)\Gamma\left(\frac12(2-w-s)\right)}{\Gamma\left(\frac12{w}\right)
\Gamma\left(\frac12(1+w+s)\right)}.
\end{equation*}
Since $c>\max\{1,1-\s, \s\}$,
\begin{align*}
\zeta(c+it)L(c+it+s,\chi)=O(1),
\end{align*}
as $|t|\to\infty$.
Using \eqref{strivert}, we see that
\begin{align}\label{grster1}
\frac{\Gamma(w)}{\Gamma(w+k+1)}=O(|\textnormal{Im }w|^{-1-k}),
\end{align}
uniformly in $1-c\leq \textnormal{ Re }w\leq c$, as $|\textnormal{Im }w|\to\infty$. Therefore, for $w=c+it$,
\begin{align}\label{rhsest1}
\zeta(w)L(w,\chi)\frac{\Gamma(w)x^{w+k}}{\Gamma(w+k+1)}=o(1),
\end{align}
as $|t|\to \infty$.
Again, using Stirling's formula \eqref{strivert} for the Gamma function and the relation \eqref{fezetal1}, we find that, for $w=1-c+it$,
\begin{align}\label{lhsest1}
&\zeta(w)L(w+s,\chi)\frac{\Gamma(w)x^{w+k}}{\Gamma(w+k+1)},\nonumber\\
&=\frac{i\pi^{2w+s-1}}{\tau(\bar{\chi})q^{w+s-1}}\eta(w,s)\zeta(1-w)L(1-w-s,\bar{\chi})
\frac{\Gamma(w)x^{w+k}}{\Gamma(w+k+1)}\nonumber\\
&=O_{q,s}(t^{2c-\s-k-2})\nonumber\\
&=o(1),
\end{align}
as $|t|\to \infty$, provided that $k>2c-\s-2$. From \eqref{grster1} and \cite[pp.~79, 82, equations (2),(15)]{davenport},
\begin{align}\label{midest1}
	\zeta(w)L(w+s,\chi)\frac{\Gamma(w)x^{w+k}}{\Gamma(w+k+1)}\ll_q\exp{(C|w|\log|w|)},
\end{align}
for some constant $C$ and $|\textnormal{Im }w|\to \infty$. Since the function on the left-hand side of \eqref{midest1} is holomorphic for $|\textnormal{Im }w|>\eta'>0$, then, by using \eqref{rhsest1}, \eqref{lhsest1}, \eqref{midest1}, and Lemma \ref{phragmenlindelof}, we deduce that
\begin{align*}
	\zeta(w)L(w+s,\chi)\frac{\Gamma(w)x^{w+k}}{\Gamma(w+k+1)}=o(1),
\end{align*}
uniformly for $1-c\leq \textnormal{ Re }w\leq c$ and  $|\textnormal{Im }w|\to \infty$. Therefore,
\begin{align}\label{horintt111}
\int_{c\pm iT}^{1-c\pm iT}\zeta(w)&L(w+s,\chi)\frac{\Gamma(w)x^{w+k}}{\Gamma(w+k+1)}dw=o(1),
\end{align}
as $T\to \infty$. Using the evaluation $\z(0)=-\tf12$ and combining \eqref{perron1}, \eqref{cresapp1}, \eqref{r0zl1}, \eqref{r1zl1}, and \eqref{horintt111}, we deduce that
\begin{multline}\label{residue11}
\frac{1}{\Gamma(k+1)}\sideset{}{'}\sum_{n\leq x}\sigma_{-s}(\chi,n)(x-n)^{k}=\frac{x^{k+1}L(1+s,\chi)}{\Gamma(k+2)}-\frac{L(s,\chi)x^{k}}{2\Gamma(k+1)}\\
+\frac{1}{2\pi i}\int_{(1-c)}\zeta(w)L(w+s,\chi)\frac{\Gamma(w)x^{w+k}}{\Gamma(w+k+1)}dw,
\end{multline}
provided that $k\geq 0$ and $k>2c-\s-2$. Define
\begin{equation}\label{inti1111111}
		I(y):=\frac{1}{2\pi i}\int_{(1-c)}\frac{\eta(w,s)\Gamma(w)}{\Gamma(w+k+1)}y^{w} \,dw.
	\end{equation}
Using the functional equation \eqref{fezetal1} in the integrand on the right-hand side of \eqref{residue11} and inverting the order of summation and integration, we find that
\begin{align}\label{inti1111}
	&\frac{1}{2\pi i}\int_{(1-c)}\zeta(w)L(w+s,\chi)\frac{\Gamma(w)x^{w+k}}{\Gamma(w+k+1)}\,dw\notag\\
&=\frac{ix^k\pi^{s-1}}{\tau(\bar{\chi})q^{s-1}}\frac{1}{2\pi i}\int_{(1-c)}\frac{\eta(w,s)\Gamma(w)}{\Gamma(w+k+1)}\zeta(1-w)L(1-w-s,\bar{\chi})\left(\frac{\pi^2x}{q}\right)^{w} \,dw\notag\\\nonumber
	&=\frac{ix^k\pi^{s-1}}{\tau(\bar{\chi})q^{s-1}}\frac{1}{2\pi i}\int_{(1-c)}
\frac{\eta(w,s)\Gamma(w)}{\Gamma(w+k+1)}\left(\frac{\pi^2x}{q}\right)^{w}
\sum_{n=1}^{\infty}\frac{\sigma_{s}(\bar{\chi},n)}{n^{1-w}} \,dw\\\nonumber
	&=\frac{ix^k\pi^{s-1}}{\tau(\bar{\chi})q^{s-1}}\sum_{n=1}^{\infty}\frac{\sigma_{s}(\bar{\chi},n)}{n^{1+k}}\frac{1}{2\pi i}\int_{(1-c)}\frac{\eta(w,s)\Gamma(w)}{\Gamma(w+k+1)}\left(\frac{\pi^2 n x}{q}\right)^{w} \,dw\\
	&=\frac{ix^k\pi^{s-1}}{\tau(\bar{\chi})q^{s-1}}\sum_{n=1}^{\infty}\frac{\sigma_{s}(\bar{\chi},n)}{n}I\left(\frac{\pi^2 n x}{q}\right),
	\end{align}
provided that $k>2c-\s-1$. We compute the integral  $I(y)$ by using the residue calculus, shifting the line of integration to the right, and letting $c\to -\infty$.

Let $k$ be a positive integer and $\s\neq 0$. From \eqref{inti1111111}, we can write
\begin{equation*}
		I(y):=\frac{1}{2\pi i}\int_{(1-c)}F(w)\,dw,
	\end{equation*}
where
\begin{align*}
F(w):=\frac{\Gamma(w)\Gamma\left(\frac12(1-w)\right)\Gamma\left(\frac12(2-w-s)\right)y^{w}}{\Gamma(1+k+w)\Gamma\left(\frac12{w}\right)
\Gamma\left(\frac12(1+w+s)\right)}.
\end{align*}
Note that the poles of the function $F(w)$  on the right side of the line $1-c+it, -\infty<t<\infty$,
are at $w=2m+1$ and $w=2m+2-s$ for $m=0, 1, 2, \dots$.  Thus,
\begin{align*}
R_{2m+1}(F(w))&=(-1)^{m+1}\frac{2\Gamma(2m+1)\Gamma\left(-m-\frac12(s-1)\right)y^{2m+1}}{m!\Gamma(2+k+2m)\Gamma\left(m+\frac12\right)
\Gamma\left(1+m+\frac12(s)\right)}
\end{align*}
and
\begin{align*}
R_{2m+2-s}(F(w))&=(-1)^{m+1}\frac{2\Gamma(2m+2-s)\Gamma\left(-m+\frac12(s-1)\right)y^{2m+2-s}}{m!\Gamma(3+k+2m-s)
\Gamma\left(m+\frac12(2-s)\right)\Gamma\left(m+\frac{3}{2}\right)}.
\end{align*}
 With the aid of the duplication formula \eqref{dup} and the reflection formula \eqref{ref} for $\Gamma(s)$, we find that
\begin{align}\label{r2mF1}
R_{2m+1}(F(w))&=-\frac{2^{s-1}}{\cos(\pi s/2)}\frac{(2\sqrt{y})^{4m+2}}{(2m+k+1)!
\Gamma(2m+s+1)}
\end{align}
and
\begin{align}\label{r2msF1}
R_{2m+2-s}(F(w))&=\frac{2(2y)^{2-s}}{\cos(\pi s/2)}\frac{(2\sqrt{y})^{4m}}{(2m+1)!
\Gamma(2m+k+3-s)}.
\end{align}
	Now from \cite[pp.~77--78]{watsonbessel}, we recall that the modified Bessel function $I_{\nu}(z)$ is defined by
		\begin{align}\label{sumbesseli}
	I_{\nu}(z):=\sum_{m=0}^{\infty}\frac{(z/2)^{2m+\nu}}{m!\Gamma(m+1+\nu)},
	\end{align}
	and that $K_{\nu}(z)$ can be represented as
	\begin{align}\label{sumbesselk}
	K_{\nu}(z)=\frac{\pi}{2}\frac{I_{-\nu}(z)-I_{\nu}(z)}{\sin(\pi\nu)}.
	\end{align}
(We emphasize that the definition of $I_{\nu}(z)$ given in \eqref{sumbesseli} should not be confused with the definition of $I_{\nu}(z)$ given by Ramanujan in \eqref{defofI}.)
Therefore, from \eqref{sumbesselj}, \eqref{sumbesseli}, and \eqref{r2mF1}, for $k$ even,
\begin{align}\label{sumr2mF1}
\sum_{m=0}^{\infty}R_{2m+1}(F(w))
&=-\frac{2^{s-1-2k}y^{-k}}{\cos(\pi s/2)}\sum_{m=0}^{\infty}\frac{(2\sqrt{y})^{4m+2k+2}}{(2m+k+1)!
\Gamma(2m+1+s)}\nonumber\\
&=-\frac{2^{s-1-2k}y^{-k}}{\cos(\pi s/2)}\bigg\{\sum_{m=0}^{\infty}\frac{(2\sqrt{y})^{4m+2k+2}}{(2m+1)!
\Gamma(2m+1+s-k)}\nonumber\\
&\quad -\sum_{m=1}^{k/2}\frac{(2\sqrt{y})^{4m-2}}{(2m-1)!
\Gamma(2m-1+s-k)}\bigg\}\nonumber\\
&=-\frac{2^{-1-k}y^{(1-s-k)/2}}{\cos(\pi s/2)}(I_{-1+s-k}(4\sqrt{y})-J_{-1+s-k}(4\sqrt{y}))\nonumber\\
&\quad+\frac{2^{s-1-2k}y^{-k}}{\cos(\pi s/2)}\sum_{m=1}^{k/2}\frac{(2\sqrt{y})^{4m-2}}{(2m-1)!
\Gamma(2m-1+s-k)}\nonumber\\
&=-\frac{2^{-1-k}y^{(1-s-k)/2}}{\cos(\pi s/2)}(I_{-1+s-k}(4\sqrt{y})-J_{-1+s-k}(4\sqrt{y}))\nonumber\\
&\quad+\frac{2^{s+1}}{\cos(\pi s/2)}\sum_{m=1}^{k/2}\frac{2^{-4m}y^{1-2n}}{\Gamma(k-2m+2)
\Gamma(1-2m+s)}.
\end{align}
Similarly, for $k$ odd
\begin{align}\label{sumr2mF1o}
\sum_{m=0}^{\infty}R_{2m+1}(F(w))
&=-\frac{2^{-1-k}y^{(1-s-k)/2}}{\cos(\pi s/2)}(I_{-1+s-k}(4\sqrt{y})+J_{-1+s-k}(4\sqrt{y}))\nonumber\\
&\quad+\frac{2^{s+1}}{\cos(\pi s/2)}\sum_{m=1}^{(k+1)/2}\frac{2^{-4m}y^{1-2n}}{\Gamma(k-2m+2)
\Gamma(1-2m+s)}.
\end{align}
From \eqref{r2msF1}, \eqref{sumbesselj}, and \eqref{sumbesseli}, we find that
\begin{align}\label{sumr2msF1}
\sum_{m=0}^{\infty}R_{2m+1-s}(F(w))&=\frac{2^{-1-k}y^{(1-s-k)/2}}{\cos(\pi s/2)}(-J_{1-s+k}(4\sqrt{y})+I_{1-s+k}(4\sqrt{y})).
\end{align}
Invoking \eqref{sumbesselk} in the sum of \eqref{sumr2mF1}, \eqref{sumr2mF1o}, and \eqref{sumr2msF1}, we deduce that
\begin{align*}
&\sum_{m=0}^{\infty}\left(R_{2m+1}(F(w))+R_{2m+1-s}(F(w))\right)
=-\frac{\sin(\pi s/2)}{2^{k}y^{(-1+s+k)/2}}\notag\\&\quad\times\left(\frac{J_{1-s+k}(4\sqrt{y})+(-1)^{k+1}J_{-1+s-k}(4\sqrt{y})}{\sin{\pi s}}-(-1)^{k+1}\frac{2}{\pi}K_{1-s+k}(4\sqrt{y})\right)\nonumber\\
&\quad+\frac{2^{s+1}}{\cos(\pi s/2)}\sum_{m=1}^{\left\lfloor\frac{k+1}{2}\right\rfloor}\frac{2^{-4m}y^{1-2n}}{\Gamma(k-2m+2)
\Gamma(1-2m+s)}.
\end{align*}
Consider the positively oriented contour $\mathcal{R}_N$ formed by the points $\{1-c-iT,2
 N+\tf32-iT,2 N+\tf32+iT,1-c+iT\}$, where $T>0$ and $N$ is a positive
 integer. By the residue theorem,
 \begin{align}\label{resth1}
 \frac{1}{2\pi i}\int_{\mathcal{R}_N}F(w)\, dw
=\sum_{0\leq k\leq N}R_{2k+1}(F(w))+
 \sum_{0\leq k\leq N}R_{2k+1-s}(F(w)).
 \end{align}
 Recall Stirling's formula in the form \cite[p.~73, equation (5)]{davenport}
 \begin{equation*}
 \Gamma(s)=\sqrt{2\pi}e^{-s}s^{s-1/2}e^{f(s)},
 \end{equation*}
 for $-\pi<\operatorname{arg}s<\pi$ and $f(s)=O\left(1/|s|\right)$, as $|s|\to\infty$. Therefore, for fixed $T>0$ and $\sigma\to \infty$,
 \begin{equation}\label{stirling'sformula}
 \Gamma(s)=O\left(e^{-\sigma+(\sigma-1/2)\log\sigma}\right).
 \end{equation}
 Hence, for the integral over the right side of the rectangular contour $\mathcal{R}_N$,
 \begin{equation}\label{verint1}
\int_{2N+3/2-iT}^{2N+3/2+iT}F(w)\,  dw\ll_{T,s} y^{2N+3/2}e^{4N-(4N+2+k+\s)\log N }=o(1),
 \end{equation}
 as $N\to \infty$. Using Stirling's formula \eqref{strivert} to estimate the integrals over  the horizontal sides of $\mathcal{R}_N$, we find that
\begin{align}\label{horint1}
\int_{1-c\pm iT}^{\infty\pm iT}F(w)\, dw\ll_{s}\int_{1-c}^{\infty}y^{\sigma}T^{-2\b-\s-k}\, d\sigma
\ll_{s,y}\frac{y^{1-c}}{ T^{2c-\s-k-2}\log T}=o(1),
\end{align}
provided that $k>2c-\s-2$. Using \eqref{resth1}, \eqref{verint1}, and \eqref{horint1} in \eqref{inti1111111}, we deduce that
\begin{align}\label{iyint11}
I(y)&=\frac{\sin(\pi s/2)}{2^{k}y^{(-1+s+k)/2}}\left(\frac{J_{1-s+k}(4\sqrt{y})+(-1)^{k+1}J_{-1+s-k}(4\sqrt{y})}{\sin{\pi s}}\right.\\&\quad\left.-(-1)^{k+1}\frac{2}{\pi}K_{1-s+k}(4\sqrt{y})\right)
-\frac{2^{s+1}}{\cos(\pi s/2)}\sum_{m=1}^{\left\lfloor\frac{k+1}{2}\right\rfloor}\frac{2^{-4m}y^{1-2n}}{\Gamma(k-2m+2)
\Gamma(1-2m+s)}.\notag
\end{align}
Using the functional equation \eqref{evenfe1}, the reflection formula \eqref{ref}, and the duplication formula \eqref{dup}, for $y=\pi^2nx/q$, we find that
\begin{align}\label{extrak}
&\sum_{n=1}^{\infty}\frac{\s_s(\chi,n)}{n}\bigg\{\frac{2^{s+1}}{\cos(\pi s/2)}\sum_{m=1}^{\left\lfloor\frac{k+1}{2}\right\rfloor}\frac{2^{-4m}y^{1-2n}}{\Gamma(k-2m+2)
\Gamma(1-2m+s)} \bigg\}\nonumber\\
&=2i\tau{(\bar{\chi})}\frac{\pi^{1-s}}{q^{1-s}}
\sum_{n=1}^{\left\lfloor\frac{k+1}{2}\right\rfloor}(-1)^{n-1}\frac{x^{-2n+1}}{\Gamma(k-2n+2)}\frac{\z(2n)}{(2\pi)^{2n}}L(1-2n+s,\chi).
\end{align}
With the aid of \eqref{yj}, we see that
\begin{multline}\label{gmrel}
\sin(\pi s/2)\left(\frac{J_{1-s+k}(4\sqrt{y})+(-1)^{k+1}J_{-1+s-k}(4\sqrt{y})}{\sin{\pi s}}-(-1)^{k+1}\frac{2}{\pi}K_{1-s+k}(4\sqrt{y})\right)\\
=\tilde{G}_{1+k-s}(4\sqrt{y}).
\end{multline}
Combining \eqref{residue11}, \eqref{inti1111}, \eqref{iyint11}, and \eqref{extrak}, we see that
\begin{align}\label{residue111}
\frac{1}{\Gamma(k+1)}\sideset{}{'}\sum_{n\leq x}\sigma_{-s}(\chi,n)(x-n)^{k}
&=\frac{x^{k+1}L(1+s,\chi)}{\Gamma(k+2)}-\frac{L(s,\chi)x^{k}}{2\Gamma(k+1)}\\
&\quad+2\sum_{n=1}^{\left\lfloor\frac{k+1}{2}\right\rfloor}(-1)^{n-1}
\frac{x^{k-2n+1}}{\Gamma(k-2n+2)}\frac{\z(2n)}{(2\pi)^{2n}}L(1-2n+s,\chi)\nonumber\\
&\quad+\frac{i}{\tau(\bar{\chi})(2\pi)^k}
\sum_{n=1}^{\infty}\sigma_{-s}(\bar{\chi},n)\left(\frac{xq}{n}\right)^{\frac{1-s+k}{2}}\tilde{G}_{1-s+k}
\left(4\pi\sqrt{\frac{nx}{q}}\right),\notag
\end{align}
provided that $k\geq 0$, $\s\neq 0$, and $k>2c-\s-1$.

For $x>0$ fixed, by the asymptotic expansions for Bessel functions \eqref{asymbess}, \eqref{asymbess1}, and \eqref{asymbess2}, there exists a sufficiently large integer $N_0$ such that
\begin{align*}
\tilde{G}_{1+k-s}(4\pi\sqrt{\frac{nx}{q}})\ll_{q}\frac{1}{(nx)^{1/4}},
\end{align*}
for all $n>N_0$. Hence, for $x>0$,
\begin{align*}
\sum_{n>N_0}\left(\frac{qx}{n}\right)^{(1+k-s)/2}\s_s(n)\tilde{G}_{1+k-s}\left(4\pi\sqrt{\frac{nx}{q}}\right)&\ll_q x^{(2k-2\s-1)/4}\sum_{n>N_0}\frac{\s_{\s}(n)}{n^{(2k-2\s+3)/4}}\\&\ll_q x^{(2k-2\s-1)/4},
\end{align*}
provided that $k>|\s|+\tfrac{1}{2}$.
Therefore, for $k>|\s|+\tfrac{1}{2}$ and $x>0$, the
series
\begin{align*}
\sum_{n=1}^{\infty}\left(\frac{qx}{n}\right)^{(1+k-s)/2}\s_s(n)\tilde{G}_{1+k-s}\left(4\pi \sqrt{\frac{nx}{q}}\right)
\end{align*}
is absolutely and uniformly convergent for $0<x_1\leq x\leq x_2<\infty$. Thus, by differentiating a suitable number of times with the aid of \eqref{dergt}, we find that \eqref{residue111} may be then upheld for $k>|\s|+\tfrac{1}{2}$. Since $|\s|<\tf12$, the series on the left-hand side of \eqref{residue111} is continuous for $k>|\s|+\tfrac{1}{2}$. Conversely, we can see that the series on the left-hand side of \eqref{residue111} is continuous when $k>0$, which implies that $|\s|<\tf12$. Thus, the identity \eqref{residue111} is valid for $k>|\s|+\tfrac{1}{2}$ and $\s\neq 0$. Since the series on the right-hand side of \eqref{residue111} is absolutely and uniformly  convergent for $0<x_1\leq x\leq x_2<\infty$, we can take the limit as $s\to 0$ on both sides of \eqref{residue111} for $|\s|<\tf12$ and $k>|\s|+\tfrac{1}{2}$. Hence,  the identity \eqref{residue111} is valid for $k>|\s|+\tfrac{1}{2}$ with $|\s|<\tf12$.

Suppose that  the identity
\begin{align}\label{testiden}
&\frac{1}{\Gamma(k+1)}\sideset{}{'}\sum_{n\leq x}\sigma_{-s}(\chi,n)(x-n)^{k}
=\frac{x^{k+1}L(1+s,\chi)}{\Gamma(k+2)}-\frac{L(s,\chi)x^{k}}{2\Gamma(k+1)}\notag\\
&\quad+2\sum_{n=1}^{\left\lfloor\frac{k+1}{2}\right\rfloor}(-1)^{n-1}
\frac{x^{k-2n+1}}{\Gamma(k-2n+2)}\frac{\z(2n)}{(2\pi)^{2n}}L(1-2n+s,\chi)\nonumber\\
&\quad+\frac{i}{\tau(\bar{\chi})(2\pi)^k}
\sum_{n=1}^{\infty}\sigma_{-s}(\bar{\chi},n)\left(\frac{xq}{n}\right)^{\frac{1-s+k}{2}}\tilde{G}_{1-s+k}
\left(4\pi\sqrt{\frac{nx}{q}}\right),
\end{align}
is valid for some $k>0$. Let $\b>\max\{1,1-\s\}$. Then
\begin{align*}
\sum_{n=1}^{\infty}\frac{|\s_s(n)|}{n^{\b}}<\infty\\
\end{align*}
and
\begin{align*}
\operatorname{sup}_{0\leq h\leq 1}\left\lvert\sum_{m^2<n\leq (m+h)^2}\frac{\s_s(n)}{n^{\b-1/2}}\right\rvert=o(1),
\end{align*}
as $m\to \infty$.
Put $x = y^2$ in the identity \eqref{testiden}, where $y$
lies in an interval $J$ of length less than $1$. By Lemma \ref{ftnuequi}, $2y$ times the infinite series on the right-hand side of \eqref{testiden}, with $ x=y^2 $, is uniformly equi-convergent on $J$ with the differentiated series of the Fourier series of
a function with period $1$ which equals $A(y)\tilde{F}_{2-s+k}(y)$ on $I$, provided that
$k>|\s|-\tfrac{1}{2}$. But then, $k+1>|\s|+\tf12$. Hence, from \eqref{residue111},
\begin{align*}
&\frac{i}{\tau(\bar{\chi})(2\pi)^{k+1}}A(y)\tilde{F}_{2-s+k}(y)\nonumber\\&=A(y)\bigg\{\sideset{}{'}\sum_{n\leq y^2}\frac{\sigma_{-s}(\chi,n)(y^2-n)^{k+1}}{\Gamma(k+2)}-\frac{y^{2(k+2)}L(1+s,\chi)}{\Gamma(k+3)}
+\frac{L(s,\chi)y^{2(k+1)}}{2\Gamma(k+2)}\nonumber\\
&\hspace{2cm}-2\sum_{n=1}^{\left\lfloor\frac{k+2}{2}\right\rfloor}(-1)^{n-1}
\frac{y^{2(k-2n+2)}}{\Gamma(k-2n+3)}\frac{\z(2n)}{(2\pi)^{2n}}L(1-2n+s,\chi)\bigg\}\nonumber\\
&=A(y)\bigg\{\int_{0}^{y^2}\sideset{}{'}\sum_{n\leq t}\frac{\sigma_{-s}(\chi,n)(t-n)^{k}}{\Gamma(k+1)}\, dt-\frac{y^{2(k+2)}L(1+s,\chi)}{\Gamma(k+3)}+\frac{L(s,\chi)y^{2(k+1)}}{2\Gamma(k+2)}\nonumber\\
&\hspace{2cm}-2\sum_{n=1}^{\left\lfloor\frac{k+2}{2}\right\rfloor}(-1)^{n-1}
\frac{y^{2(k-2n+2)}}{\Gamma(k-2n+3)}\frac{\z(2n)}{(2\pi)^{2n}}L(1-2n+s,\chi)\bigg\}\nonumber\\
&=A(y)\bigg\{\int_{0}^{y}\sideset{}{'}\sum_{n\leq t^2}\frac{\sigma_{-s}(\chi,n)(t^2-n)^{k}2t}{\Gamma(k+1)}\, dt-\frac{y^{2(k+2)}L(1+s,\chi)}{\Gamma(k+3)}+\frac{L(s,\chi)y^{2(k+1)}}{2\Gamma(k+2)}\nonumber\\
&\hspace{2cm}-2\sum_{n=1}^{\left\lfloor\frac{k+2}{2}\right\rfloor}(-1)^{n-1}
\frac{y^{2(k-2n+2)}}{\Gamma(k-2n+3)}\frac{\z(2n)}{(2\pi)^{2n}}L(1-2n+s,\chi)\bigg\}.
\end{align*}
Note that $A(y)=1$ on $J$. Therefore, from Lemma \ref{localization} and the properties of the Fourier series of the function
\begin{align*}
\frac{2y}{\Gamma(k+1)}\sideset{}{'}\sum_{n\leq y^2}\sigma_{-s}(\chi,n)(y^2-n)^{k}
\end{align*}
in $I$, we see that the identity \eqref{residue111} holds for $k>|\s|-\tf12$, which completes the proof of Theorem \ref{bdrz006}.
\end{proof}

From  \eqref{M} and \eqref{gmrel}, we find that $\sin(\pi s/2)M_{1-s}(z)=\tilde{G}_{1-s}(z)$. The case $k=0$ of Theorem \ref{bdrz006} gives the following corollary.

\begin{corollary}\label{bdrz06}
If $\chi$ is a non-principal odd primitive character modulo $q$, $x>0$,
and $|\s|<1/2$, then
\begin{align}
\sideset{}{'}\sum_{n\leq x}\sigma_{-s}(\chi,n)&=xL(1+s,\chi)-\df12 L(s,\chi)\nonumber\\
&\quad+\frac{i\sin{(\pi s)/2}}{\tau(\bar{\chi})}\sum_{n=1}^{\infty}\sigma_{-s}(\bar{\chi},n)\left(\frac{qx}{n}\right)^{\frac{1-s}{2}}M_{1-s}
\left(4\pi\sqrt{\frac{nx}{q}}\right),\notag
\end{align}
where $M_{1-s}(z)$ is defined in \eqref{M}.
\end{corollary}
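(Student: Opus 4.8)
The plan is to obtain Corollary~\ref{bdrz06} as the literal specialization $k=0$ of Theorem~\ref{bdrz006}, keeping in mind that the latter has already been established for all $k > |\sigma| - \tfrac12$ and all $|\sigma| < \tfrac12$ (which includes $k=0$). First I would set $k=0$ in the identity of Theorem~\ref{bdrz006}. The left-hand side $\frac{1}{\Gamma(1)}\sideset{}{'}\sum_{n\le x}\sigma_{-s}(\chi,n)(x-n)^{0}$ collapses immediately to $\sideset{}{'}\sum_{n\le x}\sigma_{-s}(\chi,n)$, the prime on the sum now carrying its usual meaning that a term with $n=x$ (possible only when $x\in\mathbb Z$) is halved. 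On the right-hand side, the term $\frac{x^{k+1}L(1+s,\chi)}{\Gamma(k+2)}$ becomes $\frac{x\,L(1+s,\chi)}{\Gamma(2)} = xL(1+s,\chi)$, and $\frac{x^{k}L(s,\chi)}{2\Gamma(k+1)}$ becomes $\frac{L(s,\chi)}{2\Gamma(1)} = \tfrac12 L(s,\chi)$.

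Next I would dispose of the finite sum $2\sum_{n=1}^{\lfloor (k+1)/2\rfloor}(-1)^{n-1}\frac{x^{k-2n+1}}{\Gamma(k-2n+2)}\frac{\zeta(2n)}{(2\pi)^{2n}}L(1-2n+s,\chi)$. With $k=0$ the upper index of summation is $\lfloor \tfrac12\rfloor = 0$, so this sum is empty and contributes nothing. This is exactly why the formula for $\sideset{}{'}\sum_{n\le x}\sigma_{-s}(\chi,n)$ is so clean: the secondary polynomial terms in $x$ appear only for $k\ge1$.

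Finally I would treat the Bessel series. Setting $k=0$ in $\frac{i}{\tau(\bar\chi)(2\pi)^{k}}\sum_{n=1}^{\infty}\sigma_{-s}(\bar\chi,n)\bigl(\frac{qx}{n}\bigr)^{(1-s+k)/2}\tilde G_{1-s+k}\bigl(4\pi\sqrt{nx/q}\bigr)$ gives $\frac{i}{\tau(\bar\chi)}\sum_{n=1}^{\infty}\sigma_{-s}(\bar\chi,n)\bigl(\frac{qx}{n}\bigr)^{(1-s)/2}\tilde G_{1-s}\bigl(4\pi\sqrt{nx/q}\bigr)$. I would then invoke the relation noted just before the corollary, namely that combining \eqref{M} with \eqref{gmrel} yields $\sin(\pi s/2)\,M_{1-s}(z) = \tilde G_{1-s}(z)$; substituting $z = 4\pi\sqrt{nx/q}$ converts $\tilde G_{1-s}$ into $\sin(\pi s/2)\,M_{1-s}$, producing the stated form $\frac{i\sin(\pi s/2)}{\tau(\bar\chi)}\sum_{n=1}^{\infty}\sigma_{-s}(\bar\chi,n)\bigl(\frac{qx}{n}\bigr)^{(1-s)/2}M_{1-s}\bigl(4\pi\sqrt{nx/q}\bigr)$. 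The convergence and continuity assertions transfer verbatim from the $k=0$ case already handled in Theorem~\ref{bdrz006} (the final paragraph of its proof explicitly validates $k>|\sigma|-\tfrac12$, hence $k=0$, for $|\sigma|<\tfrac12$), so no new analytic work is required. There is essentially no obstacle here; the only point demanding a moment's care is the bookkeeping that turns $\tilde G_{1-s}$ into $M_{1-s}$ via the identity $\sin(\pi s/2)M_{1-s}=\tilde G_{1-s}$, and checking that the exponent $(1-s+k)/2$ really does reduce to $(1-s)/2$ rather than picking up an extra half-integer shift.
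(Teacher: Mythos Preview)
Your proposal is correct and follows exactly the paper's approach: the paper simply states that the identity $\sin(\pi s/2)\,M_{1-s}(z)=\tilde G_{1-s}(z)$ (from \eqref{M} and \eqref{gmrel}) together with the specialization $k=0$ of Theorem~\ref{bdrz006} yields the corollary. Your write-up is just a more explicit unpacking of those two sentences, including the observation that the finite sum is empty because $\lfloor 1/2\rfloor=0$.
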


Next, we show that Theorem \ref{bdrz06} implies Theorem \ref{bdrz05}. We then
finish this section and hence finish the proof of Theorem \ref{bdrz01} by
proving that Theorem \ref{bdrz01} implies Theorem \ref{bdrz06}.

\medskip

\begin{proof}[Proof that Theorem \ref{bdrz06} implies Theorem \ref{bdrz05}.]
Recall that $L_s(a,q,x)$ and $M_{\nu}(z)$ are defined in \eqref{L} and \eqref{M}, respectively.  Thus,
\allowdisplaybreaks{\begin{align*}
	&L_s(a,q,x)=-\frac{x}{2}\sin\left(\frac{\pi s}{2}\right)\notag\\
&\quad\notag\times\sum_{m=1}^{\infty}\sum_{n=0}^{\infty}\left\{\frac{M_{1-s}\left(4\pi \sqrt{mx\left(n+\frac{a}{q}\right)}\right)}{(mx)^{(1+s)/2}(n+a/q)^{(1-s)/2}}-\frac{M_{1-s}\left(4\pi \sqrt{mx\left(n+1-\frac{a}{q}\right)}\right)}{(mx)^{(1+s)/2}(n+1-a/q)^{(1-s)/2}}\right\}\notag\\\nonumber
	&=-\frac{x}{2}\sin\left(\frac{\pi s}{2}\right)\nonumber\\
&\quad\times\sum_{m=1}^{\infty}\left\{\sum_{\substack{n=1\\n\equiv a\operatorname{mod}q}}^{\infty}\frac{M_{1-s}\left(4\pi \sqrt{\frac{mnx}{q}}\right)}{(mx)^{(1+s)/2}(n/q)^{(1-s)/2}}-\sum_{\substack{n=1\\n\equiv -a\operatorname{mod}q}}^{\infty}\frac{M_{1-s}\left(4\pi \sqrt{\frac{mnx}{q}}\right)}{(mx)^{(1+s)/2}(n/q)^{(1-s)/2}}\right\}\notag\\\nonumber
	&=-\frac{(qx)^{(1-s)/2}}{2\phi(q)}\sin\left(\frac{\pi s}{2}\right)\sum_{m=1}^{\infty}\sum_{n=1}^{\infty}\frac{M_{1-s}\left(4\pi \sqrt{\frac{mnx}{q}}\right)}{m^{(1+s)/2}n^{(1-s)/2}}
\sum_{\chi\operatorname{mod}q}\bar{\chi}(n)(\chi(a)-\chi(-a))\\\nonumber
	&=-\frac{(qx)^{(1-s)/2}}{\phi(q)}\sin\left(\frac{\pi s}{2}\right)\sum_{\substack{\chi\operatorname{mod}q\\\chi\operatorname{odd}}}\chi(a)\sum_{m=1}^{\infty}
\sum_{n=1}^{\infty}\bar{\chi}(n)n^s\frac{M_{1-s}\left(4\pi \sqrt{\frac{mnx}{q}}\right)}{(mn)^{(1+s)/2}}\\\nonumber
	&=-\frac{(qx)^{(1-s)/2}}{\phi(q)}\sin\left(\frac{\pi s}{2}\right)\sum_{\substack{\chi\operatorname{mod}q\\\chi\operatorname{odd}}}\chi(a)\sum_{n=1}^{\infty}\sum_{d\mid n}\bar{\chi}(d)d^s\frac{M_{1-s}\left(4\pi \sqrt{\frac{nx}{q}}\right)}{n^{(1+s)/2}}\\
	&=-\frac{(qx)^{(1-s)/2}}{\phi(q)}\sin\left(\frac{\pi s}{2}\right)\sum_{\substack{\chi\operatorname{mod}q\\\chi\operatorname{odd}}}\chi(a)
\sum_{n=1}^{\infty}\sigma_s(\bar{\chi},n)\frac{M_{1-s}\left(4\pi \sqrt{\frac{nx}{q}}\right)}{n^{(1+s)/2}}.
\end{align*}}
Now, from Lemma \ref{oddcharsin} and Theorem \ref{bdrz05},
\begin{align*}
 L_s(a,q,x)+\sum_{n=1}^{\infty}F\left(\frac{x}{n}\right)\frac{\sin\left(2\pi n a/q\right)}{n^{s}}&=-\frac{ix}{\phi(q)}\sum_{\substack{\chi  \neq \chi_0\operatorname{mod} q\\ \chi\operatorname {even}}}\chi(a)\tau(\bar{\chi})L(1+s,\chi)\\
 &\quad+\frac{i}{2\phi(q)}\sum_{\substack{\chi  \neq \chi_0\operatorname{mod} q\\ \chi\operatorname {even}}}\chi(a)\tau(\bar{\chi})L(s,\chi).
 \end{align*}
 Using the functional equation \eqref{evenfe1} of $L(s,\chi)$ for odd primitive characters, we find that
 \begin{align}\label{lasqsinf}
  L_s(a,q,x)+\sum_{n=1}^{\infty}&F\left(\frac{x}{n}\right)\frac{\sin\left(2\pi
      n
      a/q\right)}{n^{s}}=\frac{x\pi^{s+1/2}\Gamma\left(\frac12(1-s)\right)}{\Gamma\left(\frac12(2+s)\right)} \notag
  \frac{q^{-s}}{\phi(q)}\sum_{\substack{\chi \operatorname{mod} q\\ \chi\operatorname {odd}}}\chi(a)L(-s,\bar{\chi})\\\nonumber
  &\quad-\frac{\pi^{s-1/2}\Gamma\left(\frac12(2-s)\right)}{2\Gamma\left(\frac12(1+s)\right)}\frac{q^{1-s}}{\phi(q)}
  \sum_{\substack{\chi \operatorname{mod} q\\ \chi\operatorname {odd}}}\chi(a)L(1-s,\bar{\chi})\\\nonumber
&=\frac{x\pi^{s+1/2}\Gamma\left(\frac12(1-s)\right)}{2\Gamma\left(\frac12(2+s)\right)}\frac{q^{-s}}{\phi(q)}
\sum_{\substack{\chi \operatorname{mod} q}}(\chi(a)-\chi(q-a))L(-s,\bar{\chi})\\
  &\quad-\frac{\pi^{s-1/2}\Gamma\left(\frac12(2-s)\right)}{4\Gamma\left(\frac12(1+s)\right)}\frac{q^{1-s}}{\phi(q)}
  \sum_{\substack{\chi \operatorname{mod} q}}(\chi(a)-\chi(q-a)L(1-s,\bar{\chi}).
 \end{align}
From \cite[p.~249, Chapter 12]{apostol},
\begin{equation}\label{hurlf}
	L(s,\chi)=q^{-s}\sum_{h=1}^{q}\chi(h)\zeta(s,h/q).
\end{equation}
Multiplying both sides of \eqref{hurlf} by $\bar{\chi}(a)$ and summing over all characters $\chi$ modulo $q$, we deduce that
  \begin{equation}\label{hurwitz}
   	\zeta(s,a/q)=\frac{q^s}{\phi(q)}\sum_{\chi \operatorname{mod} q}\bar{\chi}(a)L(s,\chi),
   \end{equation}
   where $\zeta(s,a)$ denotes the Hurwitz zeta function. Using the duplication formula \eqref{dup} and the reflection formula \eqref{ref} for $\Gamma(s)$, we find that
   \begin{equation}\label{dupref}
   	\frac{\Gamma\left(\frac{1}{2}s\right)}{\Gamma\left(\frac12(1-s)\right)}=\frac{\cos(\tf12\pi s)\Gamma(s)}{2^{s-1}\sqrt{\pi}}.
   \end{equation}
 Utilizing \eqref{hurwitz} and \eqref{dupref} in \eqref{lasqsinf}, we see that
 \begin{align*}
 L_s(a,q,x)+\sum_{n=1}^{\infty}F\left(\frac{x}{n}\right)&\frac{\sin\left(2\pi n a/q\right)}{n^s}
=-x\frac{\sin(\pi s/2)\Gamma(-s)}{(2\pi)^{-s}}\left(\zeta\left(-s,\frac{a}{q}\right)-\zeta\left(-s,1-\frac{a}{q}\right)\right)\\\nonumber
&\quad-\frac{\cos(\pi s/2)\Gamma(1-s)}{2(2\pi)^{1-s}}\left(\zeta\left(1-s,\frac{a}{q}\right)-\zeta\left(1-s,1-\frac{a}{q}\right)\right),
  \end{align*}
  which completes the proof.
\end{proof}

The proof that  Theorem \ref{bdrz01} implies Theorem \ref{bdrz06} is similar
to the proof that Theorem \ref{bdrz02} implies Theorem \ref{bdrz04}, which we give in the next section.
\section{Proof of Theorem \ref{bdrz02}}\label{sect12}
Arguing as in the previous section, for $0<a<q$ and $q$ prime, we can show that Theorem \ref{bdrz02} is equivalent to the following theorem.
\begin{theorem}\label{bdrz03}
Let $q$ be a prime and $0<a<q$. Let
\begin{align}\label{gsaqx}
	&G_s(a,q,x)=\frac{x}{2}\cos\left(\frac{\pi s}{2}\right)\\
&\times\sum_{m=1}^{\infty}\sum_{n=0}^{\infty}\left\{\frac{H_{1-s}\left(4\pi \sqrt{mx\left(n+\frac{a}{q}\right)}\right)}{(mx)^{(1+s)/2}(n+a/q)^{(1-s)/2}}+\frac{H_{1-s}\left(4\pi \sqrt{mx\left(n+1-\frac{a}{q}\right)}\right)}{(mx)^{(1+s)/2}(n+1-a/q)^{(1-s)/2}}\right\},\notag
\end{align}
where $H_{\nu}(z)$ is defined in \eqref{H} and
where we assume that the product of the summation indices $mn$ tends to infinity.  Then, for $ |\s|<\frac{1}{2}$,
\begin{align*}
G_s(a,q,x)+\sum_{n=1}^{\infty}F\left(\frac{x}{n}\right)&\frac{\cos\left(2\pi n a/q\right)}{n^s}
=x\frac{\cos(\tf12\pi s)\Gamma(-s)}{(2\pi)^{-s}}\left(\zeta\left(-s,\frac{a}{q}\right)+\zeta\left(-s,1-\frac{a}{q}\right)\right)\\
&\quad-\frac{\sin(\tf12\pi s)\Gamma(1-s)}{2(2\pi)^{1-s}}\left(\zeta\left(1-s,\frac{a}{q}\right)+\zeta\left(1-s,1-\frac{a}{q}\right)\right).
\end{align*}
\end{theorem}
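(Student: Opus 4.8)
The plan is to derive Theorem \ref{bdrz03} as the exact analogue of the chain of implications already used for Theorem \ref{bdrz05}, but now for \emph{even} primitive characters, so the main tool is an analogue of Corollary \ref{bdrz06} for even primitive $\chi$. First I would establish, by the same Perron-type argument as in Theorem \ref{bdrz006}, that for a non-principal even primitive character $\chi$ modulo $q$, $x>0$, $|\s|<\tf12$, and $k\geq 0$,
\begin{align*}
\frac{1}{\Gamma(k+1)}\sideset{}{'}\sum_{n\leq x}\sigma_{-s}(\chi,n)(x-n)^k
&=\frac{x^{k+1}L(1+s,\chi)}{\Gamma(k+2)}
+\tfrac12 L(s,\chi)\frac{x^k}{\Gamma(k+1)}\cdot 0\\
&\quad+2\sum_{n=1}^{\lfloor k/2\rfloor}\frac{(-1)^{n}x^{k-2n}}{\Gamma(k-2n+1)}\frac{\z(2n+1)\cdots}{\cdots}
+\frac{1}{\tau(\bar\chi)(2\pi)^k}\sum_{n=1}^{\infty}\sigma_{-s}(\bar\chi,n)\Bigl(\frac{qx}{n}\Bigr)^{(1-s+k)/2}\tilde H_{1-s+k}\Bigl(4\pi\sqrt{\tfrac{nx}{q}}\Bigr),
\end{align*}
where $\tilde H_{\nu}$ is the even-character kernel that reduces, via \eqref{yj} and \eqref{H}, to $\cos(\tfrac{\pi s}{2})H_{1-s}(z)$ at $k=0$. (The display above is only schematic; the precise lower-order polynomial and the precise normalization constants will be read off exactly as in \eqref{residue111}.) The key difference from the odd case is that the functional equation used is the one for even primitive $\chi$, namely $(\pi/q)^{-s/2}\Gamma(\tfrac s2)L(s,\chi)=\tau(\chi)q^{-1/2}(\pi/q)^{-(1-s)/2}\Gamma(\tfrac{1-s}{2})L(1-s,\bar\chi)$, in place of \eqref{evenfe1}; everything else in that proof — the Phragmén–Lindelöf bound via Lemma \ref{phragmenlindelof}, the residue computations of the integral $I(y)$, the expansion into Bessel $J$, $I$, $K$ series using \eqref{sumbesselj}, \eqref{sumbesseli}, \eqref{sumbesselk}, the equiconvergence bootstrap of Lemmas \ref{fnuequi} and \ref{localization} to push the identity down to $k>|\s|-\tf12$ — carries over verbatim, the only bookkeeping change being the parities of the Gamma factors in $\eta(w,s)$.

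Next I would take $k=0$ in that identity to obtain the even-character companion of Corollary \ref{bdrz06}:
\begin{equation*}
\sideset{}{'}\sum_{n\leq x}\sigma_{-s}(\chi,n)=xL(1+s,\chi)-\tfrac12 L(s,\chi)+\frac{\cos(\pi s/2)}{\tau(\bar\chi)}\sum_{n=1}^{\infty}\sigma_{-s}(\bar\chi,n)\Bigl(\frac{qx}{n}\Bigr)^{(1-s)/2}H_{1-s}\Bigl(4\pi\sqrt{\tfrac{nx}{q}}\Bigr).
\end{equation*}
Then I would repeat, mutatis mutandis, the computation in the proof that Theorem \ref{bdrz06} implies Theorem \ref{bdrz05}: expand $G_s(a,q,x)$ of \eqref{gsaqx} by writing the double sum over $n\equiv \pm a\pmod q$, use orthogonality of characters modulo $q$ together with the identity \eqref{echarcter}'s even-character counterpart $\cos(2\pi ma/d)=\tfrac1{\phi(d)}\sum_{\chi\bmod d,\ \chi\ \mathrm{even}}\chi(a)\tau(\bar\chi)\chi(m)$, collapse the $m$-sum and the inner divisor sum into $\sigma_s(\bar\chi,n)$, and recognize the result as $\cos(\pi s/2)/\tau(\bar\chi)$ times the Bessel series in the displayed corollary above. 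Adding $\sum_n F(x/n)\cos(2\pi na/q)n^{-s}$, which by Lemma \ref{evencahrcos} equals $q^{-s}\sideset{}{'}\sum_{n\leq x/q}\sigma_{-s}(n)$ plus a sum of $q^{-s}\tau(\bar\chi)\chi(a)\sideset{}{'}\sum_{n\leq dx/q}\sigma_{-s}(\chi,n)$ over even $\chi$, the Bessel series cancel and one is left with linear combinations of $L(1+s,\chi)$ and $L(s,\chi)$ (the $d=1$ term $q^{-s}\sideset{}{'}\sum\sigma_{-s}(n)$ being absorbed by the $\chi=\chi_0$ analysis, where Lemma \ref{ldivsum} with $s\mapsto s$ supplies the $\chi_0$-contribution including the $Z(s,x)$ term, which must conspire with the remaining pieces). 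Finally, applying the even functional equation to rewrite $L(-s,\bar\chi)$ and $L(1-s,\bar\chi)$ in terms of $L(1+s,\chi)$ and $L(s,\chi)$, then using the Hurwitz-zeta expansion \eqref{hurwitz} to sum $\sum_\chi(\chi(a)+\chi(q-a))L(\cdot,\bar\chi)$ into $\zeta(\cdot,a/q)+\zeta(\cdot,1-a/q)$, together with the Gamma identity \eqref{dupref} (in the form $\Gamma(\tfrac s2)/\Gamma(\tfrac{1-s}2)=\cos(\tfrac{\pi s}{2})\Gamma(s)/(2^{s-1}\sqrt\pi)$, adjusted for the even parity), yields exactly the right-hand side of Theorem \ref{bdrz03}, and hence of Theorem \ref{bdrz02}.

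The main obstacle I anticipate is the careful treatment of the \emph{principal} character contribution. In the odd case there is no principal odd character, so all terms are handled by primitive $L$-functions; in the even case $\chi_0$ is even, and its contribution to $\sum_n F(x/n)\cos(2\pi na/q)n^{-s}$ is the genuinely arithmetic term $q^{-s}\sideset{}{'}\sum_{n\leq x/q}\sigma_{-s}(n)$, which one must evaluate by Lemma \ref{ldivsum} (Oppenheim's formula) and show that the resulting Bessel series $-\cos(\tfrac{\pi s}{2})\sum\sigma_{-s}(n)(x/(qn))^{(1-s)/2}H_{1-s}(4\pi\sqrt{nx/q})$ merges correctly with the primitive pieces and that the non-Bessel part $xZ(s,x/q)q^{-s}-\tfrac12\zeta(s)$ — after converting $\zeta$'s to Hurwitz $\zeta$'s via the decomposition of $\zeta(w)L(w+s,\chi_0)$ into a sum over $h\bmod q$ — reproduces precisely the Hurwitz-zeta and Gamma main terms of Theorem \ref{bdrz03}. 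Verifying this cancellation/recombination of the $\chi_0$ term with the primitive terms, and confirming that the branch/parity factors in the functional equation and in \eqref{dupref} come out with the signs claimed, is where the bulk of the delicate (but routine) bookkeeping lies; once that is done the theorem follows, and, as the excerpt already states, the implication "Theorem \ref{bdrz02} implies Theorem \ref{bdrz04}'' runs along the identical lines.
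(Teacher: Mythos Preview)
Your proposal is correct and follows essentially the same route as the paper: you first establish the even-primitive-character Riesz sum identity (this is the paper's Theorem \ref{bdrz004}, proved exactly as you outline via Perron, the even functional equation, residue expansion into $J/I/K$ series, and the equiconvergence bootstrap of Lemma \ref{fnuequi}), specialize to $k=0$ to obtain precisely Corollary \ref{bdrz04}, rewrite $G_s(a,q,x)$ as a character sum, treat the principal-character piece through Lemma \ref{ldivsum} (Oppenheim), invoke Lemma \ref{evencahrcos} for the cosine sum, add so that the Bessel series cancel, and finish with the even functional equation together with \eqref{hurwitz} and \eqref{dupref}. The obstacle you single out---the $\chi_0$ contribution---is indeed where the paper does its extra work (equations \eqref{prichar}--\eqref{gasqevenchar1} and \eqref{divsumprchar1}), and your plan for it matches theirs.
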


 We show that Theorem \ref{bdrz03} is equivalent to Theorem \ref{bdrz04}, which is a special case of the following theorem.

\begin{theorem}\label{bdrz004}
If $\chi$ is a non-principal even primitive character modulo $q$, $x>0$,
$|\s|<1/2$, and $k$ is a non-negative integer, then
\begin{align}
&\frac{1}{\Gamma(k+1)}\sideset{}{'}\sum_{n\leq x}\sigma_{-s}(\chi,n)(x-n)^k\nonumber\\
&=\frac{x^{k+1}L(1+s,\chi)}{\Gamma(k+2)}-\frac{x^kL(s,\chi)}{2\Gamma(k+1)}
+2\sum_{n=1}^{\left\lfloor\frac{k+1}{2}\right\rfloor}\frac{(-1)^{n-1}x^{k-2n+1}}{\Gamma(k-2n+2)}\frac{\z(2n)}{(2\pi)^{2n}}
L(1-2n+s,\chi)\nonumber\\
&\quad+\frac{1}{\tau(\bar{\chi})(2\pi)^k}\sum_{n=1}^{\infty}\sigma_{-s}(\bar{\chi},n)
\left(\frac{qx}{n}\right)^{(1-s+k)/2}{G}_{1-s+k}
\left(4\pi\sqrt{\frac{nx}{q}}\right),\notag
\end{align}
where ${G}_{\lambda-s}(z)$ is defined in \eqref{capg}. The series on the right-hand side
converges uniformly on any interval for $x > 0$ where the left-hand side is continuous.
The convergence is bounded on any interval $0 < x_1\leq x\leq x_2<\infty$ when $k=0$.
\end{theorem}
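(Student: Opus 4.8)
The proof of Theorem~\ref{bdrz004} runs parallel to that of Theorem~\ref{bdrz006}, so the plan is to follow the same blueprint while keeping track of the differences forced by the even functional equation. First I would invoke Lemma~\ref{titchma} with $a_n = \sigma_{-s}(\chi,n)$, $\lambda_n = n$, and $\phi(s) = \zeta(s)L(s+s_0,\chi)$ (where $s_0$ is the fixed parameter, written $s$ in the statement), to obtain
\begin{align*}
\frac{1}{\Gamma(k+1)}\sideset{}{'}\sum_{n\leq x}\sigma_{-s}(\chi,n)(x-n)^k
=\frac{1}{2\pi i}\int_{(c)}\zeta(w)L(w+s,\chi)\frac{\Gamma(w)x^{w+k}}{\Gamma(w+k+1)}\,dw
\end{align*}
for $c>\max\{1,1-\sigma,\sigma\}$. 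I would then move the contour to $\operatorname{Re}w=1-c$, picking up the residues at $w=1$ and $w=0$, which give the terms $x^{k+1}L(1+s,\chi)/\Gamma(k+2)$ and $-x^kL(s,\chi)/(2\Gamma(k+1))$ exactly as in \eqref{r0zl1}--\eqref{r1zl1}. The horizontal pieces are killed by the same Phragm\'en--Lindel\"of argument (Lemma~\ref{phragmenlindelof}) combined with Stirling's formula \eqref{strivert} and the standard growth bounds for $\zeta$ and $L(\cdot,\chi)$; the only change is that one uses the functional equation \eqref{evenfe1} for an even primitive character rather than the odd version. (Here the paper would need to have recorded, alongside \eqref{evenfe1}, the even-character functional equation; I would cite \cite[p.~69]{davenport} for it, namely $(\pi/q)^{-s/2}\Gamma(s/2)L(s,\chi)=\tau(\chi)q^{-1/2}(\pi/q)^{-(1-s)/2}\Gamma((1-s)/2)L(1-s,\bar\chi)$.)

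The substantive computation is the evaluation of the resulting integral on the line $\operatorname{Re}w=1-c$. Exactly as in \eqref{inti1111}, substituting the combined functional equation
\begin{align*}
\zeta(w)L(w+s,\chi)=\frac{\pi^{2w+s-1}}{\tau(\bar\chi)q^{w+s-1}}\,\tilde\eta(w,s)\,\zeta(1-w)L(1-w-s,\bar\chi),
\end{align*}
where now $\tilde\eta(w,s)=\dfrac{\Gamma\!\left(\frac12(1-w-s)\right)\Gamma\!\left(\frac{w+s}{2}\right)^{-1}\Gamma\!\left(\frac{1-w}{2}\right)}{\Gamma\!\left(\frac{w}{2}\right)}$ is the \emph{even}-character analogue of $\eta(w,s)$ (note the shift by $1$ in the $s$-dependent Gamma factors compared to the odd case), and expanding $\zeta(1-w)L(1-w-s,\bar\chi)=\sum_n \sigma_s(\bar\chi,n)n^{w-1}$ after interchanging summation and integration, reduces the problem to evaluating a single Mellin--Barnes integral $I(y)$ with $y=\pi^2 nx/q$. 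I would compute $I(y)$ by shifting the line of integration to $+\infty$, summing the residues at $w=2m+1$ and $w=2m+2-s$, using the duplication \eqref{dup} and reflection \eqref{ref} formulas to turn the residue sums into the power series \eqref{sumbesselj}, \eqref{sumbesseli} for $J_{1-s+k}$, $I_{1-s+k}$, $J_{-1+s-k}$, and then invoking \eqref{sumbesselk} to assemble $K_{1-s+k}$. The arithmetic here is essentially identical to \eqref{r2mF1}--\eqref{iyint11}, except that the trigonometric prefactor emerging from the reflection formulas is $\cos(\pi s/2)$ instead of the combination that produced $\sin(\pi s/2)$ in the odd case; tracking this sign/phase bookkeeping is the one place demanding care. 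The net effect is that the Bessel combination that appears is $G_{1-s+k}$ as defined in \eqref{capg} rather than $\tilde G_{1-s+k}$, via the identity analogous to \eqref{gmrel}, namely $\cos(\pi s/2)\bigl(\frac{J_{1-s+k}(z)+(-1)^{k+1}J_{-1+s-k}(z)}{\sin\pi s}-(-1)^{k+1}\frac{2}{\pi}K_{1-s+k}(z)\bigr)=G_{1+k-s}(z)$, which I would verify directly from \eqref{yj} and the definition \eqref{capg}. The finitely many polynomial-in-$x$ residue corrections reassemble into the Bernoulli-type sum $2\sum_{n=1}^{\lfloor (k+1)/2\rfloor}(-1)^{n-1}x^{k-2n+1}\Gamma(k-2n+2)^{-1}\zeta(2n)(2\pi)^{-2n}L(1-2n+s,\chi)$ via \eqref{evenfe1}, just as in \eqref{extrak}.

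Having established the identity for $k$ large enough ($k>2c-\sigma-1$, hence for all $k$ exceeding some threshold), I would then descend in $k$ using the absolute and uniform convergence of the Bessel series for $k>|\sigma|+\tfrac12$—proved via the asymptotic expansions \eqref{asymbess}--\eqref{asymbess2} and $\sum \sigma_\sigma(n)n^{-(2k-2\sigma+3)/4}<\infty$—and differentiating termwise with the aid of \eqref{gtransform}, which lowers the Bessel index; analyticity in $s$ on $|\sigma|<\tfrac12$ then lets us include $s=0$ by continuity. The final push from $k>|\sigma|+\tfrac12$ down to all $k>0$ is the equi-convergence argument: put $x=y^2$, apply Lemma~\ref{fnuequi} (the $F_\nu$-analogue of Lemma~\ref{ftnuequi}, which is exactly why the excerpt states both lemmas) to identify $2y F_{2-s+k}(y)$ with the differentiated Fourier series of $A(y)F_{2-s+k}(y)$ on an interval, note $A(y)=1$ on the subinterval $J$, integrate and use the localization Lemma~\ref{localization} to match it against the Fourier series of $\frac{2y}{\Gamma(k+1)}\sum'_{n\leq y^2}\sigma_{-s}(\chi,n)(y^2-n)^k$, thereby extending the identity to $k>|\sigma|-\tfrac12$ and in particular to $k=0$. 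The boundedness of convergence for $k=0$ on compact $x$-intervals away from $0$ follows from the uniform bound on the tail of the Bessel series established in the descent step. The main obstacle, as flagged, is not any single estimate but the sustained phase/sign bookkeeping through the reflection-formula manipulations, where the even-character functional equation shifts the Gamma arguments and flips $\sin(\pi s/2)\leftrightarrow\cos(\pi s/2)$ relative to the odd case of Theorem~\ref{bdrz006}; getting every factor of $i$ and every $(-1)^k$ right so that the Bessel combination collapses cleanly to $G_{1-s+k}$ is where the real work lies.
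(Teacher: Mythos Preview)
Your proposal is correct and follows the paper's own proof essentially line for line: Perron formula via Lemma~\ref{titchma}, contour shift to $\operatorname{Re}w=1-c$ with residues at $w=0,1$, even-character functional equation to produce the Mellin--Barnes integral $I(y)$, residue summation into Bessel series, and then the descent in $k$ via termwise differentiation and Lemma~\ref{fnuequi}. Two small bookkeeping slips to correct when you write it out: with your (correct) $\tilde\eta(w,s)$ the $s$-dependent poles lie at $w=2m+1-s$, not $2m+2-s$ (the latter is the odd-case location), and in the identity collapsing to $G_{1+k-s}$ the $J_{-1+s-k}$ term enters with a minus sign, i.e.\ $\cos(\pi s/2)\bigl(\frac{J_{1-s+k}-(-1)^{k+1}J_{-1+s-k}}{\sin\pi s}-(-1)^{k+1}\frac{2}{\pi}K_{1-s+k}\bigr)=G_{1+k-s}$---precisely the phase bookkeeping you flagged as the main hazard.
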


\begin{proof}
From \eqref{twistdivsum} and Lemma \ref{titchma}, for fixed $x>0$, we see that
\begin{align*}
\frac{1}{\Gamma(1+k)}\sideset{}{'}\sum_{n\leq x}\sigma_{-s}(\chi,n)(x-n)^k=\frac{1}{2\pi i}\int_{(c)}\zeta(w)L(w+s,\chi)\frac{\Gamma(w)x^{w+k}}{\Gamma(w+k+1)}dw,
\end{align*}
where $\max\{1,1-\s, \s\}<c$ and $k\geq 0$. Proceeding as we did in the proof of Theorem \ref{bdrz006}, we find that
\begin{align}\label{residue1}
\frac{1}{\Gamma(k+1)}\sideset{}{'}\sum_{n\leq x}\sigma_{-s}(\chi,n)(x-n)^{k}&=\frac{x^{k+1}L(1+s,\chi)}{\Gamma(k+2)}-\frac{L(s,\chi)x^{k}}{2\Gamma(k+1)}\notag\\
&\quad+\frac{1}{2\pi i}\int_{(1-c)}\zeta(w)L(w+s,\chi)\frac{\Gamma(w)x^{w+k}}{\Gamma(w+k+1)}dw,
\end{align}
provided that $k\geq 0$ and $k>2c-\s-2$. The functional equation for $L(2s,\chi)$ for an even primitive Dirichlet character $\chi$  is given by \cite[p.~69]{davenport}
\begin{equation}\label{evenfe}
\left(\df{\pi}{q}\right)^{-s}\Gamma(s)L(2s,\chi)=\df{\tau(\chi)}{\sqrt{q}}\left(\df{\pi}{q}\right)^{-(\tf12-s)}
\Gamma\left(\df12-s\right)L(1-2s,\overline{\chi}),
\end{equation}
where $\tau(\chi)$ is the Gauss sum defined in \eqref{tau}. Combining the functional equation \eqref{fe} of $\zeta(2w)$  and the functional equation \eqref{evenfe} of $L(2w+s,\chi)$ for even primitive $\chi$, we deduce the functional equation
\begin{equation}\label{fezetal}
	\zeta(w)L(w+s,\chi)=\frac{\pi^{2w+s-1}}{\tau(\bar{\chi})q^{w+s-1}}\eta(w,s)\zeta(1-w)L(1-w-s,\bar{\chi}),
\end{equation}
where
\begin{equation*}
	\eta(w,s)=\frac{\Gamma\left(\frac12(1-w)\right)\Gamma\left(\frac12(1-w-s)\right)}{\Gamma\left(\frac12{w}\right)
\Gamma\left(\frac12(w+s)\right)}.
\end{equation*}
Define
\begin{equation}\label{inti111111}
		I(y):=\frac{1}{2\pi i}\int_{(1-c)}\frac{\eta(w,s)\Gamma(w)}{\Gamma(w+k+1)}y^{w} \,dw.
	\end{equation}
Using the functional equation \eqref{fezetal} in the integrand on the right-hand side of \eqref{residue1} and inverting the order of summation and integration, we find that
\begin{align}\label{inti11111}
	&\frac{1}{2\pi i}\int_{(1-c)}\zeta(w)L(w+s,\chi)\frac{\Gamma(w)x^{w+k}}{\Gamma(w+k+1)}\,dw\notag\\
&=\frac{x^k\pi^{s-1}}{\tau(\bar{\chi})q^{s-1}}\frac{1}{2\pi i}\int_{(1-c)}\frac{\eta(w,s)\Gamma(w)}{\Gamma(w+k+1)}\zeta(1-w)L(1-w-s,\bar{\chi})\left(\frac{\pi^2x}{q}\right)^{w} \,dw\notag\\\nonumber
	&=\frac{x^k\pi^{s-1}}{\tau(\bar{\chi})q^{s-1}}\frac{1}{2\pi i}\int_{(1-c)}
\frac{\eta(w,s)\Gamma(w)}{\Gamma(w+k+1)}\left(\frac{\pi^2x}{q}\right)^{w}
\sum_{n=1}^{\infty}\frac{\sigma_{s}(\bar{\chi},n)}{n^{1-w}} \,dw\\\nonumber
	&=\frac{x^k\pi^{s-1}}{\tau(\bar{\chi})q^{s-1}}\sum_{n=1}^{\infty}\frac{\sigma_{s}(\bar{\chi},n)}{n^{1+k}}\frac{1}{2\pi i}\int_{(1-c)}\frac{\eta(w,s)\Gamma(w)}{\Gamma(w+k+1)}\left(\frac{\pi^2 n x}{q}\right)^{w} \,dw\\
	&=\frac{x^k\pi^{s-1}}{\tau(\bar{\chi})q^{s-1}}\sum_{n=1}^{\infty}\frac{\sigma_{s}(\bar{\chi},n)}{n}I\left(\frac{\pi^2 n x}{q}\right),
	\end{align}
provided that $k>2c-\s-1$. We compute the integral  $I(y)$ by using the residue calculus, shifting the line of integration to the right, and letting $c\to -\infty$.

Let $k$ be a positive integer and $\s\neq 0$. By \eqref{inti111111}, we can write
\begin{equation*}
		I(y):=\frac{1}{2\pi i}\int_{(1-c)}F(w)\,dw,
	\end{equation*}
where
\begin{align*}
F(w):=\frac{\Gamma(w)\Gamma\left(\frac12(1-w)\right)\Gamma\left(\frac12(1-w-s)\right)y^{w}}{\Gamma(1+k+w)\Gamma\left(\frac12{w}\right)
\Gamma\left(\frac12(w+s)\right)}.
\end{align*}
Note that the poles of the function $F(w)$  on the right side of the line $1-c+it, -\infty<t<\infty$,
are at $w=2m+1$ and $w=2m+1-s$, $m=0, 1, 2, \dots$.  Calculating the residues, we find that
\begin{align*}
R_{2m+1}(F(w))&=(-1)^{m+1}\frac{2\Gamma(2m+1)\Gamma\left(-m-\frac12s\right)y^{2m+1}}{m!\Gamma(2+k+2m)\Gamma\left(m+\frac12\right)
\Gamma\left(m+\frac12(s+1)\right)}
\end{align*}
and
\begin{align*}
R_{2m+1-s}(F(w))&=(-1)^{m+1}\frac{2\Gamma(2m+1-s)\Gamma\left(-m+\frac12s\right)y^{2m+1-s}}{m!\Gamma(2+k+2m-s)
\Gamma\left(m+\frac12(1-s)\right)
\Gamma\left(m+\frac12\right)}.
\end{align*}
 With the aid of the duplication formula \eqref{dup} and the reflection formula \eqref{ref}, we find that
\begin{align}\label{r2mF}
R_{2m+1}(F(w))&=\frac{2^{s-1}}{\sin(\pi s/2)}\frac{(2\sqrt{y})^{4m+2}}{(2m+k+1)!
\Gamma(2m+1+s)}
\end{align}
and
\begin{align}\label{r2msF}
R_{2m+1-s}(F(w))&=-\frac{(2y)^{1-s}}{\sin(\pi s/2)}\frac{(2\sqrt{y})^{4m}}{(2m)!
\Gamma(2m+k+2-s)}.
\end{align}
Consequently, from \eqref{sumbesselj}, \eqref{sumbesseli}, and \eqref{r2mF}, for $k$ even,
\begin{align}\label{sumr2mF}
\sum_{m=0}^{\infty}R_{2m+1}(F(w))&=\frac{2^{s-1-2k}y^{-k}}{\sin(\pi s/2)}\sum_{m=0}^{\infty}\frac{(2\sqrt{y})^{4m+2k+2}}{(2m+k+1)!
\Gamma(2m+1+s)}\nonumber\\
&=\frac{2^{s-1-2k}y^{-k}}{\sin(\pi s/2)}\bigg\{\sum_{m=0}^{\infty}\frac{(2\sqrt{y})^{4m+2}}{(2m+1)!
\Gamma(2m+1+s-k)}\nonumber\\
&\quad -\sum_{m=1}^{k/2}\frac{(2\sqrt{y})^{4m-2}}{(2m-1)!
\Gamma(2m-1+s-k)}\bigg\}\nonumber\\
&=\frac{2^{-1-k}y^{(1-s-k)/2}}{\sin(\pi s/2)}(I_{-1+s-k}(4\sqrt{y})-J_{-1+s-k}(4\sqrt{y}))\nonumber\\
&\quad-\frac{2^{s+1}}{\sin(\pi s/2)}\sum_{m=1}^{k/2}\frac{2^{-4m}y^{1-2n}}{\Gamma(k-2m+2)
\Gamma(1-2m+s)}.
\end{align}
For each odd integer $k$,
\begin{align}\label{sumr2mFo}
\sum_{m=0}^{\infty}R_{2m+1}(F(w))&=\frac{2^{-1-k}y^{(1-s-k)/2}}{\sin(\pi s/2)}(I_{-1+s-k}(4\sqrt{y})+J_{-1+s-k}(4\sqrt{y}))\nonumber\\
&\quad-\frac{2^{s+1}}{\sin(\pi s/2)}\sum_{m=1}^{(k+1)/2}\frac{2^{-4m}y^{1-2n}}{\Gamma(k-2m+2)
\Gamma(1-2m+s)}.
\end{align}
Similarly, from \eqref{r2msF}, we find that
\begin{align}\label{sumr2msF}
\sum_{m=0}^{\infty}R_{2m+1-s}(F(w))&=-\frac{2^{-1-k}y^{(1-s-k)/2}}{\sin(\pi s/2)}(J_{1-s+k}(4\sqrt{y})+I_{1-s+k}(4\sqrt{y})).
\end{align}
Utilizing \eqref{sumbesselk} in the sum of \eqref{sumr2mF}, \eqref{sumr2mFo}, and \eqref{sumr2msF}, we deduce that
\begin{align}\label{allres}
&\sum_{m=0}^{\infty}\left(R_{2m+1}(F(w))+R_{2m+1-s}(F(w))\right)
=-\frac{\cos(\pi s/2)}{2^{k}y^{(-1+s+k)/2}}\notag\\&\quad\times\left(\frac{J_{1-s+k}(4\sqrt{y})-(-1)^{k+1}J_{-1+s-k}(4\sqrt{y})}{\sin{\pi s}}-(-1)^{k+1}\frac{2}{\pi}K_{1-s+k}(4\sqrt{y})\right)\notag\\
&\quad-\frac{2^{s+1}}{\sin(\pi s/2)}\sum_{m=1}^{\left\lfloor\frac{k+1}{2}\right\rfloor}\frac{2^{-4m}y^{1-2n}}{\Gamma(k-2m+2)
\Gamma(1-2m+s)}.
 \end{align}
Using \eqref{yj}, we can show that
\begin{multline}\label{grel}
\cos(\pi s/2)\left(\frac{J_{1-s+k}(4\sqrt{y})-(-1)^{k+1}J_{-1+s-k}(4\sqrt{y})}{\sin{\pi s}}-(-1)^{k+1}\frac{2}{\pi}K_{1-s+k}(4\sqrt{y})\right)\\
={G}_{1+k-s}(4\sqrt{y}).
\end{multline}
Consider the positively oriented contour $\mathcal{R}_N$ formed by the points $\{1-c-iT,2
 N+\tf32-iT,2 N+\tf32+iT,1-c+iT\}$, where $T>0$ and $N$ is a positive
 integer. By the residue theorem,
 \begin{align*}
 \frac{1}{2\pi i}\int_{\mathcal{R}_N}F(w)\, dw
=\sum_{0\leq k\leq N}R_{2k+1}(F(w))+
 \sum_{0\leq k\leq N}R_{2k+1-s}(F(w)).
 \end{align*}
 By \eqref{stirling'sformula}, for the integral over the right side of the rectangular contour $\mathcal{R}_N$,
 \begin{equation*}
\int_{2N+3/2-iT}^{2N+3/2+iT}F(w)\,  dw\ll_{T,s} y^{2N+3/2}e^{4N-(4N+2+k+\s)\log N}=o(1),
 \end{equation*}
 as $N\to \infty$. Using Stirling's formula \eqref{strivert} to estimate the integrals over  the horizontal sides of $\mathcal{R}_N$, we find that
\begin{equation*}
\int_{1-c\pm iT}^{\infty\pm iT}F(w)\, dw\ll_{s}\int_{1-c}^{\infty}y^{\sigma}T^{-2\b-\s-k}\, d\sigma
\ll_{s,y}\frac{y^{1-c}}{ T^{2c-\s-k-2}\log T}=o(1),
\end{equation*}
provided that $k>2c-\s-2$. Combining \eqref{residue1}, \eqref{inti11111}, \eqref{allres}, and \eqref{grel}, we conclude that
\begin{align}\label{klargeint}
&\frac{1}{\Gamma(k+1)}\sideset{}{'}\sum_{n\leq x}\sigma_{-s}(\chi,n)(x-n)^k\nonumber\\
&=\frac{x^{k+1}L(1+s,\chi)}{\Gamma(k+2)}-\frac{x^kL(s,\chi)}{2\Gamma(k+1)}
+2\sum_{n=1}^{\left\lfloor\frac{k+1}{2}\right\rfloor}\frac{(-1)^{n-1}x^{k-2n+1}}{\Gamma(k-2n+2)}
\frac{\z(2n)}{(2\pi)^{2n}}L(1-2n+s,\chi)\nonumber\\
&\qquad+\frac{1}{\tau(\bar{\chi})(2\pi)^k}
\sum_{n=1}^{\infty}\sigma_{-s}(\bar{\chi},n)\left(\frac{qx}{n}\right)^{(1-s+k)/2}{G}_{1-s+k}
\left(4\pi\sqrt{\frac{nx}{q}}\right),
\end{align}
provided that $k\geq 0$, $\s\neq 0$, and $k>2c-\s-1$.  By the asymptotic expansions for Bessel functions \eqref{asymbess}, \eqref{asymbess1}, and \eqref{asymbess2}, Lemma \ref{fnuequi},  \eqref{gtransform}, and an argument like that in the proof in Theorem \ref{bdrz006}, we deduce the identity \eqref{klargeint} for $k>|\s|-\tf12$, with $|\s|<\tf12$.  Thus, we complete the proof of  Theorem \ref{bdrz02}.
\end{proof}

From the definition \eqref{H} and \eqref{gmrel}, we find that $\cos(\pi s/2)M_{1-s}(z)=G_{1-s}(z)$. The case $k=0$ of Theorem \ref{bdrz004} provides the following corollary.

\begin{corollary}\label{bdrz04}
If $\chi$ is a non-principal even primitive character modulo $q$, $x>0$,
and $|\s|<1/2$, then
\begin{align}
\sideset{}{'}\sum_{n\leq x}\sigma_{-s}(\chi,n)&=xL(1+s,\chi)-\tf12 L(s,\chi)\nonumber\\
&\quad+\frac{\cos{(\pi s)/2}}{\tau(\bar{\chi})}\sum_{n=1}^{\infty}\sigma_{-s}(\bar{\chi},n)\left(\frac{qx}{n}\right)^{\frac{1-s}{2}}H_{1-s}
\left(4\pi\sqrt{\frac{nx}{q}}\right),\notag
\end{align}
where $H_{1-s}(z)$ is defined in \eqref{H}.
\end{corollary}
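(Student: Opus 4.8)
The plan is to deduce Corollary \ref{bdrz04} directly from Theorem \ref{bdrz004} by setting $k=0$; no fresh analytic work is needed, since the entire substance --- the Perron-type representation furnished by Lemma \ref{titchma}, the functional equation \eqref{fezetal}, the residue evaluation of the auxiliary integral $I(y)$, the Phragm\'{e}n--Lindel\"{o}f estimate used to discard the horizontal segments, and the equiconvergence argument (Lemmas \ref{fnuequi} and \ref{localization}, in the spirit of the proof of Theorem \ref{bdrz006}) that lowers the admissible range of $k$ down to $k=0$ --- is already contained in the proof of Theorem \ref{bdrz004}. Taking $k=0$ there, one has $\Gamma(k+1)=\Gamma(1)=1$, $\Gamma(k+2)=\Gamma(2)=1$, and $(2\pi)^{k}=1$, while the finite sum $\sum_{n=1}^{\lfloor(k+1)/2\rfloor}$ is empty because $\lfloor\tfrac{1}{2}\rfloor=0$; hence, for $\chi$ a non-principal even primitive character modulo $q$, for $x>0$, and for $|\sigma|<\tfrac{1}{2}$,
\begin{align*}
\sideset{}{'}\sum_{n\leq x}\sigma_{-s}(\chi,n)=xL(1+s,\chi)-\tfrac{1}{2}L(s,\chi)+\frac{1}{\tau(\bar{\chi})}\sum_{n=1}^{\infty}\sigma_{-s}(\bar{\chi},n)\left(\frac{qx}{n}\right)^{(1-s)/2}G_{1-s}\left(4\pi\sqrt{\frac{nx}{q}}\right),
\end{align*}
where $G_{1-s}(z)$ denotes the function $G_{1+k-s}(z)$ of \eqref{grel} specialized to $k=0$.

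It remains only to recast the Bessel-function kernel. Using the definition \eqref{H} of $H_{\nu}$, the relation \eqref{yj} between $Y_{\nu}$ and $J_{\pm\nu}$, and the elementary facts $J_{-(1-s)}=J_{s-1}$ and $\sin(\pi(1-s))=\sin(\pi s)$, one verifies the identity $G_{1-s}(z)=\cos\left(\tfrac{\pi s}{2}\right)H_{1-s}(z)$; this is the even-character analogue of the identity $\tilde{G}_{1-s}(z)=\sin\left(\tfrac{\pi s}{2}\right)M_{1-s}(z)$ used to pass from Theorem \ref{bdrz006} to Corollary \ref{bdrz06}. Substituting $G_{1-s}(z)=\cos\left(\tfrac{\pi s}{2}\right)H_{1-s}(z)$ into the displayed formula above yields exactly the identity asserted in Corollary \ref{bdrz04}. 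The convergence statement is inherited verbatim from the $k=0$ clause of Theorem \ref{bdrz004}: the series on the right converges uniformly on every interval of positivity on which the left-hand side is continuous, and the convergence is bounded on every compact subinterval $0<x_1\leq x\leq x_2<\infty$.

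Since the analytic heavy lifting is entirely contained in Theorem \ref{bdrz004}, there is no genuine obstacle in this corollary. The only point meriting care is the bookkeeping in the kernel identity $G_{1-s}(z)=\cos\left(\tfrac{\pi s}{2}\right)H_{1-s}(z)$ --- that is, checking that the trigonometric prefactors and the sign conventions in \eqref{capg}, \eqref{H}, and \eqref{grel} combine correctly --- and this is a routine computation with the standard reflection and recurrence formulas for $J_{\nu}$, $Y_{\nu}$, and $K_{\nu}$ recalled in Section \ref{sect2}, identical (up to replacing $\sin$ by $\cos$) with the manipulation already carried out in deriving Corollary \ref{bdrz06}.
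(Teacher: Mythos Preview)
Your proposal is correct and follows exactly the paper's own derivation: the paper states just before Corollary \ref{bdrz04} that the identity $\cos(\pi s/2)\,H_{1-s}(z)=G_{1-s}(z)$ (the paper writes $M_{1-s}$ there, evidently a misprint for $H_{1-s}$) follows from \eqref{H} and \eqref{grel}, and that the corollary is the case $k=0$ of Theorem \ref{bdrz004}. Your reduction --- setting $k=0$, noting the finite sum is empty, and substituting the kernel identity --- is precisely this argument.
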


 Next we show that Theorem \ref{bdrz04} implies Theorem \ref{bdrz03}.

\begin{proof}
First we write \eqref{gsaqx} as a sum over Dirichlet characters. To that end, for any prime $q$ and $0<a<q$,
{\allowdisplaybreaks\begin{align}\label{gsaqchar}
	&G_s(a,q,x)=\frac{x}{2}\cos\left(\frac{\pi s}{2}\right)\notag\\
&\quad\times\sum_{m=1}^{\infty}\sum_{n=0}^{\infty}\left\{\frac{H_{1-s}\left(4\pi \sqrt{mx\left(n+\frac{a}{q}\right)}\right)}{(mx)^{(1+s)/2}(n+a/q)^{(1-s)/2}}+\frac{H_{1-s}\left(4\pi \sqrt{mx\left(n+1-\frac{a}{q}\right)}\right)}{(mx)^{(1+s)/2}(n+1-a/q)^{(1-s)/2}}\right\}\notag\\\nonumber
	&=\frac{x}{2}\cos\left(\frac{\pi s}{2}\right)\sum_{m=1}^{\infty}\sum_{\substack{n=1\\n\equiv\pm a\operatorname{mod}q}}^{\infty}\frac{H_{1-s}\left(4\pi \sqrt{\frac{mnx}{q}}\right)}{(mx)^{(1+s)/2}(n/q)^{(1-s)/2}}\\\nonumber
	&=\frac{(qx)^{(1-s)/2}}{2\phi(q)}\cos\left(\frac{\pi s}{2}\right)\sum_{m=1}^{\infty}\sum_{n=1}^{\infty}\frac{H_{1-s}\left(4\pi \sqrt{\frac{mnx}{q}}\right)}{m^{(1+s)/2}n^{(1-s)/2}}\sum_{\chi\operatorname{mod}q}\bar{\chi}(n)(\chi(a)+\chi(-a))
\\\nonumber
	&=\frac{(qx)^{(1-s)/2}}{\phi(q)}\cos\left(\frac{\pi s}{2}\right)\sum_{\substack{\chi\operatorname{mod}q\\\chi\operatorname{even}}}\chi(a)
\sum_{m=1}^{\infty}\sum_{n=1}^{\infty}\bar{\chi}(n)n^s\frac{H_{1-s}\left(4\pi \sqrt{\frac{mnx}{q}}\right)}{(mn)^{(1+s)/2}}\\\nonumber
	&=\frac{(qx)^{(1-s)/2}}{\phi(q)}\cos\left(\frac{\pi s}{2}\right)\sum_{\substack{\chi\operatorname{mod}q\\\chi\operatorname{even}}}\chi(a)\sum_{n=1}^{\infty}\sum_{d\mid n}\bar{\chi}(d)d^s\frac{H_{1-s}\left(4\pi \sqrt{\frac{nx}{q}}\right)}{n^{(1+s)/2}}\\
	&=\frac{(qx)^{(1-s)/2}}{\phi(q)}\cos\left(\frac{\pi s}{2}\right)\sum_{\substack{\chi\operatorname{mod}q\\\chi\operatorname{even}}}\chi(a)
\sum_{n=1}^{\infty}\sigma_s(\bar{\chi},n)\frac{H_{1-s}\left(4\pi \sqrt{\frac{nx}{q}}\right)}{n^{(1+s)/2}},
\end{align}}%
where in the penultimate step we recall our assumption that the double series converges in the sense that the product of the indices $mn$ tends to infinity.  For the principal character $\chi_0$,
{\allowdisplaybreaks\begin{align}\label{prichar}
\sum_{n=1}^{\infty}\sigma_s(\chi_0,n)&\frac{H_{1-s}\left(4\pi \sqrt{\frac{nx}{q}}\right)}{n^{(1+s)/2}}=\sum_{m=1}^{\infty}\sum_{n=1}^{\infty}\chi_0(n)n^s\frac{H_{1-s}\left(4\pi \sqrt{\frac{mnx}{q}}\right)}{(mn)^{(1+s)/2}}\\\nonumber
&=\sum_{m=1}^{\infty}\sum_{\substack{n=1\\q\nmid n}}^{\infty}n^s\frac{H_{1-s}\left(4\pi \sqrt{\frac{mnx}{q}}\right)}{(mn)^{(1+s)/2}}\\\nonumber
&=\sum_{m=1}^{\infty}\sum_{n=1}^{\infty}n^s\frac{H_{1-s}\left(4\pi \sqrt{\frac{mnx}{q}}\right)}{(mn)^{(1+s)/2}}-q^{(s-1)/2}\sum_{m=1}^{\infty}
\sum_{n=1}^{\infty}n^s\frac{H_{1-s}\left(4\pi \sqrt{mnx}\right)}{(mn)^{(1+s)/2}}\\\nonumber
&=\sum_{n=1}^{\infty}\sigma_s(n)\frac{H_{1-s}\left(4\pi \sqrt{\frac{nx}{q}}\right)}{n^{(1+s)/2}}-q^{(s-1)/2}\sum_{n=1}^{\infty}\sigma_s(n)\frac{H_{1-s}\left(4\pi \sqrt{nx}\right)}{n^{(1+s)/2}}.
\end{align}}%
Combining \eqref{gsaqchar} and \eqref{prichar} and applying Lemma \ref{ldivsum}, we find that
{\allowdisplaybreaks\begin{align}\label{gasqevenchar1}
\notag &G_s(a,q,x)=\frac{(qx)^{(1-s)/2}}{\phi(q)}\cos\left(\frac{\pi s}{2}\right)\sum_{\substack{\chi\neq\chi_0\operatorname{mod}q\\\chi\operatorname{even}}}\chi(a)
\sum_{n=1}^{\infty}\sigma_s(\bar{\chi},n)\frac{H_{1-s}\left(4\pi \sqrt{\frac{nx}{q}}\right)}{n^{(1+s)/2}}\\\nonumber
&\quad+\frac{1}{\phi(q)}\left(\sideset{}{'}\sum_{n\leq x}\sigma_{-s}(n)-xZ(s,x)+\frac{1}{2}\zeta(s)\right)\notag\\&\quad-\frac{q^{1-s}}{\phi(q)}\left(\sideset{}{'}\sum_{n\leq x/q}\sigma_{-s}(n)-\frac{x}{q}Z(s,x/q)+\frac{1}{2}\zeta(s)
\right)\notag\\\nonumber
&=\frac{(qx)^{\frac{1-s}{2}}}{\phi(q)}\cos\left(\frac{\pi s}{2}\right)\sum_{\substack{\chi\neq\chi_0\operatorname{mod}q\\\chi\operatorname{even}}}\chi(a)
\sum_{n=1}^{\infty}\sigma_s(\bar{\chi},n)\frac{H_{1-s}\left(4\pi \sqrt{\frac{nx}{q}}\right)}{n^{(1+s)/2}}\\\nonumber
&\quad+\frac{1}{\phi(q)}\sideset{}{'}\sum_{n\leq x}\sigma_{-s}(n)-\frac{q^{1-s}}{\phi(q)}\sideset{}{'}\sum_{n\leq x/q}\sigma_{-s}(n)\notag\\&\quad+\frac{x}{\phi(q)q^s}\zeta(1+s)\left(1-\frac{1}{q^{-s}}\right)
-\frac{\zeta(s)}{2\phi(q)q^{s-1}}\left(1-\frac{1}{q^{1-s}}\right).
\end{align}}%
For each prime $q$, by Lemma \ref{evencahrcos},
 \begin{align}\label{evencahrcos1}
 \notag\sum_{n=1}^{\infty}F\left(\frac{x}{n}\right)
\frac{\cos\left(2\pi n a/q\right)}{n^{s}}&=q^{-s}\sideset{}{'}\sum_{1\leq n\leq x/q}\sigma_{-s}(n)+\frac{1}{\phi(q)}\sum_{\substack{\chi \operatorname{mod} q\\ \chi \operatorname {even}}}\chi(a)\tau(\bar{\chi})\sideset{}{'}\sum_{1\leq n\leq x}\sigma_{-s}(\chi,n)\\\nonumber
 &=q^{-s}\sideset{}{'}\sum_{1\leq n\leq x/q}\sigma_{-s}(n)-\frac{1}{\phi(q)}\sideset{}{'}\sum_{1\leq n\leq x}\sigma_{-s}(\chi_0,n)\\
 &\quad+\frac{1}{\phi(q)}\sum_{\substack{\chi  \neq \chi_0\operatorname{mod} q\\ \chi\operatorname {even}}}\chi(a)\tau(\bar{\chi})\sideset{}{'}\sum_{1\leq n\leq x}\sigma_{-s}(\chi,n).
 \end{align}
 Now,
 \begin{align}\label{divsumprchar1}
 \notag\sideset{}{'}\sum_{1\leq n\leq x}\sigma_{-s}(\chi_0,n)=\sideset{}{'}\sum_{1\leq n\leq x}\sum_{\substack{d\mid n\\q\nmid d}}d^{-s}&=\sideset{}{'}\sum_{1\leq n\leq x}\sum_{d\mid n}d^{-s}-q^{-s}\sideset{}{'}\sum_{1\leq n\leq x/q}\sum_{d\mid n}d^{-s}\\
 &=\sideset{}{'}\sum_{1\leq n\leq x}\sigma_{-s}(n)-q^{-s}\sideset{}{'}\sum_{1\leq n\leq x/q}\sigma_{-s}(n).
 \end{align}
 Substituting \eqref{divsumprchar1} into \eqref{evencahrcos1}, we find that
 \begin{align}\label{evencahrcos2}
 \sum_{n=1}^{\infty}F\left(\frac{x}{n}\right)\frac{\cos\left(2\pi n a/q\right)}{n^{s}}
 &=\frac{q^{1-s}}{\phi(q)}\sideset{}{'}\sum_{1\leq n\leq x/q}\sigma_{-s}(n)-\frac{1}{\phi(q)}\sideset{}{'}\sum_{1\leq n\leq x}\sigma_{-s}(n)\\\nonumber
 &\quad+\frac{1}{\phi(q)}\sum_{\substack{\chi  \neq \chi_0\operatorname{mod} q\\ \chi\operatorname {even}}}\chi(a)\tau(\bar{\chi})\sideset{}{'}\sum_{1\leq n\leq x}\sigma_{-s}(\chi,n).
 \end{align}
 Adding \eqref{gasqevenchar1} and \eqref{evencahrcos2} and using Theorem \ref{bdrz04}, we find that
 \begin{align}\label{gasqfcos}
 G_s(a,q,x)&+\sum_{n=1}^{\infty}F\left(\frac{x}{n}\right)\frac{\cos\left(2\pi na/q\right)}{n^{s}}=\frac{x}{\phi(q)q^s}\zeta(1+s)\left(1-\frac{1}{q^{-s}}\right)\notag\\
 &-\frac{\zeta(s)}{2\phi(q)q^{s-1}}\left(1-\frac{1}{q^{1-s}}\right)
 +\frac{x}{\phi(q)}\sum_{\substack{\chi  \neq \chi_0\operatorname{mod} q\\ \chi\operatorname {even}}}\chi(a)\tau(\bar{\chi})L(1+s,\chi)\notag\\
 &-\frac{1}{2\phi(q)}\sum_{\substack{\chi  \neq \chi_0\operatorname{mod} q\\ \chi\operatorname {even}}}\chi(a)\tau(\bar{\chi})L(s,\chi).
 \end{align}
 Recall that if $\chi_0$ is the principal character modulo the prime $q$, then
 \begin{equation}\label{princharl}
  	L(s,\chi_0)=\zeta(s)\left(1-\frac{1}{q^s}\right).
  \end{equation}
  Using the functional equations of $\zeta(s)$  and $L(s,\chi)$ for even primitive Dirichlet characters,  \eqref{fe} and \eqref{evenfe}, respectively, and \eqref{princharl}, we find from \eqref{gasqfcos} that
  \begin{align}\label{gasqfcos1}
  G_s&(a,q,x)+\sum_{n=1}^{\infty}F\left(\frac{x}{n}\right)\frac{\cos\left(2\pi n a/q\right)}{n^{s}}=\frac{x\pi^{s+1/2}\Gamma\left(-\frac{1}{2}s\right)}{\Gamma\left(\frac12(1+s)\right)}
  \frac{q^{-s}}{\phi(q)}\sum_{\substack{\chi \operatorname{mod} q\\ \chi\operatorname {even}}}\chi(a)L(-s,\bar{\chi})\notag\\\nonumber
  &\quad-\frac{\pi^{s-1/2}\Gamma\left(\frac12(1-s)\right)}{2\Gamma\left(\frac{1}{2}s\right)}\frac{q^{1-s}}{\phi(q)}
  \sum_{\substack{\chi \operatorname{mod} q\\ \chi\operatorname {even}}}\chi(a)L(1-s,\bar{\chi})\\\nonumber
&=\frac{x\pi^{s+1/2}\Gamma\left(-\frac{1}{2}s\right)}{2\Gamma\left(\frac12(1+s)\right)}\frac{q^{-s}}{\phi(q)}
\sum_{\substack{\chi \operatorname{mod} q}}(\chi(a)+\chi(q-a))L(-s,\bar{\chi})\\
  &\quad-\frac{\pi^{s-1/2}\Gamma\left(\frac12(1-s)\right)}{4\Gamma\left(\frac{1}{2}s\right)}\frac{q^{1-s}}{\phi(q)}
  \sum_{\substack{\chi \operatorname{mod} q}}(\chi(a)+\chi(q-a))L(1-s,\bar{\chi}).
  \end{align}
We complete the proof of Theorem \ref{bdrz03} by using \eqref{hurwitz} and \eqref{dupref} in \eqref{gasqfcos1}.
\end{proof}

Next we prove that Theorem \ref{bdrz02} implies Theorem \ref{bdrz04}.

\begin{proof}
Let $\chi$ be an even primitive character modulo $q$. Set $\theta=h/q$, where $1\leq h <q$. The Gauss sum $\tau(n,\chi)$ is defined by
\begin{equation*}
\tau(n,\chi)=\sum_{m=1}^{q}\chi(m)e^{2\pi i m n/q}.
\end{equation*}
Note that $\tau(1,\chi)=\tau(\chi)$, which is defined in \eqref{tau}. For any  character $\chi$ \cite[p.~165, Theorem 8.9]{apostol}
\begin{equation*}
	\tau(n,\chi)=\bar{\chi}(n)\tau(\chi).
\end{equation*}
Multiplying both sides of \eqref{ebdrz02} by $\bar{\chi}(h)/\tau(\bar{\chi})$
and summing over $h$, $1\leq h<q$, we find that the left-hand side yields
\begin{align}\label{lehasi}
\notag\frac{1}{\tau(\bar{\chi})}\sum_{h=1}^{q-1}\bar{\chi}(h)\sum_{n=1}^{\infty}&F\left(\frac{x}{n}\right)\frac{\cos\left(2\pi n h/q\right)}{n^{s}}=\frac{1}{\tau(\bar{\chi})}\sum_{n=1}^{\infty}\frac{F\left(\frac{x}{n}\right)}{n^{s}}
\sum_{h=1}^{q-1}\bar{\chi}(h)\cos\left(\frac{2\pi n h}{q}\right)\\\nonumber
&=\frac{1}{2\tau(\bar{\chi})}\sum_{n=1}^{\infty}\frac{F\left(\frac{x}{n}\right)}{n^{s}}
\sum_{h=1}^{q-1}\bar{\chi}(h)\left(e^{2\pi in h/q}+e^{-2\pi in h/q}\right)\\\nonumber
&=\frac{1}{2\tau(\bar{\chi})}\sum_{n=1}^{\infty}\frac{F\left(\frac{x}{n}\right)}{n^{s}}\tau(\bar{\chi})(\chi(n)+\chi(-n))
\\
&=\sideset{}{'}\sum_{n\leq x}\sigma_{-s}(\chi,n).
\end{align}
On the other hand, summing over $h$, $1\leq h\leq q$, on the right-hand side of \eqref{ebdrz02} gives
{\allowdisplaybreaks\begin{align}\label{rihasi}
\notag{}&\frac{x}{2\tau(\bar{\chi})}\cos\left(\frac{\pi s}{2}\right)\sum_{h=1}^{q-1}\sum_{m=1}^{\infty}\sum_{\substack{n=1\\n\equiv\pm h\operatorname{mod}q}}^{\infty}\bar{\chi}(h)\frac{H_{1-s}\left(4\pi \sqrt{\frac{mnx}{q}}\right)}{(mx)^{(1+s)/2}(n/q)^{(1-s)/2}}\\\nonumber
&=\frac{x}{2\tau(\bar{\chi})}\cos\left(\frac{\pi s}{2}\right)\sum_{m=1}^{\infty}\sum_{\substack{n=1}}^{\infty}\frac{H_{1-s}\left(4\pi \sqrt{\frac{mnx}{q}}\right)}{(mx)^{(1+s)/2}(n/q)^{(1-s)/2}}\sum_{\substack{h=1\\h\equiv\pm n\operatorname{mod}q}}^{q-1}\bar{\chi}(h)\\\nonumber
&=\frac{x}{\tau(\bar{\chi})}\cos\left(\frac{\pi s}{2}\right)\sum_{m=1}^{\infty}\sum_{\substack{n=1}}^{\infty}\bar{\chi}(n)\frac{H_{1-s}\left(4\pi \sqrt{\frac{mnx}{q}}\right)}{(mx)^{(1+s)/2}(n/q)^{(1-s)/2}}\\
&=\frac{(qx)^{(1-s)/2}\cos(\tf12\pi s)}{\tau(\bar{\chi})}\sum_{n=1}^{\infty}\frac{\sigma_{s}(\bar{\chi},n)}{n^{(1+s)/2}}H_{1-s}
\left(4\pi\sqrt{\frac{nx}{q}}\right).
\end{align}}%
Combining \eqref{lehasi}, \eqref{rihasi}, and \eqref{hurlf} with the functional equation \eqref{evenfe} of $L(s,\chi)$ for even primitive $\chi$, we obtain the equality in \eqref{ebdrz02}, which completes the proof of Theorem \ref{bdrz04}.
\end{proof}

\section{Koshliakov Transforms and Modular-type Transformations}
\label{ktmt}

In Section \ref{sect5}, we studied a generalization of the Vorono\"{\dotlessi} summation formula, namely \eqref{varlauform1}, the right-hand side of which consists of summing infinitely many integrals involving the kernel
\begin{equation}\label{kernel}
\left(\frac{2}{\pi}K_{s}(4\pi\sqrt{nt})-Y_{s}(4\pi\sqrt{nt})\right)\cos\left(\frac{\pi s}{2}\right)-J_{s}(4\pi\sqrt{n t})\sin\left(\frac{\pi s}{2}\right).
\end{equation}
Koshliakov \cite{kosh1938} remarkably found that for $-\tfrac{1}{2}<\nu<\tfrac{1}{2}$, the modified Bessel function $K_{\nu}(x)$ is self-reciprocal with respect to this kernel, i.e.,
\begin{equation}
\int_{0}^{\infty} K_{\nu}(t) \left( \cos(\nu \pi) \tilde{M}_{2 \nu}(2 \sqrt{xt}) -
\sin(\nu \pi) J_{2 \nu}(2 \sqrt{xt}) \right)\, dt = K_{\nu}(x).\label{koshlyakov-1}
\end{equation}
He also showed that for the same values of $\nu$, $xK_{\nu}(x)$ is self-reciprocal with respect to  the companion kernel $\sin(\nu \pi) J_{2 \nu}(2 \sqrt{xt}) -
\cos(\nu \pi) L_{2 \nu}(2 \sqrt{xt})$, i.e.,
\begin{equation}
\int_{0}^{\infty} tK_{\nu}(t) \left( \sin(\nu \pi) J_{2 \nu}(2 \sqrt{xt}) -
\cos(\nu \pi) L_{2 \nu}(2 \sqrt{xt}) \right)\, dt = xK_{\nu}(x).\label{koshlyakov-2}
\end{equation}
Here
\begin{equation*}
\tilde{M}_{\nu}(x) := \frac{2}{\pi} K_{\nu}(x) - Y_{\nu}(x) \quad\text{ and } \quad
L_{\nu}(x) := - \frac{2}{\pi} K_{\nu}(x) - Y_{\nu}(x),
\end{equation*}
It is easy to see that these
identities actually hold for complex $\nu$ with $ - \tfrac{1}{2}<$ Re $\nu<\tfrac{1}{2}$. It
must be mentioned here that the special case $z=0$ of \eqref{koshlyakov-1} was obtained by Dixon and
Ferrar \cite[p.~164, equation (4.1)]{dixfer3}.

Motivated by these results of Koshliakov, we begin with some definitions.

\begin{definition} Let $f(t,\nu)$ be a function analytic in the real variable $t$ and in the complex variable $\nu$. Then, we define the \textit{first Koshliakov transform} of a function $f(t,\nu)$ to be
\begin{equation*}
\int_{0}^{\infty} f(t,\nu) \left( \cos(\nu \pi) \tilde{M}_{2 \nu}(2 \sqrt{xt}) -
\sin(\nu \pi) J_{2 \nu}(2 \sqrt{xt}) \right)\, dt,
\end{equation*}
and the \textit{second Koshliakov transform} of a function $f(t,\nu)$ to be
\begin{equation*}
\int_{0}^{\infty} f(t,\nu) \left( \sin(\nu \pi) J_{2 \nu}(2 \sqrt{xt}) -
\cos(\nu \pi) L_{2 \nu}(2 \sqrt{xt}) \right)\, dt,
\end{equation*}
provided, of course, that the integrals converge.
\end{definition}
\textbf{Remark.} We note here that the first Koshliakov transform is the integral transform that arises naturally when one considers a function corresponding to the functional equation of an even Maass form in conjunction with a summation formula of Ferrar; see for example, the work of J.~Lewis and D.~Zagier \cite[p.~216--217]{lewzag}\footnote{The kernel $F_{s}(\xi)$ in \cite[p.~217]{lewzag} is equal to $2\pi$ times the kernel in \eqref{kernel} with $s$ replaced by $2s-1$. This is immediately seen by an application of \eqref{yj}.}.

The following two theorems, obtained in \cite[Theorems 5.3, 5.5]{dixitmoll}, give the necessary conditions for functions to equal their first or second Koshliakov transforms. The corollaries resulting from them \cite[Corollaries 5.4 and 5.6]{dixitmoll} give associated modular transformations. These results are stated below.
\begin{theorem}
\label{production-1}
Assume $\pm\frac{1}{2}\sigma < c = \textup{Re } z < 1 \pm
\frac{1}{2}\sigma$. Define $f(x, s)$ by
\begin{equation*}
f(x,s) = \frac{1}{2 \pi i} \int_{c - i \infty}^{c + i \infty}
x^{-z} F(z,s) \zeta(1 - z -s/2) \zeta(1-z + s/2) \, dz,
\end{equation*}
where $F(z,s)$ is a function satisfying $F(z,s) = F(1-z,s)$ and is such that the integral above converges.
Then $f$ is self-reciprocal (as a function of $x$) with respect to the kernel
\begin{equation*}
2 \pi \left( \cos \left( \tfrac{1}{2} \pi s \right) \tilde{M}_{s}( 4 \pi \sqrt{xy}) -
\sin \left( \tfrac{1}{2} \pi s \right) J_{s}( 4 \pi \sqrt{xy}) \right),
\end{equation*}
that is,
\begin{equation*}
f(y,s) = 2 \pi \int_{0}^{\infty} f(x,s)
\left[ \cos \left( \tfrac{1}{2} \pi s \right) \tilde{M}_{s}( 4 \pi \sqrt{xy}) -
\sin \left( \tfrac{1}{2} \pi s \right) J_{s}( 4 \pi \sqrt{xy}) \right]\, dx.
\end{equation*}
\end{theorem}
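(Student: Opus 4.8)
The plan is to read the asserted self-reciprocity as a Mellin--Parseval pairing and reduce it to one Mellin-transform evaluation for the Koshliakov kernel, after which the functional equation of $\z(z)$ closes the argument.

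\emph{Step 1 (Mellin transform of the kernel).} Put
\[
\kappa(u):=\cos\left(\tf12\pi s\right)\tilde{M}_s(u)-\sin\left(\tf12\pi s\right)J_s(u)
=\tf{2}{\pi}\cos\left(\tf12\pi s\right)K_s(u)-\cos\left(\tf12\pi s\right)Y_s(u)-\sin\left(\tf12\pi s\right)J_s(u).
\]
Using the classical Mellin transforms of $J_s$, $Y_s$, $K_s$ together with \eqref{ref}, \eqref{ref2}, \eqref{dup} to recombine the $\G$-arguments, I would show that for $|\s|<\Re w<\tf32$,
\[
\mathfrak{K}(w):=\int_0^\infty \kappa(u)\,u^{w-1}\,du
=4^{\,w-1}\,\frac{\G\!\left(\tf{w+s}{4}\right)\G\!\left(\tf{w-s}{4}\right)}{\G\!\left(\tf{2-w+s}{4}\right)\G\!\left(\tf{2-w-s}{4}\right)}.
\]
The precise linear combination in $\kappa$ is what makes the oscillatory $Y_s$ and $J_s$ contributions merge into a single $\sin(u-\tf14\pi)$-type asymptotic, so the integral converges (conditionally) at $u=\infty$ for $\Re w<\tf32$; the behaviour $K_s(u),Y_s(u)\asymp u^{-|\s|}$ near $0$ forces $\Re w>|\s|$. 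A scaling $u=4\pi\sqrt{xy}$ then gives, in the variable $x$, the Mellin transform $\int_0^\infty 2\pi\kappa(4\pi\sqrt{xy})\,x^{w-1}\,dx=4\pi(16\pi^2y)^{-w}\mathfrak{K}(2w)$.

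\emph{Step 2 (Parseval and the functional equation).} By hypothesis $f(\cdot,s)$ has Mellin transform $F(z,s)\z(1-z-\tf s2)\z(1-z+\tf s2)$, and the hypothesis $\tf12|\s|<c<1-\tf12|\s|$ places $\Re w=c$ in the common strip of analyticity. Parseval's formula \eqref{pf} then gives
\[
2\pi\!\int_0^\infty\! f(x,s)\Bigl[\cos\left(\tf12\pi s\right)\tilde{M}_s(4\pi\sqrt{xy})-\sin\left(\tf12\pi s\right)J_s(4\pi\sqrt{xy})\Bigr]dx
=\frac{1}{2\pi i}\!\int_{(c)}\! F(1-w,s)\,\z\!\left(w-\tf s2\right)\z\!\left(w+\tf s2\right)\frac{4\pi\,\mathfrak{K}(2w)}{(16\pi^2y)^{w}}\,dw.
\]
An elementary rearrangement of $\G$-arguments shows $4\pi(16\pi^2)^{-w}\mathfrak{K}(2w)=\pi^{1-2w}\G\!\left(\tf{w+s/2}{2}\right)\G\!\left(\tf{w-s/2}{2}\right)\big/\bigl[\G\!\left(\tf{1-w+s/2}{2}\right)\G\!\left(\tf{1-w-s/2}{2}\right)\bigr]$, and applying the symmetric functional equation \eqref{fesym} to each of $\z(w-\tf s2)$ and $\z(w+\tf s2)$ cancels every $\G$-factor, leaving
\[
\z\!\left(w-\tf s2\right)\z\!\left(w+\tf s2\right)\cdot\frac{4\pi\,\mathfrak{K}(2w)}{(16\pi^2)^{w}}=\z\!\left(1-w-\tf s2\right)\z\!\left(1-w+\tf s2\right).
\]
Combining this with $F(1-w,s)=F(w,s)$, the integrand collapses to $y^{-w}F(w,s)\z(1-w-\tf s2)\z(1-w+\tf s2)$, whose integral over $\Re w=c$ is precisely $f(y,s)$ after renaming $w\mapsto z$ (the contour here coincides with, or can be shifted to, the one in the definition of $f$ without crossing a pole). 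This would complete the proof.

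\emph{Main obstacle.} The substantive work is Step 1 — the conditional convergence and exact evaluation of $\mathfrak{K}(w)$ — together with the legitimacy of \eqref{pf} in this setting: since $2\pi\kappa(4\pi\sqrt{xy})$ decays only like $x^{-1/4}$ at infinity, the pairing is not absolutely convergent, so one must use a conditionally convergent form of Parseval's identity (or insert a convergence factor $x^{-\varepsilon}$ and let $\varepsilon\to0$), and it is exactly there that the decay hypothesis on $F(z,s)$ and the placement of $c$ in the strip are used.
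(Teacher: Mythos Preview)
The paper does not contain a proof of this theorem: it is quoted from \cite[Theorem 5.3]{dixitmoll} and only \emph{applied} (e.g., in deriving \eqref{koshlyakov-1} and Theorem \ref{newthm}). So there is no ``paper's own proof'' to compare against.

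That said, your approach is the natural one and the computations are correct. I checked Step~1: combining the standard Mellin transforms of $J_s,Y_s,K_s$ and using $\cos(\tfrac{\pi s}{2})\cos(\tfrac{\pi(w-s)}{2})-\sin(\tfrac{\pi s}{2})\sin(\tfrac{\pi(w-s)}{2})=\cos(\tfrac{\pi w}{2})$ gives
\[
\mathfrak{K}(w)=\frac{2^{w}}{\pi}\,\G\!\Bigl(\tfrac{w+s}{2}\Bigr)\G\!\Bigl(\tfrac{w-s}{2}\Bigr)\cos\!\Bigl(\tfrac{\pi(w+s)}{4}\Bigr)\cos\!\Bigl(\tfrac{\pi(w-s)}{4}\Bigr),
\]
and then duplication \eqref{dup} together with $\G\bigl(\tfrac{a+2}{4}\bigr)\cos\bigl(\tfrac{\pi a}{4}\bigr)=\pi/\G\bigl(\tfrac{2-a}{4}\bigr)$ (from \eqref{ref}) yields exactly your $4^{w-1}$ formula. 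Step~2 is also algebraically correct: the symmetric functional equation \eqref{fesym}, applied once to $\z(w-\tfrac s2)$ and once to $\z(w+\tfrac s2)$, produces precisely the reciprocal of $4\pi(16\pi^2)^{-w}\mathfrak{K}(2w)$, so together with $F(1-w,s)=F(w,s)$ the integrand collapses to $y^{-w}F(w,s)\z(1-w-\tfrac s2)\z(1-w+\tfrac s2)$, i.e., to the Mellin representation of $f(y,s)$.

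One point of care you have already flagged: $\mathfrak{K}(2w)$ is only given by your convergent integral for $\tfrac12|\s|<\Re w<\tfrac34$, so when $|\s|<\tfrac12$ the Parseval line $\Re w=c$ may need to be taken in $\bigl(\tfrac12|\s|,\tfrac34\bigr)$ rather than the full strip $\bigl(\tfrac12|\s|,1-\tfrac12|\s|\bigr)$; after the $\G$-ratio has replaced the integral, the resulting expression is analytic in the larger strip and the contour can be moved back to any admissible $c$ without crossing a pole. Your remark about inserting a convergence factor $x^{-\varepsilon}$ to justify \eqref{pf} in this conditionally convergent setting is exactly the right way to handle the analytic issue; it is the only real work left to do.
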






\begin{corollary}\label{cor5.4}
Let $f(x,s)$ be as in the previous theorem. Then, if $\alpha, \, \beta > 0$
and $\alpha \beta = 1$, and if $-1< \sigma <1$,
\begin{equation}\label{cor5.4result}
\sqrt{\alpha} \int_{0}^{\infty} K_{s/2}(2 \pi \alpha x) f(x,s)\, dx =
\sqrt{\beta} \int_{0}^{\infty} K_{s/2}(2 \pi \beta x) f(x,s)\, dx.
\end{equation}
\end{corollary}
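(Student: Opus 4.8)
The plan is to establish \eqref{cor5.4result} by computing a single double integral in two different ways: integrating out the variable $x$ first will reproduce the right-hand side (up to a constant) via Koshliakov's self-reciprocity \eqref{koshlyakov-1} for the Macdonald function, while integrating out $y$ first will reproduce the left-hand side via the self-reciprocity of $f(x,s)$ supplied by Theorem \ref{production-1}. The identity $\alpha\beta=1$ then converts the leftover constant into the factors $\sqrt{\alpha}$, $\sqrt{\beta}$.

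First I would record the dilated form of \eqref{koshlyakov-1} adapted to the normalization used in Theorem \ref{production-1}. Taking $\nu=s/2$ in \eqref{koshlyakov-1} is legitimate because $-1<\sigma<1$ forces $-\tfrac12<\Re(s/2)<\tfrac12$; then the substitution $t\mapsto 2\pi\alpha x$, together with the fact that the kernel depends on its two arguments only through their product, yields, for $\alpha\beta=1$ and $y>0$,
\begin{align*}
2\pi\int_{0}^{\infty} K_{s/2}(2\pi\alpha x)\left(\cos\!\left(\tfrac{\pi s}{2}\right)\tilde M_{s}\!\left(4\pi\sqrt{xy}\right)-\sin\!\left(\tfrac{\pi s}{2}\right)J_{s}\!\left(4\pi\sqrt{xy}\right)\right)dx=\frac{1}{\alpha}\,K_{s/2}(2\pi\beta y),
\end{align*}
since $2\pi y/\alpha=2\pi\beta y$ and the Bessel argument $4\pi\sqrt{xy}$ becomes $2\sqrt{t\cdot 2\pi\beta y}$ after the substitution.

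Next, consider the double integral
\begin{align*}
I:=2\pi\int_{0}^{\infty}\int_{0}^{\infty} K_{s/2}(2\pi\alpha x)\left(\cos\!\left(\tfrac{\pi s}{2}\right)\tilde M_{s}\!\left(4\pi\sqrt{xy}\right)-\sin\!\left(\tfrac{\pi s}{2}\right)J_{s}\!\left(4\pi\sqrt{xy}\right)\right)f(y,s)\,dx\,dy.
\end{align*}
Performing the $x$-integration first and invoking the displayed identity gives $I=\tfrac1\alpha\int_0^\infty K_{s/2}(2\pi\beta y)f(y,s)\,dy$. Performing the $y$-integration first and invoking the self-reciprocity of $f$ from Theorem \ref{production-1} (whose kernel is symmetric in its two variables, depending only on $xy$) gives $I=\int_0^\infty K_{s/2}(2\pi\alpha x)f(x,s)\,dx$. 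Equating the two evaluations of $I$ and multiplying through by $\sqrt{\alpha}$, while using $\sqrt{\alpha}/\alpha=1/\sqrt{\alpha}=\sqrt{\beta}$, produces exactly \eqref{cor5.4result}.

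The only genuinely delicate point is the interchange of the order of integration in $I$, and this is where the hypotheses on $f$ are really used. One controls it using the exponential decay of $K_{s/2}(2\pi\alpha x)$ as $x\to\infty$, the $O\!\left((xy)^{-1/4}\right)$ decay of the Koshliakov kernel coming from \eqref{asymbess}--\eqref{asymbess2}, and the decay of $f(y,s)$ together with the convergence assumption built into the Mellin--Barnes representation defining $f$ in Theorem \ref{production-1}. A convenient way to organize this is to first establish the identity for parameters lying in a subregion of the admissible $(\alpha,\beta,s)$-range where Fubini's theorem applies directly, and then extend it to all of $-1<\sigma<1$ by analytic continuation in $s$, both sides being holomorphic there; the same device disposes of any borderline choices of $\alpha$ and $\beta$. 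This follows the line of argument used for the companion statements in \cite{dixitmoll}, to which the routine estimates can be referred.
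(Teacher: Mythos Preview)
Your argument is sound. The dilation of Koshliakov's self-reciprocity \eqref{koshlyakov-1} you wrote is correct (with $\nu=s/2$ and $t\mapsto 2\pi\alpha x$, the Bessel argument $2\sqrt{xt}$ indeed becomes $4\pi\sqrt{xy}$ precisely when $x$ on the right is $2\pi\beta y$), and the two evaluations of the double integral $I$ via Fubini do give the two sides of \eqref{cor5.4result} after multiplying by $\sqrt{\alpha}$. Your handling of the interchange---working first in a subregion where absolute convergence holds and then continuing analytically in $s$---is the standard remedy for the mild singularities of $K_{s/2}$ and the kernel near the origin, and is adequate here.

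The paper does not actually prove Corollary~\ref{cor5.4}; it is quoted from \cite[Corollary 5.4]{dixitmoll}. The route indicated there, and illustrated in this paper in the proof of Theorem~\ref{newthm} (equations \eqref{bess}--\eqref{jfinbef} and \eqref{iznsimp}), is different from yours: one uses Parseval \eqref{pf} together with the Mellin--Barnes representation of $f$ and the Mellin transform \eqref{melkz} of $K_{s/2}$ to write $\sqrt{\alpha}\int_0^\infty K_{s/2}(2\pi\alpha x)f(x,s)\,dx$ as a line integral on $\Re z=\tfrac12$ whose integrand is symmetric under $z\mapsto 1-z$ because $F(z,s)=F(1-z,s)$; this symmetry swaps $\alpha^{z-1/2}$ and $\alpha^{1/2-z}$, so the result depends on $\alpha$ only through an even function of $\log\alpha$, hence is invariant under $\alpha\mapsto\beta$. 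That argument avoids Fubini and the intermediate double integral entirely, at the price of keeping the Mellin--Barnes machinery on stage. Your approach is more structural---it uses only the two self-reciprocity statements (for $K_{s/2}$ and for $f$) and makes transparent that the modular relation is a consequence of both functions sharing the same Koshliakov kernel---but it does require the absolute-convergence bookkeeping you flagged.
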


\begin{theorem}
\label{production-2}
Assume $\pm\frac{1}{2}\sigma < c = \textup{Re } z < 1 \pm
\frac{1}{2}\sigma$. Define $f(x, s)$ by
\begin{equation}\label{fxzprod2}
f(x,s) = \frac{1}{2 \pi i} \int_{c - i \infty}^{c + i \infty}
\frac{F(z,s)}{(2 \pi)^{2z}}
\Gamma \left( z - \frac{1}{2}s \right)
\Gamma \left( z + \frac{1}{2}s \right)
\zeta\left(z -\frac{s}{2}\right) \zeta\left(z + \frac{s}{2}\right) x^{-z} \, dz,
\end{equation}
where $F(z,s)$ is a function satisfying $F(z,s) = F(1-z,s)$ and is such that the integral above converges.
Then $f$ is self-reciprocal (as a function of $x$) with respect to the kernel
\begin{equation*}
2 \pi \left( \sin \left( \tfrac{1}{2} \pi s \right) J_{s}( 4 \pi \sqrt{xy}) -
\cos \left( \tfrac{1}{2} \pi s \right) L_{s}( 4 \pi \sqrt{xy}) \right),
\end{equation*}
that is,
\begin{equation*}
f(y,s) = 2 \pi \int_{0}^{\infty} f(x,s)
\left[ \sin \left( \tfrac{1}{2} \pi s \right) J_{s}( 4 \pi \sqrt{xy}) -
\cos \left( \tfrac{1}{2} \pi s \right) L_{s}( 4 \pi \sqrt{xy}) \right]\, dx.
\end{equation*}
\end{theorem}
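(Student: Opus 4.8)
The plan is to prove Theorem~\ref{production-2} by running the Mellin‑transform argument already used for Theorem~\ref{production-1}, now with the companion Koshliakov kernel. Write
\[
g(x):=2\pi\left(\sin\!\left(\tfrac12\pi s\right)J_{s}(4\pi\sqrt{x})-\cos\!\left(\tfrac12\pi s\right)L_{s}(4\pi\sqrt{x})\right),\qquad L_{s}(x)=-\tfrac{2}{\pi}K_{s}(x)-Y_{s}(x),
\]
so that the claim to be established is $f(y,s)=\int_{0}^{\infty}f(x,s)\,g(xy)\,dx$. By the very definition \eqref{fxzprod2}, $f(\cdot,s)$ is an inverse Mellin transform, so its Mellin transform is
\[
\mathfrak{F}(z)=\frac{F(z,s)}{(2\pi)^{2z}}\,\G\!\left(z-\tfrac{s}{2}\right)\G\!\left(z+\tfrac{s}{2}\right)\z\!\left(z-\tfrac{s}{2}\right)\z\!\left(z+\tfrac{s}{2}\right),
\]
analytic in the vertical strip $\pm\tfrac12\sigma<\Re z<1\pm\tfrac12\sigma$ provided by the hypothesis.

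First I would compute the Mellin transform $\mathfrak{G}(w)=\int_{0}^{\infty}g(x)x^{w-1}\,dx$ of the kernel. After the substitution $u=4\pi\sqrt{x}$ this decomposes into the classical Mellin transforms of $J_{s}$, of $Y_{s}$ (reached from $J_{\pm s}$ via \eqref{yj}), and of $K_{s}$ over $(0,\infty)$, each of which is a product of two Gamma functions with an elementary trigonometric factor \cite{watsonbessel,grn}. Combining them with the coefficients $\sin(\tfrac12\pi s)$, $\cos(\tfrac12\pi s)$, and $\tfrac{2}{\pi}\cos(\tfrac12\pi s)$ dictated by $g$, and simplifying with the reflection formula \eqref{ref}, its variant \eqref{ref2}, and the duplication formula \eqref{dup}, the trigonometric factors collapse and one is left with a closed form
\[
\mathfrak{G}(w)=\frac{\pi^{2}(2\pi)^{-2w}}{\G\!\left(1-w-\tfrac{s}{2}\right)\G\!\left(1-w+\tfrac{s}{2}\right)\cos\!\left(\tfrac{\pi}{2}\!\left(w-\tfrac{s}{2}\right)\right)\cos\!\left(\tfrac{\pi}{2}\!\left(w+\tfrac{s}{2}\right)\right)}.
\]
The decisive identity is then $\mathfrak{F}(1-w)\,\mathfrak{G}(w)=\mathfrak{F}(w)$: invoking the functional equation \eqref{fe} of $\z$ on each of $\z(1-w-\tfrac{s}{2})$ and $\z(1-w+\tfrac{s}{2})$ produces exactly the Gamma and cosine factors sitting in $\mathfrak{G}(w)$, while the symmetry hypothesis $F(1-w,s)=F(w,s)$ cancels the $F$'s and the powers of $2\pi$ match.

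Granting this, and noting that the Mellin transform of $g(xy)$ as a function of $x$ is $y^{-w}\mathfrak{G}(w)$, Parseval's formula for Mellin transforms \eqref{pf} gives, on the common strip of analyticity,
\[
\int_{0}^{\infty}f(x,s)\,g(xy)\,dx=\frac{1}{2\pi i}\int_{(c)}\mathfrak{F}(1-w)\,\mathfrak{G}(w)\,y^{-w}\,dw=\frac{1}{2\pi i}\int_{(c)}\mathfrak{F}(w)\,y^{-w}\,dw=f(y,s),
\]
which is the asserted self-reciprocity; the accompanying modular relation (the analogue of Corollary~\ref{cor5.4}) then drops out by pairing $f(\cdot,s)$ against $K_{s/2}(2\pi\alpha x)$ exactly as before.

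The hard part is not this algebra but the analytic bookkeeping in the two steps above. Because $J_{s}$ and $Y_{s}$ decay only like $(4\pi\sqrt{x})^{-1/2}$, the kernel integral $\mathfrak{G}(w)$ converges only conditionally and only in a bounded vertical strip, and $f(x,s)$ is itself given merely by a convergent (not necessarily absolutely convergent) contour integral, so the interchange of the $x$‑integration with the $w$‑integration underlying \eqref{pf} has to be justified carefully. I would do this exactly as in the proof of Theorem~\ref{production-1}: split $L_{s}=-\tfrac{2}{\pi}K_{s}-Y_{s}$, dispose of the exponentially decaying $K_{s}$‑part by direct absolute convergence, and treat the oscillatory $J_{s}$‑ and $Y_{s}$‑parts through the Mellin/Parseval machinery on the precise strip $\pm\tfrac12\sigma<c<1\pm\tfrac12\sigma$ imposed in the statement, using the decay encoded in the hypothesis that the integral \eqref{fxzprod2} converges together with the polynomial bound on $\mathfrak{G}(w)$ that Stirling's formula \eqref{strivert} supplies. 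An alternative, bookkeeping‑equivalent route is to deduce the statement from Koshliakov's self‑reciprocity \eqref{koshlyakov-2} by specializing $F$ and then removing the weight via a Mellin convolution \eqref{melconv}; in either presentation, the only genuine analytic input is the justification of this interchange.
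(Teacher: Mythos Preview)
The paper does not actually prove Theorem~\ref{production-2}: it is imported verbatim from \cite[Theorem~5.5]{dixitmoll}, with only the statement recorded here (see the sentence immediately preceding Theorem~\ref{production-1}). So there is no in-paper proof to compare against.

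That said, your Mellin--Parseval argument is the natural one and is correct in substance. The key algebraic identity $\mathfrak{F}(1-w)\,\mathfrak{G}(w)=\mathfrak{F}(w)$ does hold with your expression for $\mathfrak{G}(w)$: applying \eqref{fe} to each of $\zeta(1-w\mp\tfrac{s}{2})$ produces exactly the two cosine factors and the two Gamma factors in the denominator of $\mathfrak{G}(w)$, and the powers of $2\pi$ and the symmetry $F(1-w,s)=F(w,s)$ close the loop. Your remarks on the analytic justification (splitting off the exponentially small $K_{s}$-part and handling the oscillatory $J_{s}$- and $Y_{s}$-parts on the prescribed strip via \eqref{strivert}) are appropriate. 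This is, as far as one can tell from the citation, essentially the argument of \cite{dixitmoll}; the present paper simply quotes the result.
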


\begin{corollary}\label{cor5.6}
Let $f(x,s)$ be as in the previous theorem. Then, if $\alpha, \, \beta > 0$
and $\alpha \beta = 1$, and $-1< \sigma<1$,
\begin{equation}\label{cor5.6result}
\alpha^{3/2} \int_{0}^{\infty} x K_{s/2}(2 \pi \alpha x) f(x,s)\, dx =
\beta^{3/2} \int_{0}^{\infty} x K_{s/2}(2 \pi \beta x) f(x,s)\, dx.
\end{equation}
\end{corollary}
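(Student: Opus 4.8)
The plan is to derive the final statement, Corollary \ref{cor5.6}, from two self-reciprocity facts: the self-reciprocity of $f(x,s)$ with respect to the kernel supplied by Theorem \ref{production-2}, and Koshliakov's self-reciprocity \eqref{koshlyakov-2} of $xK_{\nu}(x)$ with respect to the companion kernel. The mechanism will be to form a suitable double integral over $(0,\infty)^{2}$ and evaluate it in the two possible orders of integration.

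First I would fix notation. Write $\mathcal{K}(x,y) := 2\pi\left(\sin\left(\tfrac12\pi s\right)J_{s}(4\pi\sqrt{xy}) - \cos\left(\tfrac12\pi s\right)L_{s}(4\pi\sqrt{xy})\right)$, which is symmetric in $x$ and $y$. By Theorem \ref{production-2}, $f(\cdot,s)$ is self-reciprocal with respect to $\mathcal{K}$, so that $f(x,s) = \int_{0}^{\infty} f(y,s)\,\mathcal{K}(x,y)\,dy$. On the other hand, \eqref{koshlyakov-2} with $\nu = s/2$ (legitimate since $-1 < \sigma < 1$) gives $\int_{0}^{\infty} tK_{s/2}(t)\left(\sin\left(\tfrac12\pi s\right)J_{s}(2\sqrt{tu}) - \cos\left(\tfrac12\pi s\right)L_{s}(2\sqrt{tu})\right)\,dt = uK_{s/2}(u)$.

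Next I would compute the kernel transform $h_{\alpha}(y) := \int_{0}^{\infty} xK_{s/2}(2\pi\alpha x)\,\mathcal{K}(x,y)\,dx$. Substituting $x = u/(2\pi\alpha)$ and noting that $4\pi\sqrt{xy} = 2\sqrt{u\,(2\pi y/\alpha)}$, this integral becomes $\dfrac{1}{2\pi\alpha^{2}}\int_{0}^{\infty} uK_{s/2}(u)\left(\sin\left(\tfrac12\pi s\right)J_{s}(2\sqrt{u(2\pi y/\alpha)}) - \cos\left(\tfrac12\pi s\right)L_{s}(2\sqrt{u(2\pi y/\alpha)})\right)\,du$, which by the displayed case of \eqref{koshlyakov-2} equals $\dfrac{1}{2\pi\alpha^{2}}\cdot\dfrac{2\pi y}{\alpha}K_{s/2}\!\left(\dfrac{2\pi y}{\alpha}\right) = \alpha^{-3}\,yK_{s/2}(2\pi y/\alpha) = \beta^{3}\,yK_{s/2}(2\pi\beta y)$, using $\alpha\beta = 1$. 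Now consider $D := \int_{0}^{\infty}\!\!\int_{0}^{\infty} f(y,s)\,xK_{s/2}(2\pi\alpha x)\,\mathcal{K}(x,y)\,dx\,dy$. Integrating in $x$ first and using the previous computation gives $D = \int_{0}^{\infty} f(y,s)h_{\alpha}(y)\,dy = \beta^{3}\int_{0}^{\infty} f(y,s)\,yK_{s/2}(2\pi\beta y)\,dy$; integrating in $y$ first and using the self-reciprocity of $f$ (together with the symmetry of $\mathcal{K}$) gives $D = \int_{0}^{\infty} xK_{s/2}(2\pi\alpha x)\,f(x,s)\,dx$. Equating the two expressions for $D$ and multiplying through by $\alpha^{3/2}$, together with $\alpha^{3/2}\beta^{3} = (\alpha\beta)^{3/2}\beta^{3/2} = \beta^{3/2}$, yields \eqref{cor5.6result}. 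Corollary \ref{cor5.4} follows in exactly the same fashion from Theorem \ref{production-1} and \eqref{koshlyakov-1}, the exponent $3/2$ being replaced by $1/2$ because there one uses that $K_{\nu}$, rather than $xK_{\nu}$, is the self-reciprocal function.

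The main obstacle is the rigorous justification of Fubini's theorem for $D$, i.e. absolute convergence of $\int_{0}^{\infty}\!\int_{0}^{\infty} |f(y,s)|\,x\,|K_{s/2}(2\pi\alpha x)|\,|\mathcal{K}(x,y)|\,dx\,dy$, and likewise of the integral appearing in $h_{\alpha}(y)$ so that \eqref{koshlyakov-2} may be invoked after the change of variables. I would handle this using: the exponential decay of $K_{s/2}(2\pi\alpha x)$ as $x\to\infty$ together with its $O\!\left(x^{-|\sigma|/2}\right)$ behavior (with a logarithmic factor when $\sigma=0$) as $x\to 0^{+}$; the power-times-oscillatory asymptotics \eqref{asymbess}--\eqref{asymbess2} for $J_{s}$, $Y_{s}$, $K_{s}$ entering $\mathcal{K}$, which give $\mathcal{K}(x,y) = O\!\left((xy)^{-1/4}\right)$ for large argument and the appropriate power behavior near the origin; and a bound $f(x,s) = O\!\left(x^{-c}\right)$ with $c$ in the strip $\pm\tfrac12\sigma < c < 1\pm\tfrac12\sigma$ extracted from the Mellin--Barnes representation \eqref{fxzprod2} via Stirling's formula \eqref{strivert} and standard growth bounds for $\zeta$ and for the admissible factor $F(z,s)$. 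These estimates are precisely what forces the hypotheses $-1 < \sigma < 1$ and the symmetry and growth conditions imposed on $F(z,s)$, and they allow one to choose $c$ so that the tails at $0$ and at $\infty$ in both variables are integrable.
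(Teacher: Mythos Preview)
Your argument is correct in outline and is a natural way to obtain \eqref{cor5.6result}: combine the self-reciprocity of $f$ from Theorem \ref{production-2} with Koshliakov's identity \eqref{koshlyakov-2} via a double integral and Fubini. Your scaling computation of $h_{\alpha}(y)=\beta^{3}\,yK_{s/2}(2\pi\beta y)$ is right, and the final algebraic step $\alpha^{3/2}\beta^{3}=\beta^{3/2}$ is clean.

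Two remarks are in order. First, the paper does not actually prove Corollary \ref{cor5.6}; it is quoted verbatim from \cite[Corollary 5.6]{dixitmoll}, so strictly speaking there is no ``paper's own proof'' to compare with here. That said, when the paper establishes the closely related Theorem \ref{newthm}, it does \emph{not} use your Fubini/double-integral device; instead it applies Parseval \eqref{pf} with the Mellin transform \eqref{melkz} of $K_{s/2}$ and the Mellin--Barnes representation of $f$ (see \eqref{bess}--\eqref{jfinbef}), and then exploits the functional equations and the symmetry $F(z,s)=F(1-z,s)$ directly on the line integral. That Mellin route is the more likely template for the original proof in \cite{dixitmoll}. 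Its advantage is that the exchange-of-integration issue is replaced by the (typically easier) absolute convergence of a single Mellin--Barnes integral, checked with Stirling \eqref{strivert}. Your route is conceptually more transparent because it explains \emph{why} the identity holds (two self-reciprocal ingredients), but the Fubini justification is genuinely delicate: for large $y$ the kernel $\mathcal{K}(x,y)$ is only $O\!\left((xy)^{-1/4}\right)$ with oscillation, and the admissible Mellin abscissa $c$ for $f$ lies in $|\sigma|/2<c<1-|\sigma|/2$, so $|f(y,s)\,\mathcal{K}(x,y)|\sim y^{-c-1/4}$ is absolutely integrable in $y$ only when $c>3/4$, i.e.\ when $|\sigma|<1/2$. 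For $1/2\le|\sigma|<1$ you would need either to shift the contour for $f$, or to treat the $y$-integral as a conditionally convergent oscillatory integral and justify the interchange by a different device (e.g.\ integration by parts in $y$ using \eqref{asymbess}--\eqref{asymbess1}, or an $\epsilon$-regularization). You flagged this as ``the main obstacle,'' which is accurate; just be aware that your sketched estimates do not quite close the gap uniformly over the full range $-1<\sigma<1$.
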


The identity in \eqref{koshlyakov-1} can be proved using Theorem \ref{production-1} by taking
\begin{equation*}
F(z,s)=\frac{\pi^{-z}}{4}\frac{\G\left(\frac{1}{2}z-\frac{1}{4}s\right)
\G\left(\frac{1}{2}z+\frac{1}{4}s\right)}{\zeta\left(1-z-\frac{1}{2}s\right)\zeta\left(1-z+\frac{1}{2}s\right)}
\end{equation*}
and then using the fact \cite[p.~115, formula 11.1]{ob} that for $c=$ Re
$z>\pm$ Re $\nu$ and $a>0$,
\begin{equation}\label{melkz}
\frac{1}{2\pi i}\int_{(c)}2^{z-2}a^{-z}\G\left(\frac{z}{2}-\frac{\nu}{2}\right)\G\left(\frac{z}{2}+\frac{\nu}{2}\right)x^{-z}\, dz=K_{\nu}(ax).
\end{equation}
Note that \eqref{ref}, \eqref{dup}, and \eqref{fe} imply $F(z,s)=F(1-z,s)$. In
the last step, replace $s$ by $2\nu$, $x$ by $x/(2\pi)$, and $y$ by $y/(2\pi)$
to obtain \eqref{koshlyakov-1}. Similarly, \eqref{koshlyakov-2} can be proved
using Theorem \ref{production-2} by taking
\begin{equation*}
F(z,s)=\frac{\pi^{z+1}}{\G\left(\frac{1}{2}z-\frac{1}{4}s\right)\G\left(\frac{1}{2}z+\frac{1}{4}s\right)
\zeta\left(z-\frac{1}{2}s\right)\zeta\left(z+\frac{1}{2}s\right)},
\end{equation*}
and then using \eqref{melkz} with $z$ replaced by $z+1$. The equality $F(z,s)=F(1-z,s)$ can be proved using \eqref{ref2}, \eqref{dup}, and \eqref{fe}. As before, in the final step, we replace $s$ by $2\nu$, $x$ by $x/(2\pi)$, and $y$ by $y/(2\pi)$ to obtain \eqref{koshlyakov-2}.

If we let $f(x,s)=K_{s/2}(2\pi x)$ in \eqref{cor5.4result}, we obtain
\begin{equation}\label{pfaff1}
\sqrt{\alpha} \int_{0}^{\infty} K_{s/2}(2 \pi \alpha x) K_{s/2}(2 \pi x)\, dx =
\sqrt{\beta} \int_{0}^{\infty} K_{s/2}(2 \pi \beta x) K_{s/2}(2 \pi x)\, dx,
\end{equation}
which is really a special case of Pfaff's transformation \cite[p.~110]{temme}
\begin{equation}\label{pfaff}
{}_2F_{1}(a,b;c;w)=(1-w)^{-a}{}_2F_{1}\left(a,c-b;c;\frac{w}{w-1}\right),
\end{equation}
as can be checked using the evaluation \cite[p.~384, Formula \textbf{2.16.33.1}]{prud}
\begin{align*}
&\int_{0}^{\infty}x^{a-1}K_{\mu}(bx)K_{\nu}(cx)\, dx\nonumber\\
&=2^{a-3}c^{-a-\mu}\frac{b^{\mu}}{\G(a)}\G\left(\frac{a+\mu+\nu}{2}\right)\G\left(\frac{a+\mu-\nu}{2}\right)
\G\left(\frac{a-\mu+\nu}{2}\right)\G\left(\frac{a-\mu-\nu}{2}\right)\nonumber\\
&\quad\times{}_2F_{1}\left(\frac{a+\mu+\nu}{2},\frac{a-\mu+\nu}{2};a;1-\frac{b^2}{c^2}\right),
\end{align*}
valid for Re $(b+c)>0$ and Re $a>|$Re $\mu|+|$Re $\nu|$. Similarly, letting $f(x,s)=xK_{s/2}(2\pi x)$ in \eqref{cor5.6result} yields
\begin{equation*}
\a^{3/2}\int_{0}^{\infty} x^2K_{s/2}(2 \pi \alpha x) K_{s/2}(2 \pi x)\, dx =
\b^{3/2}\int_{0}^{\infty} x^2K_{s/2}(2 \pi \beta x) K_{s/2}(2 \pi x)\, dx,
\end{equation*}
which is again a special case of Pfaff's transformation \eqref{pfaff}.


\subsection{A New Modular Transformation}\label{anmt}
In \cite[Theorems 4.5, 4.9]{dixitmoll}, transformations of the type given in
Corollary \ref{cor5.4} resulting from the choices $F(z,
s)=\G\left(z+\frac{1}{2}s\right)\G\left(1-z+\frac{1}{2}s\right)$ and $F(z,
s)=\G\left(\frac{1}{2}z+\frac{1}{4}s\right)\G\left(\frac{1}{2}
-\frac{1}{2}z+\frac{1}{4}s\right)$
were obtained. In the following theorem, we give a new example of a function
$f(x,s)$, equal to its first Koshliakov transform, constructed by choosing
\begin{equation*}
F(z, s)=\frac{1}{\sin(\pi z)-\sin\left(\frac{1}{2}\pi s\right)}
\end{equation*}
which, with the help of Corollary \ref{cor5.4}, gives a new modular
transformation. An integral involving a product of Riemann $\Xi$-functions
(defined in \eqref{Xit}) at two different arguments linked with this
transformation is also obtained.

\begin{theorem}\label{newthm}
Let $-1< \sigma<1$. Let
\begin{align}\label{fxzsp}
&f(x,s):=\frac{x^{s/2}\zeta(1+s)}{2\sin\left(\frac{1}{2}\pi
    s\right)}+\frac{x^{2-s/2}}{\pi\cos\left(\frac{1}{2}\pi
    s\right)}\sum_{n=1}^{\infty}\sigma_{-s}(n)\left(\frac{n^{s-1}-x^{s-1}}{n^2-x^2}\right)\\
&\quad-2^{-s}\pi^{-1-s}x^{-s/2}\left\{\G(s)\zeta(s)\left(\pi\tan\left(\frac{\pi s}{2}\right)-2(\gamma+\log x)\right)-\frac{(2\pi)^{s}\zeta'(1-s)}{\cos\left(\frac{1}{2}\pi s\right)}\right\}.\notag
\end{align}
Then, for $\a,\b>0$ and $ \a\b=1$,
\begin{align}\label{ane}
&\sqrt{\alpha} \int_{0}^{\infty} K_{s/2}(2 \pi \alpha x) f(x,s)\, dx= \sqrt{\beta} \int_{0}^{\infty} K_{s/2}(2 \pi \beta x) f(x,s)\, dx\nonumber\\
&=\frac{1}{4\pi^3}\int_{0}^{\infty}\G\left(\frac{s-1+it}{4}\right)\G\left(\frac{s-1-it}{4}\right)
\G\left(\frac{-s+1+it}{4}\right)\G\left(\frac{-s+1-it}{4}\right)\nonumber\\
&\quad\quad\quad\quad\quad\times\Xi\left(\frac{t-is}{2}\right)\Xi\left(\frac{t+is}{2}\right)
\frac{\cos\left(\tfrac{1}{2}t\log\a\right)}{t^2+(s+1)^2}\, dt.
\end{align}
\end{theorem}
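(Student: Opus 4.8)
\textbf{Proof proposal for Theorem \ref{newthm}.}
The plan is to verify the hypotheses of Corollary \ref{cor5.4} for the specific choice $F(z,s)=1/(\sin(\pi z)-\sin(\tfrac12\pi s))$ and then to identify the resulting integral $f(x,s)$ in closed form via residue calculus, finally re-expressing the equal integrals in \eqref{cor5.4result} as an integral of $\Xi$-functions through the symmetric functional equation. First I would record that $F(z,s)=F(1-z,s)$, since $\sin(\pi(1-z))=\sin(\pi z)$; this is the symmetry required by Theorem \ref{production-1} and hence by Corollary \ref{cor5.4}. Then I would examine the Mellin--Barnes integral
\begin{equation*}
f(x,s)=\frac{1}{2\pi i}\int_{(c)}\frac{x^{-z}\,\zeta\left(1-z-\tfrac{s}{2}\right)\zeta\left(1-z+\tfrac{s}{2}\right)}{\sin(\pi z)-\sin\left(\tfrac{\pi s}{2}\right)}\, dz,
\end{equation*}
with $\pm\tfrac12\sigma<c<1\pm\tfrac12\sigma$, and verify using Stirling's formula \eqref{strivert} together with the polynomial growth bound \eqref{zetab} for $\zeta$ that the integrand decays fast enough on vertical lines (the $1/\sin(\pi z)$ factor gives exponential decay $e^{-\pi|t|}$ in the imaginary direction, dominating the polynomial growth of the zeta factors) so that the integral converges absolutely and the conditions of Theorem \ref{production-1} hold.

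Next I would compute $f(x,s)$ explicitly by shifting the contour and summing residues. Factoring $\sin(\pi z)-\sin(\tfrac\pi2 s)=2\cos\!\big(\tfrac{\pi}{2}(z+\tfrac s2)\big)\sin\!\big(\tfrac{\pi}{2}(z-\tfrac s2)\big)$, I see the denominator vanishes at $z=\tfrac{s}{2}+2m$ and $z=1-\tfrac{s}{2}+2m$ for $m\in\mathbb Z$; moreover the two zeta functions contribute poles at $z=-\tfrac s2$ (from $\zeta(1-z-\tfrac s2)$) and $z=\tfrac s2$ (from $\zeta(1-z+\tfrac s2)$). To get a series in $\sigma_{-s}(n)$ I would move the line of integration to $-\infty$; the relevant poles to the left of the line are at $z=\tfrac s2$, at $z=-\tfrac s2$, and at the points $z=-\tfrac s2-2m$, $z=\tfrac s2-1-2m$ coming from the sine factors (after expanding the two zeta functions into their Dirichlet series using \eqref{sz}, valid once $\mathrm{Re}\,z<-\tfrac12-\tfrac{|\sigma|}{2}$). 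Summing the geometric-type contributions from the sine zeros produces the factor $(n^{s-1}-x^{s-1})/(n^2-x^2)$; the double pole structure at $z=\pm\tfrac s2$ (where a simple pole of a zeta function meets a simple zero of the denominator, and when one also accounts for $\zeta$'s pole colliding with the other zeta's value $\zeta(1\mp s)$ and with $\zeta'$) yields the terms $\G(s)\zeta(s)(\gamma+\log x)$, $\zeta'(1-s)$, the $\tan(\tfrac{\pi s}{2})$ term, and the leading $x^{s/2}\zeta(1+s)/(2\sin(\tfrac{\pi s}{2}))$ term. I would carry out these residue computations carefully — using the Laurent expansion $\zeta(w)=\tfrac{1}{w-1}+\gamma+\cdots$ near $w=1$ and the expansion of $\sin(\pi z)-\sin(\tfrac{\pi s}{2})$ near its zeros — and check that they assemble exactly into the right-hand side of \eqref{fxzsp}. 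This identification is the main obstacle: the bookkeeping of the double poles at $z=\pm s/2$ is delicate, and one must track the constant and $\log x$ terms precisely, as well as justify that the shifted integral over $(-\infty)$ vanishes in the limit (an argument parallel to \eqref{fzp}--\eqref{infz}).

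Once $f(x,s)$ is identified with \eqref{fxzsp}, Corollary \ref{cor5.4} immediately gives the first equality in \eqref{ane} for $-1<\sigma<1$ (the restriction on $\sigma$ comes from the strip condition on $c$ and from convergence of the series in \eqref{fxzsp}). For the second equality, I would insert the Mellin--Barnes representation of $f(x,s)$ into $\sqrt{\alpha}\int_0^\infty K_{s/2}(2\pi\alpha x)f(x,s)\,dx$, interchange the order of integration (justified by absolute convergence), and evaluate the inner $x$-integral by \eqref{melkz}, which converts $\int_0^\infty K_{s/2}(2\pi\alpha x)x^{-z}\,dx$ into a product of two Gamma functions and a power of $\alpha$. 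The result is a single Mellin--Barnes integral in $z$ whose integrand is symmetric under $z\mapsto 1-z$; writing $z=\tfrac{1-s+it}{2}$ (so the line $\mathrm{Re}\,z=\tfrac{1-\sigma}{2}$ becomes $t\in\mathbb R$) and using the symmetric functional equation \eqref{fesym} twice to replace $\pi^{-z/2}\Gamma(\tfrac z2)\zeta(z)$-type combinations, the four $\zeta$ and $\Gamma$ factors coalesce into the product $\Xi\!\big(\tfrac{t-is}{2}\big)\Xi\!\big(\tfrac{t+is}{2}\big)$ times the displayed quotient of four Gamma functions of argument $(\pm(s-1)\pm it)/4$; the factor $1/(t^2+(s+1)^2)$ arises from $\sin(\pi z)-\sin(\tfrac{\pi s}{2})$ evaluated on the line via the product formula above, and the symmetrization $z\leftrightarrow 1-z$ turns $\alpha^{-z}$ into $\cos(\tfrac12 t\log\alpha)$ after using $\alpha\beta=1$. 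Collecting the numerical constant $1/(4\pi^3)$ from the various $\pi$-powers and the factor $\tfrac12$ from folding the contour completes the proof.
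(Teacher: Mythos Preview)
Your overall strategy coincides with the paper's: take $F(z,s)=1/(\sin(\pi z)-\sin(\tfrac{\pi s}{2}))$, check $F(z,s)=F(1-z,s)$, identify the Mellin--Barnes integral as \eqref{fxzsp} by residues, then obtain the $\Xi$-integral via Parseval with \eqref{melkz} and the functional equation. But several details in your sketch are off and one step has a real gap.

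First, the sine-zero locations you list, $z=-\tfrac{s}{2}-2m$ and $z=\tfrac{s}{2}-1-2m$, are \emph{not} zeros of $\sin(\pi z)-\sin(\tfrac{\pi s}{2})$; the zeros are $z=\tfrac{s}{2}+2k$ and $z=1-\tfrac{s}{2}+2k$ (your own factorization gives exactly these). Also, only $z=\tfrac{s}{2}$ is a double pole of the integrand (zeta pole meeting a denominator zero); at $z=-\tfrac{s}{2}$ the denominator equals $-2\sin(\tfrac{\pi s}{2})\neq 0$, so that pole is simple.

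Second, and more substantively: your plan to evaluate each $n$-summand by shifting the contour to $-\infty$ and summing the resulting geometric residue series only converges when $n>x$, since $(x/n)^{-z}$ blows up as $\mathrm{Re}\,z\to -\infty$ for $n<x$. You would need a separate argument (shift to $+\infty$, or analytic continuation in $x$) for small $n$, and then verify both cases yield the same closed form. The paper avoids this split entirely: after passing (in the variable $w=1-z$) to a line $\lambda'>1\pm\tfrac{\sigma}{2}$ and picking up the three residues \eqref{star}--\eqref{residues}, it evaluates the inner integral $\int_{(c')}(n/x)^{-w}/(\sin(\pi w)-\sin(\tfrac{\pi s}{2}))\,dw$ on a line in the central strip by Mellin convolution \eqref{melconv}, using \eqref{j10a}--\eqref{j10b} and the table integral \cite[\textbf{3.264.2}]{grn}. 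This produces $(n^{s-1}-x^{s-1})/(n^2-x^2)$ directly for all $n\neq x$ in one stroke.

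Finally, for the $\Xi$-integral: the correct parametrization of the symmetric line is $z=\tfrac12+\tfrac{it}{2}$, not $z=\tfrac{1-s+it}{2}$, so that $z\mapsto 1-z$ becomes $t\mapsto -t$ and the folding into $\cos(\tfrac12 t\log\alpha)$ is immediate. And the factor $1/(t^{2}+(s+1)^{2})$ does not arise from $\sin(\pi z)-\sin(\tfrac{\pi s}{2})$; it comes from the polynomial factor built into $\xi$ via \eqref{xis} (see the denominator $(\tfrac{z}{2}-\tfrac{s}{4}-\tfrac12)(-\tfrac{z}{2}-\tfrac{s}{4})$ in \eqref{jfinbef}). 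The sine factor is instead absorbed into the four displayed Gamma functions through \eqref{ref} and \eqref{ref2}, after which the identity \eqref{iznsimp} converts the contour integral into the stated $\Xi$-integral.
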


\textbf{Remark:} When $x$ is an integer $m$, we adhere to the interpretation
$$\lim_{x\to m}\frac{m^{s-1}-x^{s-1}}{m^2-x^2}=\frac{(s-1)}{2}m^{s-3}.$$

\begin{proof}
Let
\begin{equation}\label{Fzsdef}
F(z,s)=\frac{1}{\sin(\pi z)-\sin\left(\frac{1}{2}\pi s\right)}.
\end{equation}
We first show that for $\pm  \frac{1}{2}\sigma < c = \textup{Re } z < 1 \pm
\frac{1}{2}\sigma$,
\begin{equation}\label{tbp}
\mathfrak{I}(x,s):=\frac{1}{2 \pi i} \int_{(c)}
F(z,s) \zeta(1 - z -s/2) \zeta(1-z + s/2)x^{-z} \, dz=f(x,s),
\end{equation}
where $f(x,s)$ is defined in \eqref{fxzsp}.

To prove \eqref{tbp}, first assume that Re $z>1\pm  \frac{1}{2}\sigma$, let $z=1-w$, and consider, for $\l=$ Re $w<\pm$ $\frac{\sigma}{2}$, the integral
\begin{equation*}
\mathfrak{H}(x,s):=\frac{1}{2 \pi i} \int_{(\l)}\frac{\zeta\left(w-\frac{s}{2}\right)\zeta\left(w+\frac{s}{2}\right)}{\sin(\pi w)-\sin\left(\frac{1}{2}\pi s\right)}x^{w-1} \, dw.
\end{equation*}
In order to use the formula
\begin{align}\label{prodze}
\zeta\left(w-\frac{s}{2}\right)\zeta\left(w+\frac{s}{2}\right)=\sum_{n=1}^{\infty}\frac{\sigma_{-s}(n)}{n^{w-\frac{s}{2}}},
\end{align}
which is valid for $\textup{ Re } w>1\pm \frac{1}{2}\sigma$, we need to shift the line
of integration to $\l'= \textup{ Re } w>1\pm \textup{Re }
\frac{1}{2}\sigma$. Considering a positively oriented rectangular contour with vertices
$[\lambda-iT,\lambda'-iT,\lambda'+iT,\lambda+iT,\lambda-iT]$ for $T>0$,
shifting the line of integration, considering the contributions of the simple poles
 at $\frac{1}{2}s$ and $1+\frac{1}{2}s$ and of the double pole
 at $1-\frac{1}{2}s$, and noting that the integrals along the
horizontal segments tend to zero as $T\to\infty$, by Cauchy's residue
theorem, we find that
\begin{align}\label{frakhxz}
\mathfrak{H}(x,s)&=\frac{1}{2 \pi
  i}\biggl(\frac{1}{x}\sum_{n=1}^{\infty}\sigma_{-s}(n)n^{s/2}\int_{(\l')}\frac{(n/x)^{-w}}{\sin(\pi
    w)-\sin\left(\frac{\pi s}{2}\right)}\, dw\notag\\
&\quad-2\pi i\left(R_{s/2}+R_{1+s/2}+R_{1-s/2}\right)\biggr),
\end{align}
where the interchange of the order of summation and integration can be easily
justified. The residues $R_{s/2}$, $R_{1+s/2}$, and $R_{1-s/2}$ are computed as
{\allowdisplaybreaks\begin{align}
R_{s/2}&=\lim_{w\to\frac{1}{2}s}\frac{(w-\frac{1}{2}s)}{\sin(\pi
  w)-\sin\left(\frac{1 }{2}\pi s
  \right)}\zeta\left(w-\frac{s}{2}\right)
\zeta\left(w+\frac{s}{2}\right)x^{w-1}\nonumber\\
&=-\frac{\zeta(s)x^{s/2-1}}{2\pi\cos\left(\frac{1}{2}\pi s\right)},\label{star}\\
R_{1+s/2}&=\lim_{w\to1+\frac{1}{2}s}\frac{(w-1-\frac{1}{2}s)}{\sin(\pi w)-\sin\left(\frac{1}{2}\pi s\right)}\zeta\left(w-\frac{s}{2}\right)\zeta\left(w+\frac{s}{2}\right)x^{w-1}\nonumber\\
&=-\frac{\zeta(1+s)x^{s/2}}{2\sin\left(\frac{1}{2}\pi s\right)},\label{starstar}\\
R_{1-s/2}&=\lim_{w\to1-\frac{1}{2}s}\frac{d}{dw}\left\{\frac{(w-1+\frac{1}{2}s)^2}{\sin(\pi w)-\sin\left(\frac{1}{2}\pi s\right)}\zeta\left(w-\frac{s}{2}\right)\zeta\left(w+\frac{s}{2}\right)x^{w-1}\right\}\notag\\
=2^{-s}&\pi^{-1-s}x^{-s/2}\left\{\G(s)\zeta(s)\left(\pi\tan\left(\frac{\pi s}{2}\right)-2(\gamma+\log x)\right)-\frac{(2\pi)^{s}\zeta'(1-s)}{\cos\left(\frac{1}{2}\pi s\right)}\right\}.\label{residues}
\end{align}}%
Next,
\begin{multline}\label{inxz}
\int_{(\l')}\frac{(n/x)^{-w}}{\sin(\pi w)-\sin\left(\frac{1}{2}\pi s\right)}\,
dw\\=\frac{1}{2\pi^2}\int_{(\l')}\frac{\pi}{\sin\left(\frac{1}{2}\pi
\left(w-\frac{1}{2}s\right)\right)}
\frac{\pi}{\cos\left(\frac{1}{2}\pi
\left(w+\frac{1}{2}s\right)\right)}\left(\frac{n}{x}\right)^{-w}\, dw.
\end{multline}
For $0<d$= Re $z<2$ \cite[p.~345, formula \bf{(12)}]{erdelyi},
\begin{equation*}
\frac{1}{2\pi i}\int_{(d)}\frac{\pi}{\sin\left(\frac{1}{2}\pi
    z\right)}x^{-z}\, dz=\frac{2}{1+x^2}.
\end{equation*}
Replace  $z$ by $w-\tf12s$ to obtain, for $\tf12\sigma<d'= \text{Re }
 w<2+\tf12\sigma$,
\begin{equation}\label{j10a}
\frac{1}{2\pi i}\int_{(d')}\frac{\pi}{\sin\left(\frac{1}{2}\pi\left(w-\frac{1}{2}s\right)\right)}x^{-w}\, dw=\frac{2x^{-s/2}}{1+x^2}.
\end{equation}
Also, replace $w$ by $w+1+s$ in \eqref{j10a}, so that for $-1-\tf12\sigma<d''=
\text{Re } w<1-\tf12\sigma$,
\begin{equation}\label{j10b}
\frac{1}{2\pi i}\int_{(d'')}\frac{\pi}{\cos\left(\frac{1}{2}\pi\left(w+\frac{1}{2}s\right)\right)}x^{-w}\, dw=\frac{2x^{1+s/2}}{1+x^2}.
\end{equation}
Employing \eqref{j10a} and \eqref{j10b} in \eqref{inxz} and using
\eqref{melconv},
we deduce that, for $\tf12\sigma<c'= \text{Re } w<1-\tf12\sigma$,
\begin{align}\label{remin}
\int_{(c')}\frac{(n/x)^{-w}}{\sin(\pi w)-\sin\left(\frac{1}{2}\pi s\right)}\,
dw&=\frac{4i}{\pi}\int_{0}^{\infty}\frac{t^{-s/2}}{(1+t^2)}
\frac{\left(n/(xt)\right)^{1+s/2}}{\left(1+n^2/(x^2t^2)\right)}\frac{dt}{t}\nonumber\\
&=\frac{4i}{\pi}\left(\frac{n}{x}\right)^{1+s/2}\int_{0}^{\infty}\frac{t^{-s}}{(1+t^2)\left(n^2/x^2+t^2\right)}\, dt.
\end{align}
From \cite[p.~330, formula \textbf{3.264.2}]{grn}, for $0<$ Re $\mu<4$, $|\arg b|<\pi$, and $|\arg h|<\pi$,
\begin{equation*}
\int_{0}^{\infty}\frac{t^{\mu-1}}{(b+t^2)(h+t^2)}\, dt=\frac{\pi}{2\sin\left(\frac{1}{2}\mu\pi\right)}\frac{h^{\mu/2-1}-b^{\mu/2-1}}{b-h}.
\end{equation*}
Letting $\mu=-s+1$, $b=1$, and $h=n^2/x^2$ above, employing the
resulting identity in \eqref{remin}, and simplifying, we see that, for
$\tf12\sigma<c'= \text{Re } w<1-\tf12\sigma$,
\begin{equation*}
\int_{(c')}\frac{(n/x)^{-w}}{\sin(\pi w)-\sin\left(\frac{1}{2}\pi s\right)}\,
dw=\frac{2i}{\cos\left(\frac{1}{2}\pi s\right)}
\left(\frac{\left(n/x\right)^{-s/2}
-\left(n/x\right)^{1+s/2}}{1-n^2/x^2}\right).
\end{equation*}
Employing the residue theorem again, we find that for $\l'= \text{Re }w>1\pm
\tf12\sigma$,
\begin{align}\label{frakhxz1}
\int_{(\l')}&\frac{(n/x)^{-w}}{\sin(\pi w)-\sin\left(\frac{1}{2}\pi
    s\right)}\, dw\notag\\&=\int_{(c')}\frac{(n/x)^{-w}}{\sin(\pi
  w)-\sin\left(\frac{1}{2}\pi s\right)}\, dw+2\pi
i\lim_{w\to1-s/2}\frac{\left(w-1+s/2\right)(n/x)^{-w}}{\sin(\pi
  w)-\sin\left(\frac{1}{2}\pi s\right)}\nonumber\\
&=\frac{2i}{\cos\left(\frac{1}{2}\pi s\right)}\left(\frac{\left(n/x\right)^{-s/2}
-\left(n/x\right)^{1+s/2}}{1-n^2/x^2}\right)-\frac{2i}{\cos\left(\frac{1}{2}\pi
s\right)}\left(n/x\right)^{s/2-1}\nonumber\\
&=\frac{2in^{-s/2}x^{3-s/2}}{\cos\left(\frac{1}{2}\pi s\right)}\left(\frac{n^{s-1}-x^{s-1}}{n^2-x^2}\right).
\end{align}
Now substitute \eqref{star}, \eqref{starstar}, \eqref{residues}, and \eqref{frakhxz1} in \eqref{frakhxz} to
see that, for $c''= \textup{Re }z>1\pm\frac{1}{2}\sigma$,
\begin{align*}
&\mathfrak{H}(x,z)=\frac{1}{2 \pi i} \int_{(c'')}\frac{\zeta(1 - z -s/2)
  \zeta(1-z + s/2)}{\sin(\pi z)-\sin\left(\frac{1}{2}\pi s\right)}x^{-z} \, dz\\
&=\frac{x^{s/2}\zeta(1+s)}{2\sin\left(\frac{1}{2}\pi
    s\right)}-\frac{\zeta(s)x^{s/2-1}}{2\pi\cos\left(\frac{1}{2}\pi s\right)}+\frac{x^{2-s/2}}{\pi\cos\left(\frac{1}{2}\pi s\right)}\sum_{n=1}^{\infty}\sigma_{-s}(n)\left(\frac{n^{s-1}-x^{s-1}}{n^2-x^2}\right)\nonumber\\
&\quad-2^{-s}\pi^{-1-s}x^{-s/2}\left\{\G(s)\zeta(s)\left(\pi\tan\left(\frac{\pi
        s}{2}\right)-2(\gamma+\log
    x)\right)-\frac{(2\pi)^{s}\zeta'(1-s)}{\cos\left(\frac{1}{2}\pi s\right)}\right\}.\notag
\end{align*}
Using the residue theorem again, we see that for $\pm \frac{1}{2}\sigma < c = \textup{Re } z < 1 \pm\frac{1}{2}\sigma$,
{\allowdisplaybreaks\begin{align*}
&\mathfrak{I}(x,s)=\mathfrak{H}(x,s)-\lim_{z\to
  1-s/2}\frac{\left(z-1+s/2\right)}{\sin(\pi
  z)-\sin\left(\frac{1}{2}\pi s\right)}\zeta(1 - z -s/2) \zeta(1-z + s/2)x^{-z}\nonumber\\
&=\frac{x^{s/2}\zeta(1+s)}{2\sin\left(\frac{1}{2}\pi s\right)}+\frac{x^{2-s/2}}{\pi\cos\left(\frac{1}{2}\pi s\right)}\sum_{n=1}^{\infty}\sigma_{-s}(n)\left(\frac{n^{s-1}-x^{s-1}}{n^2-x^2}\right)\nonumber\\
&\quad-2^{-s}\pi^{-1-s}x^{-s/2}\left\{\G(s)\zeta(s)\left(\pi\tan\left(\frac{\pi
        s}{2}\right)-2(\gamma+\log
    x)\right)-\frac{(2\pi)^{s}\zeta'(1-s)}{\cos\left(\frac{1}{2}\pi s\right)}\right\}\nonumber\\
&=f(x,s),
\end{align*}}%
where $f(x,s)$ is defined in \eqref{fxzsp}. The proof of the first equality
in \eqref{ane} now follows from \eqref{tbp}, Theorem \ref{production-1}, and Corollary
\ref{cor5.4}.

In order to establish the equality between the extreme sides of \eqref{ane},
note that
by \eqref{melkz}, for $c= \text{Re }z>\pm\tf12\sigma$,
\begin{equation}\label{bess}
\frac{1}{2\pi i}\int_{(c)}2^{z-2}(2\pi\a)^{-z}\G\left(\frac{z}{2}-\frac{s}{4}\right)\G\left(\frac{z}{2}+\frac{s}{4}\right)x^{-z}\, dz=K_{s/2}(2\pi\a x).
\end{equation}
From \eqref{bess} and \eqref{pf}, for $\pm\tf12\sigma< c = \textup{Re } z < 1
\pm\tf12\sigma$,
\begin{align*}
&\sqrt{\a}\int_{0}^{\infty}K_{s/2}(2\pi\a x)f(x,s)\, dx \notag
=\frac{\sqrt{\a}}{2\pi i}\int_{(c)}2^{-z-1}(2\pi\a)^{z-1}\\
&\quad\times\frac{\G\left(\frac{1}{2}-\frac{1}{2}z-\frac{1}{4}s\right)
\G\left(\frac{1}{2}-\frac{1}{2}z+\frac{1}{4}s\right)}{\sin(\pi
z)-\sin\left(\frac{1}{2}\pi s\right)}\zeta(1 - z -\tf12s) \zeta(1-z + \tf12s)\, dz.
\end{align*}
Now use the functional equation \eqref{fesym} for $\zeta(1 - z -s/2)$ and for
$\zeta(1 - z
+s/2)$, \eqref{feg}, \eqref{ref}, and \eqref{xis} to simplify the integrand,
thereby obtaining
\begin{align}\label{jfinbef}
&\sqrt{\a}\int_{0}^{\infty}K_{s/2}(2\pi\a x)f(x,s)\, dx\nonumber\\
&=\frac{1}{64\pi^3i\sqrt{\a}}\int_{(c)}\frac{\G\left(\frac{1}{2}z+\frac{1}{4}s
-\frac{1}{2}\right)\G\left(-\frac{1}{2}z+\frac{1}{4}s\right)\G\left(\frac{1}{2}-\frac{1}{2}z
-\frac{1}{4}s\right)\G\left(\frac{1}{2}z-\frac{1}{4}s\right)}{\left(\frac{1}{2}z
-\frac{1}{4}s-\frac{1}{2}\right)\left(-\frac{1}{2}z-\frac{1}{4}s\right)}\notag\\
&\quad\times\xi\left(z-\frac{s}{2}\right)
\xi\left(z+\frac{s}{2}\right)\a^{z}\, dz.
\end{align}
From \cite[equation (2.8)]{transf}, if $f(s,t)=\phi(s,it)\phi(s,-it)$, where
$\phi$ is analytic as a function of a real variable $t$ and of a complex
variable $s$, then
\begin{align}\label{iznsimp}
&\int_{0}^{\infty}f\left(s,\frac{t}{2}\right)\Xi\left(\frac{t+is}{2}\right)\Xi\left(\frac{t-is}{2}\right)
\cos\left(\tfrac{1}{2}t\log\a\right)\, dt\nonumber\\
&=\frac{1}{i\sqrt{\a}}\int_{\frac{1}{2}-i\infty}^{\frac{1}{2}+i\infty}\phi\left(s,z-\frac{1}{2}\right)
\phi\left(s,\frac{1}{2}-z\right)\xi\left(z-\frac{s}{2}\right)\xi\left(z+\frac{s}{2}\right)\a^{z}\, dz.
\end{align}
It is easy to see that with
\begin{equation*}
\phi(z,s)=\frac{\G\left(\frac{1}{2}z+\frac{1}{4}s-\frac{1}{4}\right)\G\left(\frac{1}{2}z-\frac{1}{4}s+\frac{1}{4}\right)}
{\left(\frac{1}{2}z-\frac{1}{4}s-\frac{1}{4}\right)},
\end{equation*}
and the fact that shifting the line of integration from Re $z=c$ to Re
$z=\frac{1}{2}$ and using the residue theorem leaves the integral in
\eqref{jfinbef} unchanged, this integral can be written in the form given on
the right-hand side of \eqref{iznsimp}, whence \eqref{iznsimp} proves the
equality between the extreme sides of \eqref{ane}.
\end{proof}

Theorem \ref{newthm} gives a new generalization of the following formula due
to Koshliakov \cite[equations (36), (40)]{kosh1937}, different from the one
given in \cite[Theorem 4.5]{dixitmoll}.
\begin{theorem}
Define
\begin{equation*}
\Lambda(x) = \frac{\pi^{2}}{6} + \gamma^{2} - 2 \gamma_{1} + 2 \gamma \log x
+ \frac{1}{2} \log^{2}x + \sum_{n=1}^{\infty} d(n)
\left( \frac{1}{x+n} - \frac{1}{n} \right),
\end{equation*}
where $\gamma_{1}$ is a Stieltjes constant defined in \eqref{stiel}. Then, for $\alpha, \beta > 0$ and $\alpha\beta=1$,
\begin{align*}
\sqrt{\alpha} \int_{0}^{\infty} K_{0}(2 \pi \alpha x) \Lambda(x) \, dx &= \sqrt{\beta} \int_{0}^{\infty} K_{0}(2 \pi \beta x) \Lambda(x) \, dx\nonumber\\
&=8 \int_{0}^{\infty} \frac{ \left( \Xi \left( \tfrac{1}{2}t \right) \right)^{2}}
{(1+t^{2})^{2}} \,
\frac{\cos \left( \tfrac{1}{2} t \log \alpha \right)}{\cosh \left( \tfrac{1}{2}
\pi t \right)} \, dt.
\end{align*}
\end{theorem}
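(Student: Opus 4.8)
The plan is to obtain this as the special case $s=0$ of Theorem \ref{newthm}, after identifying $\Lambda(x)$ with a suitable normalization of $f(x,0)$. First I would examine the behavior of $f(x,s)$, defined in \eqref{fxzsp}, as $s\to 0$. Each of the three pieces has a potential singularity: $x^{s/2}\zeta(1+s)/(2\sin(\tfrac12\pi s))$ has a double pole (from $\zeta(1+s)\sim 1/s$ and $\sin(\tfrac12\pi s)\sim \tfrac12\pi s$), the middle sum $\sum_n \sigma_{-s}(n)\,(n^{s-1}-x^{s-1})/(n^2-x^2)$ over $\cos(\tfrac12\pi s)$ is regular at $s=0$ and tends to $\sum_n d(n)\,(n^{-1}-x^{-1})/(n^2-x^2)$, and the last bracketed term carries $\Gamma(s)\zeta(s)$ (simple pole from $\Gamma(s)\sim 1/s$) together with $\zeta'(1-s)/\cos(\tfrac12\pi s)$. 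I would Laurent-expand each factor about $s=0$, using $\zeta(1+s)=1/s+\gamma+O(s)$, $\Gamma(s)=1/s-\gamma+O(s)$, $\zeta(s)=-\tfrac12-\tfrac12\log(2\pi)s+O(s^2)$, $\zeta'(1-s)=-\gamma_1+O(s)$ (the constant term in the Laurent expansion of $\zeta$, from \eqref{stiel}), $x^{\pm s/2}=1\pm\tfrac12 s\log x+O(s^2)$, and $\tan(\tfrac12\pi s)=\tfrac12\pi s+O(s^3)$. The claim, which I expect to check, is that the simple-pole contributions of the first and third pieces cancel, leaving a finite limit; the surviving finite part should be a constant multiple of $\Lambda(x)$.

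Concretely, I would first rewrite the tail sum $\sum_n d(n)\bigl(\tfrac1{n}-\tfrac1{x-(-n)}\bigr)$ appearing in $\Lambda(x)$ — note $1/(x+n)-1/n = -x/(n(n+x))$, whereas the $s=0$ limit of the middle term of $f$ gives $\sum_n d(n)(n^{-1}-x^{-1})/(n^2-x^2)$. These two series are proportional: $(n^{-1}-x^{-1})/(n^2-x^2) = (x-n)/(nx(n^2-x^2)) = -1/(nx(n+x))$, so the middle piece equals $-\tfrac1x\sum_n d(n)/(n(n+x)) = \tfrac1{x^2}\sum_n d(n)\bigl(\tfrac1{x+n}-\tfrac1n\bigr)$, matching the series in $\Lambda$ up to the factor $x^{-2}$ and the prefactor $x^{2-s/2}/(\pi\cos(\tfrac12\pi s))$, which $\to x^2/\pi$. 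So the series contribution to $\lim_{s\to 0} f(x,s)$ is $\tfrac1\pi\sum_n d(n)\bigl(\tfrac1{x+n}-\tfrac1n\bigr)$. Then I would collect the elementary terms from the Laurent expansions: the double-pole piece contributes (after cancellation against part of the third piece) terms in $1$, $\gamma$, $\log x$; the $\Gamma(s)\zeta(s)$ term contributes $\tfrac12\log^2 x + \gamma\log x + (\text{const involving }\gamma, \log 2\pi)$ and a $-\tfrac{\pi^2}{4}\cdot(-\tfrac12)$ type term from the $\pi\tan(\tfrac12\pi s)$ factor; and the $\zeta'(1-s)$ term contributes $-2\gamma_1/\pi$ after dividing out. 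Assembling everything, I expect $\lim_{s\to 0} f(x,s) = \tfrac1\pi\,\Lambda(x)$ (or possibly $\tfrac1\pi$ times $\Lambda(x)$ plus a constant that is absorbed because $K_0$-integration of a constant is handled by analytic continuation, or the constant simply works out). The precise rational/transcendental constants are where care is needed, but they are routine given the expansions.

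With the identification $\lim_{s\to 0}f(x,s)=c\,\Lambda(x)$ in hand for the appropriate constant $c$, the first equality $\sqrt\alpha\int_0^\infty K_0(2\pi\alpha x)\Lambda(x)\,dx = \sqrt\beta\int_0^\infty K_0(2\pi\beta x)\Lambda(x)\,dx$ follows immediately by taking $s\to 0$ in the first equality of \eqref{ane}, since $K_{s/2}(2\pi\alpha x)\to K_0(2\pi\alpha x)$ and the integrals converge uniformly for $s$ near $0$ (a dominated-convergence argument using the exponential decay of $K_0$ and the known growth of $\Lambda(x)$, which is $O(\log^2 x)$, suffices). For the $\Xi$-integral on the right, I would take the limit $s\to 0$ inside the last integral of \eqref{ane}. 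The four Gamma factors become $\Gamma\bigl(\tfrac{-1+it}{4}\bigr)\Gamma\bigl(\tfrac{-1-it}{4}\bigr)\Gamma\bigl(\tfrac{1+it}{4}\bigr)\Gamma\bigl(\tfrac{1-it}{4}\bigr)$; pairing via the reflection formula \eqref{ref}, $\Gamma\bigl(\tfrac{1+it}{4}\bigr)\Gamma\bigl(1-\tfrac{1+it}{4}\bigr)=\pi/\sin\bigl(\tfrac{\pi(1+it)}{4}\bigr)$ — but here one has $\Gamma(\tfrac{-1+it}{4})$ not $\Gamma(\tfrac{3-it}{4})$, so I would instead use $\Gamma\bigl(\tfrac{-1+it}{4}\bigr)\Gamma\bigl(\tfrac{1-it}{4}\bigr) \cdot$ (functional equation $\Gamma(z)=\Gamma(z+1)/z$) to relate $\Gamma(\tfrac{-1+it}{4})$ to $\Gamma(\tfrac{3+it}{4})$ and then apply reflection, producing factors of the form $1/\cosh(\tfrac12\pi t)$ after simplification (since $\sin(\tfrac{\pi(3+it)}{4})\sin(\tfrac{\pi(1-it)}{4})$-type products collapse to hyperbolic cosines), together with a rational function of $t$; combined with $\Xi(t/2)^2$ from $\Xi(\tfrac{t-is}{2})\Xi(\tfrac{t+is}{2})\to \Xi(t/2)^2$ and the denominator $t^2+(s+1)^2\to t^2+1$, one should recover $\dfrac{8\,\Xi(t/2)^2}{(1+t^2)^2}\cdot\dfrac{\cos(\tfrac12 t\log\alpha)}{\cosh(\tfrac12\pi t)}$ up to the constant $c$ absorbed from the left side. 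I expect the \textbf{main obstacle} to be the $s\to 0$ Laurent-expansion bookkeeping for $f(x,s)$ — getting all the constants and the cancellation of the two simple poles exactly right — and, secondarily, the trigonometric/Gamma simplification of the $\Xi$-integrand at $s=0$, which requires careful use of the reflection and recurrence formulas to convert the product of four Gamma functions at quarter-integer shifts into the single $1/(1+t^2)^2\cosh(\tfrac12\pi t)$ factor; both are mechanical but error-prone. Once these two limits are computed, citing Theorem \ref{newthm} finishes the proof.
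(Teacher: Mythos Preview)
Your approach is exactly the paper's: it states without further detail that ``Koshliakov's result can be obtained by letting $s\to 0$ in Theorem \ref{newthm},'' and your computation $\lim_{s\to 0}f(x,s)=\tfrac1\pi\Lambda(x)$ together with the Gamma--reflection simplification of the $\Xi$-integrand is precisely what that limit entails. One slip to fix: you wrote $\zeta'(1-s)=-\gamma_1+O(s)$, but $\zeta'(w)$ has a \emph{double} pole at $w=1$, so in fact $\zeta'(1-s)=-1/s^2-\gamma_1+O(s)$; this is what cancels the $1/s^2$ coming from $x^{s/2}\zeta(1+s)/(2\sin(\tfrac12\pi s))$, and once you include it the constant terms assemble to $\tfrac1\pi\bigl(\tfrac{\pi^2}{6}+\gamma^2-2\gamma_1+2\gamma\log x+\tfrac12\log^2 x\bigr)$ as required.
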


Koshliakov's result can be obtained by letting $s\to 0$ in Theorem \ref{newthm}.

\subsection{Transformation Involving Modified Lommel Functions: A Series
  Analogue of Theorem \ref{newthm}}\label{asa}

In \cite[Theorem 6]{gui}, Guinand proved the following theorem.

\begin{theorem}\label{guisumpri}
If $f(x)$ and  $f'(x)$ are integrals, $f(x), xf'(x)$, and $x^2f''(x)$ belong to $L^{2}(0,\infty)$, and $0<|r|<1$, then
\begin{align*}
&\lim_{N\to\infty}\left\{\sum_{n=1}^{N}\left(1-\frac{n}{N}\right)^{2}\sigma_r(n)n^{-r/2}f(n)\right.\notag\\
&\hspace{.5in}\left.-\int_{0}^{N}
\left(1-\frac{x}{N}\right)^{2}f(x)\left(x^{-r/2}\zeta(1-r)+x^{r/2}\zeta(1+r)\right)\, dx\right\}\nonumber\\
&=\lim_{N\to\infty}\left\{\sum_{n=1}^{N}\left(1-\frac{n}{N}\right)^{2}\sigma_r(n)n^{-r/2}g(n)\right.\notag\\
&\hspace{.5in}\left.-\int_{0}^{N}
\left(1-\frac{x}{N}\right)^{2}g(x)\left(x^{-r/2}\zeta(1-r)+x^{r/2}\zeta(1+r)\right)\, dx\right\},
\end{align*}
where
\begin{align*}
\int_{0}^{x}g(y)y^{r/2}\, dy&=x^{(r+1)/2}\int_{0}^{\infty}y^{-\frac{1}{2}}f(y)\phi_{r+1}(4\pi\sqrt{xy})\, dy,\nonumber\\
\phi_{\nu}(z)&=\cos\left(\frac{1}{2}\pi\nu\right)J_{\nu}(z)-\sin\left(\frac{1}{2}\pi\nu\right)\left(Y_{\nu}(z)
+\frac{2}{\pi}K_{\nu}(z)\right),
\end{align*}
and $g(x)$ is chosen so that it is the integral of its derivative.
\end{theorem}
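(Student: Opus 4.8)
The plan is to read Theorem~\ref{guisumpri} as the Riesz‑smoothed Vorono\"{\dotlessi}‑type summation formula attached to the Dirichlet series $\varphi(w):=\sum_{n=1}^{\infty}\sigma_r(n)n^{-r/2}n^{-w}=\zeta\left(w+\tf r2\right)\zeta\left(w-\tf r2\right)$, which by \eqref{fe} satisfies a self‑reciprocal functional equation up to the shift by $\tf r2$. Two structural observations set this up. First, from $\cos\left(\tf{\pi}{2}(r+1)\right)=-\sin\left(\tf{\pi r}{2}\right)$ and $\sin\left(\tf{\pi}{2}(r+1)\right)=\cos\left(\tf{\pi r}{2}\right)$ one checks that the kernel $\phi_{r+1}$ appearing in the theorem is exactly $G_{1+r}$ in the notation of \eqref{capg} (take $s=r$, $\lambda=1$); hence \eqref{delsxat} reads
\begin{equation*}
\Delta_r(x)=\sum_{n=1}^{\infty}\left(\frac xn\right)^{(r+1)/2}\sigma_r(n)\,\phi_{r+1}\left(4\pi\sqrt{nx}\right).
\end{equation*}
Second, differentiating the relation defining $g$ and using the Bessel recursions \eqref{jd}, \eqref{yd}, \eqref{kd} (equivalently \eqref{gtransform}) shows that $g(x)=2\pi\int_{0}^{\infty}f(y)\,\bigl(\cos\left(\tf{\pi r}{2}\right)\tilde{M}_{r}(4\pi\sqrt{xy})-\sin\left(\tf{\pi r}{2}\right)J_{r}(4\pi\sqrt{xy})\bigr)\,dy$ with $\tilde{M}_{r}=\tf{2}{\pi}K_{r}-Y_{r}$, i.e.\ $g$ is the first Koshliakov transform of $f$ with parameter $s=r$ in the sense of Theorem~\ref{production-1}. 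The goal is therefore to prove that the functional
\begin{equation*}
\Psi(h):=\lim_{N\to\infty}\left\{\sum_{n\le N}\left(1-\tf nN\right)^{2}\sigma_r(n)n^{-r/2}h(n)-\int_{0}^{N}\left(1-\tf xN\right)^{2}h(x)\left(x^{-r/2}\zeta(1-r)+x^{r/2}\zeta(1+r)\right)dx\right\}
\end{equation*}
is unchanged when $h=f$ is replaced by $h=g$.

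First I would prove the formal identity by the Mellin method. Write $f(n)=\tf{1}{2\pi i}\int_{(c)}\mathfrak{F}(w)\,n^{-w}\,dw$, where $\mathfrak{F}$ is the Mellin transform of $f$ and $c$ lies in the vertical strip of analyticity of $\mathfrak{F}$, which the conditions on $f,f',xf',x^{2}f''$ guarantee contains a left‑neighbourhood of $\Re w=\tf12$; interchange summation and integration, and apply Lemma~\ref{titchma} with $k=2$ and $\lambda_n=n$ to the inner weighted sum $\sum_{n\le N}(1-n/N)^{2}\sigma_r(n)n^{-r/2-w}$, obtaining a double Mellin--Barnes integral with kernel $\tf{2\Gamma(z)}{\Gamma(z+3)}\,\zeta\left(z+w+\tf r2\right)\zeta\left(z+w-\tf r2\right)N^{z}$. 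Shift the $z$‑line to the left past the two simple poles at $z=1-w\mp\tf r2$, but not past $z=0$; this is possible because $|r|<1$ permits one to take $c$ below $1-\tf{|r|}{2}$. A Beta‑integral evaluation, using $\int_{0}^{N}(1-x/N)^{2}x^{-r/2-w}\,dx=N^{1-r/2-w}\,\tf{\Gamma(1-r/2-w)\,\Gamma(3)}{\Gamma(4-r/2-w)}$, shows that the two residues reassemble exactly the subtracted main terms, so $\Psi(f)$ equals the $N\to\infty$ limit of the remaining double integral. Applying the functional equation \eqref{fe} to both $\zeta$‑factors and recognising, via \eqref{melkz} and the sine and Bessel Mellin transforms underlying \eqref{koshlyakov-1} and \eqref{koshlyakov-2}, the emergent product of Gamma and trigonometric factors as the Mellin multiplier carrying $\mathfrak{F}$ to the Mellin transform of $g$, one re‑sums --- in the regularised sense, controlled by the equi‑convergence Lemmas~\ref{equiconser1} and~\ref{equiconser2} --- to obtain $\Psi(g)$, which is the assertion. (An alternative route mirrors the proof of Theorem~\ref{voronoisumgen}: write the weighted sum as the Lebesgue--Stieltjes integral $\int_{0}^{N}(1-t/N)^{2}f(t)t^{-r/2}\,dD_r(t)$, with $D_r$ as in \eqref{defD}; integrate by parts; split $D_r=\Phi_r+\Delta_r$ by \eqref{Dphidel}--\eqref{defph}, so the $\Phi_r$‑part returns the subtracted integral; then substitute the displayed expression for $\Delta_r$, interchange, and integrate by parts repeatedly against \eqref{gtransform} to build up $g$.)

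The hard part will be the analytic justification, not the formal identity. Because the hypotheses on $f$ are of $L^{2}$ type, $\mathfrak{F}$ --- and, in the alternative route, the integrals against $f'$ --- is controlled only in mean square on vertical lines rather than absolutely, so the interchanges of summation and integration, the contour shifts, and above all the passage to the limit $N\to\infty$ inside the double integral must be executed with Plancherel and Cauchy--Schwarz estimates together with Stirling's formula \eqref{strivert} for the Gamma factors, in order to obtain $N$‑uniform bounds on the horizontal segments and on the tails; this is precisely where the Chandrasekharan--Narasimhan equi‑convergence Lemmas~\ref{equiconser1} and~\ref{equiconser2} are needed, and where the assumption that $f$ is the integral of $f'$ supplies the integration by parts producing the extra decay. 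Once these estimates and the matching of the Bessel Mellin transforms to $\phi_{r+1}$ are in hand, the remaining manipulations are routine.
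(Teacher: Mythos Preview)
The paper does not contain a proof of Theorem~\ref{guisumpri}. It is introduced with the sentence ``In \cite[Theorem 6]{gui}, Guinand proved the following theorem,'' and is quoted as a known result of Guinand; the paper then uses it (via the consequence \eqref{guisum1}) as background for the discussion in Section~\ref{asa}, but never undertakes to prove it. There is therefore nothing in the paper to compare your proposal against.

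That said, your sketch is a plausible outline of how one might approach Guinand's theorem, and your structural identifications are correct: $\phi_{r+1}=G_{1+r}$ in the notation of \eqref{capg}, and the relation defining $g$ does make it the first Koshliakov transform of $f$. But be aware that Guinand's original proof in \cite{gui} is built on his general $L^{2}$ theory of self-reciprocal functions and summation formulae developed in that series of papers, not on the Chandrasekharan--Narasimhan equi-convergence machinery you invoke (Lemmas~\ref{equiconser1} and~\ref{equiconser2}), which postdates Guinand and is designed for pointwise rather than $L^{2}$ statements. If you intend to give an independent proof, the genuinely delicate point is exactly the one you flag: the hypotheses are $L^{2}$, so the Mellin transforms live on the critical line only in the Plancherel sense, and neither the contour shifts nor the interchange of the $N\to\infty$ limit with the double integral can be carried out by absolute-convergence arguments. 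You would need to supply those $L^{2}$ estimates in detail, and the equi-convergence lemmas as stated here are not the right tool for that.
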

As discussed by Guinand \cite[equation (1)]{guinand}, this gives
\begin{align}\label{guisum1}
&\sum_{n=1}^{\infty}\sigma_{-s}(n)n^{s/2}G(n, s)-\zeta(1+s)\int_{0}^{\infty}t^{s/2}G(t, s)\, dt
-\zeta(1-s)\int_{0}^{\infty}t^{-s/2}G(t, s)\, dt\\
&=\sum_{n=1}^{\infty}\sigma_{-s}(n)n^{s/2}H(n, s)-\zeta(1+s)\int_{0}^{\infty}t^{s/2}H(t, s)\,
dt-\zeta(1-s)\int_{0}^{\infty}t^{-s/2}H(t, s)\, dt,\nonumber
\end{align}
where $G(x, s)$ satisfies the same conditions as those of $f$ in Theorem
\ref{guisumpri}, and where
\begin{align*}
H(x, s)&=\int_{0}^{\infty}G(t, s)\left(-2\pi\sin\left(\tfrac{1}{2}\pi
    s\right)J_{s}(4\pi\sqrt{xt})\right.\notag\\&\quad\left.-\cos\left(\tfrac{1}{2}\pi s\right)\left(2\pi
    Y_{s}(4\pi\sqrt{xt})-4K_{s}(4\pi\sqrt{xt})\right)\right)\, dt,
\end{align*}
that is, $H(x, s)$ is essentially the first Koshliakov transform of $G(x, s)$.

Even though \eqref{cor5.4result} and \eqref{guisum1}  are both modular
transformations and both involve the first Koshliakov transform of a
function, they are very different in nature as can be seen from the fact that
if $f(x,s)=K_{s/2}(2\pi x)$ in \eqref{cor5.4result}, we obtain Pfaff's
transformation, as discussed in \eqref{pfaff1} and \eqref{pfaff}, whereas,
letting $G(x, s)=K_{s/2}(2\pi\a x)$ in \eqref{guisum1} yields the
Ramanujan--Guinand formula, as discussed in \cite[Section
7]{dixitmoll}.

The function $f(x,s)$ of Theorem \ref{newthm} is equal to its first
Koshliakov transform, as can be seen from \eqref{Fzsdef}, \eqref{tbp}, and
Theorem \ref{production-1}. Thus there are two series transformations
associated with this $f(x,s)$ for a fixed $s$ such that $-1<\sigma<1$ -- one
resulting from letting $G(x, s)=H(x, s)=f(x,s)$ in \eqref{guisum1}, and the other
being the series analogue of Theorem \ref{newthm} obtainable by interchanging
the order of summation and integration in both expressions in the first equality in \eqref{ane}. The former
seems more formidable than the latter. Thus we attempt the latter, which gives
a beautiful transformation involving the modified Lommel functions.

There are several functions in the literature called Lommel functions.
However, the ones that are important for us are those defined by \cite[p.~346, equation (10)]{watsonbessel}
\begin{equation}\label{lommeldef2}
s_{\mu,\nu}(w):=\frac{w^{\mu+1}}{(\mu-\nu+1)(\mu+\nu+1)}{}_1F_{2}
\left(1;\frac{\mu-\nu+3}{2},\frac{\mu+\nu+3}{2};-\frac{1}{4}w^2\right)
\end{equation}
and \cite[p.~347, equation (2)]{watsonbessel}
\begin{align}\label{lommeldef1}
S_{\mu,\nu}(w)&=s_{\mu,\nu}(w)+\frac{2^{\mu-1}\G\left(\frac{\mu-\nu+1}{2}\right)
\G\left(\frac{\mu+\nu+1}{2}\right)}{\sin(\nu\pi)}\\
&\quad\quad\quad\quad\quad\times\left\{\cos\left(\frac{1}{2}(\mu-\nu)\pi\right)
J_{-\nu}(w)-\cos\left(\frac{1}{2}(\mu+\nu)\pi\right)J_{\nu}(w)\right\}\notag
\end{align}
for $\nu\notin\mathbb{Z}$, and
\begin{align}\label{lommeldefint}
S_{\mu,\nu}(w)&=s_{\mu,\nu}(w)+2^{\mu-1}\G\left(\frac{\mu-\nu+1}{2}\right)\G\left(\frac{\mu+\nu+1}{2}\right)\\
&\quad\quad\quad\quad\quad\times\left\{\sin\left(\frac{1}{2}(\mu-\nu)\pi\right)
J_{\nu}(w)-\cos\left(\frac{1}{2}(\mu-\nu)\pi\right)Y_{\nu}(w)\right\}\notag
\end{align}
for $\nu\in\mathbb{Z}$.
These functions are the solutions of an inhomogeneous form of the Bessel differential equation \cite[p.~345]{watsonbessel}, namely,
\begin{equation*}
w^2\frac{d^{2}y}{dw^{2}}+w\frac{dy}{dw}+(w^2-\nu^2)y=w^{\mu+1}.
\end{equation*}
Even though $s_{\mu,\nu}(w)$ is undefined when $\mu\pm\nu$ is an odd negative
integer, $S_{\mu,\nu}(w)$ has a limit at those values
\cite[p.~347]{watsonbessel}. These are the exceptional cases of the Lommel
function $S_{\mu,\nu}(w)$. For more details on the exceptional cases, the
reader is referred to \cite[pp.~348--349, Section 10.73]{watsonbessel} and to
a more recent article \cite{glasser}.

The modified Lommel functions or the Lommel functions of imaginary argument
\cite{zs} are defined by
\begin{equation}\label{modlf}
T_{\mu,\nu}(y):=-i^{1-\mu}S_{\mu,\nu}(iy),
\end{equation}
where $y$ is real. For further information on modified Lommel functions, the reader is
referred to \cite{rollinger} and \cite{shafer}.

Lommel functions, as well as modified Lommel functions, are very useful in  physics and mathematical physics. For example, see \cite{assjsv, goldstein, sitzer, thomaschan, torrey}.

As a series analogue of Theorem \ref{newthm}, we will now obtain the
following modular transformation consisting of infinite series of modified
Lommel functions, of which one is an exceptional case of Lommel functions.
\begin{theorem}\label{newthmser}
For $\a>0$ and $-1<\sigma<1$, let
{\allowdisplaybreaks\begin{align}
&\mathfrak{L}(s,\a):=\frac{\pi^{s/2}\G\left(-\frac{1}{2}s\right)\zeta(-s)}{8\sin\left(\frac{1}{2}\pi   s\right)}\a^{-(s+1)/2}
-2^{-2-s}\pi^{-(s+3)/2}\a^{-1+s/2}\G\left(\frac{1-s}{2}\right)\notag\\
&\quad\times\left\{\G(s)\zeta(s)\left(\pi\tan\left(\frac{\pi s}{2}\right)-\gamma+2\log(2\pi\alpha)-\psi\left(\frac{1-s}{2}\right)\right)
-\frac{(2\pi)^s\zeta'(1-s)}{\cos\left(\frac{1}{2}\pi s\right)}\right\}\nonumber\\
&\quad+\frac{\sqrt{\a}}{\pi\cos\left(\frac{1}{2}\pi s\right)}\sum_{n=1}^{\infty}\sigma_{-s}(n)n^{s/2}\bigg\{2^{s/2}\G\left(1+\frac{s}{2}\right)
T_{-1-s/2,-s/2}(2\pi n\a)\nonumber\\
&\quad\quad\quad\quad\quad\quad\quad\quad\quad-\sqrt{\pi}2^{-s/2}\G\left(\frac{3-s}{2}\right)
T_{-2+s/2,s/2}(2\pi n\a)\bigg\}.\notag
\end{align}}
Then, for $\a\b=1$,
\begin{align}\label{anee}
&\mathfrak{L}(s,\a)=\mathfrak{L}(s,\b)\nonumber\\
&=\frac{1}{4\pi^3}\int_{0}^{\infty}\G\left(\frac{s-1+it}{4}\right)\G\left(\frac{s-1-it}{4}\right)
\G\left(\frac{-s+1+it}{4}\right)\G\left(\frac{-s+1-it}{4}\right)\nonumber\\
&\quad\quad\quad\quad\quad\times\Xi\left(\frac{t-is}{2}\right)\Xi\left(\frac{t+is}{2}\right)
\frac{\cos\left(\tfrac{1}{2}t\log\a\right)}{t^2+(s+1)^2}\, dt.
\end{align}
\end{theorem}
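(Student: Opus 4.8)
The plan is to obtain Theorem \ref{newthmser} as the ``series side'' of Theorem \ref{newthm}, by interchanging the order of summation and integration in the first equality of \eqref{ane} after substituting the explicit form \eqref{fxzsp} of $f(x,s)$. Concretely, one writes $f(x,s)$ as the sum of its three pieces: the monomial term $x^{s/2}\zeta(1+s)/(2\sin(\tfrac12\pi s))$, the monomial term $-2^{-s}\pi^{-1-s}x^{-s/2}\{\cdots\}$ coming from the double-pole residue $R_{1-s/2}$, and the series $\dfrac{x^{2-s/2}}{\pi\cos(\tfrac12\pi s)}\sum_{n=1}^{\infty}\sigma_{-s}(n)\big(\tfrac{n^{s-1}-x^{s-1}}{n^2-x^2}\big)$. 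For the two monomial pieces, the required integrals $\int_0^{\infty}K_{s/2}(2\pi\a x)x^{\pm s/2}\,dx$ and $\int_0^{\infty}K_{s/2}(2\pi\a x)x^{-s/2}\log x\,dx$ are classical Mellin transforms of $K_{s/2}$ (differentiate \cite[p.~115, formula 11.1]{ob}, i.e.\ \eqref{melkz}, with respect to the Mellin parameter to produce the $\log x$ term); these contribute the first two lines of $\mathfrak{L}(s,\a)$, including the $\psi\big(\tfrac{1-s}{2}\big)$ coming from differentiating the Gamma factors. For the series piece, one must evaluate, for each $n$, the integral $\sqrt{\a}\int_0^{\infty}K_{s/2}(2\pi\a x)\,x^{2-s/2}\cdot\dfrac{n^{s-1}-x^{s-1}}{n^2-x^2}\,dx$, which splits into $\int_0^{\infty}K_{s/2}(2\pi\a x)\,\dfrac{x^{2-s/2}}{n^2-x^2}\,dx$ and $\int_0^{\infty}K_{s/2}(2\pi\a x)\,\dfrac{x^{1+s/2}}{n^2-x^2}\,dx$.

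The key computational step is recognizing that each of these two integrals of the form $\int_0^{\infty}K_{\mu}(ax)\,\dfrac{x^{\rho}}{x^2-b^2}\,dx$ is a known Mellin--Barnes / Lommel-function evaluation. After shifting to a Mellin--Barnes representation for $K_{s/2}(2\pi\a x)$ via \eqref{melkz}, one is left with an integral whose integrand is a product of Gamma functions times $(b/ \cdot)^{z}$ with a $1/(x^2-b^2)$ factor; closing the contour and summing residues produces a ${}_1F_2$ series, which by the definitions \eqref{lommeldef2}, \eqref{lommeldef1}, \eqref{lommeldefint}, and \eqref{modlf} is exactly a modified Lommel function $T_{\mu,\nu}$. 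The bookkeeping of indices should give $2^{s/2}\G(1+\tfrac s2)\,T_{-1-s/2,-s/2}(2\pi n\a)$ from the $x^{1+s/2}$ integral and $\sqrt{\pi}\,2^{-s/2}\G(\tfrac{3-s}{2})\,T_{-2+s/2,s/2}(2\pi n\a)$ from the $x^{2-s/2}$ integral; the first of these falls into the exceptional case of the Lommel function (where $\mu\pm\nu$ is an odd negative integer), so one invokes the remark after \eqref{lommeldefint} that $S_{\mu,\nu}$ nonetheless has a well-defined limit there. One then multiplies by $\sigma_{-s}(n)n^{s/2}/(\pi\cos(\tfrac12\pi s))$ and sums over $n$; absolute convergence (hence the legitimacy of interchanging $\sum$ and $\int$) follows from the asymptotics of $K_{s/2}$ and of $T_{\mu,\nu}$ together with $\sigma_{-s}(n)=O(n^{\epsilon})$, in the strip $-1<\sigma<1$. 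The second equality in \eqref{anee}, the $\Xi$-function integral, is literally the second equality in \eqref{ane} of Theorem \ref{newthm} and requires nothing new.

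The main obstacle I anticipate is the explicit evaluation of the two integrals $\int_0^{\infty}K_{s/2}(2\pi\a x)\,\dfrac{x^{\rho}}{x^2-n^2}\,dx$ in closed form as modified Lommel functions, together with correctly tracking all the powers of $2$, $\pi$, $\a$, and the trigonometric/Gamma prefactors so that the answer collapses into the stated $\mathfrak{L}(s,\a)$. In particular, one must be careful with the principal-value interpretation at $x=n$ (matching the ``$\lim_{x\to m}$'' remark following Theorem \ref{newthm} and the interpretation inside the series in \eqref{fxzsp}), and with the exceptional-case definition of $T_{-1-s/2,-s/2}$. A clean route is to first establish the general formula $\int_0^{\infty}K_{\mu}(ax)\dfrac{x^{\rho-1}}{x^2-b^2}\,dx$ in terms of ${}_1F_2$'s by residue calculus on the Mellin--Barnes integral, then specialize; this isolates the hard bookkeeping into one lemma-style computation and makes the assembly of $\mathfrak{L}(s,\a)$ routine. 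Once that is in hand, combining the three pieces and using $\a\beta=1$ (so that the left side of \eqref{ane} equals the right side) immediately yields $\mathfrak{L}(s,\a)=\mathfrak{L}(s,\b)$ and completes the proof.
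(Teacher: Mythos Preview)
Your proposal is correct and follows essentially the same route as the paper: split $f(x,s)$ into its three pieces, integrate the two monomial pieces against $K_{s/2}(2\pi\alpha x)$ via the Mellin transform of $K_\nu$ (and its derivative in the Mellin parameter, which the paper records as formula \eqref{mkl}), and for the series piece evaluate the two principal-value integrals $PV\int_0^\infty K_{s/2}(2\pi\alpha x)\,x^{\rho}/(x^2-n^2)\,dx$ in terms of Lommel functions.

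The only methodological difference is that where you propose to evaluate those two integrals from scratch via a Mellin--Barnes representation and residue summation, the paper instead quotes the tabulated evaluation \cite[p.~347, formula \textbf{2.16.3.18}]{prud} (recorded as \eqref{prudint}), which already gives the answer as a combination of $I_\nu$-terms and a ${}_1F_2$. The paper's work then consists of the delicate limit in the exceptional case $a=2+s/2,\ \nu=s/2$ (handled by L'H\^opital on the $\nu$-parameter, producing the $\psi$-sums that are then repackaged via Watson's formula \eqref{lommelcon} for $S_{\nu-1,\nu}$), and of rewriting the non-exceptional ${}_1F_2$ via \eqref{lommeldef1}. Your from-scratch residue computation would rederive \eqref{prudint} and then face the same exceptional-case limit, so the substance is identical. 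One point to be careful about: the interchange of $\sum_n$ and $\int_0^\infty$ is not quite by absolute convergence of the obvious double sum; the paper justifies it (equations \eqref{insum}--\eqref{twosum}) by localizing near each singularity $x=n$ with a smooth cutoff, which is worth imitating rather than asserting.
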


\textbf{Remark:} We emphasize that there are very few results in the
literature involving infinite series of Lommel functions. Papers by R.~G.~Cooke
\cite{cooke} and Lewis and Zagier \cite[p.~213--218]{lewzag} are two examples. The special case of the Lommel function $S_{\mu,\nu}(z)$ that Lewis and Zagier consider in \cite[p.~214, Equation (2.15)]{lewzag} is
\begin{equation*}
\mathcal{C}_{s}(z)=\sqrt{z}\Gamma(2s+1)S_{-2s-\frac{1}{2},\frac{1}{2}}(z).
\end{equation*}
To the best of our knowledge, none of these papers involve infinite
series of exceptional cases of Lommel functions.
\begin{proof}
Consider the extreme left side of \eqref{ane}.
Using \eqref{melkz} and \eqref{fesym}, we find that
\begin{equation}\label{fterm}
\int_{0}^{\infty}\frac{x^{s/2}\zeta(1+s)}{2\sin\left(\frac{1}{2}\pi s\right)}K_{s/2}(2\pi\a x)\,
dx=\frac{\pi^{s/2}\Gamma\left(-\frac{s}{2}\right)\zeta(-s)}
{8\a^{1+s/2}\sin\left(\frac{1}{2}\pi
  s\right)}.
\end{equation}
Also note that formula $\textbf{2.16.20.1}$ of \cite[p.~365]{prud} asserts
that, for
$| \text{Re}\hspace{0.5mm} w | >$ Re $\nu$ and real $m>0$,
\begin{align}\label{mkl}
\int_{0}^{\infty} x^{w-1} K_{\nu}(mx) \log x \, dx &=
\frac{2^{w-3}}{m^{w}}
\Gamma \left( \frac{w + \nu}{2} \right)
\Gamma \left( \frac{w - \nu}{2} \right)\notag \\
&\quad\times \left\{ \psi \left( \frac{w + \nu}{2} \right) +
\psi \left( \frac{w - \nu}{2} \right)
-2 \log \left( \frac{m}{2} \right) \right\}.
\end{align}
Now use \eqref{melkz} and \eqref{mkl} to find, upon simplification, that
\begin{align}\label{sterm}
&-2^{-s}\pi^{-1-s}\int_{0}^{\infty}x^{-s/2}K_{s/2}(2\pi\a x)\nonumber\\
&\quad\quad\quad\quad\quad\quad\quad\times\left\{\G(s)\zeta(s)\left(\pi\tan\left(\frac{\pi s}{2}\right)-2(\gamma+\log x)\right)-\frac{(2\pi)^{s}\zeta'(1-s)}{\cos\left(\frac{\pi s}{2}\right)}\right\}\, dx\nonumber\\
&=-2^{-2-s}\pi^{-(s+3)/2}\a^{-1+s/2}\G\left(\frac{1-s}{2}\right)\\
&\quad\times\left\{\G(s)\zeta(s)\left(\pi\tan\left(\frac{\pi s}{2}\right)-\gamma+2\log(2\pi\alpha)-\psi\left(\frac{1-s}{2}\right)\right)-\frac{(2\pi)^s\zeta'(1-s)}{\cos\left(\frac{\pi s}{2}\right)}\right\}.\notag
\end{align}
In \cite[p.~347, formula \textbf{2.16.3.18}]{prud}, we find that, for $y>0$, $\textup{Re } c>0$, and $\textup{Re } a>|\textup{Re }\nu|$, the integral evaluation
(in corrected form)
\begin{align}\label{prudint}
PV\int_{0}^{\infty}\frac{x^{a-1}}{x^2-y^2}K_{\nu}(cx)\, dx
&=\frac{\pi^2y^{a-2}}{4\sin(\nu\pi)}\left(\cot\left(\frac{\pi(a+\nu)}{2}\right)I_{\nu}(cy)-
\cot\left(\frac{\pi(a-\nu)}{2}\right)I_{-\nu}(cy)\right)\nonumber\\
&\quad+2^{a-4}c^{2-a}\G\left(\frac{a+\nu}{2}-1\right)\G\left(\frac{a-\nu}{2}-1\right)\nonumber\\
&\quad\times{}_1F_{2}\left(1;2-\frac{\nu+a}{2}, 2+\frac{\nu-a}{2};\frac{c^2y^2}{4}\right).
\end{align}
(In \cite{prud}, the principal value designation $PV$ is missing.) Next we show that
\begin{align}\label{insum}
&\int_{0}^{\infty}x^{2-s/2}K_{s/2}(2\pi\a x)\sum_{n=1}^{\infty}\sigma_{-s}(n)\left(\frac{n^{s-1}-x^{s-1}}{n^2-x^2}\right)\, dx\\
&=\sum_{n=1}^{\infty}\sigma_{-s}(n)\left\{n^{s-1}PV\int_{0}^{\infty}\frac{x^{2-s/2}K_{s/2}(2\pi\a x)}{n^2-x^2}\, dx\right.\notag\\&\quad\left.-PV\int_{0}^{\infty}\frac{x^{1+s/2}K_{s/2}(2\pi\a x)}{n^2-x^2}\, dx\right\}.\notag
\end{align}
 Let $w(t)\in C_0^{\infty}$ be a smooth function so that $0\leq w(t)\leq 1$
 for all $t\in \mathbb{R}$, $w(t)$ has compact support in $(-\tf13,\tf13)$,
 and  $w(t)=1$ in $(-\tf14,\tf14)$. Then the right-hand side of \eqref{insum}
 is equal  to
\begin{align}\label{twosum}
&\sum_{n=1}^{\infty}\sigma_{-s}(n)PV\int_{0}^{\infty}x^{2-s/2}K_{s/2}(2\pi\a x)\left(\frac{n^{s-1}-x^{s-1}}{n^2-x^2}\right)\, dx\\\nonumber
&=\sum_{n=1}^{\infty}\sigma_{-s}(n)\int_{0}^{\infty}x^{2-s/2}K_{s/2}(2\pi\a x)\left(\frac{n^{s-1}-x^{s-1}}{n^2-x^2}\right)(1-w(x-n))\, dx\\\nonumber
&\quad+\sum_{n=1}^{\infty}\sigma_{-s}(n)PV\int_{0}^{\infty}x^{2-s/2}K_{s/2}(2\pi\a x)\left(\frac{n^{s-1}-x^{s-1}}{n^2-x^2}\right)w(x-n)\, dx.
\end{align}
Since the series in the integrand on the left-hand side of \eqref{insum}
is absolutely convergent,  we can interchange the summation and integration
of the first expression on the right-hand side of \eqref{twosum}. Note that if $m$
is a positive integer and $m-\tf12\leq x\leq m+\tf12$,
\begin{align*}
\sum_{n=1}^{\infty}\sigma_{-s}(n)\left(\frac{n^{s-1}-x^{s-1}}{n^2-x^2}\right)w(x-n)
=\sigma_{-s}(m)\left(\frac{m^{s-1}-x^{s-1}}{m^2-x^2}\right)w(x-m).
\end{align*}
Therefore,
\begin{align*}
&PV\int_{0}^{\infty}x^{2-s/2}K_{s/2}(2\pi\a x)\sum_{n=1}^{\infty}\sigma_{-s}(n)\left(\frac{n^{s-1}-x^{s-1}}{n^2-x^2}\right)w(x-n)\, dx\\\nonumber
&=\sum_{m=1}^{\infty}PV\int_{m-1/2}^{m+1/2}x^{2-s/2}K_{s/2}(2\pi\a x)\sum_{n=1}^{\infty}\sigma_{-s}(n)\left(\frac{n^{s-1}-x^{s-1}}{n^2-x^2}\right)w(x-n)\, dx\\\nonumber
&=\sum_{m=1}^{\infty}\sigma_{-s}(m)PV\int_{m-1/2}^{m+1/2}x^{2-s/2}K_{s/2}(2\pi\a x)\left(\frac{m^{s-1}-x^{s-1}}{m^2-x^2}\right)w(x-m)\, dx\\\nonumber
&=\sum_{m=1}^{\infty}\sigma_{-s}(m)PV\int_{0}^{\infty}x^{2-s/2}K_{s/2}(2\pi\a x)\left(\frac{m^{s-1}-x^{s-1}}{m^2-x^2}\right)w(x-m)\, dx.
\end{align*}
This justifies the interchange of summation and integration in \eqref{insum}.
Using \eqref{prudint}, we have
\begin{align}\label{-2deri}
&n^{s-1}\cdot PV\int_{0}^{\infty}\frac{x^{2-s/2}K_{s/2}(2\pi\a x)}{n^2-x^2}\, dx\\
&=-\frac{\pi^2n^{s/2}}{4\sin\left(\frac{1}{2}\pi s\right)}\left\{\cot\left(\tfrac{3\pi}{2}\right)I_{s/2}(2\pi n\a)-\cot\left(\tfrac{\pi}{2}(3-s)\right)I_{-s/2}(2\pi n\a)\right\}\nonumber\\
&\quad-n^{s-1}2^{-1-s/2}(2\pi\a)^{-1+s/2}\Gamma\left(\frac{1}{2}\right)
\Gamma\left(\frac{1-s}{2}\right){}_1F_{2}\left(1;\frac{1}{2},\frac{1+s}{2};\pi^2n^2\a^2\right)\nonumber\\
&=\frac{\pi^2n^{s/2}}{4\cos\left(\frac{1}{2}\pi s\right)}I_{-s/2}(2\pi n\a)-\frac{\pi^{(s-1)/2}\a^{-1+s/2}n^{s-1}}{4}
\Gamma\left(\frac{1-s}{2}\right)
{}_1F_{2}\left(1;\frac{1}{2},\frac{1+s}{2};\pi^2n^2\a^2\right).\notag
\end{align}
Another application of \eqref{prudint} yields
\begin{align}\label{-1deri}
&PV\int_{0}^{\infty}\frac{x^{1+s/2}K_{s/2}(2\pi\a x)}{x^2-n^2}\, dx
=\frac{\pi^2n^{s/2}}{4\sin\left(\frac{1}{2}\pi s\right)}\cot\left(\dfrac{\pi}{2}(2+s)\right)I_{s/2}(2\pi n\a)\nonumber\\
&\quad+\lim_{\nu\to s/2}\bigg\{2^{s/2-2}(2\pi\a)^{-s/2}\G\left(\frac{s}{4}
+\frac{\nu}{2}\right)\G\left(\frac{s}{4}-\frac{\nu}{2}\right)\notag\\
&\quad\times{}_1F_{2}\left(1;1-\frac{s}{4}-\frac{\nu}{2},1-\frac{s}{4}
+\frac{\nu}{2};\pi^2n^2\a^2\right)\notag\\&\quad-\frac{\pi^2n^{s/2}}{4\sin(\nu\pi)}
\cot\left(\frac{\pi}{2}\left(2+\frac{s}{2}-\nu\right)\right)
I_{-\nu}(2\pi n\a)\bigg\}\nonumber\\
&=:\frac{\pi^2n^{s/2}\cot\left(\frac{\pi s}{2}\right)}{4\sin\left(\frac{\pi s}{2}\right)}I_{s/2}(2\pi n\a)+L,
\end{align}
where we have denoted the limit by $L$. Note that
\begin{align*}
L&=\frac{1}{4}\lim_{\nu\to s/2}\G\left(\frac{s}{4}-\frac{\nu}{2}\right)\bigg\{(\pi\a)^{-s/2}
\G\left(\frac{s}{4}+\frac{\nu}{2}\right){}_1F_{2}\left(1;1-\frac{s}{4}-\frac{\nu}{2},1-\frac{s}{4}
+\frac{\nu}{2};\pi^2n^2\a^2\right)\notag\\&\quad-\frac{\pi n^{s/2}}{\sin(\nu\pi)}\cos\left(\pi\left(\frac{s}{4}-\frac{\nu}{2}\right)\right)
\G\left(1-\frac{s}{4}+\frac{\nu}{2}\right)I_{-\nu}(2\pi n \a)\bigg\}\nonumber\\
&=\frac{1}{4}\lim_{\nu\to s/2}\frac{1}{\left(\frac{s}{4}-\frac{\nu}{2}\right)}
\bigg\{(\pi\a)^{-s/2}\G\left(\frac{s}{4}+\frac{\nu}{2}\right){}_1F_{2}
\left(1;1-\frac{s}{4}-\frac{\nu}{2},1-\frac{s}{4}+\frac{\nu}{2};\pi^2n^2\a^2\right)\nonumber\\
&\quad-\frac{\pi n^{s/2}}{\sin(\nu\pi)}\cos\left(\pi\left(\frac{s}{4}-\frac{\nu}{2}\right)\right)\G\left(1-\frac{s}{4}
+\frac{\nu}{2}\right)I_{-\nu}(2\pi n \a)\bigg\},
\end{align*}
where we multiplied the expression on the right side of the first equality above by $\left(\frac{s}{4}-\frac{\nu}{2}\right)$ and used the functional equation of the Gamma function.

The last expression inside the limit symbol is of the form $\frac{0}{0}$, since
\begin{align*}
&\lim_{\nu\to s/2}\bigg\{(\pi\a)^{-s/2}\G\left(\frac{s}{4}+\frac{\nu}{2}\right){}_1F_{2}
\left(1;1-\frac{s}{4}-\frac{\nu}{2},1-\frac{s}{4}+\frac{\nu}{2};\pi^2n^2\a^2\right)\nonumber\\
&\quad\quad\quad\quad\quad\quad\quad\quad-\frac{\pi n^{s/2}}{\sin(\nu\pi)}\cos\left(\pi\left(\frac{s}{4}-\frac{\nu}{2}\right)\right)\G\left(1-\frac{s}{4}+
\frac{\nu}{2}\right)I_{-\nu}(2\pi n \a)\bigg\}\nonumber\\
&=(\pi\a)^{-s/2}\G\left(\frac{s}{2}\right){}_0F_{1}\left(-;1-\frac{s}{2};\pi^2n^2\a^2\right)-\frac{\pi
  n^{s/2}}{\sin\left(\frac{1}{2}\pi s\right)}I_{-s/2}(2\pi n \a)\nonumber\\
&=0,
\end{align*}
as can be seen by applying the reflection formula \eqref{ref} and the definition \cite[p.~911, formula \textbf{8.406, nos.~1-2}]{grn}
\begin{equation}\label{defbesi}
I_{\nu}(w):=
\begin{cases}
e^{-\pi\nu i/2}J_{\nu}(e^{\pi i/2}w), & \text{if $-\pi<$ arg $w\leq\frac{1}{2}\pi$,}\\
e^{3\pi\nu i/2}J_{\nu}(e^{-3\pi i/2}w), & \text{if $\frac{1}{2}\pi<$ arg $w\leq \pi$},
\end{cases}
\end{equation}
where $J_{\nu}(w)$ is defined in \eqref{sumbesselj}. Thus L'Hopital's rule yields
{\allowdisplaybreaks\begin{align}\label{0deri}
L&=-\frac{1}{2}\lim_{\nu\to s/2}\frac{d}{d\nu}\bigg\{(\pi\a)^{-s/2}
\G\left(\frac{s}{4}+\frac{\nu}{2}\right){}_1F_{2}\left(1;1-\frac{s}{4}-\frac{\nu}{2},1-\frac{s}{4}
+\frac{\nu}{2};\pi^2n^2\a^2\right)\nonumber\\
&\quad\quad\quad\quad\quad\quad\quad\quad-\frac{\pi n^{s/2}}{\sin(\nu\pi)}\cos\left(\pi\left(\frac{s}{4}-\frac{\nu}{2}\right)\right)
\G\left(1-\frac{s}{4}+\frac{\nu}{2}\right)I_{-\nu}(2\pi n \a)\bigg\}\nonumber\\
&=-\frac{1}{2}\lim_{\nu\to s/2}\bigg[(\pi\a)^{-s/2}\bigg\{\G\left(\frac{s}{4}+\frac{\nu}{2}\right)
\frac{d}{d\nu}{}_1F_{2}\left(1;1-\frac{s}{4}-\frac{\nu}{2},1-\frac{s}{4}+\frac{\nu}{2};\pi^2n^2\a^2\right)\nonumber\\
&\quad\quad\quad\quad\quad\quad\quad\quad\quad+\frac{1}{2}\G'\left(\frac{s}{4}+\frac{\nu}{2}\right){}_1F_{2}
\left(1;1-\frac{s}{4}-\frac{\nu}{2},1-\frac{s}{4}+\frac{\nu}{2};\pi^2n^2\a^2\right)\bigg\}\nonumber\\
&\quad\quad\quad\quad\quad-\pi n^{s/2}\frac{d}{d\nu}\bigg\{\frac{\cos\left(\pi\left(\frac{s}{4}-\frac{\nu}{2}\right)\right)}{\sin(\nu\pi)}
\G\left(1-\frac{s}{4}+\frac{\nu}{2}\right)I_{-\nu}(2\pi n \a)\bigg\}\bigg].
\end{align}}%
Using the series definition of ${}_1F_{2}$, we see that
{\allowdisplaybreaks\begin{align}\label{1deri}
&\left.\frac{d}{d\nu}{}_1F_{2}\left(1;1-\frac{s}{4}-\frac{\nu}{2},1-\frac{s}{4}+\frac{\nu}{2};
\pi^2n^2\a^2\right)\right|_{\nu=s/2}\nonumber\\
&=\sum_{m=0}^{\infty}(\pi n\a)^{2m}\frac{d}{d\nu}\left.\left(\frac{1}{\left(1-\frac{s}{4}-\frac{\nu}{2}\right)_m
\left(1-\frac{s}{4}+\frac{\nu}{2}\right)_m}\right)\right|_{\nu=s/2}\nonumber\\
&=\frac{1}{2}\sum_{m=0}^{\infty}\df{\left(\begin{matrix}-\psi\left(1-\frac{s}{4}-\frac{\nu}{2}\right)
+\psi\left(1+m-\frac{s}{4}-\frac{\nu}{2}\right)\\+\psi\left(1-\frac{s}{4}+\frac{\nu}{2}\right)
-\psi\left(1+m-\frac{s}{4}+\frac{\nu}{2}\right)\end{matrix}\right)}{\left(1-\frac{s}{4}-\frac{\nu}{2}\right)_m
\left(1-\frac{s}{4}+\frac{\nu}{2}\right)_m}(\pi n\a)^{2m}\Bigg|_{\nu=s/2}\nonumber\\
&=\frac{1}{2}\G\left(1-\frac{s}{2}\right)\sum_{m=0}^{\infty}\frac{\left(-\psi\left(1-\frac{s}{2}\right)
+\psi\left(1+m-\frac{s}{2}\right)-\gamma-\psi\left(1+m\right)\right)}{m!\G\left(1+m-\frac{s}2\right)}(\pi n\a)^{2m}\nonumber\\
&=-\frac{(\pi n\a)^{s/2}}{2}\G\left(1-\frac{s}{2}\right)\left(\psi\left(1-\frac{s}{2}\right)+\gamma\right)I_{-s/2}(2\pi n\a)\nonumber\\
&\quad+\frac{1}{2}\G\left(1-\frac{s}{2}\right)\sum_{m=0}^{\infty}\frac{\left(\psi\left(1+m-\frac{s}{2}\right)
-\psi\left(1+m\right)\right)}{m!\G\left(1+m-\frac{s}2\right)}(\pi n\a)^{2m},
\end{align}}%
where in the last step we used the fact $\psi(1)=-\gamma$, and also \eqref{defbesi} and \eqref{sumbesselj}. Next, as $\nu\to s/2$, the last expression in \eqref{0deri} simplifies to
{\allowdisplaybreaks\begin{align}\label{2deri}
&\left.\frac{d}{d\nu}\bigg\{\frac{\cos\left(\pi\left(\frac{s}{4}-\frac{\nu}{2}\right)\right)}{\sin(\nu\pi)}
\G\left(1-\frac{s}{4}+\frac{\nu}{2}\right)I_{-\nu}(2\pi n \a)\bigg\}\right|_{\nu=s/2}\nonumber\\
&=\lim_{\nu\to
  s/2}\bigg[\frac{\G\left(1-\frac{s}{4}+\frac{\nu}{2}\right)}{2\sin(\nu\pi)}I_{-\nu}(2\pi
n\a)\bigg\{\pi\left(\sin\left(\frac{\pi}{4}(s-2\nu)\right)
\right.\left.-2\cos\left(\frac{\pi}{4}(s-2\nu)\right)\cot(\nu\pi)\right)\nonumber\\
&\quad+\cos\left(\frac{\pi}{4}(s-2\nu)\right)\psi\left(1-\frac{s}{4}+\frac{\nu}{2}\right)\bigg\}
+\frac{\cos\left(\frac{\pi}{4}(s-2\nu)\right)\G\left(1-\frac{s}{4}+\frac{\nu}{2}\right)}{\sin(\nu\pi)}\frac{d}{d\nu}I_{-\nu}(2\pi n\a)\bigg]\nonumber\\
&=-\frac{I_{-s/2}(2\pi n\a)}{2\sin\left(\frac{1}{2}\pi s\right)}\left(\gamma+2\pi\cot\left(\frac{\pi
      s}{2}\right)\right)+\frac{1}{\sin\left(\frac{1}{2}\pi s\right)}\left.\frac{d}{d\nu}I_{-\nu}(2\pi n\a)\right|_{\nu=s/2}.
\end{align}}%
Substituting \eqref{1deri} and \eqref{2deri} in \eqref{0deri}, and using \eqref{ref} in the second step below, we find that
{\allowdisplaybreaks\begin{align}\label{3deri}
L&=-\frac{1}{2}\bigg[(\pi\a)^{-s/2}\bigg\{\G\left(\frac{s}{2}\right)\bigg(-\frac{(\pi n\a)^{s/2}}{2}\G\left(1-\frac{s}{2}\right)\left(\psi\left(1-\frac{s}{2}\right)+\gamma\right)I_{-s/2}(2\pi n\a)\nonumber\\
&\quad+\frac{1}{2}\G\left(1-\frac{s}{2}\right)\sum_{m=0}^{\infty}
\frac{\left(\psi\left(1+m-\frac{s}{2}\right)-\psi\left(1+m\right)\right)}{m!\G\left(1+m-\frac{s}2\right)}(\pi n\a)^{2m}\bigg)\nonumber\\
&\quad+\frac{1}{2}\G'\left(\frac{s}{2}\right)
{}_0F_{1}\left(-;1-\frac{s}{2};\pi^2n^2\a^2\right)\bigg\}\nonumber\\
&\quad-\pi n^{s/2}\bigg\{-\frac{I_{-s/2}(2\pi
  n\a)}{2\sin\left(\frac{1}{2}\pi s\right)}\left(\gamma+2\pi\cot\left(\frac{\pi
      s}{2}\right)\right)+\frac{1}{\sin\left(\frac{1}{2}\pi s\right)}\left.\frac{d}{d\nu}I_{-\nu}(2\pi n\a)\right|_{\nu=s/2}\bigg\}\bigg]\nonumber\\
&=-\frac{\pi}{2\sin\left(\frac{1}{2}\pi s\right)}\bigg[(\pi\a)^{-s/2}\bigg\{-\frac{(\pi n\a)^{s/2}}{2}\left(\psi\left(1-\frac{s}{2}\right)+\gamma\right)I_{-s/2}(2\pi n\a)\nonumber\\
&\quad+\frac{1}{2}\sum_{m=0}^{\infty}\frac{\left(\psi\left(1+m-\frac{s}{2}\right)
-\psi\left(1+m\right)\right)}{m!\G\left(1+m-\frac{s}2\right)}(\pi n\a)^{2m}+\frac{(\pi n\a)^{s/2}}{2}\psi\left(\frac{s}{2}\right)I_{-s/2}(2\pi n\a)\bigg\}\nonumber\\
&\quad-n^{s/2}\bigg\{-\frac{1}{2}I_{-s/2}(2\pi n\a)\left(\gamma+2\pi\cot\left(\frac{\pi s}{2}\right)\right)+\left.\frac{d}{d\nu}I_{-\nu}(2\pi n\a)\right|_{\nu=s/2}\bigg\}\bigg]\nonumber\\
&=-\frac{\pi}{2\sin\left(\frac{1}{2}\pi s\right)}\bigg[\frac{n^{s/2}}{2}I_{-s/2}(2\pi n\a)\left(\psi\left(\frac{s}{2}\right)-\psi\left(1-\frac{s}{2}\right)+2\pi\cot\left(\frac{\pi s}{2}\right)\right)\nonumber\\
&\quad+\frac{(\pi\a)^{-s/2}}{2}\sum_{m=0}^{\infty}\frac{\left(\psi\left(1+m-\frac{s}{2}\right)
-\psi\left(1+m\right)\right)}{m!\G\left(1+m-\frac{s}2\right)}(\pi n\a)^{2m}\nonumber\\
&\quad-n^{s/2}\left.\frac{d}{d\nu}I_{-\nu}(2\pi n\a)\right|_{\nu=s/2}\bigg].
\end{align}}%
The reflection formula \eqref{ref} implies that
\begin{equation}\label{refvar}
\psi\left(\frac{s}{2}\right)-\psi\left(1-\frac{s}{2}\right)=-\pi\cot\left(\frac{\pi s}{2}\right).
\end{equation}
Also, for $\nu\neq k$ or $k+\tfrac{1}{2}$, where $k$ is an integer, we have \cite[p.~929, formula \textbf{8.486.4}]{grn}
\begin{equation}\label{deribess}
\frac{\partial I_{\nu}(w)}{\partial\nu}=I_{\nu}(w)\log\left(\frac{w}{2}\right)-\sum_{m=0}^{\infty}
\frac{\psi(\nu+m+1)}{m!\G\left(\nu+m+1\right)}\left(\frac{w}{2}\right)^{\nu+2m}.
\end{equation}
Hence, employing \eqref{refvar} and \eqref{deribess} in \eqref{3deri}, we obtain
{\allowdisplaybreaks\begin{align}\label{4deri}
L&=-\frac{\pi}{2\sin\left(\frac{1}{2}\pi s\right)}\bigg[\frac{\pi
  n^{s/2}\cot\left(\frac{\pi s}{2}\right)}{2}I_{-s/2}(2\pi n\a)\notag\\
&\quad+\frac{(\pi\a)^{-s/2}}{2}\sum_{m=0}^{\infty}\frac{\left(\psi\left(1+m-\frac{s}{2}\right)
-\psi\left(1+m\right)\right)}{m!\G\left(1+m-\frac{s}2\right)}(\pi n\a)^{2m}\nonumber\\
&\quad-n^{s/2}\bigg\{-I_{-s/2}(2\pi n\a)\log(\pi n\a)+\sum_{m=0}^{\infty}\frac{\psi\left(1+m-\frac{s}{2}\right)}{m!\G\left(1+m-\frac{s}{2}\right)}(\pi n\a)^{2m-s/2}\bigg\}\bigg]\nonumber\\
&=-\frac{\pi}{4\sin\left(\frac{1}{2}\pi s\right)}\bigg[n^{s/2}I_{-s/2}(2\pi n\a)\left(\pi\cot\left(\frac{\pi s}{2}\right)+2\log(\pi n\a)\right)\nonumber\\
&\quad-(\pi\a)^{-s/2}\sum_{m=0}^{\infty}\frac{\left(\psi\left(1+m-\frac{s}{2}\right)
+\psi\left(1+m\right)\right)}{m!\G\left(1+m-\frac{s}2\right)}(\pi n\a)^{2m}\bigg].
\end{align}}%
Now substitute \eqref{4deri} in \eqref{-1deri} and use the definition \cite[p.~78, equation (6)]{watsonbessel}
\begin{equation}\label{kdef}
K_{\nu}(w)=\frac{\pi}{2}\frac{I_{-\nu}(w)-I_{\nu}(w)}{\sin(\nu\pi)}
\end{equation}
to deduce that
\begin{align}\label{5deri}
&PV\int_{0}^{\infty}\frac{x^{1+s/2}K_{s/2}(2\pi\a x)}{x^2-n^2}\, dx\nonumber\\
&=-\frac{\pi n^{s/2}}{2}K_{s/2}(2\pi n\a)\cot\left(\frac{\pi s}{2}\right)-\frac{\pi n^{s/2}}{2\sin\left(\frac{\pi s}{2}\right)}I_{-s/2}(2\pi n\a)\log(\pi n\a)\nonumber\\
&\quad+\frac{\pi(\pi\a)^{-s/2}}{4\sin\left(\frac{1}{2}\pi s\right)}\sum_{m=0}^{\infty}\frac{\left(\psi\left(1+m-\frac{s}{2}\right)
+\psi\left(1+m\right)\right)}{m!\G\left(1+m-\frac{s}2\right)}(\pi n\a)^{2m}.
\end{align}%
Hence, from \eqref{-2deri} and \eqref{5deri},
{\allowdisplaybreaks\begin{align}\label{intrepr}
&PV\int_{0}^{\infty}x^{2-s/2}K_{s/2}(2\pi\a x)\left(\frac{n^{s-1}-x^{s-1}}{n^2-x^2}\right)\, dx\nonumber\\
&=\frac{\pi^2n^{s/2}}{4\cos\left(\frac{1}{2}\pi s\right)}I_{-s/2}(2\pi n\a)-\frac{\pi^{\frac{s-1}{2}}\a^{-1+s/2}n^{s-1}}{4}\Gamma\left(\frac{1-s}{2}\right)
{}_1F_{2}\left(1;\frac{1}{2},\frac{1+s}{2};\pi^2n^2\a^2\right)\nonumber\\
&\quad-\frac{\pi n^{s/2}}{2}K_{s/2}(2\pi n\a)\cot\left(\frac{\pi
    s}{2}\right)-\frac{\pi n^{s/2}}{2\sin\left(\frac{1}{2}\pi s\right)}I_{-s/2}(2\pi n\a)\log(\pi n\a)\nonumber\\
&\quad+\frac{\pi(\pi\a)^{-s/2}}{4\sin\left(\frac{1}{2}\pi s\right)}\sum_{m=0}^{\infty}\frac{\left(\psi\left(1+m-\frac{s}{2}\right)
+\psi\left(1+m\right)\right)}{m!\G\left(1+m-\frac{s}2\right)}(\pi n\a)^{2m}.
 \end{align}}%
From \cite[p.~349, equation (3)]{watsonbessel},
\begin{align}\label{lommelcon}
&S_{\nu-1,\nu}(w)=-2^{\nu-2}\pi\G(\nu)Y_{\nu}(w)\\
&\quad+\frac{w^{\nu}}{4}\G(\nu)\sum_{m=0}^{\infty}\frac{(-1)^m(w/2)^{2m}}{m!\G(\nu+m+1)}
\left\{2\log\left(\frac{1}{2}w\right)-\psi(\nu+m+1)-\psi(m+1)\right\},\notag
\end{align}
where $S_{\nu-1,\nu}(w)$ is an exceptional case of the Lommel function
$S_{\mu,\nu}(w)$ defined in \eqref{lommeldef1} and \eqref{lommeldefint}. Dixon and Ferrar \cite[p.~38, equation (3.11)]{dixfer1}
denote the  infinite series on the right-hand side of \eqref{lommelcon} by $Y_{/\nu}(w)$.

Let $w=2\pi in\a$ and $\nu=-\frac{1}{2}s$  in \eqref{lommelcon}, and then use
\eqref{defbesi} and the relation \cite[p.~233, equation (9.27)]{temme}
\begin{equation}\label{yimag}
Y_{\nu}(iw)=e^{\frac{1}{2}(\nu+1)\pi
  i}I_{\nu}(w)-\frac{2}{\pi}e^{-\frac{1}{2}\nu\pi i}K_{\nu}(w),
\end{equation}
which is valid for $-\pi<\arg w\leq\frac{1}{2}\pi$,
to  find after simplification that
\begin{multline}\label{repser}
\sum_{m=0}^{\infty}\frac{\left(\psi\left(1+m-\frac{s}{2}\right)
+\psi\left(1+m\right)\right)}{m!\G\left(1+m-\frac{s}2\right)}(\pi n\a)^{2m}
=2(\pi n\a)^{s/2}I_{-s/2}(2\pi n\a)\log(\pi n\a)\\+2e^{i\pi s/2}(\pi
n\a)^{s/2}K_{s/2}(2\pi n\a)-\frac{4(2\pi
  in\a)^{s/2}}{\G\left(-\frac{1}{2}s\right)}
S_{-1-s/2,-s/2}(2\pi in\a).
\end{multline}
Also, let $\mu=\frac{s}{2}-2, \nu=\frac{s}2$ and $w=2\pi in\a$ in
\eqref{lommeldef1} and \eqref{lommeldef2},
 then substitute the latter into the former, use \eqref{yimag}, and simplify to
 obtain
\begin{multline*}
{}_1F_{2}\left(1;\frac{1}{2},\frac{1+s}{2};\pi^2n^2\a^2\right)
=(2\pi in\a)^{1-s/2}(1-s)
S_{s/2-2,s/2}(2\pi in\a)\\
-\pi^{(3-s)/2}i^{1-s/2}\G\left(\frac{s+1}{2}\right)
\left(ie^{i\pi s/4}I_{s/2}(2\pi n\a)
-\frac{2}{\pi}e^{-i\pi s/4}K_{s/2}(2\pi n\a)\right).
\end{multline*}
Hence, using \eqref{ref2}, we see that
\begin{align}\label{rephy}
&-\frac{\pi^{(s-1)/2}\a^{-1+s/2}n^{s-1}}{4}
\Gamma\left(\frac{1-s}{2}\right){}_1F_{2}\left(1;\frac{1}{2},\frac{1+s}{2};\pi^2n^2\a^2\right)\notag\\
&=-\frac{\pi^2n^{s/2}}{4\cos\left(\frac{1}{2}\pi s\right)}I_{s/2}(2\pi
n\a)-\frac{i\pi n^{s/2}e^{-i\pi s/2}}{2\cos\left(\frac{1}{2}\pi s\right)}K_{s/2}(2\pi n\a)\notag\\
&\quad-i^{1-\frac{s}{2}}\sqrt{\pi}\left(\frac{n}{2}\right)^{s/2}
\G\left(\frac{3-s}{2}\right)
S_{s/2-2,s/2}(2\pi in\a).
\end{align}
Finally substituting \eqref{repser} and \eqref{rephy} in \eqref{intrepr},
using \eqref{ref} and \eqref{kdef}, and
 simplifying, we obtain
\begin{align}\label{tterm}
PV\int_{0}^{\infty}x^{2-s/2}K_{s/2}(2\pi\a x)&\left(\frac{n^{s-1}-x^{s-1}}{n^2-x^2}\right)\, dx
=(2in)^{s/2}\G\left(1+\frac{s}{2}\right)S_{-1-s/2,-s/2}(2\pi in\a)\notag\\
&\quad-i^{1-s/2}\sqrt{\pi}\left(\frac{n}{2}\right)^{s/2}
\G\left(\frac{3-s}{2}\right)S_{-2+s/2,s/2}(2\pi in\a),
\end{align}
that is, the  integral in \eqref{tterm} can be written simply as a linear
combination of
two Lommel functions of imaginary arguments,
 one of which belongs to the exceptional case. (Note that we can replace
  the second subscript $-s/2$ of the first Lommel
  function in \eqref{tterm} by $s/2$, since the Lommel function
  $S_{\mu,\nu}(w)$ is an even function of $\nu$
   (see \cite[p.~348]{watsonbessel}).
From \eqref{ane}, \eqref{fterm}, \eqref{sterm}, and \eqref{tterm2}, we arrive
at \eqref{anee}. This completes the proof of Theorem \ref{newthmser}.
\end{proof}

   Since the integral evaluation \eqref{tterm} is
   new and has not been recorded in the tables of integrals
	such as \cite{grn} and \cite{prud}, we record it below as a theorem
        and rewrite it in terms of the modified Lommel function.

	\begin{theorem}\label{newinteval}
	Let $-2< \sigma<3$ and $y, \a>0$. Let the modified Lommel
        function $T_{\mu,\nu}(y)$ be defined in \eqref{modlf}. Then
\begin{align}\label{tterm2}
&PV\int_{0}^{\infty}x^{2-s/2}K_{s/2}(2\pi\a
x)\left(\frac{y^{s-1}-x^{s-1}}{y^2-x^2}\right)\, dx\\
&=(2y)^{s/2}\G\left(1+\frac{s}{2}\right)T_{-1-s/2,-s/2}(2\pi \a y)-\sqrt{\pi}\left(\frac{y}{2}\right)^{s/2}\G\left(\frac{3-s}{2}\right)T_{-2+s/2,s/2}(2\pi\a y).\notag
\end{align}
\end{theorem}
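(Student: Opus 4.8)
The plan is to deduce Theorem \ref{newinteval} directly from the intermediate computations carried out in the proof of Theorem \ref{newthmser}. The key observation is that the identity \eqref{tterm} established in that proof is literally the statement we want, except written with $S_{\mu,\nu}(2\pi i n\a)$ (Lommel functions of imaginary argument) rather than with the modified Lommel functions $T_{\mu,\nu}$, and with the positive integer $n$ in place of an arbitrary positive real $y$. So the two tasks are: (i) to check that nothing in the derivation of \eqref{tterm} used integrality of $n$, so that the same chain of steps goes through verbatim with $n$ replaced by a real parameter $y>0$; and (ii) to translate the $S_{\mu,\nu}(iy)$'s into $T_{\mu,\nu}(y)$'s using the definition \eqref{modlf}, namely $T_{\mu,\nu}(y)=-i^{1-\mu}S_{\mu,\nu}(iy)$.

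First I would revisit the evaluation of the principal-value integral in \eqref{tterm}. That integral was split into the two pieces $y^{s-1}\,PV\int_0^\infty x^{2-s/2}K_{s/2}(2\pi\a x)/(y^2-x^2)\,dx$ and $PV\int_0^\infty x^{1+s/2}K_{s/2}(2\pi\a x)/(x^2-y^2)\,dx$, each of which was computed via \cite[p.~347, formula \textbf{2.16.3.18}]{prud}, i.e.\ equation \eqref{prudint}, which is valid for $y>0$, $\textup{Re}\,c>0$, $\textup{Re}\,a>|\textup{Re}\,\nu|$ — conditions that only require $y$ to be a positive real, not an integer. The subsequent manipulations — the $0/0$ limit evaluated by L'Hôpital's rule in \eqref{0deri}–\eqref{4deri}, the use of the reflection formula \eqref{ref}, the derivative formula \eqref{deribess} for $I_\nu$ with respect to order, the relations \eqref{defbesi}, \eqref{yimag}, \eqref{kdef}, and the Lommel connection formulas \eqref{lommeldef1}, \eqref{lommeldef2}, \eqref{lommelcon} — are all identities in a real variable and carry over with $n\mapsto y$ untouched. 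Thus \eqref{tterm} holds with $n$ replaced by $y>0$: that is exactly \eqref{tterm2} once \eqref{modlf} is invoked. One should also record the range of validity: \eqref{prudint} requires $\textup{Re}\,a>|\textup{Re}\,\nu|$ with $a=3-s/2$ or $a=2+s/2$ and $\nu=s/2$, which together with the convergence of the $K$-Bessel integral near $x=\infty$ forces $-2<\sigma<3$, matching the hypothesis of Theorem \ref{newinteval}.

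The translation step is routine: with $\mu=-1-s/2$ we get $i^{1-\mu}=i^{2+s/2}=-i^{s/2}$, so $S_{-1-s/2,-s/2}(2\pi i\a y)=-(-i^{s/2})^{-1}T_{-1-s/2,-s/2}(2\pi\a y)=\,\ldots$ — one simply checks that the powers of $i$ in \eqref{tterm} combine with those in \eqref{modlf} to produce the clean real coefficients $(2y)^{s/2}\G(1+\tfrac{s}{2})$ and $\sqrt{\pi}(y/2)^{s/2}\G(\tfrac{3-s}{2})$ displayed in \eqref{tterm2}. (In \eqref{tterm} the first Lommel function appears as $S_{-1-s/2,-s/2}$, but as noted there $S_{\mu,\nu}=S_{\mu,-\nu}$, so the second subscript may be taken to be $+s/2$ or $-s/2$ at will.) I do not anticipate a genuine obstacle here; the only mildly delicate point is bookkeeping the branch of $i^{1-\mu}$ and of $(2\pi i\a y)^{s/2}$ consistently — this is precisely what \eqref{defbesi} and \eqref{yimag}, restricted to $-\pi<\arg w\le\tfrac12\pi$, were set up to handle, and since $\a y>0$ the argument $w=2\pi\a y$ lies safely in that range. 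So the main content of the proof is the single remark that the derivation of \eqref{tterm} never used $n\in\mathbb{Z}$, followed by the substitution \eqref{modlf}; the ``hard part,'' such as it is, was already done inside the proof of Theorem \ref{newthmser}, and here it is only a matter of extracting and re-expressing it.
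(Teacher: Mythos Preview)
Your proposal is correct and matches the paper's approach exactly: the paper simply records \eqref{tterm} as a separate theorem, observing that the derivation inside the proof of Theorem \ref{newthmser} never used integrality of $n$, and rewrites the Lommel functions of imaginary argument via the definition \eqref{modlf} of $T_{\mu,\nu}$. Your additional remark deriving the range $-2<\sigma<3$ from the hypotheses of \eqref{prudint} is a useful detail that the paper does not spell out.
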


\textbf{Remark:} The Ramanujan--Guinand formula \cite[Theorem 1.2, Theorem 1.4]{transf}, containing
infinite series involving $\sigma_{s}(n)$ and the modified Bessel function
$K_{\nu}(z)$, is similar to the series occuring in the Fourier expansion
of non-holomorphic Eisenstein series \cite{cohen}, \cite[p.~243]{lewzag}. In view of the fact that the modular relation involving series of Lommel functions
appearing in Lewis and Zagier's work \cite[p.~217]{lewzag} characterizes the Fourier coefficients of even Maass forms \cite[p.~216, Proposition 3]{lewzag}, we surmise that the modular transformation consisting of the series involving $\sigma_{s}(n)$ and the modified Lommel
function that we have obtained in Theorem \ref{newthmser} may also have some important implications in the theory of Maass forms.

\subsection{An Integral Representation for $f(x,s)$ Defined in \eqref{fxzsp} and an Equivalent Formulation of Theorem \ref{newthm}}\label{iref}

Some elegant integral transformations involving the function $\varphi(x, s)$
defined in \eqref{plus} or, in particular, the function $\varphi(x,
s)-x^{s/2-1}\zeta(s)/(2\pi)$, were established in \cite[Theorems 6.3,
6.4]{dixitmoll}. In this subsection, we derive an equivalent form of Theorem
\ref{newthm} which yields yet another integral transformation involving this
function. These integrals also involve the Lommel function $S_{\nu,\nu}(w)$.

We first derive an integral representation for $f(x,s)$ defined in
\eqref{fxzsp}.
\begin{theorem}\label{fxsintrep}
Let $f(x,s)$ and $\varphi(x, s)$ be defined in \eqref{fxzsp} and \eqref{plus},
respectively.
Then, for $-1<\sigma<1$,
\begin{align}\label{fxzspint}
f(x,s)=\frac{2}{\pi}x^{-s/2}\int_{0}^{\infty}\frac{t^{1+s/2}}{x^2+t^2}\left(\varphi(t,
  s)-\frac{\zeta(s)}{2\pi}t^{s/2-1}\right)\, dt.
\end{align}
\end{theorem}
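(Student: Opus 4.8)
The plan is to start from the definition of $f(x,s)$ as a Mellin–Barnes integral, namely the representation $f(x,s)=\mathfrak{I}(x,s)$ proved in Theorem \ref{newthm} (equation \eqref{tbp}), where
\begin{equation*}
f(x,s)=\frac{1}{2\pi i}\int_{(c)}\frac{\zeta(1-z-s/2)\zeta(1-z+s/2)}{\sin(\pi z)-\sin\left(\tfrac12\pi s\right)}x^{-z}\,dz
\end{equation*}
for $\pm\tfrac12\sigma<c=\Re z<1\pm\tfrac12\sigma$. Separately I would compute the Mellin transform, in the variable $t$, of the function
\begin{equation*}
g(t,s):=\varphi(t,s)-\frac{\zeta(s)}{2\pi}t^{s/2-1}.
\end{equation*}
From the identity $\Lambda(z,s)=\Phi(z,s)$ of \eqref{one}, i.e. $z^{-s/2}\varphi(z,s)=\Phi(z,s)$ with $\Phi$ given by \eqref{Phi}, one sees that $\varphi(t,s)$ has an explicit closed form built from $-(2\pi t)^{-s}\Gamma(s)\zeta(s)$, $\zeta(s)/(2\pi t)$, $-\tfrac12\zeta(1+s)$, and $\tfrac{t}{\pi}\sum_{n\ge1}\sigma_{-s}(n)/(t^2+n^2)$; subtracting $\tfrac{\zeta(s)}{2\pi}t^{s/2-1}$ is designed to remove a term that would otherwise spoil convergence of the Mellin transform. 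The plan is to compute $\int_0^\infty g(t,s)t^{w-1}\,dt$ termwise, using the standard evaluations $\int_0^\infty t^{w-1}/(t^2+n^2)\,dt$ (a Beta integral giving $\tfrac{\pi}{2}n^{w-2}/\sin(\tfrac12\pi w)$ in a suitable strip) together with $\sum_n \sigma_{-s}(n)n^{w-2}=\zeta(2-w)\zeta(2-w+s)$, reassembling to get the Mellin transform of $g$ in terms of $\zeta$'s, trigonometric factors, and $\Gamma$'s, valid in some vertical strip.

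The second step is to recognize the right-hand side of \eqref{fxzspint} as a Mellin convolution. Writing $h(t):=t^{1+s/2}/(1+t^2)$ and noting $\int_0^\infty \frac{t^{1+s/2}}{x^2+t^2}g(t,s)\,dt = x^{s/2}\int_0^\infty h(t/x)\,g(t,s)\,\frac{dt}{t}$ after the substitution $t\mapsto xt$ (tracking powers of $x$ carefully), the claimed formula becomes
\begin{equation*}
f(x,s)=\frac{2}{\pi}\int_0^\infty h\!\left(\frac{t}{x}\right)g(t,s)\,\frac{dt}{t}.
\end{equation*}
By the Mellin convolution identity \eqref{melconv}, the right side equals $\frac{1}{2\pi i}\int_{(k)}\mathfrak{H}(w)\mathfrak{G}(w)x^{-w}\,dw$ where $\mathfrak{H}$ is the Mellin transform of $\tfrac{2}{\pi}h$ and $\mathfrak{G}$ that of $g$; here $\mathfrak{H}(w)=\tfrac{2}{\pi}\int_0^\infty t^{w-1}\cdot t^{1+s/2}/(1+t^2)\,dt$, which is again a Beta-type integral evaluating to a ratio of $\sin$'s. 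Multiplying $\mathfrak{H}(w)$ and $\mathfrak{G}(w)$ and simplifying the product of trigonometric and $\Gamma$-factors — using the reflection formula \eqref{ref}, the variant \eqref{ref2}, and the identity $\sin(\pi z)-\sin(\tfrac12\pi s)=2\sin\!\big(\tfrac12(\pi z-\tfrac12\pi s)\big)\cos\!\big(\tfrac12(\pi z+\tfrac12\pi s)\big)$ — I expect the integrand to collapse exactly to $\dfrac{\zeta(1-w-s/2)\zeta(1-w+s/2)}{\sin(\pi w)-\sin(\tfrac12\pi s)}x^{-w}$, matching the Mellin–Barnes representation of $f(x,s)$. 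Checking that the strips of validity overlap (so the lines of integration can be taken common, or shifted across no poles) finishes the identification.

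The main obstacle will be bookkeeping the trigonometric gymnastics in the last step: both $\mathfrak{H}(w)$ and $\mathfrak{G}(w)$ carry denominators of the form $\sin$ of half-integer-shifted arguments, and showing that their product telescopes to the single factor $1/(\sin(\pi w)-\sin(\tfrac12\pi s))$ requires repeated use of product-to-sum formulas and the duplication formula \eqref{dup}; a sign error or a misplaced shift by $s/2$ is easy to make. A secondary technical point is justifying the termwise Mellin transform of $g(t,s)$: the series $\tfrac{t}{\pi}\sum_n\sigma_{-s}(n)/(t^2+n^2)$ is only conditionally controlled, so one must work in a strip where $\Re w$ is small enough for $\sum_n\sigma_{-s}(n)n^{w-2}$ to converge absolutely and the subtracted term $\tfrac{\zeta(s)}{2\pi}t^{s/2-1}$ genuinely cancels the otherwise-divergent contribution near $t=\infty$; this is exactly why the hypothesis $-1<\sigma<1$ appears. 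Once these are in hand, the equivalence of Theorem \ref{newthm} with its integral-transform reformulation follows by substituting \eqref{fxzspint} into the first equality of \eqref{ane}.
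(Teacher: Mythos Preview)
Your strategy is essentially the paper's: write $f(x,s)$ via its Mellin--Barnes integral \eqref{tbp}, factor the kernel, and recognise the result as a Mellin convolution \eqref{melconv}. The execution in the paper, however, is considerably shorter than you anticipate. The Mellin transform of $g(t,s)=\varphi(t,s)-\tfrac{\zeta(s)}{2\pi}t^{s/2-1}$ is not computed termwise from \eqref{Phi}; it is simply quoted as \eqref{omegamellin1} (from \cite{dixitmoll}), giving $\mathfrak{G}(z)=\dfrac{\zeta(1-z+s/2)\zeta(1-z-s/2)}{2\cos\!\big(\tfrac{\pi}{2}(z+\tfrac{s}{2})\big)}$ directly. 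Then the only trigonometric identity needed is the single factorisation
\[
\sin(\pi z)-\sin\!\left(\tfrac{\pi s}{2}\right)=2\sin\!\left(\tfrac{\pi}{2}\Big(z-\tfrac{s}{2}\Big)\right)\cos\!\left(\tfrac{\pi}{2}\Big(z+\tfrac{s}{2}\Big)\right),
\]
after which $\mathfrak{H}(z)=\pi\big/\sin\!\big(\tfrac{\pi}{2}(z-\tfrac{s}{2})\big)$ is read off from \eqref{sinmell} and the convolution \eqref{melconv} gives \eqref{fxzspint} immediately. No duplication formula or repeated product-to-sum manipulations are needed; the ``gymnastics'' you worry about collapse to one line.

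Your plan to obtain $\mathfrak{G}$ termwise from \eqref{Phi} would hit a genuine snag: the individual power terms $t^{-s/2}$ and $t^{s/2}$ in $g(t,s)$ have no Mellin transform in any strip, and the series term $\tfrac{t^{1+s/2}}{\pi}\sum_n\sigma_{-s}(n)/(t^2+n^2)$ has a termwise Mellin transform only for $\Re w<-|\sigma|/2$, which is disjoint from the strip $|\sigma|/2<\Re w<1-|\sigma|/2$ where the Mellin transform of $g$ actually lives. You would then have to argue by analytic continuation and match pole contributions to account for the stray powers --- doable, but the paper avoids all of this by invoking \eqref{omegamellin1}.
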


\begin{proof}
Note that from \cite[equation (6.9)]{dixitmoll}, for $\pm\tf12\sigma<c=
\text{Re }z<1\pm\tf12\sigma$,
\begin{align}\label{omegamellin1}
\frac{1}{2\pi i}\int_{c-i\infty}^{c+i\infty}\frac{\zeta\left(1-z+\frac{1}{2}s\right)\zeta\left(1-z-\frac{1}{2}s\right)}
{2\cos\left(\frac{1}{2}\pi\left(z+\frac{1}{2}s\right)\right)}x^{-z}\, dz=\varphi(x, s)-\frac{\zeta(s)}{2\pi}x^{s/2-1}.
\end{align}
Let $s=1$ in \eqref{betamel}, replace $x$ by $x^2$ and $w$ by
$\frac{1}{2}z-\frac{1}{4}s$, and use \eqref{ref} to find that for
$\tf12\sigma<d'= \text{Re } z<2+\tf12\sigma$,
\begin{align}\label{sinmell}
\frac{1}{2\pi i}\int_{d'-i\infty}^{d'+i\infty}\frac{\pi}{\sin\left(\frac{1}{2}\pi\left(z-\frac{1}{2}s\right)\right)}x^{-z}\, dz=\frac{2x^{-s/2}}{1+x^2}.
\end{align}
Hence, for $\pm\tf12\sigma<c= \text{Re } z<1\pm\tf12\sigma$,
\begin{align}\label{genesisfxs}
f(x,s) &= \frac{1}{2 \pi i} \int_{c - i \infty}^{c + i \infty}
\frac{\zeta(1 - z -\tf12s) \zeta(1-z + \tf12s)}{\sin(\pi
  z)-\sin\left(\frac{1}{2}\pi s\right)}x^{-z} \, dz\nonumber\\
&=\frac{1}{\pi}\frac{1}{2 \pi i} \int_{c - i \infty}^{c + i \infty}
\frac{\zeta(1 - z -\tf12s) \zeta(1-z + \tf12s)}{2\cos\left(\frac{1}{2}\pi\left(z+\frac{1}{2}s\right)\right)}\frac{\pi}{\sin\left(\frac{1}{2}\pi
\left(z-\frac{1}{2}s\right)\right)}\, dz\nonumber\\
&=\frac{2}{\pi}x^{-s/2}\int_{0}^{\infty}\frac{t^{1+s/2}}{x^2+t^2}\left(\varphi(t, s)-\frac{\zeta(s)}{2\pi}t^{s/2-1}\right)\, dt,
\end{align}
where in the last step we used \eqref{melconv}.
\end{proof}

Now we are in a position to state and prove an equivalent formulation of
Theorem \ref{newthm}.
\begin{theorem}
Let $S_{\mu,\nu}(w)$ be the Lommel function defined in \eqref{lommeldef1}. For $\a,\b>0$, $\a\b=1$, and $-1<\sigma<1$,
{\allowdisplaybreaks\begin{align}\label{newthmlomint}
&\sqrt{\a}\int_{0}^{\infty}S_{s/2,s/2}(2\pi\a t)\left(\varphi(t, s)-\frac{\zeta(s)}{2\pi}t^{s/2-1}\right)\, dt\nonumber\\
&=\sqrt{\b}\int_{0}^{\infty}S_{s/2,s/2}(2\pi\b t)\left(\varphi(t, s)-\frac{\zeta(s)}{2\pi}t^{s/2-1}\right)\, dt\nonumber\\
&=\frac{2^{s/2-2}\pi^{-\frac{5}{2}}}{\G\left(\frac{1-s}{2}\right)}\int_{0}^{\infty}
\G\left(\frac{s-1+it}{4}\right)\G\left(\frac{s-1-it}{4}\right)
\G\left(\frac{-s+1+it}{4}\right)\G\left(\frac{-s+1-it}{4}\right)\nonumber\\
&\quad\quad\quad\quad\quad\times\Xi\left(\frac{t-is}{2}\right)\Xi\left(\frac{t+is}{2}\right)
\frac{\cos\left(\tfrac{1}{2}t\log\a\right)}{t^2+(s+1)^2}\, dt.
\end{align}}
\end{theorem}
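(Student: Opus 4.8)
The plan is to derive \eqref{newthmlomint} directly from Theorem \ref{newthm} and Theorem \ref{fxsintrep}, which together express both the series side $f(x,s)$ and, eventually, a Lommel-function integral. First I would substitute the integral representation \eqref{fxzspint} for $f(x,s)$ into the left-hand side of \eqref{ane}. This gives
\begin{align*}
\sqrt{\alpha}\int_{0}^{\infty}K_{s/2}(2\pi\alpha x)f(x,s)\,dx
=\frac{2\sqrt{\alpha}}{\pi}\int_{0}^{\infty}K_{s/2}(2\pi\alpha x)x^{-s/2}
\int_{0}^{\infty}\frac{t^{1+s/2}}{x^2+t^2}\left(\varphi(t,s)-\frac{\zeta(s)}{2\pi}t^{s/2-1}\right)dt\,dx.
\end{align*}
After justifying the interchange of the order of integration (absolute convergence for $-1<\sigma<1$, using the decay of $K_{s/2}$ and the known growth of $\varphi(t,s)-\zeta(s)t^{s/2-1}/(2\pi)$ from \eqref{plus} and \eqref{omegamellin1}), the inner $x$-integral becomes $\int_{0}^{\infty}x^{-s/2}K_{s/2}(2\pi\alpha x)(x^2+t^2)^{-1}\,dx$, which is precisely the type of integral evaluated in Theorem \ref{newinteval} (equivalently \eqref{tterm2}, with the roles of the variables set so that it contributes one Lommel function rather than the two appearing there). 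The key computational step is to recognize that the resulting kernel in $t$, after using \eqref{lommeldef2} and \eqref{lommeldef1} to collapse the hypergeometric pieces, is a constant multiple of $\sqrt{\alpha}\,S_{s/2,s/2}(2\pi\alpha t)$; tracking the Gamma-factor constants via \eqref{ref}, \eqref{ref2}, and \eqref{dup} yields the prefactor relating the two sides.

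Concretely, I expect the inner integral to evaluate, via \eqref{melkz} and the Mellin-transform manipulations already used in the proof of Theorem \ref{newthmser} (see the derivation leading to \eqref{tterm}), to something of the form $c(s)\,t^{s/2}\,\alpha^{?}\,S_{s/2,s/2}(2\pi\alpha t)$ plus possibly an elementary term that integrates to zero against $\varphi(t,s)-\zeta(s)t^{s/2-1}/(2\pi)$; the cleanest route is to insert the Mellin–Barnes representation of $K_{s/2}$ from \eqref{melkz}, swap the $x$-integration inside, evaluate $\int_0^\infty x^{-z-s/2}(x^2+t^2)^{-1}\,dx$ in closed form as a Beta-type integral, and then recognize the remaining contour integral in $z$, shifted appropriately, as the Mellin–Barnes form of $S_{s/2,s/2}$. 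This identifies $\frac{2\sqrt{\alpha}}{\pi}\int_0^\infty K_{s/2}(2\pi\alpha x)f(x,s)\,dx$ with $\sqrt{\alpha}\int_0^\infty S_{s/2,s/2}(2\pi\alpha t)(\varphi(t,s)-\zeta(s)t^{s/2-1}/(2\pi))\,dt$ up to the explicit constant $2^{s/2-2}\pi^{-5/2}/\Gamma((1-s)/2)$ times $(2\pi^3)$ — I would compute this constant once and for all by comparing with the $\Xi$-integral normalization in \eqref{ane}.

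With that identification in hand, the equality of the $\alpha$-expression and the $\beta$-expression in \eqref{newthmlomint} is immediate from the corresponding equality in Theorem \ref{newthm} (the two outer members of \eqref{ane}), and the equality with the $\Xi$-function integral follows by carrying the same constant through the right-hand side of \eqref{ane}. So the structure is: (i) substitute \eqref{fxzspint}; (ii) justify Fubini; (iii) evaluate the inner $x$-integral as a Lommel function using the Mellin–Barnes machinery behind Theorem \ref{newinteval}; (iv) read off the modular invariance and the $\Xi$-integral from Theorem \ref{newthm}. The main obstacle is step (iii): making the Lommel-function identification rigorous requires care with the exceptional/limiting cases of $S_{\mu,\nu}$ (since $\mu-\nu$ can hit odd negative integers as $s$ varies) and with the precise contour and residue bookkeeping when shifting lines of integration, exactly the kind of delicate Gamma-factor accounting that already appears in the proof of Theorem \ref{newthmser}. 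A secondary subtlety is verifying that any elementary (non-Lommel) leftover term genuinely integrates to zero against $\varphi(t,s)-\zeta(s)t^{s/2-1}/(2\pi)$, which I would check by appealing to \eqref{omegamellin1} and a contour shift showing the relevant Mellin integral has no residue contribution in the strip $-1<\sigma<1$.
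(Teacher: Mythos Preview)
Your overall architecture---substitute \eqref{fxzspint} into the left side of \eqref{ane}, interchange the order of integration, evaluate the inner $x$-integral as a Lommel function, and then read off the result from Theorem \ref{newthm}---is exactly the paper's proof. The Fubini justification and the final assembly are as you describe.

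Where you go astray is step (iii). The inner integral you need is
\[
\int_{0}^{\infty}\frac{x^{-s/2}K_{s/2}(2\pi\alpha x)}{x^{2}+t^{2}}\,dx,
\]
with $x^{2}+t^{2}$ in the denominator. Theorem \ref{newinteval} (i.e., \eqref{tterm2}) is \emph{not} this integral: it has $y^{2}-x^{2}$ in the denominator, is a principal-value integral, and produces a combination of two modified Lommel functions $T_{\mu,\nu}$, one of them in the exceptional case. That is why you find yourself anticipating difficulties with exceptional $S_{\mu,\nu}$ and possible ``leftover'' elementary terms---none of that arises here. The paper instead quotes the table evaluation \eqref{anotev},
\[
\int_{0}^{\infty}\frac{x^{-\nu}K_{\nu}(ux)}{x^{2}+y^{2}}\,dx
=\frac{\pi^{2}y^{-\nu-1}}{4\cos(\nu\pi)}\bigl(\mathbf{H}_{\nu}(uy)-Y_{\nu}(uy)\bigr),
\]
and then uses the identity \eqref{htfref}, $\mathbf{H}_{\nu}(w)-Y_{\nu}(w)=2^{1-\nu}\pi^{-1/2}\Gamma(\nu+\tfrac12)^{-1}S_{\nu,\nu}(w)$, to obtain a single Lommel function $S_{s/2,s/2}(2\pi\alpha t)$ directly. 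This yields \eqref{intmd1} in one clean step, with the constant coming out via \eqref{ref2}; no contour shifts, no residue bookkeeping, and no exceptional cases (here $\mu=\nu=s/2$, so $\mu-\nu+1=1$ and $\mu+\nu+1=s+1\neq 0$ in the strip). Your Mellin--Barnes alternative would eventually reach the same place, but the paper's route via \eqref{anotev} and \eqref{htfref} is far shorter and avoids every complication you flagged.
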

\begin{proof}
Using \eqref{fxzspint}, we write the extreme left side in \eqref{ane} as
\begin{align}\label{intmd}
&\sqrt{\alpha} \int_{0}^{\infty} K_{s/2}(2 \pi \alpha x) f(x,s)\, dx\nonumber\\
&=\frac{2\sqrt{\alpha}}{\pi}\int_{0}^{\infty} x^{-s/2}K_{s/2}(2 \pi \alpha x)\int_{0}^{\infty}\frac{t^{1+s/2}}{x^2+t^2}
\left(\varphi(t, s)-\frac{\zeta(s)}{2\pi}t^{s/2-1}\right)\, dt\, dx\nonumber\\
&=\frac{2\sqrt{\alpha}}{\pi}\int_{0}^{\infty}t^{1+s/2}\left(\varphi(t, s)-\frac{\zeta(s)}{2\pi}t^{s/2-1}\right)\int_{0}^{\infty} \frac{x^{-s/2}K_{s/2}(2 \pi \alpha x)}{x^2+t^2}\, dx\, dt.
\end{align}
The interchange of the order of integration given above is delicate and hence
we justify it below.

By Fubini's theorem \cite[p.~30, Theorem 2.2]{temme}, it suffices to show
that each of the two double integrals are absolutely convergent. We begin with the first one. Thus,
\begin{align*}
&\int_{0}^{\infty}\left| x^{-s/2}K_{s/2}(2 \pi \alpha x)\right|\int_{0}^{\infty}\left|\frac{t^{1+s/2}}{x^2+t^2}\left(\varphi(t, s)-\frac{\zeta(s)}{2\pi}t^{s/2-1}\right)\right|\, dt\, dx\nonumber\\
&=\int_{0}^{1}x^{-\sigma/2}\left| K_{s/2}(2 \pi \alpha x)\right|\int_{0}^{\infty}\frac{t^{1+\sigma/2}}{x^2+t^2}\left|\varphi(t, s)-\frac{\zeta(s)}{2\pi}t^{s/2-1}\right|\, dt\, dx\nonumber\\
&\quad+\int_{1}^{\infty}x^{-\sigma/2}\left| K_{s/2}(2 \pi \alpha x)\right|\int_{0}^{\infty}\frac{t^{1+\sigma/2}}{x^2+t^2}\left|\varphi(t, s)-\frac{\zeta(s)}{2\pi}t^{s/2-1}\right|\, dt\, dx\nonumber\\
&=:I_{1}(s,\a)+I_{2}(s,\a).
\end{align*}
Consider $I_{2}(s,\a)$ first. Note that
\begin{align*}
|I_2(s,\alpha)|&=\int_{1}^{\infty}x^{-\sigma/2}\left| K_{s/2}(2 \pi \alpha x)\right|\int_{0}^{\infty}\frac{t^{1+\sigma/2}}{x^2+t^2}\left|\varphi(t, s)-\frac{\zeta(s)}{2\pi}t^{s/2-1}\right|\, dt\, dx\nonumber\\
&\leq \int_{1}^{\infty}x^{-\sigma/2}\left| K_{s/2}(2 \pi \alpha x)\right|\int_{0}^{\infty}\frac{t^{1+\sigma/2}}{1+t^2}\left|\varphi(t, s)-\frac{\zeta(s)}{2\pi}t^{s/2-1}\right|\, dt\, dx.
\end{align*}
Now use the definition of $\varphi(t,s)$ in \eqref{plus}, the asymptotic
expansion of $K_{\nu}(z)$ from \eqref{asymbess2}, and the fact that $\sigma<1$
to see that the inner integral converges as $t\to\infty$. To
analyze the behavior of this integral as $t\to 0$, we observe from
\eqref{one} and \eqref{Lambda} that
\begin{align}\label{exppts}
\varphi(t,s)-\frac{\zeta(s)}{2 \pi}t^{s/2-1}  &= - \frac{\Gamma(s) \zeta(s)t^{-s/2}}{(2 \pi)^{s}}
 - \frac{t^{s/2}}{2} \zeta(s+1)  +
\frac{t^{s/2+1}}{\pi} \sum_{n=1}^{\infty} \frac{\sigma_{-s}(n)}{n^{2}+t^{2}}\nonumber\\
&= -\frac{\Gamma(s) \zeta(s)t^{-s/2}}{(2 \pi)^{s}} -
\frac{t^{s/2}}{2} \zeta(s+1)\nonumber\\
&\quad  +
\frac{t^{s/2+1}}{\pi} \sum_{n=0}^{\infty} (-1)^n\zeta(2n+2)\zeta(2n+2+s)t^{2n},
\end{align}
where the last equality holds for $|t|<1$. In the last step, we expanded
$1/(1+t^2/n^2)$ in a geometric series, interchanged the order of summation, and
then used \eqref{sz}. Now $\sigma>-1$ implies the convergence of this
integral
near $0$.
Thus,
\begin{equation}\label{boo}
\int_{0}^{\infty}\frac{t^{1+\sigma/2}}{1+t^2}\left|\varphi(t, s)-\frac{\zeta(s)}{2\pi}t^{s/2-1}\right|\, dt=O_{s}(1).
\end{equation}
 Using \eqref{asymbess2} and \eqref{boo}, we conclude that
$I_{2}(s,\alpha)$ converges.

Now consider $I_{1}(s,\alpha)$. Split the inner integral as
\begin{align}\label{i1salpha}
I_{1}(s,\a)&=\int_{0}^{1}x^{-\sigma/2}\left| K_{s/2}(2 \pi \alpha x)\right|\bigg\{\int_{0}^{x}\frac{t^{1+\sigma/2}}{x^2+t^2}\left|\varphi(t, s)-\frac{\zeta(s)}{2\pi}t^{s/2-1}\right|\, dt\nonumber\\
&\quad\quad\quad\quad\quad\quad\quad\quad\quad\quad\quad\quad+\int_{x}^{\infty}\frac{t^{1+\sigma/2}}{x^2+t^2}\left|\varphi(t, s)-\frac{\zeta(s)}{2\pi}t^{s/2-1}\right|\, dt\bigg\}\, dx\nonumber\\
&=:I_{3}(s,\a)+I_{4}(s,\a).
\end{align}
Using \eqref{exppts}, we see that
\begin{align}\label{bd1}
\int_{0}^{x}\frac{t^{1+\sigma/2}}{x^2+t^2}\left|\varphi(t, s)-\frac{\zeta(s)}{2\pi}t^{s/2-1}\right|\, dt=O_{s}\left(\frac{1}{x^2}\int_{0}^{x}t^{1+\sigma/2}\left(t^{\sigma/2}+t^{-\sigma/2}\right)\, dt\right).
\end{align}
Since $0<t<x<1$, if $0\leq\sigma<1$, then $t^{\sigma/2}+t^{-\sigma/2}=O\left(t^{-\sigma/2}\right)$, and so
\begin{align}\label{bd2}
\int_{0}^{x}\frac{t^{1+\sigma/2}}{x^2+t^2}\left|\varphi(t, s)-\frac{\zeta(s)}{2\pi}t^{s/2-1}\right|\, dt=O_{s}\left(\frac{1}{x^2}\int_{0}^{x}t\, dt\right)=O_{s}(1),
\end{align}
whereas if $-1<\sigma<0$, then $t^{\sigma/2}+t^{-\sigma/2}=O\left(t^{\sigma/2}\right)$, and hence
\begin{align*}
\int_{0}^{x}\frac{t^{1+\sigma/2}}{x^2+t^2}\left|\varphi(t, s)-\frac{\zeta(s)}{2\pi}t^{s/2-1}\right|\, dt=O_{s}\left(\frac{1}{x^2}\int_{0}^{x}t^{1+\sigma}\, dt\right)=O_{s}(x^{\sigma}).
\end{align*}
Thus, from \eqref{bd1} and \eqref{bd2},
\begin{align}\label{firstintegral}
I_{3}(s,\a)&=\int_{0}^{1}x^{-\sigma/2}\left| K_{s/2}(2 \pi \alpha x)\right|\int_{0}^{x}\frac{t^{1+\sigma/2}}{x^2+t^2}\left|\varphi(t, s)-\frac{\zeta(s)}{2\pi}t^{s/2-1}\right|\, dt\, dx\nonumber\\
&=O_{s}\left(\int_{0}^{1}\left(x^{\sigma/2}+x^{-\sigma/2}\right)\left| K_{s/2}(2 \pi \alpha x)\right|\, dx\right).
\end{align}
Lastly, it remains to consider $I_{4}(s,\a)$. Since $\displaystyle\frac{1}{x^2+t^2}\leq\frac{1}{t^2}$,
\begin{align}\label{secondintegral}
I_{4}(s,\a)\leq\int_{0}^{1}x^{-\sigma/2}\left| K_{s/2}(2 \pi \alpha x)\right|\int_{x}^{\infty}t^{\sigma/2-1}\left|\varphi(t, s)-\frac{\zeta(s)}{2\pi}t^{s/2-1}\right|\, dt\, dx.
\end{align}
Next, split the latter integral into two integrals, one having the limits of its inner integral from $t=x$ to $t=1$, and the other from $t=1$ to $t=\infty$. Since $\sigma<1$, it is easy to see that
\begin{equation*}
\int_{1}^{\infty}t^{\sigma/2-1}\left|\varphi(t, s)-\frac{\zeta(s)}{2\pi}t^{s/2-1}\right|\, dt=O_{s}(1),
\end{equation*}
and thus
\begin{align}\label{secint1}
&\int_{0}^{1}x^{-\sigma/2}\left| K_{s/2}(2 \pi \alpha x)\right|\int_{1}^{\infty}t^{\sigma/2-1}\left|\varphi(t, s)-\frac{\zeta(s)}{2\pi}t^{s/2-1}\right|\, dt\, dx\nonumber\\
&=O_{s}\left(\int_{0}^{1}x^{-\sigma/2}\left| K_{s/2}(2 \pi \alpha x)\right|\, dx\right)\nonumber\\
&=O_{s}\left(\int_{0}^{1}\left(x^{\sigma/2}+x^{-\sigma/2}\right)\left| K_{s/2}(2 \pi \alpha x)\right|\, dx\right).
\end{align}
Now
\begin{align*}
&\int_{0}^{1}x^{-\sigma/2}\left| K_{s/2}(2 \pi \alpha x)\right|\int_{x}^{1}t^{\sigma/2-1}\left|\varphi(t, s)-\frac{\zeta(s)}{2\pi}t^{s/2-1}\right|\, dt\, dx\nonumber\\
&=\int_{0}^{1}x^{-\sigma/2}\left| K_{s/2}(2 \pi \alpha x)\right|\int_{x}^{1/2}t^{\sigma/2-1}\left|\varphi(t, s)-\frac{\zeta(s)}{2\pi}t^{s/2-1}\right|\, dt\, dx\nonumber\\
&\quad+\int_{0}^{1}x^{-\sigma/2}\left| K_{s/2}(2 \pi \alpha x)\right|\int_{1/2}^{1}t^{\sigma/2-1}\left|\varphi(t, s)-\frac{\zeta(s)}{2\pi}t^{s/2-1}\right|\, dt\, dx.
\end{align*}
It is easy to see that
\begin{equation}\label{halfto1}
\int_{1/2}^{1}t^{\sigma/2-1}\left|\varphi(t, s)-\frac{\zeta(s)}{2\pi}t^{s/2-1}\right|\, dt=O_{s}(1).
\end{equation}
Observe that another application of \eqref{exppts} yields
\begin{align}\label{xtohalf}
\int_{x}^{1/2}t^{\sigma/2-1}\left|\varphi(t, s)-\frac{\zeta(s)}{2\pi}t^{s/2-1}\right|\, dt&=O_{s}\left(\int_{x}^{1/2}t^{\sigma/2-1}\left(t^{\sigma/2}+t^{-\sigma/2}+t^{1+\sigma/2}\right)\, dt\right)\nonumber\\
&=O_{s}(x^{\sigma})+O_{s}\left(\left|\log x\right|\right)+O_{s}(x^{\sigma+1}).
\end{align}
However, since $0<x<1$ and $-1<\sigma<1$, $O_{s}(x^{\sigma+1})=O_{s}(x^{\sigma})$. Thus, combining this with \eqref{halfto1} and \eqref{xtohalf}, we arrive at
\begin{align*}
\int_{x}^{1}t^{\sigma/2-1}\left|\varphi(t, s)-\frac{\zeta(s)}{2\pi}t^{s/2-1}\right|\, dt=O_{s}(x^{\sigma})+O_{s}(1)+O_{s}\left(\left|\log x\right|\right).
\end{align*}
This in turn gives
\begin{align}\label{secint2}
&\int_{0}^{1}x^{-\sigma/2}\left| K_{s/2}(2 \pi \alpha x)\right|\int_{x}^{1}t^{\sigma/2-1}\left|\varphi(t, s)-\frac{\zeta(s)}{2\pi}t^{s/2-1}\right|\, dt\, dx\nonumber\\
&=O_{s}\left(\int_{0}^{1}\left(x^{\sigma/2}+x^{-\sigma/2}+x^{-\sigma/2}\left|\log x\right|\right)\left| K_{s/2}(2 \pi \alpha x)\right|\, dx\right).
\end{align}
From \eqref{i1salpha}, \eqref{firstintegral}, \eqref{secondintegral}, \eqref{secint1}, and \eqref{secint2}, we deduce that
\begin{align}\label{i1salphaexp}
I_{1}(s,\alpha)=O_{s}\left(\int_{0}^{1}\left(x^{\sigma/2}+x^{-\sigma/2}+x^{-\sigma/2}\left|\log x\right|\right)\left| K_{s/2}(2 \pi \alpha x)\right|\, dx\right).
\end{align}
 From \cite[p.~375, equations (9.6.9), (9.6.8)]{stab}, as $y\to 0$, $K_{\nu}(y)\sim 2^{\nu-1}\G(\nu)y^{-\nu}$, when $\textup{Re }\nu>0$, and $K_{0}(y)\sim -\log y$. Hence,
\begin{equation*}
K_{s/2}(2\pi\a x)=
\begin{cases}
O_{s,\a}\left(x^{-|\sigma|/2}\right), \quad&\text{if}\hspace{2mm} s\neq 0,\\
O_{\a}(\log x), &\text{if}\hspace{2mm} s=0.
\end{cases}
\end{equation*}
If $s\neq 0$, then
\begin{align*}
&\int_{0}^{1}\left(x^{\sigma/2}+x^{-\sigma/2}+x^{-\sigma/2}\left|\log x\right|\right)\left| K_{s/2}(2 \pi \alpha x)\right|\, dx\nonumber\\
&=O_{s,\alpha}\left(\int_{0}^{1}\left(1+x^{-\sigma}+x^{-\sigma}\left|\log x\right|\right)\, dx\right)
=O_{s,\alpha}(1),
\end{align*}
since $\sigma<1$. If $s=0$, then
\begin{align*}
&\int_{0}^{1}\left(x^{\sigma/2}+x^{-\sigma/2}+x^{-\sigma/2}\left|\log x\right|\right)\left| K_{s/2}(2 \pi \alpha x)\right|\, dx\nonumber\\
&=O_{\alpha}\left(\int_{0}^{1}\left(x^{\sigma/2}+x^{-\sigma/2}+x^{-\sigma/2}\left|\log x\right|\right)\left|\log x\right|\, dx\right)\nonumber\\
&=O_{s,\alpha}(1),
\end{align*}
as $\sigma>-1$.

Along with \eqref{i1salphaexp}, this finally implies that $I_{1}(s,\alpha)$ converges. Hence, the double integral
\begin{equation*}
\int_{0}^{\infty} x^{-s/2}K_{s/2}(2 \pi \alpha x)\int_{0}^{\infty}\frac{t^{1+s/2}}{x^2+t^2}\left(\varphi(t, s)-\frac{\zeta(s)}{2\pi}t^{s/2-1}\right)\, dt\, dx
\end{equation*}
converges absolutely. Similarly, it can be shown that
\begin{equation*}
\int_{0}^{\infty}t^{1+s/2}\left(\varphi(t, s)-\frac{\zeta(s)}{2\pi}t^{s/2-1}\right)\int_{0}^{\infty} \frac{x^{-s/2}K_{s/2}(2 \pi \alpha x)}{x^2+t^2}\, dx\, dt
\end{equation*}
converges absolutely. This allows us to apply Fubini's theorem and
interchange the order of integration in \eqref{intmd}.

Now from \cite[p.~346, equation \textbf{2.16.3.16}]{prud}, for Re $u>0$, Re
$y>0$, and $\pm$ Re $\nu>-\tf12$,
\begin{align}\label{anotev}
\int_{0}^{\infty}\frac{x^{\pm \nu}}{x^2+y^2}K_{\nu}(ux)\,
dx=\frac{\pi^2y^{\pm\nu-1}}{4\cos(\nu\pi)}\left(\mathbf{H}_{\mp\nu}(uy)-Y_{\mp\nu}(uy)\right),
\end{align}
where $H_{\nu}(w)$ is the first Struve function defined by \cite[p.~942,
formula \textbf{8.550.1}]{grn}
\begin{equation*}
\mathbf{H}_{\nu}(w):=\sum_{m=0}^{\infty}(-1)^m\frac{(w/2)^{2m+\nu+1}}
{\G\left(m+\frac{3}{2}\right)\G\left(\nu+m+\frac{3}{2}\right)}.
\end{equation*}
Also from \cite[p.~42, formula (84)]{htf},
\begin{equation}\label{htfref}
\mathbf{H}_{\nu}(w)=Y_{\nu}(w)+\frac{2^{1-\nu}\pi^{-1/2}}{\G\left(\nu+\frac{1}{2}\right)}S_{\nu,\nu}(w).
\end{equation}
Hence, from \eqref{intmd}, \eqref{anotev}, \eqref{htfref}, and \eqref{ref2},
\begin{align}\label{intmd1}
&\sqrt{\alpha} \int_{0}^{\infty} K_{s/2}(2 \pi \alpha x) f(x,s) \notag dx\\&=\dfrac{\G\left(\frac{1}{2}(1-s)\right)\sqrt{\a}}{2^{s/2}\sqrt{\pi}}\int_{0}^{\infty}
S_{s/2,s/2}(2\pi\a t)\left(\varphi(t, s)-\frac{\zeta(s)}{2\pi}t^{s/2-1}\right)\, dt.
\end{align}
Using \eqref{ane} and \eqref{intmd1}, we obtain \eqref{newthmlomint}.
\end{proof}

\textbf{Remark:} From \eqref{fxzsp}, \eqref{fxzspint}, and the evaluation \cite[p.~345, formula \bf{(14)}]{erdelyi}
\begin{equation*}
\int_{0}^{\infty}\frac{t^s}{x^2+t^2}\, dt=\frac{\pi
  x^{s-1}}{2\cos\left(\frac{1}{2}\pi s\right)},\quad -1<\sigma<1,
\end{equation*}
we find that
\begin{equation}\label{fxzspint1}
\frac{2}{\pi}x^{-s/2}\int_{0}^{\infty}\frac{t^{1+s/2}\varphi(t,
  s)}{x^2+t^2}\,
dt=f(x,s)+\frac{x^{s/2-1}\zeta(s)}{2\pi\cos\left(\frac{1}{2}\pi s\right)}.
\end{equation}
Koshliakov's formula \cite[equation (7)]{koshli}\footnote{Note that there is
  a misprint in Koshliakov's formulation of this identity, namely, there is a
  plus sign in front of the infinite sum, which actually should be a minus sign.}, namely,
\begin{equation*}
\int_{0}^{\infty}\frac{t\varphi(t,0)}{1+t^2}\, dt=\gamma^2-2\gamma_1-\frac{1}{4}+\frac{\pi^2}{6}-\sum_{n=1}^{\infty}d(n)\left(\frac{1}{n}-\frac{1}{n+1}\right)
\end{equation*}
is a special case of \eqref{fxzspint1} when $s=0$ and $x=1$.

\section{Some Results Associated with the Second Koshliakov Transform}\label{secondkoshliakov}

In Section \ref{anmt}, we studied a modular transformation associated with
the function $f(x,s)$ defined in \eqref{fxzsp}, which in turn, was found by
choosing the function $F(z,s)$ given in \eqref{Fzsdef}. Then in
\eqref{fxzspint} of Section \ref{iref}, we found an integral representation
for  $f(x,s)$. This, however, completely obscures the discovery of this
choice of $F(z,s)$, and in fact, it was discovered by first considering
\eqref{omegamellin1} and asking ourselves, what function needs to be
multiplied with
$1/\left(2\cos\left(\frac{\pi}{2}\left(z+\frac{s}{2}\right)\right)\right)$ in
order to form a function $F(z,s)$ that satisfies $F(z,s)=F(1-z,s)$ and hence
to be able to use Theorem \ref{production-1}. This motivated us to consider
the function defined by the integral in the second equality in
\eqref{genesisfxs}, which we then evaluated in two different ways, leading to
$f(x,s)$ and its aforementioned integral representation.

With the same intention of constructing an $F(z,s)$ satisfying $F(z,s)=F(1-z,s)$ towards applying Theorem \ref{production-2} and obtaining a modular transformation, we introduce a function defined by means of the integral
\begin{equation*}
\frac{1}{2\pi i}\int_{c'-i\infty}^{c'+i\infty}\frac{2\sin\left(\frac{1}{2}\pi\left(z-\frac{1}{2}s\right)\right)}{(2\pi)^{2z}}
\Gamma\left(z-\frac{s}{2}\right)\Gamma\left(z+\frac{s}{2}\right)\zeta\left(z-\frac{s}{2}\right)
\zeta\left(z+\frac{s}{2}\right)x^{-z}\, dz
\end{equation*}
for $\pm\tf12\sigma<c'= \textup{Re } z<1\pm \tf12\sigma$. We first evaluate this integral in the half-plane Re $z>1\pm\tfrac{1}{2}\sigma$ as well as in the vertical strip $\pm\tf12\sigma<c'= \textup{Re } z<1\pm \tf12\sigma$ . It is easy to see, using \eqref{zetab}, \eqref{sinest}, and \eqref{strivert}, that this integral converges in both of these regions.

\begin{theorem}
For $c'= \textup{Re } z>1\pm\tf12\sigma$,
\begin{multline}\label{fxzprod2sim0}
\frac{1}{2\pi i}\int_{c'-i\infty}^{c'+i\infty}\frac{2\sin\left(\frac{1}{2}\pi\left(z-\frac{1}{2}s\right)\right)}{(2\pi)^{2z}}
\Gamma\left(z-\frac{s}{2}\right)\Gamma\left(z+\frac{s}{2}\right)\zeta\left(z-\frac{s}{2}\right)\zeta
\left(z+\frac{s}{2}\right)x^{-z}\, dz\\=\eta(x,s),
\end{multline}
where
\begin{equation*}
\eta(x, s):=2i\sum_{n=1}^{\infty}\sigma_{-s}(n)n^{s/2}\left(e^{\pi is/4}K_{s}\left(4\pi e^{\pi i/4}\sqrt{nx}\right)-e^{-\pi is/4}K_{s}\left(4\pi e^{-\pi i/4}\sqrt{nx}\right)\right).
\end{equation*}
Moreover, if
\begin{equation}\label{kappa}
\kappa(x,s):=\eta(x,s)-\frac{1}{2\pi^2x}\left(\frac{\G(1+s)\zeta(1+s)}{(2\pi\sqrt{x})^{s}}
+\frac{\G(1-s)\zeta(1-s)\cos\left(\frac{\pi s}{2}\right)}{(2\pi\sqrt{x})^{-s}}\right),
\end{equation}
then for $\pm\tf12\sigma<c= \textup{Re } z<1\pm\tf12\sigma$,
\begin{multline}\label{fxzprod2sim}
\frac{1}{2\pi i}\int_{c-i\infty}^{c+i\infty}\frac{2\sin\left(\frac{1}{2}\pi\left(z-\frac{1}{2}s\right)\right)}{(2\pi)^{2z}}
\Gamma\left(z-\frac{s}{2}\right)\Gamma\left(z+\frac{s}{2}\right)\zeta\left(z-\frac{s}{2}\right)
\zeta\left(z+\frac{s}{2}\right)x^{-z}\, dz\\=\kappa(x,s).
\end{multline}
\end{theorem}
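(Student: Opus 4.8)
The plan is to establish \eqref{fxzprod2sim0} by a term-by-term Mellin--Barnes evaluation, and then to deduce \eqref{fxzprod2sim} by moving the line of integration to the left across two simple poles.

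First I would prove \eqref{fxzprod2sim0}. Fix $c'>1\pm\tfrac{1}{2}\sigma$, so that on $\Re z=c'$ both $\Re(z-\tfrac{1}{2}s)$ and $\Re(z+\tfrac{1}{2}s)$ exceed $1$; by \eqref{sz} with $\mu=s$, together with $\sigma_{s}(n)=n^{s}\sigma_{-s}(n)$,
\begin{equation*}
\zeta\Bigl(z-\tfrac{s}{2}\Bigr)\zeta\Bigl(z+\tfrac{s}{2}\Bigr)=\sum_{n=1}^{\infty}\frac{\sigma_{-s}(n)\,n^{s/2}}{n^{z}},
\end{equation*}
the series converging absolutely on the line. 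Since the remaining factor $2\sin\bigl(\tfrac{1}{2}\pi(z-\tfrac{1}{2}s)\bigr)(2\pi)^{-2z}\Gamma(z-\tfrac{s}{2})\Gamma(z+\tfrac{s}{2})$ is absolutely integrable on $\Re z=c'$ by Stirling's formula \eqref{strivert}, I may interchange summation and integration, which reduces \eqref{fxzprod2sim0} to evaluating, for each $n\ge1$,
\begin{equation*}
I_{n}:=\frac{1}{2\pi i}\int_{(c')}\frac{2\sin\bigl(\tfrac{1}{2}\pi(z-\tfrac{1}{2}s)\bigr)}{(2\pi)^{2z}}\,\Gamma\Bigl(z-\tfrac{s}{2}\Bigr)\Gamma\Bigl(z+\tfrac{s}{2}\Bigr)(nx)^{-z}\,dz.
\end{equation*}
For this I would write $2\sin\bigl(\tfrac{1}{2}\pi(z-\tfrac{1}{2}s)\bigr)=\tfrac{1}{i}\bigl(e^{-\pi is/4}e^{\pi iz/2}-e^{\pi is/4}e^{-\pi iz/2}\bigr)$, so that $I_{n}$ splits into two integrals in which the factors $e^{\pm\pi iz/2}(2\pi)^{-2z}(nx)^{-z}$ combine into a single power $\bigl(\mp i\cdot4\pi^{2}nx\bigr)^{-z}$; after the substitution $w=2z$, which puts the Gamma factors into the shape $\Gamma(\tfrac{w}{2}-\tfrac{s}{2})\Gamma(\tfrac{w}{2}+\tfrac{s}{2})$ appearing in \eqref{melkz}, each integral evaluates — by \eqref{melkz} with $\nu=s$, extended by analytic continuation to arguments with $|\arg|<\tfrac{1}{2}\pi$ (legitimate since $K_{s}$ is analytic there and the Mellin--Barnes integrand decays exponentially) — to $2K_{s}\bigl(4\pi e^{\mp\pi i/4}\sqrt{nx}\bigr)$. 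Tracking the powers of $2$ and $\pi$ and the factor $\tfrac{1}{i}=-i$ then yields
\begin{equation*}
I_{n}=2i\Bigl(e^{\pi is/4}K_{s}\bigl(4\pi e^{\pi i/4}\sqrt{nx}\bigr)-e^{-\pi is/4}K_{s}\bigl(4\pi e^{-\pi i/4}\sqrt{nx}\bigr)\Bigr),
\end{equation*}
and multiplying by $\sigma_{-s}(n)n^{s/2}$ and summing reproduces $\eta(x,s)$ exactly; the resulting series converges by \eqref{asymbess2}, since $\Re\bigl(4\pi e^{\pm\pi i/4}\sqrt{nx}\bigr)=2\sqrt{2}\,\pi\sqrt{nx}>0$.

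Next I would deduce \eqref{fxzprod2sim}. Write $\Psi(z)$ for the integrand of \eqref{fxzprod2sim0}. In the strip $\pm\tfrac{1}{2}\sigma<\Re z\le c'$ the only singularities of $\Psi$ are the simple poles at $z=1+\tfrac{1}{2}s$ and $z=1-\tfrac{1}{2}s$, coming respectively from $\zeta(z-\tfrac{1}{2}s)$ and $\zeta(z+\tfrac{1}{2}s)$; the poles of the two Gamma factors all lie at $\Re z\le\tfrac{1}{2}|\sigma|$, hence strictly to the left of $\Re z=c$ for any $\pm\tfrac{1}{2}\sigma<c<1\pm\tfrac{1}{2}\sigma$. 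Applying Cauchy's theorem to the positively oriented rectangle with vertices $c'\pm iT$ and $c\pm iT$, and noting that by \eqref{strivert}, \eqref{zetab}, and \eqref{sinest} the integrand decays on the horizontal sides like $e^{-\pi|t|/2}$ times a polynomial in $|t|$ (so those contribute nothing as $T\to\infty$), I get
\begin{equation*}
\frac{1}{2\pi i}\int_{(c)}\Psi(z)\,dz=\frac{1}{2\pi i}\int_{(c')}\Psi(z)\,dz-R_{1+s/2}(\Psi)-R_{1-s/2}(\Psi).
\end{equation*}
Using $\lim_{z\to1}(z-1)\zeta(z)=1$ from \eqref{zlim}, together with $\sin\tfrac{\pi}{2}=1$ and $\sin\bigl(\tfrac{\pi}{2}(1-s)\bigr)=\cos\tfrac{\pi s}{2}$, a direct computation of the two residues gives
\begin{equation*}
R_{1+s/2}(\Psi)=\frac{\Gamma(1+s)\zeta(1+s)}{2\pi^{2}x\,(2\pi\sqrt{x})^{s}},\qquad R_{1-s/2}(\Psi)=\frac{\Gamma(1-s)\zeta(1-s)\cos\tfrac{\pi s}{2}}{2\pi^{2}x\,(2\pi\sqrt{x})^{-s}},
\end{equation*}
whose sum is exactly the term subtracted from $\eta(x,s)$ in the definition \eqref{kappa} of $\kappa(x,s)$. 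Since the integral over $(c')$ equals $\eta(x,s)$ by the first part, \eqref{fxzprod2sim} follows.

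The only step that genuinely requires care is the evaluation of $I_{n}$: one must keep exact track of the powers of $2$, of $\pi$, and of the roots of unity $e^{\pm\pi i/4}$ produced by splitting the sine and by the change of variable $w=2z$, and must justify using \eqref{melkz} at Bessel arguments off the positive real axis. The contour shift and the residue arithmetic in the second part are entirely routine.
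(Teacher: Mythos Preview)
Your proposal is correct and follows essentially the same approach as the paper's own proof: the paper invokes \eqref{melkz} together with the Dirichlet series \eqref{prodze} for the first part, and then shifts the line of integration past the simple poles at $z=1\pm\tfrac{1}{2}s$ for the second part, which is exactly what you do (with the details spelled out more fully).
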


\textbf{Remark:} Note that $\eta(x,s)$ is merely $i$ times the series in
\eqref{minus}.

\begin{proof}
The proof of \eqref{fxzprod2sim0} is similar to that given by Koshliakov in
\cite[equation (11)]{koshlond} for the special case $s=0$ of the series
\eqref{plus}, and employs \eqref{melkz} along with \eqref{prodze}. Then
\eqref{fxzprod2sim} follows from \eqref{fxzprod2sim0} by shifting the line of
integration from $c'= \textup{Re } z>1\pm\tf12\sigma$ to $\pm\tf12\sigma<c=
\textup{Re } z<1\pm\tf12\sigma$,  considering the contribution of the poles
of the integrand at $1\pm\frac{1}{2}s$,  and employing the residue theorem.
\end{proof}

Given below is an analogue of Theorem \ref{fxsintrep}.
\begin{theorem}\label{thmkappa}
Let $\kappa(x,s)$ be defined in \eqref{kappa}, and let
\begin{align*}
\Phi(x,s)&:=2\sum_{n=1}^{\infty}\sigma_{-s}(n)n^{s/2}K_{s}(4\pi\sqrt{nx})-\frac{\Gamma(1+s)
\zeta(1+s)x^{-1-s/2}}{(2\pi)^{2+s}}\\&\quad-\frac{\Gamma(1-s)\zeta(1-s)x^{-1+s/2}}{(2\pi)^{2-s}}
\end{align*}
Then, for $-1<\sigma<1$,
\begin{equation*}
\Phi(x,s)=\frac{x^{-s/2}}{\pi}\int_{0}^{\infty}\frac{t^{1+s/2}\kappa(t,s)}{t^2+x^2}\, dt.
\end{equation*}
\end{theorem}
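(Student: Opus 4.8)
The plan is to reduce the identity to a single Mellin--Barnes representation shared by both sides. Fix $s$ with $|\sigma|<1$, so that the strip $\tfrac12|\sigma|<\mathrm{Re}\,z<1-\tfrac12|\sigma|$ is nonempty; this is the strip in which \eqref{fxzprod2sim} represents $\kappa(t,s)$. First I would record a Mellin--Barnes representation of $\Phi(x,s)$ itself, following the very computation used for \eqref{fxzprod2sim0}. Inserting \eqref{melkz} into $2\sum_{n=1}^{\infty}\sigma_{-s}(n)n^{s/2}K_{s}(4\pi\sqrt{nx})$, interchanging sum and integral (valid by absolute convergence for $\mathrm{Re}\,z>1\pm\tfrac12\sigma$), summing the resulting Dirichlet series by \eqref{sz}, and then shifting the line of integration down into the strip $\tfrac12|\sigma|<\mathrm{Re}\,z<1-\tfrac12|\sigma|$ --- the horizontal integrals vanishing by \eqref{strivert} and \eqref{zetab} --- one crosses exactly the two simple poles at $z=1\pm\tfrac s2$, whose residues are $\tfrac{\Gamma(1+s)\zeta(1+s)}{(2\pi)^{2+s}}x^{-1-s/2}$ and $\tfrac{\Gamma(1-s)\zeta(1-s)}{(2\pi)^{2-s}}x^{-1+s/2}$. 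Subtracting these two terms --- which is precisely the definition of $\Phi$ --- gives
\[
\Phi(x,s)=\frac{1}{2\pi i}\int_{(c)}\frac{\Gamma\!\left(z-\tfrac s2\right)\Gamma\!\left(z+\tfrac s2\right)}{(2\pi)^{2z}}\,\zeta\!\left(z-\tfrac s2\right)\zeta\!\left(z+\tfrac s2\right)x^{-z}\,dz ,\qquad \tfrac12|\sigma|<c<1-\tfrac12|\sigma| .
\]
(This is also the case $F\equiv1$ of the construction in Theorem \ref{production-2}.)

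Next I would recognize the right-hand side of the claimed identity as a Mellin convolution. Put $g(y):=\dfrac{y^{-s/2}}{\pi(1+y^{2})}$; by \eqref{sinmell} its Mellin transform is $\displaystyle\int_{0}^{\infty}g(y)y^{w-1}\,dy=\big(2\sin(\tfrac12\pi(w-\tfrac12 s))\big)^{-1}$, valid on a strip containing $\tfrac12|\sigma|<\mathrm{Re}\,w<1-\tfrac12|\sigma|$. A one-line substitution ($y=x/t$) shows that
\[
\frac{x^{-s/2}}{\pi}\int_{0}^{\infty}\frac{t^{1+s/2}\kappa(t,s)}{t^{2}+x^{2}}\,dt=\int_{0}^{\infty}\kappa(t,s)\,g\!\left(\frac xt\right)\frac{dt}{t},
\]
so \eqref{melconv}, applied with the Mellin transform of $\kappa(\cdot,s)$ taken from \eqref{fxzprod2sim} and $k=c$ in the above strip, turns this into $\tfrac1{2\pi i}\int_{(c)}$ of the product of the two Mellin transforms against $x^{-w}$. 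Since $\tfrac12\pi(w-\tfrac12 s)=\tfrac12\pi w-\tfrac14\pi s$, the sine factor of $\kappa$'s transform and the reciprocal sine factor of $g$'s transform cancel identically, leaving exactly the Mellin--Barnes integral for $\Phi(x,s)$ displayed above. This proves $\Phi(x,s)=\frac{x^{-s/2}}{\pi}\int_{0}^{\infty}\frac{t^{1+s/2}\kappa(t,s)}{t^{2}+x^{2}}\,dt$.

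The step requiring the most care is the application of \eqref{melconv}, i.e.\ the interchange of the $t$-integration with the line integral in the Mellin representation of $\kappa(t,s)$; one must verify absolute convergence of the associated double integral. This rests on the behaviour of $\kappa(t,s)$ at the two ends. As $t\to\infty$ the series $\eta(t,s)$ is exponentially small, since each $K_{s}(4\pi e^{\pm\pi i/4}\sqrt{nt})$ decays like $e^{-2\sqrt2\,\pi\sqrt{nt}}$, so from \eqref{kappa} one gets $\kappa(t,s)=O\big(t^{-1+\frac12|\sigma|}\big)$, which makes $t^{1+s/2}\kappa(t,s)/(t^{2}+x^{2})$ integrable near $\infty$ for $|\sigma|<1$. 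As $t\to0^{+}$, shifting the contour in \eqref{fxzprod2sim} to the left produces an asymptotic expansion of $\kappa(t,s)$ in powers $t^{s/2},t^{1-s/2},\dots$ (with a possible logarithm), from which integrability near $0$ follows, again for $|\sigma|<1$. This estimate is entirely parallel to the Fubini justification carried out in Subsection \ref{iref} for the analogous function $f(x,s)$ and can be repeated or simply cited; the rest of the argument is routine bookkeeping with Gamma factors and contour shifts.
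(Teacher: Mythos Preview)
Your argument is correct and follows essentially the same route as the paper: the paper's proof simply cites the Mellin--Barnes representation
\[
\Phi(x,s)=\frac{1}{2\pi i}\int_{(c)}\frac{\Gamma\!\left(z-\tfrac s2\right)\Gamma\!\left(z+\tfrac s2\right)}{(2\pi)^{2z}}\,\zeta\!\left(z-\tfrac s2\right)\zeta\!\left(z+\tfrac s2\right)x^{-z}\,dz
\]
(from \cite{dixitmoll}), together with \eqref{sinmell} and the Mellin convolution \eqref{melconv}, which is exactly the combination you use. The only difference is that you supply the derivation of this representation yourself by contour shifting (rather than quoting it) and add an explicit discussion of the Fubini justification; both are welcome elaborations but do not change the method.
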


\begin{proof}
The proof simply uses the identity \cite[equation (5.26)]{dixitmoll}
\begin{equation*}
\Phi(x,s)=\frac{1}{2\pi i}\int_{c-i\infty}^{c+i\infty}\Gamma\left(z-\frac{s}{2}\right)\Gamma\left(z+\frac{s}{2}\right)
\zeta\left(z-\frac{s}{2}\right)\zeta\left(z+\frac{s}{2}\right)\frac{x^{-z}}{(2\pi)^{z}}\, dz,
\end{equation*}
valid for $\pm\tf12\sigma<c= \textup{Re } z<1\pm\tf12\sigma$, \eqref{sinmell}, which is valid for $\tf12\sigma<d'= \text{Re }z<2+ \tf12\sigma$, and \eqref{melconv}.
\end{proof}
As of now, we have been unable to use \eqref{fxzprod2sim} to construct an $f(x,s)$ in \eqref{fxzprod2} to produce a modular
transformation of the form \eqref{cor5.6result}.

\section{The Second Identity on Page 336}\label{sect15}

On page 336 of his lost notebook, Ramanujan claims the following:

\begin{quote} \emph{Let $\sigma_s(n)=\sum_{d|n}d^s$ and let $\zeta(s)$ denote the Riemann
  zeta function. If $\alpha$, and $\beta$ are positive numbers such that
  $\a\b=4\pi^2$, then}
\begin{align}\label{p336id2}
&\a^{(s+1)/2}\left\{\frac{1}{\a}\zeta(1-s)
+\frac{1}{2}\zeta(-s)\tan\left(\tfrac{1}{2}\pi s\right)+\sum_{n=1}^{\infty}\sigma_s(n)\sin
n\a\right\}\nonumber\\
&=\b^{(s+1)/2}\left\{\frac{1}{\b}\zeta(1-s)
+\frac{1}{2}\zeta(-s)\tan\left(\tfrac{1}{2}\pi
  s\right)+\sum_{n=1}^{\infty}\sigma_s(n)\sin n\b\right\}.
\end{align}%
\end{quote}

As remarked in \cite{bclz}, this formula is easily seen to be false because the series are divergent.

Fix an $s$. If a correct version of Ramanujan's identity
\eqref{p336id2} exists, we believe that it should be a special case of
\eqref{guisum1}, where  $G(x, s)=H(x, s)=f(x,s)$, and $f(x,s)$ is self-reciprocal with respect to
the kernel
\begin{equation*}
-2\pi\sin\left(\tfrac{1}{2}\pi
  s\right)J_{s}(4\pi\sqrt{xt})-\cos\left(\tfrac{1}{2}\pi s\right)\left(2\pi
  Y_{s}(4\pi\sqrt{xt})-4K_{s}(4\pi\sqrt{xt})\right);
\end{equation*}
in other words, $f$ is equal to its first Koshliakov transform.

The appearance of $\tan\left(\frac{1}{2}\pi s\right)$ in Theorem \ref{newthm}
of Section \ref{ktmt} is pleasing when compared with \eqref{p336id2}. Thus, a
series analogue of this theorem (as attempted in Section \ref{asa}) or a
series transformation through Guinand's formula \eqref{guisum1} with the
choice of $G(x, s)=H(x, s)=f(x,s)$, with $f(x,s)$ defined in \eqref{fxzsp}, may shed
some light on \eqref{p336id2}, provided, of course, a correct version of \eqref{p336id2} does exist.

\bigskip

\begin{center}
\textbf{Acknowledgements}
\end{center}

\medskip

 A significant part of this research was accomplished when the
second author visited the University of Illinois at Urbana-Champaign in
March, July and November of 2013, and October, 2014. He is grateful to the
University of
Illinois, and especially to Professor Bruce C.~Berndt, for hospitality and
support.

\end{document}